\newcommand{\e}{\ensuremath{\eta}} 
\newcommand{\te}{\tilde{\e}}
\newcommand{\pd}[2]{\ensuremath{{\partial_{#2} #1}}}
\DeclareMathOperator{\image}{im}
\DeclareMathOperator{\ind}{Ind}
\def\TM+{T^*(\rr_+ \times M)}
\def\rp{\mathbb R_+}
\newcommand{\rr}{\ensuremath{\mathbb{R}}}
\newcommand{\zz}{\ensuremath{\mathbb{Z}}}
\newcommand{\ff}{\ensuremath{\mathbb{F}}}
\theoremstyle{plain}
\newtheorem{thm}{Theorem}[section]
\newtheorem{cor}[thm]{Corollary}
\newtheorem{lem}[thm]{Lemma}
\newtheorem{slem}[thm]{Sublemma}
\newtheorem{prop}[thm]{Proposition}
\theoremstyle{definition}
\newtheorem{defn}[thm]{Definition}
\newtheorem*{oques}{Open Question}
\theoremstyle{remark}
\newtheorem{rem}[thm]{Remark}
\newtheorem{ex}[thm]{Example}
\numberwithin{equation}{section} 
\def\dfn#1{{\textbf {#1}}}
\newcommand{\gh}[2]{\ensuremath{\widetilde{GH}\vphantom{H}^{#1}(#2)}}
\newcommand{\rgh}[2]{\ensuremath{{GH}^{#1}(#2)}}
\newcommand{\wgh}[2]{\ensuremath{\widetilde{WGH}\vphantom{H}^{#1}(#2)}}
\newcommand{\rwgh}[2]{\ensuremath{{WGH}^{#1}(#2)}}
\newcommand{\lmax}{\ensuremath{\overline{\ell}}}
\newcommand{\lmin}{\ensuremath{\underline{\ell}}}
\newcommand{\lmaxp}{\ensuremath{\lmax_+}}
\newcommand{\lmaxpm}{\ensuremath{\lmax_\pm}}
\newcommand{\lmaxm}{\ensuremath{\lmax_-}}
\newcommand{\lminp}{\ensuremath{\lmin_+}}
\newcommand{\lminpm}{\ensuremath{\lmin_\pm}}
\newcommand{\lminm}{\ensuremath{\lmin_-}}
\newcommand{\lingf} {\mathcal F^{\operatorname{lin}}}
\newcommand{\grad}{\operatorname{grad}}
\newcommand{\id}{\operatorname{id}}
\def\infin{\omega}
\def\Infin{\Omega}
\def\leg{\Lambda}
\def\sclag{\overline{L}}
\def\lag{\mathcal L}
\def\clag{\overline{\mathcal L}}
\def\slag{L}
\def\lag{\mathcal L}
\def\leg{\Lambda}
\begin{document}

\title[Lagrangian Cobordisms between Legendrians]
{Obstructions to  Lagrangian Cobordisms between 
Legendrians via Generating Families }

\date{\today}

\author[J. Sabloff]{Joshua M. Sabloff} \address{Haverford College,
Haverford, PA 19041} \email{jsabloff@haverford.edu} \thanks{JS is
partially supported by NSF grant DMS-0909273.}

\author[L. Traynor]{Lisa Traynor} \address{Bryn Mawr College, Bryn
Mawr, PA 19010} \email{ltraynor@brynmawr.edu} \thanks{LT is
partially supported by NSF grant DMS-0909021.}

\begin{abstract} The technique of generating families produces obstructions to the existence of embedded Lagrangian cobordisms between Legendrian submanifolds in the symplectizations of $1$-jet bundles. In fact, generating families may be used to construct a TQFT-like theory that, in addition to giving the aforementioned obstructions, yield structural information about invariants of Legendrian submanifolds.  For example, the obstructions devised in this paper show that there is no generating family compatible Lagrangian cobordism between the Chekanov-Eliashberg Legendrian $m(5_2)$ knots.  Further, the generating family cohomology groups of a Legendrian submanifold restrict the topology of a Lagrangian filling.  Structurally, the generating family cohomology of a Legendrian submanifold satisfies a type of Alexander duality that, when the Legendrian is null-cobordant, can be seen as Poincar\'e duality of the associated Lagrangian filling.  This duality implies the Arnold Conjecture for Legendrian submanifolds with linear-at-infinity generating families.  The results are obtained by developing a generating family version of wrapped Floer cohomology and establishing long exact sequences that arise from viewing the the spaces underlying these cohomology groups as mapping cones.  \end{abstract}

\maketitle


\section{Introduction}
\label{sec:intro}

\subsection{Motivation and Questions}

While the notions of cobordism and concordance of submanifolds have been influential in topology since the 1950s, their introduction into contact and symplectic geometry is more recent.  Arnold made the first steps in his study of geometric optics \cite{arnold:cobordism-1, arnold:cobordism-2} by introducing immersed Lagrangian cobordisms between Lagrangian submanifolds.  Over the last two decades, a greater understanding of Legendrian submanifolds --- and even the underlying smooth submanifolds --- was obtained by studying Lagrangian submanifolds of a symplectic manifold with a contact boundary. Rudolph \cite{rudolph}, for example, employed the gauge theoretic techniques of Kronheimer and Mrowka \cite{km:thom} to show that the Thurston-Bennequin invariant of a Legendrian knot gives a bound on the $4$-ball genus of a knot.  More recently, the advent of Eliashberg-Givental-Hofer's Symplectic Field Theory framework \cite{egh} has shifted attention to the use of Lagrangian cobordisms in the symplectization of a contact manifold $X$ to study, via pseudo-holomorphic curves, the geometry and geography of Legendrian submanifolds of $X$, and eventually the underlying smooth submanifolds.

The questions of interest in this paper fall into two families:

\begin{quote}{\bf Existence of Obstructions}: What are the obstructions to one Legendrian submanifold being Lagrangian cobordant to another? In particular, what are the obstructions to a Legendrian submanifold having a Lagrangian filling, i.e., being null-cobordant?
\end{quote}

These questions were first approached by Chantraine \cite{chantraine}: employing an adjunction inequality obtained through gauge theory, he showed that classical invariants of Legendrian submanifolds can provide obstructions to the existence of cobordisms.  One goal of this paper is to strengthen Chantraine's results, as well as Golovko's higher dimensional version of Chantraine's work \cite{golovko:tb}, using non-classical invariants derived from generating families.

Reversing the flow of information from cobordism to invariant, we may also ask: 

\begin{quote}
 {\bf Structure of Invariants:} What can a Lagrangian cobordism tell us about the meaning of Legendrian invariants?  How can Lagrangian cobordism be used to explain deeper structure in the invariants?
\end{quote}

As described more precisely below, the existence of a certain type of Lagrangian filling will, for example, impose conditions on the generating family invariants of a Legendrian submanifold; furthermore, a duality present in these invariants comes from Poincar\'e duality of the Lagrangian filling.

In this paper, we study embedded Lagrangian cobordisms between Legendrian submanifolds in the symplectizations of $1$-jet bundles, which are classical examples of contact manifolds; throughout, we assume that the Legendrians and the Lagrangian have compatible generating families, as defined below in Section~\ref{sec:cobord-gf}.  Generating families have previously been used to define non-classical invariants of certain Legendrian submanifolds of $1$-jet bundles and Lagrangian submanifolds of cotangent bundles; see \cite{f-r, lisa-jill, lisa:links} and Sections~\ref{sec:gh-legendrian} and \ref{sec:cobord-gf}, below.  Not every Legendrian submanifold has a generating family, though in $J^1\rr$ at least, the existence of a generating family is equivalent to the existence of an augmentation of the Chekanov-Eliashberg DGA \cite{fuchs:augmentations, fuchs-ishk, rulings}.  The condition of a Lagrangian cobordism having a compatible generating family is not yet well-understood. There do exist several methods for constructing generating family-compatible Lagrangian cobordisms, however: Legendrian isotopy, spinning, and, most importantly, attaching Lagrangian handles; see \cite{bst:construct}. In addition, any Legendrian submanifold with a generating family has a (potentially immersed) generating family-compatible Lagrangian filling \cite{bst:construct}.  At this time, it would not be unreasonable to conjecture that the study of generating family-compatible Lagrangian cobordisms between Legendrian submanifolds is tantamount to the study of zero-Maslov Lagrangian cobordisms between Legendrian submanifolds with DGAs admitting augmentations.

There are several reasons to study Lagrangian cobordisms through the technique of generating families.  The generating family technique, which employs classic analysis and Morse-theoretic techniques, is analytically simpler than holomorphic curve techniques.  Further, as mentioned above, results of \cite{bst:construct} show that generating family-compatible Lagrangian cobordisms are plentiful and easily constructed.  Given the potential complexity of the theory of Lagrangian cobordisms, it seems reasonable to begin its study in this more tractable setting.  

It is also interesting to compare results obtained though generating families and holomorphic curves.  For a number of results in this paper, there is a parallel story to be told for invariants defined through the theory of holomorphic curves, due to Ekholm, Honda, and K\'alm\'an \cite{ehk:leg-knot-lagr-cob}, Ekholm \cite{ekholm:rsft, ekholm:lagr-cob, ekholm:rsft-survey}, Golovko \cite{golovko:tb}, the first author \cite{duality}, and Ekholm, Etnyre, and the first author \cite{high-d-duality}.  The holomorphic projects of \cite{ehk:leg-knot-lagr-cob} and \cite{ekholm:lagr-cob} were initiated before we began this paper and deeply inspired us; the work of Golovko \cite{golovko:tb} which is based on these projects appeared as we were putting this finishing touches on this paper.  Holomorphic techniques have the advantage that they apply in more general settings and to more Legendrian and Lagrangian submanifolds.  
At the time of this writing, however, a number of the holomorphic curve based results are at the stage in which the shape of the theory is known in detail, and there is a fairly complete sketch of the difficult analysis required; see especially \cite{ekholm:rsft}. Generating family techniques can be seen as a way to more easily establish some results in standard settings and potentially provide intuition for phenomena that may occur in a more general setting.

\subsection{Main Results}
\label{ssec:results}

Let $M$ be a compact manifold (or $\rr^n$), and denote by $J^1M$ its $1$-jet space.  Let $\sclag$ be an embedded Lagrangian submanifold in the symplectization $\rr \times J^1M$ that is cylindrical over Legendrian submanifolds $\leg_\pm$ of $J^1M$ outside a compact interval of $\rr$; we denote such a cobordism by
$\leg_- \prec_{\sclag} \leg_+ $.
We assume that, in a sense to be described precisely in Section~\ref{sec:cobord-gf}, the cobordism $\sclag$ \footnote{more precisely, the image of the cobordism $\theta(\sclag) = \clag  \subset T^*(\rp \times M)$}
 has a generic slicewise-linear-at-infinity generating family $F$  that is compatible with the generic linear-at-infinity generating families $f_\pm$ for $\leg_\pm$ at the ends;  
 we refer to such a cobordism as a gf-compatible Lagrangian cobordism and
 denote it by   $(\leg_-, f_-) \prec_{(\sclag, F)} (\leg_+, f_+)$.  
The order is important: as we shall see below, Lagrangian cobordism is not a symmetric relation.

The relative (resp. total) generating family cohomology groups $\rgh{*}{f}$ (resp.\  $\gh{*}{f}$) for a linear-at-infinity generating family $f$ of a Legendrian submanifold $\leg \subset J^1M$ are defined to be the relative cohomology groups of pairs of sublevels sets of a difference function associated to $f$.  This idea has been explored in \cite{f-r, lisa-jill, lisa:links}; see Section~\ref{sec:gh-legendrian} for details.  
While the generating family cohomology may depend on the specific generating family $f$, the set of all cohomology groups for all linear-at-infinity generating families for a Legendrian submanifold $\leg$ is an invariant of the isotopy class of $\leg$.  The set of generating families of a fixed Legendrian submanifold may be simplified using a notion of equivalence to be defined in Section~\ref{sec:gf}: the generating family cohomologies all descend to equivalence classes.

The key idea in this paper is that introducing Lagrangian cobordisms into the theory of generating family cohomology gives rise to a TQFT-like structure.  In the following theorem, we let $\slag$ denote the compact portion of $\sclag$, noting that the boundary of $\slag$ is $\leg_- \cup \leg_+$.

\begin{thm} \label{thm:cobord-les}
  If $(\leg_-, f_-) \prec_{(\sclag, F)} (\leg_+, f_+)$ and $\sclag$ is orientable, then there exists a homomorphism $\Psi_F: \rgh{k}{f_-} \to \rgh{k}{f_+}$ that fits into the following long exact sequence:
  \begin{equation} \label{eqn:cobord-les}
    \xymatrix@1{
      \cdots \ar[r] & \rgh{k}{f_-} \ar[r]^{\Psi_F} & \rgh{k}{f_+} \ar[r] & H^{k+1}(\slag, \leg_+) \ar[r] & \cdots.
    }
  \end{equation}
\end{thm}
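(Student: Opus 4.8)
The plan is to exhibit the three groups in \eqref{eqn:cobord-les} as the three terms in the long exact sequence of a pair of spaces (equivalently, as a mapping-cone sequence), all three built from the difference function of the cobordism's generating family.  Recall that for a linear-at-infinity generating family $f$ of $\leg \subset J^1M$, the group $\rgh{*}{f}$ is the relative cohomology of a pair of sublevel sets of the difference function $\Delta_f(x,\e,\te) = f(x,\e) - f(x,\te)$, with levels straddling exactly the positive critical values that come from Reeb chords of $\leg$.  Passing to $\clag = \theta(\sclag) \subset T^*(\rp \times M)$ and its slicewise-linear-at-infinity generating family $F$, we form the difference function $\Delta_F(s,x,\e,\te) = F(s,x,\e) - F(s,x,\te)$ on $\rp \times M \times \rr^N \times \rr^N$.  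The first task is to build a suitable pair $(Z,Y)$ of sublevel-type sets of $\Delta_F$ — one that remembers the behavior of $\Delta_F$ at both ends $s\to 0$ and $s\to\infty$ — and to identify $H^*(Z,Y)$, $H^*(Z)$, and $H^*(Y)$ with (shifts of) $\rgh{*}{f_-}$, $\rgh{*}{f_+}$, and $H^{*+1}(\slag,\leg_+)$ respectively; the long exact sequence of $(Z,Y)$ is then \eqref{eqn:cobord-les}, with $\Psi_F$ the connecting/restriction map into the $\rgh{*}{f_+}$ term.

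The heart of the matter is the critical-point structure of $\Delta_F$.  Off the diagonal $\{\e=\te\}$, the $\df_s$-component of $\nabla\Delta_F$ equals the fiberwise difference of $F$, which is nonzero except where $F$ has become cylindrical; hence all off-diagonal critical points sit over the two ends, and there — using that $F$ is compatible with $f_\pm$ — the function $\Delta_F$ restricts to a fiberwise stabilization of $\Delta_{f_-}$ near $s\to 0$ and of $\Delta_{f_+}$ near $s\to\infty$, which is exactly what produces the two $\rgh{*}{f_\pm}$ contributions.  On the diagonal $\Delta_F$ vanishes identically and its critical locus is the fiber-critical set $\Sigma_F$ of $F$, which is diffeomorphic to $\clag$ and so homotopy equivalent to the compact cobordism $\slag$; extracting the contribution of this Morse--Bott locus transverse to the diagonal, and accounting for the non-compactness of $\clag$ over the positive end together with the grading normalization built into $\rgh{*}{\cdot}$, turns $H^*(\clag)$ into the shifted relative group $H^{*+1}(\slag,\leg_+)$.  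It is precisely at this step that orientability of $\sclag$ is used, to make the transverse Thom class — hence a Poincar\'e--Lefschetz-type identification — available over $\zz$.

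Assembling the three contributions is then formal: filter $(Z,Y)$ by the $\rp$-coordinate into negative end, compact region, and positive end, read off the three groups, and take the long exact sequence of the pair; the induced map $\Psi_F$ may equivalently be described as the continuation map obtained by interpolating $f_-$ to $f_+$ across $F$.  I expect the real obstacle to be the middle step, not this bookkeeping: one must control the non-compact function $\Delta_F$ over $T^*(\rp\times M)$ well enough to (i) match its two ends with $\Delta_{f_\pm}$ after stabilization, (ii) exclude — via genericity of $F$ and, if necessary, a neck-stretching deformation of $\clag$ — spurious off-diagonal critical points over the compact region that would corrupt the interior term, and (iii) pin down the grading shift and signs so that the interior contribution is exactly $H^{k+1}(\slag,\leg_+)$ rather than merely some relative group of $\slag$.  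As a consistency check: for the trivial cylinder $\sclag=\rr\times\leg$ one has $H^*(\slag,\leg_+)=0$, so \eqref{eqn:cobord-les} forces $\Psi_F$ to be an isomorphism, as functoriality demands; and when $\leg_-=\emptyset$ the sequence recovers the expected identification $\rgh{k}{f_+}\cong H^{k+1}(\slag,\leg_+)$ for a generating-family-compatible Lagrangian filling.
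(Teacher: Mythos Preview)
Your overall architecture---a difference function for $F$, three contributions (two ends plus a Morse--Bott interior), and a long exact sequence assembled from them---is the scaffold the paper uses.  But there is a genuine gap at the very first step: the \emph{unsheared} difference function $\Delta_F$ you propose does not have the critical-point structure you describe, and ordinary sublevel sets of it cannot see the Reeb chords of $\leg_\pm$ at all.

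Concretely, your claim that ``all off-diagonal critical points sit over the two ends'' is false in the direction you need.  Since $\clag$ is embedded, critical points of $\Delta_F(t,x,\e,\te)=F(t,x,\e)-F(t,x,\te)$ correspond to self-intersections of $\clag$, so there are \emph{no} off-diagonal critical points anywhere---over the ends or over the compact region.  On the cylindrical ends $\Delta_F = t\,\delta_\pm$, hence $\partial_t\Delta_F=\delta_\pm$, which at a Reeb chord equals $\ell(\gamma)\neq 0$; the Reeb chords are never critical for $\Delta_F$.  In particular a sublevel set $\{\Delta_F\le a\}$ restricted to the positive end is $\{\delta_+\le a/t\}$, and as $t\to\infty$ the level $a/t\to 0$ slides below every positive critical value of $\delta_+$, so no choice of constants isolates the Reeb-chord range.  (This also means the ``spurious off-diagonal critical points over the compact region'' you worry about in item~(ii) are not the issue; the issue is that the \emph{desired} ones over the ends are absent.)

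The paper's fix is to add a \emph{Hamiltonian shearing function} $H(t)$---quadratic at each end, zero on $[t_-,t_+]$---and work with the sheared difference $\Delta=F(t,x,\te)+H(t)-F(t,x,\e)$; this is precisely the generating-family avatar of wrapped Floer cohomology.  With $H$ chosen carefully, each Reeb chord of $\leg_\pm$ becomes an honest nondegenerate critical point of $\Delta$ at a computable positive value, while the embedded interior still contributes the index-$N$ Morse--Bott submanifold $\cong\lag$ at level $0$.  The argument then runs through \emph{two} long exact sequences rather than one: a ``horizontal'' sequence, obtained by realizing $(\Delta^{\Infin},\Delta^{\mu})$ as a relative mapping cone split at $t=t_+$, giving $\rgh{k}{f_-}\xrightarrow{\Psi_F}\rgh{k}{f_+}\to\rwgh{k+2}{F}$; and a ``vertical'' sequence from the triple $(\Delta^{\Infin},\Delta^{\mu},\Delta^{-\mu})$, which together with the separately proved vanishing of the \emph{total} wrapped groups $\wgh{*}{F}$ identifies $\rwgh{k+2}{F}\cong H^{k+1}(\lag,\leg_+)$.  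Your filtration-by-$\rp$ idea is morally the horizontal step, but it cannot begin without the shearing, and the identification of the third term requires the vanishing result, which your outline does not supply.
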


In this and all theorems below, the hypothesis of orientability may be dropped if $\zz_2$ coefficients are used.  The cobordism map $\Psi_F$ satisfies some of the typical properties of a TQFT such as non-triviality, naturality, and functoriality; see Section~\ref{sec:tqft}.  The existence of a cobordism map in the holomorphic curved based theory of Legendrian contact homology setting is explored  in \cite{ekholm:rsft, ehk:leg-knot-lagr-cob}, and the existence of a long exact sequence involving the cobordism map is examined in \cite{golovko:tb}.

Taking Euler characteristics of the long exact sequence (\ref{eqn:cobord-les}) yields a generalization of Chantraine's $3$-dimensional result about the relationships between the Thurston-Bennequin invariants of the Legendrian knots at the ends of a Lagrangian cobordism \cite{chantraine} as well as of Golovko's generalization \cite{golovko:tb}.

\begin{cor}\label{cor:chantraine}
If  $(\leg_-, f_-) \prec_{(\sclag, F)} (\leg_+, f_+)$,  $\sclag$ is orientable, 
 and the $n$-dimensional Legendrians $\leg_\pm \subset J^1\rr^n$ are generic, then
  $$tb(\leg_+) - tb(\leg_-) = (-1)^{\frac{1}{2}(n^2 - 3n)} \chi(\slag, \leg_+).$$
\end{cor}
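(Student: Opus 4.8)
The plan is to read off a numerical identity from the long exact sequence~(\ref{eqn:cobord-les}) and then to translate both of its sides into classical invariants. All the groups involved have finite total dimension: $\leg_\pm$ and $\slag$ are compact, so $H^*(\slag,\leg_+)$ is finite, and the generic Legendrians $\leg_\pm\subset J^1\rr^n$ carry only finitely many Reeb chords, so the difference-function complexes computing $\rgh{*}{f_\pm}$ are finite. Thus graded Euler characteristics are available. Unrolling~(\ref{eqn:cobord-les}) into a single exact sequence
\[
  \cdots \longrightarrow \rgh{k-1}{f_+} \longrightarrow H^{k}(\slag,\leg_+) \longrightarrow \rgh{k}{f_-} \stackrel{\Psi_F}{\longrightarrow} \rgh{k}{f_+} \longrightarrow H^{k+1}(\slag,\leg_+) \longrightarrow \cdots,
\]
the alternating sum of dimensions vanishes; grouping the terms into periods of three and keeping track of the degree shift in the $H^{k+1}(\slag,\leg_+)$ slot (which contributes an extra sign) produces
\[
  \chi\bigl(\rgh{*}{f_-}\bigr) - \chi\bigl(\rgh{*}{f_+}\bigr) - \chi(\slag,\leg_+) = 0 ,
\]
i.e.\ $\chi(\rgh{*}{f_+}) - \chi(\rgh{*}{f_-}) = -\,\chi(\slag,\leg_+)$.

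It then remains to identify $\chi(\rgh{*}{f})$, for a generic $\leg\subset J^1\rr^n$ with linear-at-infinity generating family $f$, with a fixed sign times $tb(\leg)$. Here I would argue through the Morse theory of the associated difference function $w(x,\xi,\xi') = f(x,\xi) - f(x,\xi')$, following Section~\ref{sec:gh-legendrian}: the critical locus of $w$ splits as a ``diagonal'' copy of $\leg$, which does not contribute to the relative groups $\rgh{*}{f}$, together with finitely many isolated critical points determined by the Reeb chords of $\leg$. Hence $\chi(\rgh{*}{f})$ equals the signed count of the Reeb chords of $\leg$, the signs coming from the generating-family grading. Identifying this count with the self-linking data of $\leg$ --- via Fuchs--Rutherford \cite{f-r} when $n=1$, and via the definition of the higher-dimensional Thurston--Bennequin invariant (cf.\ Golovko \cite{golovko:tb}) in general --- and tallying the grading conventions of Section~\ref{sec:gh-legendrian} gives $\chi(\rgh{*}{f}) = (-1)^{\frac{1}{2}(n^2-3n)+1}\,tb(\leg)$. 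Feeding this into the identity of the previous paragraph and simplifying the sign yields exactly $tb(\leg_+) - tb(\leg_-) = (-1)^{\frac{1}{2}(n^2-3n)}\,\chi(\slag,\leg_+)$; in particular, for $n=1$ this recovers Chantraine's relation $tb(\leg_+)-tb(\leg_-) = -\chi(\slag)$.

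The only delicate point is the sign; the exactness is purely formal. The factor $(-1)^{\frac{1}{2}(n^2-3n)}$ is assembled from several independent bookkeeping contributions that must be matched carefully against the definitions in Section~\ref{sec:gh-legendrian}: the $+1$ degree shift carried by the $H^{k+1}(\slag,\leg_+)$ term of~(\ref{eqn:cobord-les}); the grading shift built into $\rgh{*}{f}$ relative to the Morse index of $w$ and to the grading on Reeb chords; and the conventions defining $tb(\leg)$ in dimension $n$, which behave differently according to the parity of $n$. Finally, orientability of $\sclag$ is used only via Theorem~\ref{thm:cobord-les} and to make the count go through over $\zz$; dropping it, the identical argument computes the $\zz_2$-reduction of the formula.
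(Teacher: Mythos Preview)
Your argument is correct and follows the same route as the paper's proof: take Euler characteristics in the long exact sequence~(\ref{eqn:cobord-les}) and then invoke the identity $\chi(\rgh{*}{f}) = (-1)^{\frac{1}{2}(n-1)(n-2)}\,tb(\leg)$ (note $(n-1)(n-2)/2 = (n^2-3n)/2 + 1$, so your sign matches). The only difference is in the citation for that sign identity: the paper obtains it from \cite[\S3]{ees:high-d-geometry} together with Proposition~\ref{prop:index}, rather than from \cite{f-r} and \cite{golovko:tb}.
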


The asymmetry of the Lagrangian cobordism relation is evident from this corollary.  If we only consider cobordisms that are actually concordances, then we get:

\begin{cor} \label{cor:cyl-iso}
 If    $(\leg_-, f_-) \prec_{(\sclag, F)} (\leg_+, f_+)$, and $\sclag$ is orientable and  
  diffeomorphic to $\rr \times \leg$, then $\Psi_F$ is an isomorphism.
\end{cor}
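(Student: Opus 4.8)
The plan is to invoke Theorem~\ref{thm:cobord-les} and show that the "error term" $H^{k+1}(\slag, \leg_+)$ vanishes for all $k$, so that the long exact sequence (\ref{eqn:cobord-les}) degenerates into a collection of isomorphisms $\Psi_F \colon \rgh{k}{f_-} \xrightarrow{\ \sim\ } \rgh{k}{f_+}$. First I would note that $\slag$, the compact portion of $\sclag$, is by hypothesis diffeomorphic to $[0,1] \times \leg$ (a cylinder truncated to compact interval), with $\leg_+$ corresponding to $\{1\} \times \leg$. Thus the inclusion $\leg_+ \hookrightarrow \slag$ is a deformation retract, and hence a homotopy equivalence. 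The long exact sequence of the pair $(\slag, \leg_+)$ in ordinary cohomology then shows $H^{k}(\slag, \leg_+) = 0$ for every $k$ (equivalently, the relative cohomology of a pair where the subspace is a deformation retract of the total space vanishes). Feeding this into (\ref{eqn:cobord-les}), each segment $\cdots \to \rgh{k}{f_-} \xrightarrow{\Psi_F} \rgh{k}{f_+} \to 0 \to \cdots$ together with $\cdots \to 0 \to \rgh{k}{f_-} \xrightarrow{\Psi_F} \rgh{k}{f_+} \to \cdots$ forces $\Psi_F$ to be both surjective and injective, hence an isomorphism.

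The only subtlety is orientability: Theorem~\ref{thm:cobord-les} is stated under the assumption that $\sclag$ is orientable, which is part of the hypothesis of the corollary, so there is nothing extra to check. (If one drops orientability, the argument goes through verbatim with $\zz_2$ coefficients, as noted in the remark following Theorem~\ref{thm:cobord-les}.) I would also remark that the orientation hypothesis is automatically consistent here, since $\rr \times \leg$ is orientable precisely when $\leg$ is, so in the concordance case the hypothesis is mild.

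The main "obstacle" is really no obstacle at all — it is purely the observation that relative cohomology vanishes for a deformation-retract pair — so the proof is essentially a one-line corollary of Theorem~\ref{thm:cobord-les}. The only thing worth spelling out is that "diffeomorphic to $\rr \times \leg$" for the \emph{cobordism} $\sclag$ translates, after truncating to the compact portion $\slag$, into the statement that $(\slag, \leg_+)$ is a product pair $([0,1]\times\leg, \{1\}\times\leg)$, so that the deformation retraction is available; this is where the specific geometry (cylindrical ends) enters. I would close by noting that this corollary is the natural manifestation of the expected TQFT behavior: a "trivial" cobordism induces the identity (up to canonical isomorphism) on the invariant.
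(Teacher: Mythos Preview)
Your proof is correct and is precisely the intended argument: the paper states Corollary~\ref{cor:cyl-iso} without proof as an immediate consequence of Theorem~\ref{thm:cobord-les}, and the vanishing of $H^{*}(\slag,\leg_+)$ for a product cobordism is exactly the observation needed. The one point you might tighten is the passage from ``$\sclag \cong \rr \times \leg$'' to ``$(\slag,\leg_+) \cong ([0,1]\times\leg,\{1\}\times\leg)$'': strictly speaking this uses that a diffeomorphism between manifolds with cylindrical ends can be taken to respect the product structure near infinity, but this is standard and is clearly the intended meaning in context (the paper refers to this situation as a \emph{concordance}).
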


\begin{figure}
\centerline{\includegraphics[width=4in]{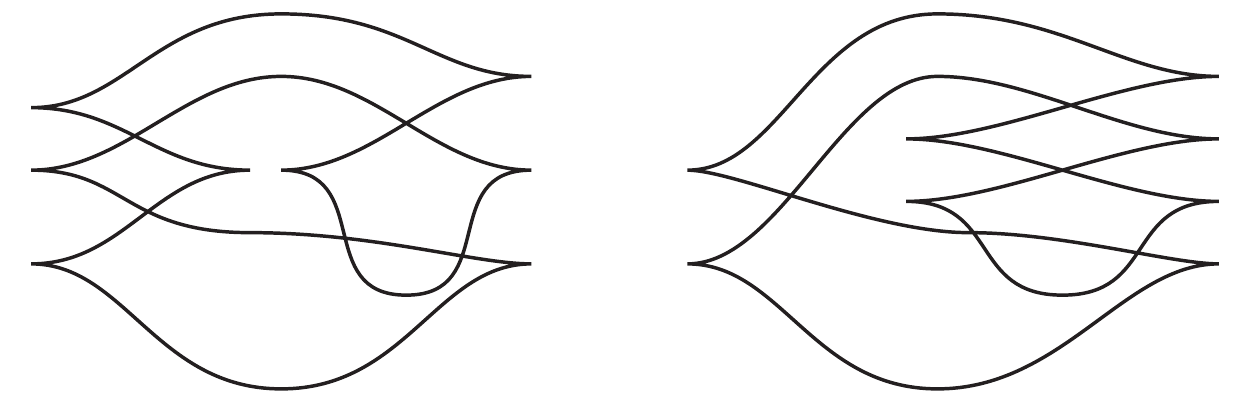}}
\caption{Front projections of the Chekanov-Eliashberg examples $K_1$ and $K_2$.}
\label{fig:chv-ex}
\end{figure}

\begin{ex} \label{ex:ch-e} There is no gf-compatible Lagrangian cobordism between
the  Chekanov-Eliashberg Legendrian $m(5_2)$ knots $K_1$ and $K_2$  pictured in Figure~\ref{fig:chv-ex} in either order. To see this, first notice that by using the established connection between the existence of a ruling and the existence of a generating family \cite{chv-pushkar, f-r},
it is easy to see that $K_1$ and $K_2$  have linear-at-infinity generating families.  Moreover, 
using Fuchs and Rutherford's connection between generating family homology and linearized Legendrian contact homology \cite{f-r}, it is straightforward to compute that, for any linear-at-infinity generating families $f_i$ of $K_i$, the Poincar\'e polynomials of the generating family cohomologies with coefficients in $\zz_2$ are given by:
\begin{equation} \label{eqn:chv-el}
P_{f_1}(t) = 2+t \quad \text{and} \quad P_{f_2}(t) = t^{-1} + t + t^2.
\end{equation}
Thus, $K_1$ and $K_2$ 
are not Legendrian isotopic.  Since their Thurston-Bennequin invariants agree, Corollary~\ref{cor:chantraine} implies that a gf-compatible cobordism between them would necessarily be a concordance, but Corollary~\ref{cor:cyl-iso} forbids this.
\end{ex}
Additional examples are given in Theorem~\ref{thm:ex}.

If we restrict to the case where $\leg_- = \emptyset$, i.e.\ to when $\leg_+$ has a Lagrangian filling, then Theorem~\ref{thm:cobord-les} yields a strong restriction on the topology of the filling.  In fact, we show that both the relative and total generating family cohomology detect the topology of the filling:

\begin{thm} \label{thm:filling-iso}  
If $(\leg_-, f_{-}) \prec_{(\sclag, F)} (\leg_{+}, f_{+})$, then
 $$\rgh{k}{f_+} \simeq H^{k+1}(\slag, \leg_+) \quad \text{and} \quad \gh{k}{f_+} \simeq H^{k+1}(\slag).$$
\end{thm}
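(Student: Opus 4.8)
The plan is to read off both isomorphisms from Theorem~\ref{thm:cobord-les} (and its total-cohomology counterpart) by specializing to the filling case $\leg_- = \emptyset$, working with $\zz_2$ coefficients when $\sclag$ is non-orientable. The key observation is that a generating family $f_-$ for the empty Legendrian contributes nothing: the associated difference function has empty (or contractible) sublevel sets near $-\infty$, so $\rgh{k}{f_-} = 0$ and $\gh{k}{f_-} = 0$ for every $k$.

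First I would feed $\rgh{k}{f_-} = 0$ into the long exact sequence (\ref{eqn:cobord-les}). Each three-term window then reads $0 \to \rgh{k}{f_+} \to H^{k+1}(\slag, \leg_+) \to 0$, and exactness forces $\rgh{k}{f_+} \cong H^{k+1}(\slag, \leg_+)$. The only thing to double-check here is that the conventions for ``$\leg_- = \emptyset$'' in the notion of a gf-compatible filling --- namely, that $\sclag$ is the union of a compact piece $\slag$ with $\df\slag = \leg_+$ and a single cylindrical end over $\leg_+$ --- really do yield the vanishing $\rgh{*}{f_-} = 0$; but this is immediate from the sublevel-set description of the groups.

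For the second isomorphism I would run the same mapping-cone argument that proves Theorem~\ref{thm:cobord-les}, now using the pair of sublevel sets that defines the \emph{total} groups $\gh{}{}$ rather than the relative ones. Structurally the only change is that the piece of the mapping cone built from the cobordism is $\slag$ itself rather than the pair $(\slag, \leg_+)$ --- in the total theory one does not excise the Morse--Bott stratum at critical level $0$, which plays the role of the end $\leg_+$ --- so the construction produces a long exact sequence
\[
\cdots \to \gh{k}{f_-} \to \gh{k}{f_+} \to H^{k+1}(\slag) \to \gh{k+1}{f_-} \to \cdots,
\]
into which $\gh{*}{f_-} = 0$ collapses to give $\gh{k}{f_+} \cong H^{k+1}(\slag)$. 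Alternatively, to avoid re-running the construction, I would deduce the total isomorphism from the relative one via the five lemma applied to the commutative ladder formed by (i) the long exact sequence relating $\gh{*}{f_+}$, $\rgh{*}{f_+}$, and $H^*(\leg_+)$ (whose $H^*(\leg_+)$ term comes from the diagonal Morse--Bott level of the difference function of $f_+$) and (ii) the cohomology long exact sequence of the pair $(\slag, \leg_+)$, with the middle column identified via the first isomorphism.

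The main obstacle is the compatibility in that ladder --- equivalently, in the first approach, pinning down the correct mapping-cone piece. One must check that the comparison map $\gh{k}{f_+} \to \rgh{k}{f_+}$ corresponds, under $\gh{k}{f_+} \cong H^{k+1}(\slag)$ and $\rgh{k}{f_+} \cong H^{k+1}(\slag,\leg_+)$, to the map induced by $(\slag,\emptyset) \hookrightarrow (\slag,\leg_+)$, and that the two connecting homomorphisms agree up to the expected sign. Geometrically this amounts to identifying the diagonal stratum of the difference function of $F$ with a collar of $\leg_+$ inside $\slag$; the bookkeeping of the degree shift, the orientation conventions, and the precise delineation of ``total'' versus ``relative'' (which portion of the fiber at infinity is retained) is where the care is required. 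Once that square is seen to commute, the remainder is a routine diagram chase.
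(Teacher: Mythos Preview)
Your approach differs from the paper's in an important structural way, and the total-cohomology half has a genuine gap.

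\textbf{The relative isomorphism.} Deducing $\rgh{k}{f_+}\simeq H^{k+1}(\slag,\leg_+)$ as a corollary of Theorem~\ref{thm:cobord-les} with $\leg_-=\emptyset$ is logically valid and not circular, since the proof of Theorem~\ref{thm:cobord-les} in Section~\ref{sec:cobord-les} does not invoke Theorem~\ref{thm:filling-iso}. The paper, however, runs the argument in the opposite order: it proves Theorem~\ref{thm:filling-iso} first (Section~\ref{sec:filling-iso}) directly from Theorem~\ref{thm:filling-les} and the vanishing of $\wgh{*}{F}$ (Proposition~\ref{prop:wgh-vanish}), and only later establishes the general cobordism sequence. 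So your route is correct here but relies on heavier machinery than the paper needs.

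\textbf{The total isomorphism, first approach.} The claimed ``total'' analogue of the cobordism long exact sequence,
\[
\cdots \to \gh{k}{f_-}\to \gh{k}{f_+}\to H^{k+1}(\slag)\to \cdots,
\]
does not exist for general cobordisms. Take $\sclag$ to be the trivial cylinder over $\leg$: then the cobordism map is an isomorphism, so the third term of every window would have to vanish, forcing $H^{k+1}(\leg)=0$ for all $k$. Concretely, if you literally rerun the Section~\ref{sec:cobord-les} argument with $(\Delta^\Infin,\Delta^{-\mu})$ in place of $(\Delta^\Infin,\Delta^{\mu})$, the cone is now the \emph{total} wrapped pair, whose cohomology vanishes; the long exact sequence you extract is then $\gh{*}{f_-}\simeq \gh{*}{f_+}$ rather than anything involving $H^*(\slag)$. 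Moreover, the retraction of Lemma~\ref{lem:retract-t-} flows down to level $\mu>0$ and cannot be pushed past the Morse--Bott critical submanifold at level $0$, so the ``same argument'' does not produce a codomain identified with $\gh{*}{f_-}$ when you lower the level to $-\mu$.

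\textbf{What the paper does instead.} The paper obtains \emph{both} isomorphisms from a single pair $(\Delta^\Infin,\Delta^{-\mu})$ by splitting it as a mapping cone at two different $t$-values, $u_+$ and $t_+$ (Corollary~\ref{cor:2cones}). Because $\lambda_{-\mu}(u_+)>0$ while $\lambda_{-\mu}(t_+)<0$, the slice at $u_+$ computes $\rgh{*}{f_+}$ and the slice at $t_+$ computes $\gh{*}{f_+}$; the complementary pieces are identified with $H^*(\lag,\partial\lag)$ and $H^*(\lag)$ respectively (Lemma~\ref{lem:filling-cobord-cohom}). Since the cone is the total wrapped pair and $\wgh{*}{F}=0$, both long exact sequences of Theorem~\ref{thm:filling-les} collapse at once.

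\textbf{Your five-lemma alternative.} This can be made to work, but the commutativity you flag as ``the main obstacle'' is genuinely the content: it amounts to what the paper isolates as Lemma~\ref{lem:cone-maps-commute}, and proving it requires comparing the two mapping-cone maps $\phi_F$ and $\widetilde\phi_F$ rather than just quoting Theorem~\ref{thm:cobord-les}. So this route does not avoid the work of Section~\ref{sec:filling-iso}; it reorganizes it.
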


The geometric framework for a parallel result involving the
holomorphic-curve-based Legendrian contact homology appears in \cite{ekholm:lagr-cob}.

\begin{ex} Returning to the $m(5_2)$ knots of Example~\ref{ex:ch-e}, we see that for any generating family, the knot $K_2$ cannot have a compatible Lagrangian filling.  Further, any gf-compatible Lagrangian filling for $K_1$ must be homeomorphic to a punctured torus.
In \cite{bst:construct}, it is shown that such a punctured torus filling for $K_{1}$ indeed
exists. \end{ex}

Theorems~\ref{thm:cobord-les} and \ref{thm:filling-iso} arise from a study of an adaptation of wrapped Floer homology --- as introduced by Abbondondolo and Schwarz \cite{abb-schw:cotangent} and developed further by Fukaya, Seidel and Smith \cite{fss:wrapped-floer} and by Abouzaid and Seidel \cite{as:wrapped-floer} --- to the generating family setting.  The \dfn{relative (resp.\ total) wrapped generating family cohomology} $\rwgh{*}{F}$ (resp.\ $\wgh{*}{F}$) of a Lagrangian cobordism 
$(\leg_-, f_-) \prec_{(\sclag, F)} (\leg_+, f_+)$   is defined as the relative cohomology of a pair of sublevel sets of a ``sheared'' difference function associated to $F$.  This cohomology has a cochain complex with generators that can be identified with the self-intersections of $\sclag$ and the Reeb chords of the Legendrians at the ends; see Section~\ref{sec:cobord-gf}.  The proofs of the aforementioned theorems have similar outlines: the spaces underlying the relative and total wrapped generating family cohomology can be viewed as mapping cones and thus give rise to long exact sequences. In fact, the total wrapped generating family cohomology vanishes, and thus the corresponding long exact sequence gives rise to the isomorphism in Theorem~\ref{thm:filling-iso}.  These proofs have parallels to the work of Ekholm \cite{ekholm:lagr-cob} for Legendrian contact homology and Bourgeois and Oancea \cite{bo:exact-seq} for closed contact homology.

The theorems above can be applied to analyze the following examples: Legendrian negative twist knots in $\rr^3$, which have recently been classified by Etnyre, Ng, and Vertesi, \cite{env:twist}; the higher dimensional non-isotopic Legendrians studied by Ekholm, Etnyre, and Sullivan in \cite{ees:high-d-geometry}; and the Legendrian knot studied by Melvin and Shrestha \cite{melvin-shrestha} which has augmentations that lead to different linearized contact homologies.

\begin{thm} \label{thm:ex}  
\begin{enumerate}
\item  For all $n \geq 1$, each of the $n$ Legendrian representatives of the odd, negative twist knot $K_{-2n-1}$ with maximal Thurston-Bennequin invariant, as described in Figure~\ref{fig:twist-knot},
has a linear-at-infinity generating family, but none have a gf-compatible Lagrangian filling.  
Moreover, there is no  gf-compatible Lagrangian cobordism between any two of these  
  Legendrian versions
of $K_{-2n-1}$.
\item  The smoothly equivalent but non-Legendrian-isotopic surfaces $\leg_0$ and $\leg_1$ pictured in Figure~\ref{fig:high-d-ex} have linear-at-infinity generating families and the same classical invariants
but are not gf-compatibly Lagrangian cobordant.  Furthermore, $\leg_1$ does not have a gf-compatible Lagrangian filling.  
\item  The Legendrian $m(8_{21})$ knot shown in Figure~\ref{fig:ms},
 has two linear-at-infinity generating families $f_0$ and $f_1$ that do not have a
gf-compatible Lagrangian cobordism compatible with the pair $\{f_0, f_1\}$ at the ends.
\end{enumerate}
\end{thm}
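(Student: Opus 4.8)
The plan is to handle all three parts with a single template, following the model of Example~\ref{ex:ch-e}: for each Legendrian in question I would (a) exhibit a linear-at-infinity generating family, (b) compute the Poincar\'e polynomial of its generating family cohomology, and then (c) play these computations against Corollaries~\ref{cor:chantraine} and~\ref{cor:cyl-iso} to obstruct cobordisms and against Theorem~\ref{thm:filling-iso} to obstruct fillings.

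For (a) I would invoke the equivalence between graded rulings of a front and linear-at-infinity generating families due to Chekanov--Pushkar and Fuchs--Rutherford \cite{chv-pushkar, f-r} --- each of the knots in parts (1) and (3) carries a graded ruling --- together with its higher-dimensional analogue, or an explicit construction, for the Ekholm--Etnyre--Sullivan surfaces of part (2). For (b) I would use Fuchs and Rutherford's isomorphism between generating family cohomology and linearized Legendrian contact cohomology \cite{f-r}, reducing the computation to linearized contact homology already recorded in the literature: for the twist-knot representatives of part (1) from the classification in \cite{env:twist}, for the $m(8_{21})$ knot of part (3) from Melvin--Shrestha's two augmentations \cite{melvin-shrestha}, and for part (2) from \cite{ees:high-d-geometry}. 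The output is, within each family, a list of Poincar\'e polynomials that are pairwise distinct and none of which has the shape a Lagrangian filling would force.

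Step (c) is then formal. In each case the Legendrians involved share all classical invariants --- maximal Thurston--Bennequin number in part (1), the full classical data in parts (2) and (3) --- so Corollary~\ref{cor:chantraine} gives $\chi(\slag,\leg_+)=0$ for any gf-compatible cobordism $\slag$ between them; since $\slag$ is a connected orientable cobordism between diffeomorphic ends with vanishing relative Euler characteristic, it is a concordance, and Corollary~\ref{cor:cyl-iso} then forces $\Psi_F$ to be an isomorphism, so the two Poincar\'e polynomials must agree --- contradicting their being distinct (and in part (3) contradicting $P_{f_0}(t)\neq P_{f_1}(t)$). For the absence of fillings, Theorem~\ref{thm:filling-iso} identifies $\rgh{k}{f_+}$ with $H^{k+1}(\slag,\leg_+)$ for a putative filling $\slag$; a connected filling of a knot has $H^0(\slag,\leg_+)=0$, and more generally its cohomology is pinned down by Lefschetz duality, so comparing with the computed Poincar\'e polynomial --- which in the obstructed cases carries a term in a degree incompatible with such an $\slag$, for instance a $t^{-1}$ term as for $K_2$ in Example~\ref{ex:ch-e} --- yields a contradiction.

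The main obstacle is steps (a)--(b), not the formalities of (c), and it takes two shapes. For part (2) one must genuinely produce linear-at-infinity generating families for the higher-dimensional surfaces of \cite{ees:high-d-geometry}, where the ruling/generating-family correspondence is less automatic than in $J^1\rr$, and then carry out or carefully import the corresponding linearized computation. For part (1), ruling out a cobordism between \emph{any} two representatives --- whose compatible generating families at the ends are not fixed in advance --- requires knowing the generating family cohomology of each representative for every equivalence class of linear-at-infinity generating family, exactly the kind of rigidity established by direct computation for the small knots in Example~\ref{ex:ch-e}; verifying this is the technical heart of the argument, after which all the cobordism and filling conclusions are immediate.
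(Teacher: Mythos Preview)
Your approach for parts (1) and (3) matches the paper's: existence of generating families via graded rulings, computation of the Poincar\'e polynomials via Fuchs--Rutherford, then Corollary~\ref{cor:chantraine} forces any cobordism to be a concordance (a connected orientable surface with two circle boundary components and $\chi=0$ is an annulus) and Corollary~\ref{cor:cyl-iso} finishes.  You also correctly flag that part (1) requires the generating family polynomial to be independent of the choice of $f$; the paper handles this by checking that each odd twist-knot representative has a \emph{unique} linearized contact homology, hence a unique generating family polynomial $t^{-2z^--1}+t+t^{2z^-+1}$.

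There is, however, a genuine gap in your treatment of part (2).  The step ``$\chi(\slag,\leg_+)=0$ implies $\slag$ is a concordance'' is valid for surface cobordisms between knots but fails for $3$-dimensional cobordisms between Legendrian $2$-spheres: \emph{every} compact orientable $3$-manifold $\slag$ with $\partial\slag = S^2\sqcup S^2$ satisfies $\chi(\slag)=\tfrac12\chi(\partial\slag)=2$, hence $\chi(\slag,\leg_+)=0$ automatically, and there are many such $\slag$ not diffeomorphic to $[0,1]\times S^2$.  So Corollary~\ref{cor:chantraine} carries no information here and you cannot feed into Corollary~\ref{cor:cyl-iso}.

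The paper bypasses this by appealing directly to the long exact sequence of Theorem~\ref{thm:cobord-les}.  The Reeb-chord computation in \cite{ees:high-d-geometry} (together with Proposition~\ref{prop:index}) gives $\rgh{-1}{f_1}\neq 0$ for any generating family $f_1$ of $\leg_1$, while $\rgh{k}{f_0}$ is concentrated in degree $2$.  Plugging $k=-1$ into the sequence (\ref{eqn:cobord-les}) in either order and using $H^{-1}(\slag,\leg_+)=0$ and $H^0(\slag,\leg_+)=0$ (for connected $\slag$) forces $\rgh{-1}{f_1}=0$, a contradiction --- with no need to know the diffeomorphism type of $\slag$.  Your filling obstruction for $\leg_1$ via Theorem~\ref{thm:filling-iso} and the $t^{-1}$ term is fine as stated.
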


 Similar arguments produce examples  of topologically equivalent Legendrian negative, even twist knots that are not gf-compatibly Lagrangian cobordant and examples of higher dimensional Legendrians with the same classical invariants that are not
gf-compatibly Lagrangian cobordant; see Section~\ref{sec:ex}.

\begin{figure}
  \centerline{\includegraphics[width=3in]{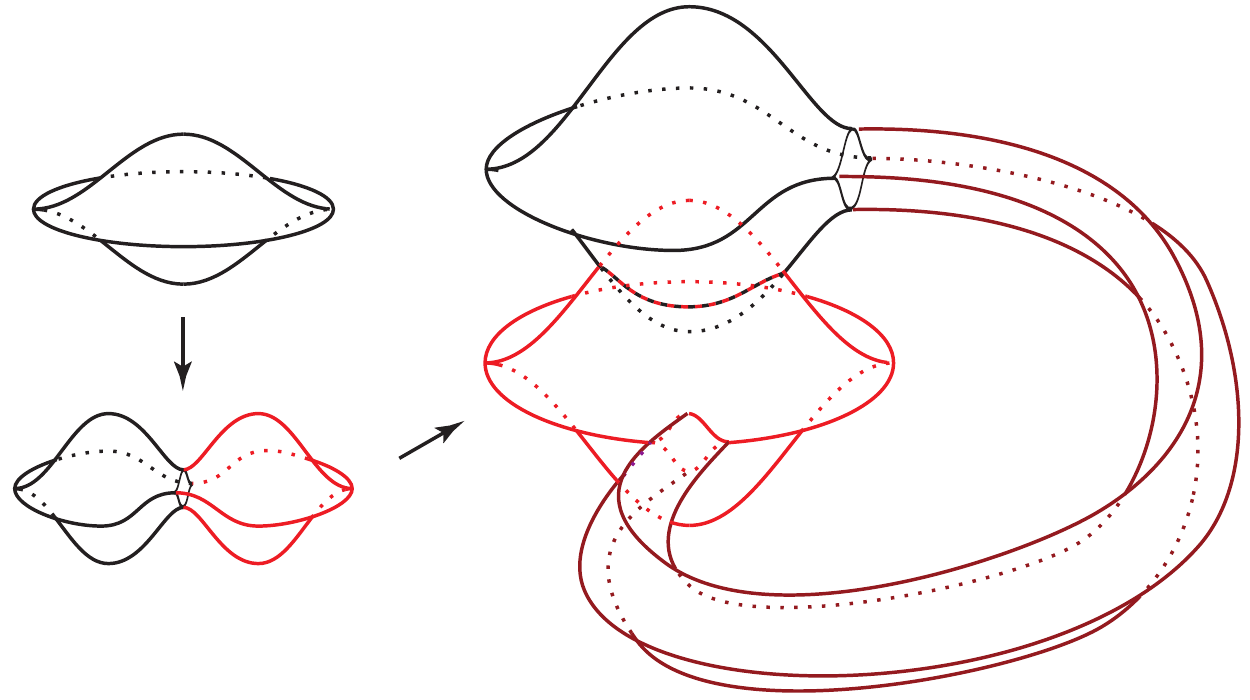}}
 \caption{ $\leg_0$ is the Legendrian with the ``flying saucer" front projection shown in the upper left portion of this diagram;
 $\Lambda_1$ is constructed by squeezing the front of $\Lambda_0$ into a dumbbell shape  and then doing a helical rotation
 of the connecting tube so that  that dumbbell ends are overlapping.}
 \label{fig:high-d-ex}
\end{figure}

\subsection{Interactions with Duality}
\label{ssec:duality-intro}

Given the isomorphisms in Theorem~\ref{thm:filling-iso} and the fact that the Lagrangian fillings will satisfy Poincar\'e duality, one sees that, for Legendrians that are null-cobordant, there is a duality in the generating family cohomology groups.  It is natural to ask whether there is generating family duality for more general Legendrians and whether this duality for Legendrians which are null-cobordant can be geometrically interpreted as Poincar\'e duality.

There is a well-developed theory of duality for the holomorphic curved based linearized contact homology of a horizontally-displaceable Legendrian submanifold in dimensions three \cite{duality} and higher \cite{high-d-duality}; here, horizontally displaceable means that the Lagrangian projection of the Legendrian submanifold is Hamiltonian displaceable.  In dimension three, the duality says that there is an isomorphism between the linearized contact homology groups in degrees $\pm k$ when $k \neq 1$; when $k = 1$, the isomorphism is offset by the presence of a fundamental class of index $1$.  In higher dimensions, the duality statement takes the form of a long exact sequence showing that up to a fixed ``error term'', which depends only on the topology of the $n$-dimensional Legendrian submanifold, there is an isomorphism between $k$-dimensional homology classes and $(-k + (n-1))$-dimensional cohomology classes.  Fuchs and Rutherford \cite{f-r} showed that in the isomorphism between generating family and linearized contact homology groups, the three-dimensional duality for the linearized contact homology corresponds to a version of Alexander duality for the generating family homology.  The following theorem refines their statement of duality and generalizes it to higher dimensions in parallel to the results of \cite{high-d-duality}.

\begin{thm} \label{thm:duality} If $\leg$ is an Legendrian submanifold of $J^1M$ with linear-at-infinity generating family $f$, then there is a long exact sequence:
  \begin{equation*}
    \xymatrix@R=5pt@C=12pt{
     \cdots \ar[r] & GH^{k-1}(f) \ar[r]^\phi & GH_{n-k}(f)
      \ar[r] & H^{k}(\Lambda) \ar[r]
      & \cdots   }
  \end{equation*}
\end{thm}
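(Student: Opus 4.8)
The plan is to transport the duality exact sequence of \cite{high-d-duality} for linearized Legendrian contact homology into the Morse theory of the difference function of the generating family, the geometric mechanism being a central antisymmetry of that function. Recall from Section~\ref{sec:gh-legendrian} that $GH^\bullet(f)$ and $GH_\bullet(f)$ are, up to a shift by the number $N$ of fiber variables of $f$, the cohomology and homology of the pair $(E^\omega,E^\epsilon)$ of sublevel sets of the difference function $w(x,\eta,\tilde\eta)=f(x,\eta)-f(x,\tilde\eta)$, where $0<\epsilon<\omega$ are chosen below and above the lengths of the Reeb chords of $\leg$ (equivalently, below and above the positive critical values of $w$). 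Since $f$ is linear at infinity, so is $w$, with nonzero linear part $A(\eta)-A(\tilde\eta)$; this makes the relative groups well defined and lets us replace $M\times\rr^{2N}$ by a fiberwise compactification $\overline X$, a disk bundle over $M$ of dimension $m=n+2N$ on which Poincar\'e--Lefschetz duality is available. Orientability of $\overline X$ follows from the hypothesis on $\leg$; otherwise one works over $\zz_2$.

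\emph{Three inputs.} (1) The fiber-swap involution $\iota(x,\eta,\tilde\eta)=(x,\tilde\eta,\eta)$ satisfies $w\circ\iota=-w$, so it carries each sublevel set $E^a$ homeomorphically to the superlevel set $\{w\ge -a\}$. Combining $\iota^{\,*}$ with Poincar\'e--Lefschetz duality on $\overline X$ identifies $GH_{n-k}(f)$, in the appropriate degree, with the cohomology of the ``negative'' sublevel pair $(E^{-\epsilon},E^{-\omega})$; carried out on the chain level, this comparison supplies the duality map $\phi$, which is the Morse-theoretic Poincar\'e pairing of $w$ precomposed with $\iota$. (2) The critical points of $w$ fall into exactly three groups by critical value: the strictly positive ones are in bijection with the Reeb chords of $\leg$, their $\iota$-images are precisely the strictly negative ones, and $\{w=0\}$ is the diagonal $\Delta=\{\eta=\tilde\eta\}$, a Morse--Bott critical submanifold canonically diffeomorphic to $\leg$. (3) Because $\iota$ fixes $\Delta$ pointwise and conjugates the normal Hessian of $\Delta$ to its negative, the negative-normal bundle of $\Delta$ has rank exactly $N$, so a Thom isomorphism gives $H^\bullet(E^\epsilon,E^{-\epsilon})\cong H^{\bullet-N}(\leg)$. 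Input (3) is the refinement of the Fuchs--Rutherford duality \cite{f-r} that pins the error term down to $H^\bullet(\leg)$.

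\emph{Assembly.} Filter the ``total'' pair $(E^\omega,E^{-\omega})$ by $E^{-\omega}\subset E^{-\epsilon}\subset E^\epsilon\subset E^\omega$, whose associated graded pieces are the negative chords, the diagonal $\leg$, and the positive chords. The isomorphism in (1) interchanges the positive and negative pieces, so by splicing the long exact sequences of the triples $(E^\omega,E^\epsilon,E^{-\epsilon})$ and $(E^\epsilon,E^{-\epsilon},E^{-\omega})$ along the identified terms --- exactly as in \cite{high-d-duality} and the closed-string argument of Bourgeois--Oancea \cite{bo:exact-seq} --- one obtains a single long exact sequence whose three recurring terms, after unwinding the shifts, are $GH^{k-1}(f)$, $GH_{n-k}(f)$, and $H^k(\leg)$, with the odd maps equal to $\phi$; equivalently, one shows directly that $\mathrm{cone}(\phi)$ computes $H^\bullet(\leg)$ up to the shift, the ``defect'' of $\phi$ being exactly the diagonal graded piece of (2)--(3). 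As a check, when $\leg_-=\emptyset$ this sequence reduces, via Theorem~\ref{thm:filling-iso} and its homological analogue together with Poincar\'e--Lefschetz duality of the filling $\slag$, to the long exact sequence of the pair $(\slag,\leg)$.

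\emph{Main obstacle.} The hard points are analytic and bookkeeping rather than conceptual: making the compactification of $w$ at fiber infinity precise enough that Poincar\'e--Lefschetz duality and the excisions above are legitimate (the same technical issue that underlies the well-definedness of $GH^\bullet$ itself, and it must be handled compatibly); controlling the Morse--Bott degeneracy along $\Delta$, either by an $\iota$-respecting perturbation or by a Morse--Bott spectral sequence, in order to extract the clean $H^{\bullet-N}(\leg)$; and tracking orientations through $\iota$ and the duality pairings --- this is what forces the orientability hypothesis and the $\zz_2$ alternative --- while keeping all the degree shifts consistent so that the sequence lands on precisely $k-1$, $n-k$, and $k$.
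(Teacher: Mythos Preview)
Your ingredients are exactly the paper's: the fiber-swap $\iota$ combined with Poincar\'e--Alexander--Lefschetz duality on the compactified domain (this is Lemma~\ref{lem:+-iso}), and the Thom isomorphism for the index-$N$ diagonal critical submanifold giving $H^{*+N}(\delta^\epsilon,\delta^{-\epsilon})\simeq H^*(\leg)$ (this is the content of Proposition~\ref{prop:full-rel-dual}).

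Where your assembly drifts from the paper is the ``splicing'' step. The two triples you write down are not independent: under $\iota$ together with PAL duality, the long exact sequence of $(E^\omega,E^\epsilon,E^{-\epsilon})$ \emph{is} the long exact sequence of $(E^\epsilon,E^{-\epsilon},E^{-\omega})$, so there is nothing new to splice. The paper instead takes the single sequence of Proposition~\ref{prop:full-rel-dual} and substitutes $\widetilde{GH}^{k}(f)\simeq GH_{n-1-k}(f)$ into one term. That substitution requires one fact you omit: the pair $(\delta^\omega,\delta^{-\omega})$ is acyclic (Lemma~\ref{lem:total-vanish}), which via the triple $(\delta^\omega,\delta^\epsilon,\delta^{-\omega})$ gives $H_{*+N}(\delta^\epsilon,\delta^{-\omega})\simeq GH_*(f)$ (Corollary~\ref{cor:alt-gf-hom}). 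Without this step, your item~(1) lands $GH_{n-k}(f)$ on the cohomology of $(E^{-\epsilon},E^{-\omega})$, but you have no way to place $GH^{k-1}(f)=H^{k+N}(E^\omega,E^\epsilon)$ into either of your triples in the correct slot --- the middle term of your second triple is $H^{k+N-1}(E^\epsilon,E^{-\omega})$, and identifying that with $GH^{k-1}(f)$ is precisely the acyclicity statement. The repair is short: linearity-at-infinity lets you deform $\delta$ to its linear part, which has no critical points, so $(\delta^\omega,\delta^{-\omega})$ is acyclic; once you have that, drop the splicing and just substitute into one triple.
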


\begin{rem}
  Having a linear-at-infinity generating family and being horizontally displaceable are not equivalent conditions on a Legendrian submanifold.  For example, it is straightforward to construct a linear-at-infinity generating family for the Legendrian knot $K$ in $J^1S^1$ pictured in Figure~\ref{fig:non-disp}, 
  however, this knot is not horizontally-displaceable. To see why, suppose for the sake of contradiction that $K$ were, indeed, horizontally-displaceable.  Then there exists a Hamiltonian displaceable neighborhood $U$ of the Lagrangian projection of $K$ in $T^*S^1$.  It it easy to draw a section of $T^*S^1$ inside $U$, so our assumption would imply that a section of $T^*S^1$ is Hamiltonian displaceable, which is a impossible. Thus, Theorem~\ref{thm:duality} above and Corollary~\ref{cor:arnold} below capture different Legendrian submanifolds than does the theory of \cite{high-d-duality}.  
\end{rem}

\begin{figure}
  \centerline{\includegraphics[height=.8in]{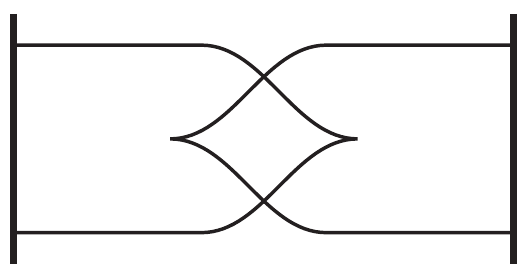}}
  \caption{The Legendrian submanifold of $J^1S^1$ represented by this front diagram has a linear-at-infinity generating family, but is not horizontally displaceable.}
  \label{fig:non-disp}
\end{figure}

The Arnold conjecture for Legendrian submanifolds, which states that the number of Reeb chords of a generic Legendrian submanifold $\leg \subset J^1\rr^n$ with respect to the standard contact form is bounded from below by half the sum of the Betti numbers of $\leg$, was proven for horizontally displaceable Legendrian submanifolds with linearizable contact homology in \cite{ees:ori} and refined in \cite{high-d-duality}. Analogously, Theorem~\ref{thm:duality} easily leads to a refined version of the Arnold Conjecture for Legendrian submanifolds with linear-at-infinity generating families:

\begin{cor} \label{cor:arnold} Let $r_i(\leg)$ denote the number of Reeb chords of $\leg$ of Conley-Zehnder index $i$.  If $\leg$ is a
generic, $n$-dimensional Legendrian submanifold of $J^1M$ with linear-at-infinity generating family  $f$, then:  
  $$r_i(\leg) + r_{n-i}(\leg) \geq b_i(\leg; \ff)$$
  for $0 \leq i \leq n$, where $b_i(\leg; \ff)$ is the $i^{th}$ Betti number of $K$ over a field $\ff$. 
\end{cor}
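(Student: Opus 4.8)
The plan is to obtain the corollary as a formal consequence of the long exact sequence in Theorem~\ref{thm:duality}, together with one additional ingredient: a Morse-theoretic count bounding the dimensions of the generating family (co)homology groups by numbers of Reeb chords. Throughout I work with coefficients in the field $\ff$ of the statement, and all dimensions below are $\ff$-dimensions.

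The first step is to recall from Section~\ref{sec:gh-legendrian} that, for a generic linear-at-infinity generating family $f$ for $\leg$, both $GH^*(f)$ and $GH_*(f)$ are computed by a (co)chain complex whose generators in degree $k$ are in bijection with the Reeb chords of $\leg$ of Conley--Zehnder index $k$. These generators are exactly the critical points of the (sheared) difference function associated to $f$ in the band of positive critical values: genericity of $\leg$ makes that function Morse there, the linear-at-infinity condition makes the relevant sublevel sets and their relative (co)homology behave like those of a compact pair, and the Morse index of such a critical point agrees with the Conley--Zehnder index of the corresponding chord after the global degree shift built into $GH$. The Morse inequalities for a pair of sublevel sets then give
$$\dim_\ff GH^{k}(f) \le r_k(\leg) \qquad\text{and}\qquad \dim_\ff GH_{k}(f) \le r_k(\leg)$$
for every $k$.

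The second step is the exact-sequence manipulation. Continuing the sequence of Theorem~\ref{thm:duality} one step past $H^{k}(\leg)$, it reads
$$\cdots \longrightarrow GH_{n-k}(f) \longrightarrow H^{k}(\leg) \longrightarrow GH^{k}(f) \longrightarrow \cdots,$$
so exactness at the middle term presents $H^{k}(\leg)$ as an extension of a subspace of $GH^{k}(f)$ by a quotient of $GH_{n-k}(f)$. Over $\ff$ this yields $b_k(\leg;\ff) = \dim_\ff H^k(\leg;\ff) \le \dim_\ff GH_{n-k}(f) + \dim_\ff GH^{k}(f)$, and substituting the two Morse bounds of the first step with $k = i$ gives $b_i(\leg;\ff) \le r_{n-i}(\leg) + r_i(\leg)$, which is the claim; the stated range $0 \le i \le n$ is merely the range in which the left-hand side can fail to vanish.

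All of the geometric content is carried by Theorem~\ref{thm:duality}, so the only point in this argument that requires genuine care is the Morse count of the first step: one must verify that the difference function is Morse on the positive-value band for generic $\leg$, that the linear-at-infinity hypothesis really does tame the noncompactness so that ordinary Morse theory applies to the sublevel pairs, and that the bijection there respects the grading conventions fixed in Section~\ref{sec:gh-legendrian}. Each of these is in place by the time Theorem~\ref{thm:duality} is proved, so no new obstacle arises; the corollary is a bookkeeping consequence of the duality long exact sequence, parallel to how the Arnold-type bounds of \cite{ees:ori, high-d-duality} follow from the holomorphic-curve duality.
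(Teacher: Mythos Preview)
Your proof is correct and follows essentially the same approach as the paper: use the duality long exact sequence to bound $b_k$ by $\dim GH_{n-k}(f) + \dim GH^k(f)$ via exactness at $H^k(\leg)$, then bound these dimensions by $r_{n-k}$ and $r_k$ using the Morse-theoretic identification of generators with Reeb chords (Propositions~\ref{prop:leg-crit-point} and~\ref{prop:index}). The paper presents the same computation slightly more explicitly via $b_k = \dim\ker\sigma_k + \dim\image\sigma_k = \dim\image\rho_k + \dim\image\sigma_k$, but the content is identical.
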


Results from Theorems~\ref{thm:filling-iso} and \ref{thm:duality} can be combined to obtain a relationship between the ``Alexander duality'' map $\phi$ in Theorem~\ref{thm:duality} and the Poincar\'e duality map for a Lagrangian filling $L$.

\begin{thm} \label{thm:duality-filling}  Suppose $(\emptyset, f_-) \prec_{(\sclag, F)} (\leg_+, f_+)$, and $\sclag$ is orientable. 
  Then the following diagram commutes, where the bottom sequence comes from Theorem~\ref{thm:duality}, the top sequence comes from the long exact sequence of the pair $(\slag, \leg_+)$ and Poincar\'e-Lefschetz duality, and the vertical maps arise from Theorem~\ref{thm:filling-iso}.
 \begin{equation*}
    \xymatrix@R=3pt@C=15pt{
      & & H^k(L,\leg_+) \ar[r]^-e \ar[dd]^{\simeq} & H_{n+1-k}(L,\leg_+) 
      \ar[dr] \ar[dd]^\simeq & & \\
      \cdots \ar[r] & H^{k-1}(\leg_+) \ar[ur] \ar[dr] & & & H^{k}(\leg_+) \ar[r]
      & \cdots \\
      & & GH^{k-1}(f) \ar[r]^{\phi} & GH_{n-k}(f) \ar[ur] & &
    }
  \end{equation*}
\end{thm}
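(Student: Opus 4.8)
The plan is to unfold the array into two interlocking long exact sequences and then to check that the resulting ladder commutes, one square at a time. The ``top'' row is obtained by splicing the cohomology long exact sequence of the pair $(\slag,\leg_+)$ with the Poincar\'e--Lefschetz isomorphism $H^k(\slag)\cong H_{n+1-k}(\slag,\leg_+)$; thus $e$ is the natural map $H^k(\slag,\leg_+)\to H^k(\slag)$ followed by this isomorphism, the left-hand unlabelled map is the connecting homomorphism $\delta\colon H^{k-1}(\leg_+)\to H^k(\slag,\leg_+)$ of the pair, and the right-hand unlabelled map is the inverse Poincar\'e--Lefschetz isomorphism $H_{n+1-k}(\slag,\leg_+)\cong H^k(\slag)$ followed by restriction to $H^k(\leg_+)$. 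The ``bottom'' row is the duality sequence of Theorem~\ref{thm:duality} with $\leg=\leg_+$. The two outer verticals are the identity on $H^\ast(\leg_+)$; the two inner verticals are the isomorphism of Theorem~\ref{thm:filling-iso} (which, when $\leg_-=\emptyset$, is precisely the $\leg_-=\emptyset$ case of the long exact sequence of Theorem~\ref{thm:cobord-les}) and its homological counterpart, the latter following over a field by universal coefficients. So it suffices to verify commutativity of the two end triangles of the folded diagram together with the central square.

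I would first dispose of the purely topological content. The cohomology long exact sequence of $(\slag,\leg_+)$, the corresponding homology sequence, and cap product with the relative fundamental class $[\slag,\partial\slag]$ fit together into the standard commuting ladder relating Poincar\'e--Lefschetz duality of the compact oriented manifold $\slag$ to Poincar\'e duality of its closed boundary $\leg_+$; this I would simply invoke, noting that it is exactly where the degree and sign bookkeeping of the top row is absorbed. Everything that remains is a comparison of the bottom row, and of the Theorem~\ref{thm:filling-iso} isomorphisms, against this picture.

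Next I would identify, at the cochain level, the two ``end'' maps of the duality sequence of Theorem~\ref{thm:duality}. There the factor $H^\ast(\leg_+)$ enters through the diagonal critical submanifold $\{(x,x)\}\cong\leg_+$ of the difference function of $f_+$: the map $H^{k-1}(\leg_+)\to GH^{k-1}(f_+)$ comes from ``forgetting'' the diagonal and the map $GH_{n-k}(f_+)\to H^k(\leg_+)$ from the dual inclusion of the diagonal. I claim that precomposing the Theorem~\ref{thm:filling-iso} isomorphism $H^k(\slag,\leg_+)\xrightarrow{\ \simeq\ }GH^{k-1}(f_+)$ with $\delta$ reproduces the first of these diagonal maps, and symmetrically for the second via the homological form of Theorem~\ref{thm:filling-iso}. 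This I would establish by tracking both maps through the mapping-cone model used to prove Theorems~\ref{thm:cobord-les} and \ref{thm:filling-iso}: the wrapped generating family cochain complex of $\slag$ is there displayed as a cone in which the Reeb chords of $\leg_+$ contribute a subcomplex computing $GH^\ast(f_+)$ and the diagonal contributes a quotient complex computing $H^\ast(\slag,\leg_+)$, with the connecting map of the cone equal to both $\delta$ and, under the identifications, the diagonal map. This gives commutativity of the two end triangles.

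The central square --- that $e$ is carried by the Theorem~\ref{thm:filling-iso} isomorphisms to the Alexander-type duality map $\phi$ of Theorem~\ref{thm:duality} --- is, I expect, the main obstacle, since it is not formally forced by commutativity of the two end triangles and exactness alone (the discrepancy map would still be free to factor through $\image b\to\image b'$ between the two middle arrows). The plan is to realize $\phi$ at the chain level via the involution $\iota(x,y)=(y,x)$ of the domain of the difference function, which negates that function and hence swaps its sub- and super-level sets, and then to show that, transported through the mapping-cone isomorphisms of Theorem~\ref{thm:filling-iso}, this involution becomes cap product with $[\slag,\partial\slag]$ on $(\slag,\leg_+)$. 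The delicate point will be the interaction of $\iota$ with the ``shearing'' of the difference function that is built into the wrapped theory --- the shearing is precisely what destroys the symmetry of a general cobordism, in line with the asymmetry already visible in Corollary~\ref{cor:chantraine} --- so one must check that for a filling the symmetry surviving the shearing is exactly Poincar\'e--Lefschetz duality of $(\slag,\leg_+)$. Should a direct chain-level identification prove too cumbersome, a fallback is to combine the two end triangles with exactness of both rows to see that $\phi$ and $e$ agree on $\image\delta$ and have the same image modulo the next map, and then to eliminate the residual ambiguity by checking agreement on the single class that separates $\phi$ from an isomorphism, namely the image of the fundamental class detected by the $H^\ast(\leg_+)$ term.
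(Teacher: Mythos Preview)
Your overall architecture is reasonable, but the central square is where the argument fails, and neither of your two routes to it goes through. For the direct route, note that the filling isomorphisms of Theorem~\ref{thm:filling-iso} are built from sublevel sets of the \emph{sheared} difference function $\Delta(t,x,\eta,\tilde\eta) = F(t,x,\tilde\eta) + H(t) - F(t,x,\eta)$, and $\Delta\circ\iota = -\Delta + 2H(t)$, not $-\Delta$. So the involution does not exchange sublevel and superlevel sets of $\Delta$, and there is no clean way to transport $\iota$ through those isomorphisms. The duality map $\phi$ of Theorem~\ref{thm:duality} is defined via the involution on the \emph{unsheared} $\delta_+$, and the whole difficulty is precisely in relating that to the $\Delta$-side objects. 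Your fallback is also inadequate: commutativity of the end triangles plus exactness only pins down the middle map up to a correction factoring through the kernel/image, and there is no ``single class'' on which a check settles the matter in all degrees.

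The paper sidesteps this by inserting an \emph{intermediate} long exact sequence between the two rows you wrote down, built from the triple $\bigl(\Delta^\sigma_{[v_-,u_+]},\ \Delta^\sigma_{[v_-,t_+]}\cup\Delta^{-\mu}_{[t_+,u_+]},\ \Delta^{-\mu}_{[v_-,u_+]}\bigr)$ (Lemma~\ref{lem:les-pair}). The vertical maps from the $(L,\Lambda_+)$ row to this middle row are Thom isomorphisms, whose naturality is automatic; the vertical maps from the middle row to the $\delta_+$-row are the maps $\phi_F^*$ and $\widetilde\phi_F^*$ of Theorem~\ref{thm:filling-les}, and the one square whose commutativity is not formal is handled by Lemma~\ref{lem:cone-maps-commute}, which compares $\phi_F$ and $\widetilde\phi_F$ directly as gradient-flow retractions. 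Only \emph{after} this ladder is established does one invoke the involution on $\delta_+$ (via Lemma~\ref{lem:+-iso}) to rewrite $H^*(\delta_+^\infin,\delta_+^{-\epsilon})$ as $GH_{n-k}(f)$, and Poincar\'e--Lefschetz duality to rewrite $H^{k+1}(L)$ as $H_{n-k}(L,\Lambda_+)$; these identifications happen separately on the two outer rows and never need to be compared to one another. In short, the paper decouples the two dualities by routing everything through $\Delta$-sublevel-set pairs, where the required commutativity (Lemma~\ref{lem:cone-maps-commute}) is a concrete statement about gradient flows rather than about cap products.
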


Ekholm discusses a parallel statement for Legendrian contact homology in \cite{ekholm:lagr-cob}.

\subsection{Open Questions}  The results above open a number of questions for future research.
  \begin{enumerate}
\item   Every Lagrangian cobordism of $\rr \times J^1M$ with a generating family will
be an exact Lagrangian with Maslov index $0$. When does an exact, Maslov  $0$, Lagrangian cobordism of $\rr \times J^1M$ 
have a generating family?  
 \item More specifically, through augmentations and rulings, we have algebraic and combinatorial ways to detect the existence of a
tame generating family for a Legendrian knot in $\rr^3$.  Are there algebraic and/or
combinatorial ways to detect the existence of a tame generating family for a higher dimensional Legendrian or for a 
Lagrangian cobordism in $\rr \times J^1M$?
 \item It can be easily shown that a Legendrian with a tame generating family always
has an immersed Lagrangian filling with a tame, compatible generating family.  What are obstructions to
removing  double points of a given index?
\item It is known that a Lagrangian filling will minimize the smooth $4$-ball genus.  Does an exact, Maslov 0,  Lagrangian cobordism
between two Legendrian knots minimize the smooth $4$-genus of such a cobordism?  
\end{enumerate}

\subsection{Plan of the Paper}

In the next section, we will briefly review some background on the theory of generating families.  In Sections~\ref{sec:gh-legendrian} and \ref{sec:cobord-gf}, we give precise definitions and prove basic properties for various flavors of generating family (co)homology groups for Legendrian submanifolds of $1$-jet bundles and Lagrangian cobordisms in the symplectization.  Section~\ref{sec:map-cones} lays out the technical tools necessary to prove our main theorems.  These theorems are then proved in the next four sections: we examine the special case of Lagrangian fillings in Section~\ref{sec:filling-iso}; discuss duality in Section~\ref{sec:duality}; prove the main theorems about the cobordism map and its associated long exact sequence in Section~\ref{sec:cobord-les}; and finally study the TQFT-like properties of the cobordism map in Section~\ref{sec:tqft}.  In Section ~\ref{sec:ex}, we end by giving additional examples of 
Legendrian submanifolds that are not Lagrangian cobordant, and pose some open questions.

\subsection*{Acknowledgements}

The work in this paper was launched in response to a conjecture of Paul Seidel that predicted a holomorphic analogue of Theorem~\ref{thm:filling-iso} based on preliminary results of Ekholm, Honda, and K\'alm\'an; we are also indebted to Ekholm, Honda, and K\'alm\'an's pioneering work.  We thank Mohammed Abouzaid, Fr\'ed\'eric Bourgeois, 
 Baptiste Chantraine, 
Roman Golovko, Paul Melvin, and Dan Rutherford  for several stimulating discussions.  Finally, we thank the participants in the Thematic Week on Generating Families during the Special Trimester on Contact and Symplectic Topology at the Universit\'e de Nantes for their feedback on the material contained in this paper.

\section{Generating Family Background}
\label{sec:gf}

In this section, we discuss the background necessary for working with generating families for Lagrangian and Legendrian submanifolds.  The germ of the idea comes from the following simple observation: given a function $f: B \to \rr$,
the graph of $df$ in $T^*B$ is a Lagrangian submanifold and the $1$-jet of $f$ is a Legendrian submanifold of $J^1B$.  Generating families extend this construction to ``non-graphical'' Lagrangians and Legendrians by expanding the domain to, for example, the trivial vector bundle $B \times \rr^N$ for some potentially large $N$.  We will denote the fiber coordinates by $\e = (\e_1, \ldots, \e_N)$. What follows are bare-bones definitions so as to set notation; see \cite{theret:viterbo, lisa:links, viterbo:generating} for more details.

Suppose that we have a smooth function $f: B^b \times \rr^N \to \rr$ such that $\mathbf{0}$ is a regular value of the map $\partial_{\e} f: B \times \rr^N \to \rr^N$.  We define the \dfn{fiber critical set} of $f$ to be the $b$-dimensional submanifold $\Sigma_f = (\partial_\eta f)^{-1}(\mathbf{0})$.  Define immersions $\partial_f: \Sigma_f \to T^*B$ and $j_f: \Sigma_f \to J^1B$ in local coordinates by: \begin{align*}
  \partial_f(x,\e) &= (x,\partial_xf(x,\e)), \\
  j_f(x,\e) &= (x, \partial_x f(x,\e), f(x,\e)).
\end{align*}
The image $L$ of $\partial_f$ is an immersed Lagrangian submanifold, while the
image $\leg$ of $j_f$ is an immersed Legendrian submanifold.  We say
that $f$ \dfn{generates} $L$ and $\leg$, or that $f$ is a
\dfn{generating family (of functions)} for $L$ and $\leg$.

Two functions $f_i: B \times \rr^{N_i} \to
\rr$, $i= 0, 1$, are \dfn{equivalent} (denoted $f_0 \sim f_1$) if they can be made equal after the operations fiber-preserving diffeomorphism and
stabilization, which are defined as follows:
\begin{enumerate}
\item Given a function $f: B \times \rr^N \to \rr$, let $Q:
  \rr^K \to \rr$ be a non-degenerate quadratic function.  Define $f
  \oplus Q: B \times \rr^N \times \rr^K \to \rr$ by $f \oplus Q(x,
  \e, \e') = f(x, \e) + Q(\e')$.  Then $f \oplus Q$ is a (dimension $K$)
  \dfn{stabilization} of $f$.
\item Given a function $f: B \times \rr^N \to \rr$, suppose
  $\Phi: B \times \rr^N \to B \times \rr^N$ is a fiber-preserving
  diffeomorphism, i.e., $\Phi(x,\e) = (x, \phi_x(\e))$ for a smooth family of 
  diffeomorphisms $\phi_x$. Then $f \circ \Phi$ is said to be
  obtained from $f$ by a \dfn{fiber-preserving diffeomorphism}.
\end{enumerate}
Given a function $f$, denote by $[f]$ its equivalence class with
respect to these two operations.

It is easy to see that if $f: B \times \rr^N \to \rr$ is a generating family for a Lagrangian (Legendrian) $L$ ($\leg$), then any $f' \in [f]$ will also be a generating family for $L$ ($\leg$).  While a Lagrangian or Legendrian submanifold with a generating family will always have an infinite number of generating families, the set of equivalence classes may be more tractable.

\begin{rem}  When dealing with generating families for Lagrangians, it is also common to include the addition of a constant
in the notion of equivalence.  Our definition of equivalence comes from the fact that   the Lagrangians we consider will be ``cylindrical'' over Legendrians. 
\end{rem}

In the next two sections, we will define invariants of Legendrian and Lagrangian submanifolds by applying Morse-theoretic constructions to ``difference functions'' associated to generating families.  The following three Morse-theoretic lemmas will be essential in defining and working with the invariants.  In order to apply these three lemmas, we will often work in the setting where our generating families are \dfn{tame}, meaning ``linear-at-infinity'' as defined in Definition~\ref{defn:li} or ``slicewise-linear-at-infinity'' as defined in Definition~\ref{defn:sli}. Equivalence and tameness of generating families will imply equivalence and tameness of the associated difference functions.

The first Morse-theoretic lemma tells us that the relative cohomology and homology of sublevel sets of a function remain unchanged under equivalence, perhaps up to a shift in degree.

\begin{lem} \label{lem:equiv-he} If $g_0 \sim g_1$, then there exists $q \in \zz$ so that for all $a < b$, the relative (co)homologies of the pairs of sublevel sets $(g_0^b, g_0^a)$ and $(g_1^b, g_1^a)$ are isomorphic up to a shift in degree by $q$.
\end{lem}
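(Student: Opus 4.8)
The plan is to reduce the statement to the two generating operations that define equivalence — stabilization and fiber-preserving diffeomorphism — and check each one separately, since a general equivalence $g_0 \sim g_1$ is a finite composition of these moves and the integers $q$ add up. First I would handle fiber-preserving diffeomorphisms: if $\Phi\colon B \times \rr^N \to B \times \rr^N$ is such a diffeomorphism and $g_1 = g_0 \circ \Phi$, then $\Phi$ restricts to a diffeomorphism of sublevel sets $g_1^a \xrightarrow{\ \sim\ } g_0^a$ for every $a$, carrying the pair $(g_1^b, g_1^a)$ homeomorphically onto $(g_0^b, g_0^a)$; hence the relative (co)homologies agree on the nose, with shift $q=0$. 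This case is essentially formal.

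The substantive case is stabilization: $g_1 = g_0 \oplus Q$ on $B \times \rr^N \times \rr^K$, where $Q\colon \rr^K \to \rr$ is a nondegenerate quadratic form of some index $\lambda$. Here I would argue that $(g_1^b, g_1^a)$ deformation retracts onto the pair obtained by intersecting with the negative eigenspace $E^-\cong\rr^\lambda$ of $Q$: write $\rr^K = E^+ \oplus E^-$, use the linear deformation that contracts the $E^+$-directions to the origin (which does not increase the value of $g_0 \oplus Q$, hence preserves each sublevel set), and then on $E^-$ apply a standard Morse-theoretic handle/Thom-isomorphism argument — the sublevel set of $-|u|^2$ on $\rr^\lambda$ restricted near a point, together with the value of $g_0$, behaves like the total space of a rank-$\lambda$ vector bundle over $\Sigma$, so the pair $(g_1^b,g_1^a)$ is (up to homotopy) the Thom space of a trivial $\lambda$-plane bundle over $(g_0^b, g_0^a)$. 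The Thom isomorphism then gives the isomorphism of relative (co)homology with a degree shift by $q = \lambda = \operatorname{ind}(Q)$ (for homology the shift is $-\lambda$; the sign is bookkeeping). Concretely one can bypass the bundle language: since $g_0 \oplus Q$ is a proper-in-fibers function, Morse theory with moving boundary (the one-parameter family obtained by scaling the $E^+$-coordinate, as in Milnor's treatment of handle attachment) shows the inclusion $(g_0^b \times E^-,\, g_0^a\times E^-) \hookrightarrow (g_1^b, g_1^a)$ is a homotopy equivalence, and $E^-$ is contractible but the pair structure forces the shift through excision/suspension by $\lambda$ copies of $\rr$.

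The main obstacle I anticipate is making the deformation-retraction/excision argument for stabilization rigorous when $B$ is noncompact or the function is merely ``linear-at-infinity'' rather than literally quadratic off a compact set — one needs the retractions to be well-defined globally and to respect the $a < b$ cutoffs uniformly, which is exactly where tameness (Definition~\ref{defn:li}, \ref{defn:sli}) is used; the remark in the text that ``equivalence and tameness of generating families imply equivalence and tameness of the difference functions'' is the hook that lets us assume the relevant functions have the needed behavior at infinity. A clean way to organize this is to note that for a linear-at-infinity function the sublevel sets $g^a$ for $a$ in the relevant range are cofibered nicely and the quadratic-stabilization model is exact outside a compact set, so the Thom-isomorphism computation is valid verbatim; the shift $q$ depends only on the index of the quadratic forms involved in the chain of equivalences and is therefore a well-defined integer attached to the pair $(g_0,g_1)$, independent of $a$ and $b$.
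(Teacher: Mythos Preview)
Your proposal is correct and takes essentially the same approach that the paper indicates: the paper does not prove the lemma in-line but cites \cite[Lemma 4.7]{josh-lisa:cap} and records precisely the two facts you derive, namely that a fiber-preserving diffeomorphism gives shift $q=0$ and a stabilization by $Q$ gives shift $q=\operatorname{ind}(Q)$.

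One minor remark: your final paragraph worries about noncompactness and tameness, but this lemma does not need those hypotheses. The contraction of the $E^+$-factor is a linear homotopy in a Euclidean summand and is globally defined regardless of how $g_0$ behaves at infinity; the Thom-isomorphism step likewise only uses that $Q$ is a nondegenerate quadratic form on $\rr^K$. Tameness enters later in the paper (Lemmas~\ref{lem:retract} and \ref{lem:crit-non-crossing}) where one needs gradient-like flows to exist for all time, not here. So you can drop that caveat and the argument is complete as you outlined it.
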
 

A proof of this statement can be found in \cite[Lemma 4.7]{josh-lisa:cap}.  In fact, if the $g_i$ differ only by a fiber-preserving diffeomorphism, then there is no shift in degree. If $g_0$ differs from $g_1$ by a stabilization, however, the shift is precisely the index of the quadratic $Q$.

The second Morse-theoretic lemma is an extension of the key deformation lemma in Morse theory to some functions with non-closed domains.

\begin{lem} \label{lem:retract} 
 If there
is an integrable, gradient-like vector field for $g: B \times \rr^N \to \rr$ that is bounded away from $\mathbf 0$ on the set $g^{-1}[a, b]$, then the sublevel set $g^a$ is a deformation retract of the sublevel set $g^b$.  
\end{lem}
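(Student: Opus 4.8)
The plan is to adapt the classical deformation argument from Morse theory --- where one flows down along a (negative) gradient-like vector field and uses compactness of the domain to guarantee that trajectories travel a definite amount in finite time --- to the present non-compact setting, using the hypothesis that the gradient-like vector field $V$ is \emph{bounded away from $\mathbf 0$} on the band $g^{-1}[a,b]$ to replace compactness. First I would fix an integrable gradient-like vector field $V$ for $g$ with $\|V\| \geq c > 0$ on $g^{-1}[a,b]$, and choose a smooth cutoff function $\rho: B \times \rr^N \to [0,1]$ that equals $1$ on $g^{-1}[a,b]$ (or a slightly shrunk band) and vanishes outside a neighborhood of it, e.g.\ outside $g^{-1}[a-\varepsilon, b+\varepsilon]$. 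I would then rescale: set $W = -\rho \cdot V / (V \cdot g)$ on the set where $V \cdot g \neq 0$ (which includes all of $g^{-1}[a,b]$ since $V$ is gradient-like and bounded away from zero there) and $W = 0$ elsewhere. The point of this normalization is that along the flow $\varphi_t$ of $W$ we have $\frac{d}{dt} g(\varphi_t(p)) = W \cdot g = -\rho$, so $g$ decreases at unit speed while inside the band, giving precise control.

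The key steps, in order, are: (1) verify that $W$ is smooth --- the only concern is the locus $V \cdot g = 0$, but there $\rho = 0$ and $\rho$ vanishes to all orders fast enough, or more cleanly one chooses $\rho$ supported where $V\cdot g$ is bounded below, so $W$ is smooth and in fact $W$ itself is integrable because $V$ is integrable, $\rho$ and $1/(V\cdot g)$ are bounded on the support, and outside a neighborhood of the band $W \equiv 0$ (so trajectories are stationary there and global existence of the flow is immediate); (2) let $\varphi_t$ denote the (complete) flow of $W$; (3) observe that for $p$ with $g(p) \leq b$, the trajectory $t \mapsto g(\varphi_t(p))$ is non-increasing, and strictly decreasing at unit rate as long as it lies in the band, so after time $t = b - a$ every point of $g^b$ has been carried into $g^a$ --- more precisely, define $r: g^b \to g^a$ by $r(p) = \varphi_{\tau(p)}(p)$ where $\tau(p) = \min\{t \geq 0 : g(\varphi_t(p)) \leq a\}$, and check $\tau$ is continuous (it is continuous wherever $g(p) > a$ by the implicit function theorem / transversality since $\frac{d}{dt}g = -1 < 0$ at the stopping time, and $\tau \equiv 0$ on $g^a$ where things match up continuously); (4) build the deformation retraction $H: g^b \times [0,1] \to g^b$ by $H(p,s) = \varphi_{s\tau(p)}(p)$, noting $H(p,0) = p$, $H(p,1) = r(p) \in g^a$, $H(p,s) \in g^b$ for all $s$ (since $g$ is non-increasing along the flow), and $H(p,s) = p$ for all $s$ when $p \in g^a$ (since $\tau(p) = 0$ there).

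The main obstacle I anticipate is \textbf{global existence and well-definedness of the flow on the non-compact domain}: a priori a trajectory of a vector field on $B \times \rr^N$ could escape to infinity in finite time, so one cannot simply quote the standard compact Morse lemma. The resolution is exactly the rescaling by $1/(V \cdot g)$ together with the cutoff $\rho$: once the flow line exits the (slightly enlarged) band it encounters $W \equiv 0$ and stops, so no trajectory starting in $g^b$ can run off to infinity --- it is trapped in $g^{-1}(-\infty, b+\varepsilon]$ and moreover, by integrability of $V$, the flow exists for all time. A secondary technical point is the regularity of the stopping time $\tau$ and hence of $H$ near the boundary level set $g^{-1}(a)$; this is handled by noting that $g$ strictly decreases at the moment a trajectory meets $\{g = a\}$, so $a$ is attained transversally and $\tau$ depends continuously (indeed smoothly away from $\{g = a\} \cap g^b$) on the initial point, and the two pieces of the definition agree continuously on the overlap $\{g = a\}$.
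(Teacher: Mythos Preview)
Your proposal is correct and follows the same approach as the paper. The paper's own proof is only a one-sentence sketch --- normalize the given integrable gradient-like vector field so that $g$ decreases at unit speed, then flow --- and you have simply supplied the standard details (the cutoff $\rho$, the stopping time $\tau$, the homotopy $H(p,s)=\varphi_{s\tau(p)}(p)$) in the manner of Milnor's classical argument, exactly as the paper suggests by citing \cite{milnor:morse}.
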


The proof of this lemma is a straightforward extension of the usual key lemma where the domain of $g$ is closed; see, for example, \cite{milnor:morse}. 
The main idea is that since the vector field is integrable and bounded away from $\mathbf 0$, the flow of the
normalized vector field is defined and  will give a deformation of one sublevel set to another.  
The final Morse-theoretic lemma will prove useful in proving invariance and independence properties in subsequent sections.

\begin{lem}[Critical Non-Crossing; cf., \cite{lisa:links}] \label{lem:crit-non-crossing}  If a continuous $1$-parameter family of functions
  $g_s: B \times \rr^{N} \to \rr$, $s \in [0, 1]$, and continuous paths $\alpha, \beta: [0,1] \to \rr$, with $\alpha(s) \leq \beta(s)$, satisfy:
\begin{quote}
There exists $\epsilon > 0$ so that, for all $s$,  there exists an integrable, gradient-like vector field
$X_s$ for $g_s$ which is bounded away from $\mathbf 0$ on 
\begin{equation} \label{eqn:cnc}
g_s^{-1}\left( [\alpha(s) - \epsilon, \alpha(s) + \epsilon] \cup [\beta(s) - \epsilon, \beta(s) + \epsilon] \right),
\end{equation}
\end{quote}
then  $(g_0^{\beta(0)}, g_0^{\alpha(0)}) $ is homotopy equivalent to $(g_1^{\beta(1)}, g_1^{\alpha(1)})$. 
\end{lem}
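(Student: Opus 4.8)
The plan is to reduce the statement to a ``parametrized'' application of the deformation-retract Lemma~\ref{lem:retract}, combined with a standard compactness argument in the parameter $s$. First I would observe that it suffices to prove the following local claim: for each $s_0 \in [0,1]$ there is a neighborhood $I_{s_0} \subset [0,1]$ of $s_0$ such that the pairs $(g_s^{\beta(s)}, g_s^{\alpha(s)})$ are homotopy equivalent for all $s \in I_{s_0}$. Granting this, cover $[0,1]$ by finitely many such intervals, chain the homotopy equivalences together, and conclude. So the heart of the argument is the local claim, and here is where the hypothesis on $X_s$ and the uniform $\epsilon$ get used.

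To prove the local claim near $s_0$, I would proceed in two moves. \emph{First}, fix the value of $s$ at $s_0$ but vary the threshold levels: by continuity of $\alpha,\beta$ I can choose $I_{s_0}$ small enough that for all $s \in I_{s_0}$ the levels $\alpha(s),\beta(s)$ lie within $\epsilon$ of $\alpha(s_0),\beta(s_0)$ respectively, so that $X_{s_0}$ is bounded away from $\mathbf{0}$ on $g_{s_0}^{-1}[\alpha(s)-\tfrac{\epsilon}{2},\,\alpha(s_0)]$ and on $g_{s_0}^{-1}[\beta(s_0),\,\beta(s)+\tfrac{\epsilon}{2}]$ (shrinking $\epsilon$ if necessary); then Lemma~\ref{lem:retract} gives that $g_{s_0}^{\alpha(s)}$ deformation retracts onto $g_{s_0}^{\alpha(s_0)}$ and similarly for $\beta$, hence $(g_{s_0}^{\beta(s)}, g_{s_0}^{\alpha(s)}) \simeq (g_{s_0}^{\beta(s_0)}, g_{s_0}^{\alpha(s_0)})$, and the pair at level $(\alpha(s),\beta(s))$ with function $g_{s_0}$ has a homotopy type independent of $s \in I_{s_0}$. \emph{Second}, I vary the function from $g_{s_0}$ to $g_s$ while keeping the levels fixed at $\alpha(s),\beta(s)$: I want to show the inclusion-induced maps give a homotopy equivalence of pairs $(g_{s_0}^{\beta(s)}, g_{s_0}^{\alpha(s)}) \simeq (g_s^{\beta(s)}, g_s^{\alpha(s)})$. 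For this I would use that $g_s$ is continuous in $s$ (in a suitable $C^1$ sense, which is implicit in the phrase ``gradient-like vector field $X_s$'' making sense), so that for $s$ close enough to $s_0$ the vector field $X_{s_0}$ is \emph{still} gradient-like for $g_s$ and still bounded away from $\mathbf{0}$ on a slightly enlarged band around the levels $\alpha(s),\beta(s)$; applying Lemma~\ref{lem:retract} to the ``crossed'' function/level combinations lets me interpolate. A cleaner way to organize this second move is to build a single function $G$ on $(B\times\rr^N)\times I_{s_0}$ by $G(\x,\e,s) = g_s(\x,\e)$, equip it with the vector field $\widehat X(\x,\e,s) = (X_s(\x,\e),0)$, and run Lemma~\ref{lem:retract} fiberwise; the uniform $\epsilon$ in \eqref{eqn:cnc} is exactly what guarantees $\widehat X$ is bounded away from $\mathbf{0}$ on the relevant band over all of $I_{s_0}$.

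The main obstacle I anticipate is the regularity/continuity bookkeeping in the parameter $s$: the statement only posits a \emph{continuous} family $g_s$ and the \emph{existence} of an integrable gradient-like $X_s$ for each $s$, with uniform bound $\epsilon$, but says nothing about how $X_s$ depends on $s$. So I cannot simply assert that $\widehat X$ above is integrable or even continuous. The fix is either (i) to appeal to the fact that in all the applications in this paper $g_s$ is smooth in $(\x,\e,s)$ jointly and one may take $X_s = \nabla g_s$ (or a smoothly-varying perturbation thereof), which is the situation of \cite{lisa:links}; or (ii) to argue more carefully that near each $s_0$ the \emph{fixed} vector field $X_{s_0}$ works for all nearby $s$, using only $C^1$-continuity of $g_s$ in $s$ near the compact bands $g_{s_0}^{-1}[\alpha(s_0)-\epsilon,\alpha(s_0)+\epsilon]$ etc.\ (closed but possibly non-compact since the domain is $B\times\rr^N$, which is why the integrability and uniform-boundedness hypotheses cannot be dropped). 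I would adopt route (ii) for the local claim, so that only $X_{s_0}$ — one vector field at a time — is ever flowed, sidestepping the need for an $s$-continuous family of vector fields entirely, and then glue by compactness of $[0,1]$ as above.
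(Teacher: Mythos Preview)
Your overall architecture---reduce to a local claim by compactness of $[0,1]$, then prove the local claim in two moves---matches the paper's proof exactly, and your first move (vary the levels with the function fixed, using Lemma~\ref{lem:retract}) is fine. The gap is in the second move, where you want to compare $(g_{s_0}^{\beta(s)}, g_{s_0}^{\alpha(s)})$ with $(g_s^{\beta(s)}, g_s^{\alpha(s)})$. Both of your proposed routes (i) and (ii) rely on $C^1$-closeness of $g_s$ to $g_{s_0}$ (so that $X_{s_0}$ remains gradient-like for $g_s$), but the lemma only posits a \emph{continuous} family, and the paper interprets this as $C^0$-continuity in the sup norm: $\|g_s - g_t\|_\infty < \epsilon$ for $s$ near $t$. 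With only $C^0$-closeness you have no control on $dg_s$ versus $dg_{s_0}$, so there is no reason $X_{s_0}$ should be gradient-like for $g_s$.

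The trick you are missing, and which the paper uses instead of either route, is a sandwich argument. From $\|g_s - g_t\|_\infty < \epsilon$ one gets inclusions of sublevel sets
\[
\bigl(g_t^{\beta(t)-\epsilon}, g_t^{\alpha(t)-\epsilon}\bigr) \;\subset\; \bigl(g_s^{\beta(t)}, g_s^{\alpha(t)}\bigr) \;\subset\; \bigl(g_t^{\beta(t)+\epsilon}, g_t^{\alpha(t)+\epsilon}\bigr).
\]
Now apply Lemma~\ref{lem:retract} with $X_t$ (gradient-like for $g_t$, bounded away from zero on the relevant bands by hypothesis) to see that the outer pair deformation retracts onto the inner pair; since the middle pair sits between them, it too deformation retracts onto the inner pair. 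This gives $(g_s^{\beta(t)}, g_s^{\alpha(t)}) \simeq (g_t^{\beta(t)}, g_t^{\alpha(t)})$ using only $X_t$ on sublevel sets of $g_t$. Then a separate application of Lemma~\ref{lem:retract} with $X_s$ (on sublevel sets of $g_s$) adjusts the levels from $(\alpha(t),\beta(t))$ to $(\alpha(s),\beta(s))$---this is your first move, applied to $g_s$ rather than $g_{s_0}$. At no point does one need a single vector field to be gradient-like for two different functions, so the $C^1$ issue never arises.
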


\begin{proof} It suffices to show that, for all $t \in [0, 1]$, there exists a neighborhood $U(t)$ of $t$ so that, for all $s \in U(t)$, $(g_s^{\beta(s)}, g_s^{\alpha(s)})$ is homotopy equivalent to $(g_{t}^{\beta(t)}, g_{t}^{\alpha(t)} )$.  Choose $\epsilon$ as in the hypotheses, and choose a neighborhood $U(t)$ of $t$ so that, for $s \in U(t)$:
\begin{enumerate}
\item $\| g_s - g_{t} \|_\infty < \epsilon$ by the continuity of the path $g_s$, 
\item $|\beta(t) - \beta(s)| < \epsilon$, and $ |\alpha(t) - \alpha(s)| < \epsilon.$
\end{enumerate}
By (1), we have the inclusions
$$
(
g_{t}^{\beta(t) - \epsilon}, g_{t}^{\alpha(t) - \epsilon}
) \subset
(
g_{s}^{\beta(t)}, g_{s}^{\alpha(t)}
) \subset
(
g_{t}^{\beta(t) + \epsilon}, g_{t}^{\alpha(t) + \epsilon}
).
$$
Applying Lemma~\ref{lem:retract} with the vector field $X_s$, we see that $( g_{t}^{\beta(t) - \epsilon}, g_{t}^{\alpha(t) - \epsilon} )$ is a deformation retract of $( g_{t}^{\beta(t)}, g_{t}^{\alpha(t) } ).$ Similarly, we see that $( g_{t}^{\beta(t) - \epsilon}, g_{t}^{\alpha(t) - \epsilon} )$ is a deformation retract of $( g_{t}^{\beta(t) + \epsilon}, g_{t}^{\alpha(t) + \epsilon} )$, and thus also of $( g_{s}^{\beta(t)}, g_{s}^{\alpha(t)} )$. On the other hand, by (2), Equation~\ref{eqn:cnc}, and Lemma~\ref{lem:retract}, we see that
$(
g_{s}^{\beta(t)}, g_{s}^{\alpha(t)}
)$
is homotopy equivalent to 
$(
g_{s}^{\beta(s)}, g_{s}^{\alpha(s)}
).$  Thus $(
g_{s}^{\beta(s)}, g_{s}^{\alpha(s)}
)$ is homotopy equivalent to $(
g_{t}^{\beta(t)}, g_{t}^{\alpha(t) }
)$ for all $s \in U(t)$, as desired.
\end{proof}

\section{Generating Family Cohomology Groups for Legendrian Submanifolds}
\label{sec:gh-legendrian}

In this section, we use sublevel sets of a ``difference function'' associated to a generating family to define the generating family (co)homology invariants of Legendrian submanifolds; see also \cite{f-r, lisa-jill, lisa:links}.

\subsection{Basic Definitions and Properties}

Suppose that $f: M \times \rr^N \to \rr$ is a generating family for a Legendrian $\leg \subset J^1M$.  The \dfn{ difference function}, $\delta: M \times \rr^N \times \rr^N \to \rr$, is defined to be: \begin{equation} \delta(x,\e,\te) = f(x,\te) - f(x,\e).  \end{equation}
 
The reason to work with the difference function is that its critical points
capture information about the \dfn{Reeb chords} of $\leg$ (with respect to the standard contact form), which in this context are vertical 
 segments $\gamma: [a,b] \to J^1M$ whose endpoints lie on $\leg$. Note that Reeb chords are in one-to-one correspondence with double points of the projection of $\leg$ to an immersed Lagrangian submanifold of $T^*M$.  Let $\ell(\gamma) > 0$ be the
 \dfn{ length of the Reeb chord} $\gamma$, and let $\lmax$ (resp.\ $\lmin$) denote the maximum (resp.\ minimum) length of all Reeb chords of $\leg$.

\begin{prop}[\cite{f-r, josh-lisa:cap}] 
  \label{prop:leg-crit-point} 
  The critical points of the difference function $\delta$ are of two types:
  \begin{enumerate}
  \item For each Reeb chord $\gamma$ of $\leg$, there are two
    critical points $(x,\e,\te)$ and $(x,\te,\e)$ of $\delta$ with
    nonzero critical values $\pm \ell(\gamma)$.
  \item The set
    $$\left\{ (x,\e,\e) \,:\, (x,\e) \in \Sigma_f\right\}$$
    is a critical submanifold of $\delta$ with critical value $0$.
  \end{enumerate}
  For generic $f$, these critical points and submanifolds are
  non-degenerate, and the critical submanifold has index $N$.
\end{prop}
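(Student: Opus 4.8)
The plan is to differentiate $\delta$ directly, read off the critical set, and then compute $\hess\delta$ in each of the two cases; the index statement will follow from a short observation about isotropic subspaces, and genericity will enter only at the Reeb-chord critical points. In local coordinates one has $\partial_x\delta(x,\e,\te)=\partial_x f(x,\te)-\partial_x f(x,\e)$, $\partial_\e\delta=-\partial_\e f(x,\e)$, and $\partial_\te\delta=\partial_\e f(x,\te)$, so $(x,\e,\te)$ is critical exactly when $(x,\e)$ and $(x,\te)$ both lie in $\Sigma_f$ and $\partial_x f(x,\e)=\partial_x f(x,\te)$, i.e.\ $\partial_f(x,\e)=\partial_f(x,\te)$ in $T^*M$. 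If $\e=\te$ this is the diagonal submanifold of case (2); if $\e\neq\te$ it is a double point of the Lagrangian immersion $\partial_f$, hence, by the Reeb-chord/double-point correspondence recalled above, a Reeb chord $\gamma$, and the two orderings give the pair $(x,\e,\te),(x,\te,\e)$. Finally, $\delta=f(x,\te)-f(x,\e)$ is the difference of the last ($z$-)coordinates of the two lifts $j_f(x,\te),j_f(x,\e)$ over the common point of $T^*M$, which is $\pm\ell(\gamma)$ in case (1) and $0$ in case (2).

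\textbf{Non-degeneracy at Reeb chords.} Since $\partial_\e\delta$ is independent of $\te$ and $\partial_\te\delta$ is independent of $\e$, the Hessian at $(x,\e,\te)$ with $\e\neq\te$ has vanishing $\e\te$-block:
\[
\hess\delta=\begin{pmatrix} f_{xx}(x,\te)-f_{xx}(x,\e) & -f_{x\e}(x,\e) & f_{x\e}(x,\te)\\ -f_{\e x}(x,\e) & -f_{\e\e}(x,\e) & 0\\ f_{\e x}(x,\te) & 0 & f_{\e\e}(x,\te)\end{pmatrix}.
\]
A vector $(v,w,w')$ in its kernel satisfies $(v,w)\in T_{(x,\e)}\Sigma_f$ and $(v,w')\in T_{(x,\te)}\Sigma_f$ (last two block rows) together with $d\partial_f|_{(x,\e)}(v,w)=d\partial_f|_{(x,\te)}(v,w')$ (first block row); that is, a nonzero kernel vector is precisely a nonzero common tangent vector of the two Lagrangian sheets of $\image\partial_f$ meeting at the double point. (Here $\partial_f$ is automatically an immersion: a vector in $\ker d\partial_f|_{(x,\e)}\cap T\Sigma_f$ forces $v=0$ and $w\in\ker f_{x\e}(x,\e)\cap\ker f_{\e\e}(x,\e)$, which is $\{0\}$ because $\mathbf 0$ is a regular value of $\partial_\e f$.) Hence $\hess\delta$ is non-degenerate iff the double point is transverse, and transversality of all self-intersections of $\partial_f$ is generic: a standard jet-transversality/Sard argument produces an arbitrarily small perturbation of $f$ achieving it (and one also arranges that distinct Reeb chords have distinct lengths).

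\textbf{The diagonal submanifold and its index.} At $(x,\e,\e)$ with $(x,\e)\in\Sigma_f$, write $C=f_{x\e}(x,\e)$ and $D=f_{\e\e}(x,\e)$; then $D$ is symmetric, and transposing the regular-value condition $\operatorname{rank}(f_{\e x}\ f_{\e\e})=N$ gives $\operatorname{rank}\binom{C}{D}=N$, i.e.\ $\ker C\cap\ker D=\{0\}$. The Hessian becomes
\[
H_0=\begin{pmatrix} 0 & -C & C\\ -C^{\top} & -D & 0\\ C^{\top} & 0 & D\end{pmatrix},
\]
and a direct computation gives $\ker H_0=\{(v,w,w):C^{\top}v+Dw=0\}$, which has dimension $b$ and is exactly the tangent space of the diagonal critical submanifold; so that submanifold is non-degenerate in the Morse--Bott sense. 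For the index, note that $W=\{(v,w,w):v\in\rr^b,\ w\in\rr^N\}$ is an isotropic subspace for $H_0$ of dimension $b+N$ (the quadratic form of $H_0$ vanishes on $W$, as one checks). Since $\operatorname{rank}H_0=(b+2N)-b=2N$, the form induced on $\rr^{b+2N}/\ker H_0$ is non-degenerate of rank $2N$ and carries the image of $W$, an isotropic subspace of dimension $(b+N)-b=N$; a non-degenerate symmetric form of rank $2N$ with an $N$-dimensional isotropic subspace is hyperbolic, hence has signature $0$ and index $N$. As $\ker H_0$ is exactly the tangent space of the critical submanifold, the normal index equals the index of $H_0$, namely $N$.

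\textbf{The main obstacle.} The only step that genuinely uses genericity is the transversality of the self-intersections of $\partial_f$ at the Reeb-chord critical points; the description of the critical set, the Morse--Bott property of the diagonal, and the index count $N$ are all forced by the regular-value condition built into the definition of a generating family. The delicate point is therefore not the local analysis but the bookkeeping in the perturbation step: one must choose the perturbation achieving transversality so that it preserves the linear-at-infinity (or slicewise-linear-at-infinity) structure of $f$ on which the subsequent Morse theory depends.
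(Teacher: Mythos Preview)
The paper does not supply its own proof of this proposition; it simply cites \cite{f-r, josh-lisa:cap} and moves on to the index computation in Proposition~\ref{prop:index}. So there is nothing to compare against directly, and your write-up in fact fills in what the paper leaves to the references.

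Your argument is correct. The identification of the critical set via $\partial_f(x,\e)=\partial_f(x,\te)$ is the standard one, and your kernel analysis at a Reeb-chord critical point cleanly reduces non-degeneracy to transversality of the two Lagrangian branches. The index computation via the isotropic subspace $W=\{(v,w,w)\}$ is a nice touch: rather than diagonalizing the normal Hessian explicitly, you observe that the induced form on the $2N$-dimensional quotient carries a maximal ($N$-dimensional) isotropic and is therefore hyperbolic, forcing index $N$. This is slightly slicker than the usual coordinate change one finds in the references.

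One small remark: your parenthetical about arranging distinct Reeb-chord lengths is not part of the stated proposition, and your closing comment about preserving linear-at-infinity behavior under perturbation, while a legitimate concern for the later theory, is likewise outside the scope here---the proposition is stated for an arbitrary generating family $f$, with the tameness hypotheses only entering later (Corollary~\ref{lem:e-l-indep} and beyond). You could safely drop both without loss.
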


We may calculate the Morse index of the non-degenerate critical points using the Conley-Zehnder index of the associated Reeb chord, as defined in \cite{ees:high-d-geometry}.  Given a Reeb chord $\gamma$, let $c_\gamma$ be a ``capping path'' in $\leg$ from the top of the Reeb chord to the bottom.  The Lagrangian projections of the tangents to $\leg$ along the capping path induce a path of Lagrangian subspaces $C_\gamma(t)$. We create a path from $C_\gamma(1)$ to $C_\gamma(0)$ as in \cite{ees:high-d-geometry}: choose a complex structure $I$ on $\rr^{2n}$ such that $I C_\gamma(1) = C_\gamma(0)$ and let $\lambda_\gamma(t) = e^{tI}C_\gamma(1)$.  The loop $C_\gamma \ast \lambda_\gamma$ will be denoted by $\bar{C}_\gamma$, and the Conley-Zehnder index $CZ(\gamma)$ is defined to be the Maslov index $\mu(\bar{C}_\gamma)$.

\begin{prop} \label{prop:index}
  Given a non-degenerate critical point $(x_0,\e_0, \te_0)$ of $\delta$ and its corresponding Reeb chord $\gamma$, we have:
  $$\ind_{(x_0,\e_0,\te_0)} d^2 \delta = CZ(\gamma) + N.$$
\end{prop}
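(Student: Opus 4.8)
The plan is to compute $\hess \delta$ at the critical point $(x_0, \e_0, \te_0)$ directly in suitable coordinates and then identify the negative eigenspace with a model whose dimension is governed by the Conley--Zehnder index. First I would recall that, by the definition of $\delta$, we have
\[
\hess_{(x_0,\e_0,\te_0)}\delta =
\begin{pmatrix}
\pd{}{xx}f(x_0,\te_0) - \pd{}{xx}f(x_0,\e_0) & -\pd{}{x\e}f(x_0,\e_0) & \pd{}{x\te}f(x_0,\te_0) \\
-\pd{}{\e x}f(x_0,\e_0) & -\pd{}{\e\e}f(x_0,\e_0) & 0 \\
\pd{}{\te x}f(x_0,\te_0) & 0 & \pd{}{\te\te}f(x_0,\te_0)
\end{pmatrix},
\]
so the computation decouples into a piece coming from $f$ at the bottom endpoint $(x_0,\e_0)$ of the Reeb chord and a piece coming from $f$ at the top endpoint $(x_0,\te_0)$, coupled through the base variable $x$. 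Because the critical point is nondegenerate (the generic hypothesis of Proposition~\ref{prop:leg-crit-point}), after a fiber-preserving change of coordinates near each endpoint I may assume $\pd{}{\e\e}f(x_0,\e_0)$ and $\pd{}{\te\te}f(x_0,\te_0)$ are nondegenerate; by Lemma~\ref{lem:equiv-he} such changes do not affect the index we are after (and a stabilization, if needed to make them nondegenerate of the right size, shifts both the computed Hessian index and would-be $N$ by the same amount). The standard trick is then to perform the ``Gaussian elimination'' in the fiber directions $\e$ and $\te$: since the two fiber blocks are independent and each is the Hessian of $f$ in the fiber variables at a point of $\Sigma_f$, eliminating them contributes $\ind \pd{}{\e\e}f(x_0,\e_0) + \ind \pd{}{\te\te}f(x_0,\te_0)$ to the index and leaves an effective quadratic form on the base directions $x$.

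Next I would match the three remaining pieces against the Conley--Zehnder index. The reduced form on the $x$-directions, after the elimination, is a Morse-theoretic incarnation of the difference of the two germs $f(\cdot,\e)$, $f(\cdot,\te)$ near their respective fiber-critical points, i.e.\ of the ``generating function'' of the Reeb chord in the sense of Viterbo/Th\'eret; its index is computed by the Maslov-type index of the path of Lagrangian subspaces $C_\gamma(t)$ obtained by projecting $T\leg$ along the capping path, closed up via $\lambda_\gamma(t) = e^{tI}C_\gamma(1)$ into the loop $\bar C_\gamma$. The point is the classical identity relating the Morse index of a generating-function-type quadratic form to the Maslov index of the associated Lagrangian path together with a correction by the Morse indices of the fiber Hessians at the two ends; see \cite{theret:viterbo, viterbo:generating}. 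Assembling:
\[
\ind \hess_{(x_0,\e_0,\te_0)}\delta = \mu(\bar C_\gamma) + \ind\pd{}{\e\e}f(x_0,\e_0) + \ind\pd{}{\te\te}f(x_0,\te_0),
\]
and I would argue that the sum of the two fiber Hessian indices is exactly $N$: intuitively, $(x_0,\e_0)$ and $(x_0,\te_0)$ are the bottom and top of a single Reeb chord, and the split of the $2N$ fiber directions of $\delta$ at the diagonal critical submanifold (which has index $N$ by Proposition~\ref{prop:leg-crit-point}) between the two endpoints is complementary. More carefully, one can see this by deforming the chord to have small length and comparing with the index-$N$ statement for the critical submanifold $\{(x,\e,\e)\}$: the two fiber blocks $-\pd{}{\e\e}f$ and $\pd{}{\te\te}f$ have opposite signs, so their indices sum to $N$ regardless of the particular nondegenerate $\pd{}{\e\e}f$. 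This gives $\ind \hess\delta = CZ(\gamma) + N$.

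The main obstacle I anticipate is the bookkeeping in the second step: making precise the identification of the reduced base-quadratic-form index with $\mu(\bar C_\gamma)$, including getting the capping-path and closing-up conventions to line up exactly with the definition of $CZ(\gamma)$ from \cite{ees:high-d-geometry}, and tracking signs so that the fiber contributions genuinely total $N$ rather than, say, $N$ plus a constant depending on $n$. A clean way to handle this is to reduce to a normal form: use a fiber-preserving diffeomorphism and, if necessary, a stabilization so that near $(x_0,\e_0)$ the family $f$ looks like a ``linear chord'' model (e.g.\ $f(x,\e) = \langle A x, \e\rangle + \tfrac12\langle B\e,\e\rangle + \dots$), compute the Hessian of $\delta$ and its index explicitly in that model, and check directly that the answer is $\mu(\bar C_\gamma) + N$; then invoke Lemma~\ref{lem:equiv-he} and the invariance of both sides under the allowed moves (fiber-preserving diffeomorphism fixes the index; a dimension-$K$ stabilization of index $q$ raises the Hessian index by $q$ and $N$ by $K$ with $q \le K$, but one checks it also changes $CZ$ and the fiber split consistently — in fact it is cleanest to observe that $CZ(\gamma)$ is intrinsic to $\leg$ and independent of $f$, so only the claim ``$\ind\pd{}{\e\e}f + \ind\pd{}{\te\te}f = N$'' needs the genericity/normal-form argument). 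The remaining verification is then a finite-dimensional linear-algebra computation, which I would not grind through here.
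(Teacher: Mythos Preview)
Your overall architecture (compute the Hessian of $\delta$, strip off the fiber blocks, and match the remainder to the Conley--Zehnder index) is the same as the paper's, but the decomposition you propose is not correct, and the gap is in the step where you assert that the two fiber contributions sum to $N$.

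The fiber blocks of $\hess\delta$ at $(x_0,\e_0,\te_0)$ are $-\pd{}{\e\e}f(x_0,\e_0)$ and $+\pd{}{\te\te}f(x_0,\te_0)$, so the fiber contribution to the index is
\[
\bigl(N-\ind \pd{}{\e\e}f(x_0,\e_0)\bigr)+\ind \pd{}{\te\te}f(x_0,\te_0)
= N + \bigl(\ind \pd{}{\te\te}f(x_0,\te_0)-\ind \pd{}{\e\e}f(x_0,\e_0)\bigr).
\]
The two Hessians are evaluated at \emph{different} fiber-critical points over $x_0$, and there is no reason their Morse indices agree; your ``opposite signs, hence sum to $N$'' argument tacitly assumes they are the same matrix, which is only true on the diagonal critical submanifold, not at a Reeb-chord critical point.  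The ``deform the chord to small length'' argument cannot fix this: the Reeb chord is determined by $\leg$, and its length is not a parameter you can send to zero while keeping the critical point nondegenerate.  Correspondingly, your claim that the reduced base quadratic form has index exactly $\mu(\bar C_\gamma)$ is also off: once you pass to the split normal form $f(x,\e)=a(x)+b(\e)$ near $(x_0,\e_0)$ and $f(x,\te)=\tilde a(x)+\tilde b(\te)$ near $(x_0,\te_0)$, the base block has index $\ind(d^2\tilde a-d^2a)$, and this equals only the closing-up piece $\mu(\lambda_\gamma,V)$ of $CZ(\gamma)$, not all of it.  The missing fiber discrepancy $\ind d^2\tilde b-\ind d^2b$ is exactly what shows up in $\mu(C_\gamma,V)$ via Th\'eret's formula for $\mu(C_\gamma,H)$ together with the horizontal-to-vertical correction; in other words, $CZ(\gamma)$ genuinely sees the fiber Hessians, so a clean split ``$CZ$ from the base, $N$ from the fibers'' is not available.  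You need that extra Maslov-index bookkeeping (Th\'eret's theorem plus the Robbin--Salamon change-of-reference-Lagrangian formula) to close the argument.
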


\begin{proof}
  On one hand, after a fiber-preserving diffeomorphism, we may assume that in neighborhoods of $(x_0,\e_0)$ and $(x_0,\te_0)$, the generating family $f$ has the form
\begin{align*}
  f(x,\e) &= a(x) + b(\e) & \text{and}\\
  f(x,\te) &= \tilde{a}(x) + \tilde{b}(\te).
\end{align*}
It follows that: 
\begin{equation} \label{eqn:gf-ind}
  \ind_{(x_0, \e_0, \te_0)} d^2 \delta = \ind_{x_0} (d^2\tilde{a} - d^2 a) + \ind_{\te_0} d^2 \tilde{b} - \ind_{\e_0} d^2 b + N.
\end{equation}

On the other hand, we may use the Conley-Zehnder index of the path $\bar{C}_\gamma$ relative to the vertical Lagrangian $V = \{0\} \times \rr^n$, as defined in \cite{robbin-salamon:maslov}, to compute $CZ(\gamma)$.  We compute using the definitions above and \cite[Lemma 3.4]{ees:high-d-geometry}:
\begin{equation} \label{eqn:cz-ind}
  \begin{split}
    CZ(\gamma) = \mu(\bar{C}_\gamma) &= \mu(C_\gamma,V) + \mu(\lambda_\gamma,V) \\
    &=  \mu(C_\gamma,V) + \ind_{x_0} (d^2\tilde{a} - d^2 a).
  \end{split}
\end{equation}
To compute $\mu(C_\gamma, V)$ in terms of the generating family $f$, we let $H = \rr^n \times \{0\}$ and appeal to \cite[Theorem B.5]{theret:camel} (after an overall sign correction):
\begin{equation} \label{eqn:theret}
  \begin{split}
    \mu(C_\gamma, H) &= \ind_{(x_0,\te_0)}d^2 f - \ind_{(x_0,\e_0)} d^2 f \\
    &= \ind_{x_0} d^2 \tilde{a} - \ind_{x_0} d^2 a+ \ind_{\te_0} d^2 \tilde{b} - \ind_{\e_0} d^2 b.
  \end{split}
\end{equation}

It remains to understand the difference $\mu(C_\gamma,V) - \mu(C_\gamma,H)$. By \cite[Theorem 3.5]{robbin-salamon:maslov}, this difference is independent of the path $C_\gamma$ (with fixed endpoints) and is, in fact, equal to the difference
\begin{equation} \label{eqn:mas-diff}
  \mu(L,C_\gamma(0)) - \mu(L,C_\gamma(1)),
\end{equation}
where $L$ is a clockwise rotation from $H$ to $V$.  By a similar argument to that in \cite[Lemma 3.4]{ees:high-d-geometry}, the difference in (\ref{eqn:mas-diff}) is equal to $\ind_{x_0} d^2 \tilde{a} - \ind_{x_0} d^2 a$.  

Thus, combining Equations (\ref{eqn:gf-ind}), (\ref{eqn:cz-ind}), (\ref{eqn:theret}), and the correction between $V$ and $H$ calculated above, we obtain the desired index computation.
\end{proof}

Given  the geometric importance of the  critical points of $\delta$ and the 
philosophy of Morse theory, it is natural to study sublevel sets of $\delta$.  Choose 
$\epsilon$ and $ \infin $ so that   
\begin{equation} \label{ineq:epsilon-infin}
  0 < \epsilon < \lmin \leq \lmax < \infin.
\end{equation}

\begin{defn} 
  \label{defn:gh}  
  The \dfn{total (resp.\ relative) generating family cohomology} $\gh{*}{f}$ (resp.\ 
  $\rgh{*}{f}$)  of the  generating family $f$ 
  is defined to be:
  $$\gh{k}{f} = H^{k+N+1}(\delta^\infin, \delta^{-\epsilon}) \quad \text{and} \quad
  \rgh{k}{f} = H^{k+N+1}(\delta^\infin, \delta^{\epsilon}).$$
\end{defn}
  
\begin{rem}
\begin{enumerate}
\item There are also analogous definitions of the \dfn{total generating family homology}, $\widetilde{GH}_k(f)$, and \dfn{relative generating family homology}, $GH_k(f)$, using the same degree shift as above.

\item Caveat lector: the generating family homology in \cite{f-r} (and hence all comparisons to Legendrian contact homology) coincides with the \emph{relative} generating family homology in this paper.

\item We may think of the relative and total generating family cohomology as the total generating family cohomology taken relative to an expanded set.  This, along with the statement of Theorem~\ref{thm:filling-iso}, explains our naming convention.  \end{enumerate} \end{rem}

There is a simple relationship between the total and relative generating family cohomologies:

\begin{prop} \label{prop:full-rel-dual} Let $\Lambda^n$ be an  orientable, Legendrian submanifold of $J^1M$ with linear-at-infinity generating family $f$.   
  There is a long exact sequence:
  $$\dots \to H^k(\Lambda) \to GH^{k}(f) \to \widetilde {GH}^{k}(f) \to H^{k+1}(\Lambda) \to \dots.$$
  If the groups are calculated with $\mathbb Z_2$ coefficients, the result holds without the orientability condition on the Legendrian.
\end{prop}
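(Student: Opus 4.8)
The plan is to realize the long exact sequence as the cohomology long exact sequence of a triple, together with an identification of one of the relative terms with $H^*(\Lambda)$. Recall from Definition~\ref{defn:gh} that $\gh{k}{f} = H^{k+N+1}(\delta^\infin,\delta^{-\epsilon})$ and $\rgh{k}{f} = H^{k+N+1}(\delta^\infin,\delta^\epsilon)$, where $-\epsilon < 0 < \epsilon$ and $0<\epsilon<\lmin$, so there are no critical values of $\delta$ in $[-\epsilon,\epsilon]$ other than the critical submanifold at level $0$. Since $\delta^{-\epsilon}\subset\delta^{\epsilon}\subset\delta^\infin$, the triple $(\delta^\infin,\delta^\epsilon,\delta^{-\epsilon})$ yields the long exact sequence
\begin{equation*}
\cdots \to H^{k+N+1}(\delta^\epsilon,\delta^{-\epsilon}) \to H^{k+N+1}(\delta^\infin,\delta^{-\epsilon}) \to H^{k+N+1}(\delta^\infin,\delta^\epsilon) \to H^{k+N+2}(\delta^\epsilon,\delta^{-\epsilon})\to\cdots.
\end{equation*}
The middle and right terms are exactly $\gh{k}{f}$ and $\rgh{k}{f}$ by definition, so the entire content is to identify $H^{k+N+1}(\delta^\epsilon,\delta^{-\epsilon})$ with $H^k(\Lambda)$.

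For this identification I would use Morse–Bott theory for the single critical submanifold of $\delta$ lying between levels $-\epsilon$ and $\epsilon$. By Proposition~\ref{prop:leg-crit-point}, the only critical set of $\delta$ in $\delta^{-1}[-\epsilon,\epsilon]$ is the diagonal-type submanifold $\{(x,\e,\e) : (x,\e)\in\Sigma_f\}$, which is diffeomorphic to $\Sigma_f$ (hence to $\Lambda$, via the immersion $j_f$ — here one uses that for a linear-at-infinity $f$ the relevant piece is compact and one can arrange things so that the critical submanifold is an embedded copy of $\Lambda$) and has index $N$ with a nondegenerate normal Hessian. The passage from $\delta^{-\epsilon}$ to $\delta^{\epsilon}$ then attaches, up to homotopy, the Thom space of the negative normal bundle of this critical submanifold; since $\delta$ is a difference function $f(x,\te)-f(x,\e)$, the negative normal bundle at the diagonal is canonically trivial of rank $N$ (it is the "antidiagonal" $\te - \e$ direction paired against the positive-definite $\te+\e$ direction only after the fiber-preserving normal form as in Proposition~\ref{prop:index}, so the splitting is honest). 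Hence the Thom isomorphism — which needs the orientability hypothesis, or $\zz_2$ coefficients otherwise — gives
\begin{equation*}
H^{k+N+1}(\delta^\epsilon,\delta^{-\epsilon}) \simeq H^{k+1-1}(\Lambda) \;\text{shifted appropriately} \;=\; H^{k}(\Lambda),
\end{equation*}
where the degree bookkeeping is: the Thom iso shifts by the rank $N$ of the negative bundle, matching the $+N$ in the definition's shift, and the remaining $+1$ matches the $+1$. Substituting this identification into the triple sequence and relabeling the connecting map $H^{k+N+1}(\delta^\infin,\delta^\epsilon)\to H^{k+N+2}(\delta^\epsilon,\delta^{-\epsilon})$ as $\widetilde{GH}^k(f)\to H^{k+1}(\Lambda)$ produces exactly the claimed sequence.

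The main obstacle — and the step requiring genuine care rather than bookkeeping — is justifying the Morse–Bott/Thom-space statement for $\delta$ on a noncompact domain. One cannot invoke the closed-manifold Morse–Bott theorem directly; instead I would combine Lemma~\ref{lem:retract} (deformation retraction of sublevel sets in the absence of critical values, valid here because linear-at-infinity forces a gradient-like field bounded away from $\mathbf 0$ outside a compact set and on $\delta^{-1}[-\epsilon,-\epsilon']\cup\delta^{-1}[\epsilon',\epsilon]$) with a local Morse–Bott model near the critical submanifold to excise down to a neighborhood of $\Sigma_f$ and identify the pair $(\delta^{\epsilon},\delta^{-\epsilon})$ up to homotopy with the Thom space of the negative normal bundle. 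The orientability hypothesis enters precisely in the Thom isomorphism for this bundle; with $\zz_2$ coefficients the Thom isomorphism holds unconditionally, which gives the final sentence of the statement. I would also remark that the degree-shift conventions of Definition~\ref{defn:gh} were chosen exactly so that this sequence comes out unshifted, as promised in Remark (3) following that definition.
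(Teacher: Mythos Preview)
Your approach is exactly the paper's: take the long exact sequence of the triple $(\delta^\infin,\delta^\epsilon,\delta^{-\epsilon})$ and identify $H^*(\delta^\epsilon,\delta^{-\epsilon})$ with $H^*(\Lambda)$ via Morse--Bott theory and the Thom isomorphism. Your elaboration of why the Morse--Bott step is legitimate on a noncompact domain is more careful than what the paper actually writes (the paper simply says ``standard constructions in Morse--Bott theory and the Thom isomorphism'').

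However, there are two intertwined bookkeeping slips. First, the long exact sequence of a triple $C\subset B\subset A$ in cohomology reads
\[
\cdots\to H^n(A,B)\to H^n(A,C)\to H^n(B,C)\to H^{n+1}(A,B)\to\cdots,
\]
so the middle two terms in your displayed sequence should be swapped: the map goes from $\rgh{k}{f}=H^{k+N+1}(\delta^\infin,\delta^\epsilon)$ to $\gh{k}{f}=H^{k+N+1}(\delta^\infin,\delta^{-\epsilon})$, which is the order in the stated proposition. Second, and consequently, the term preceding $\rgh{k}{f}$ is $H^{k+N}(\delta^\epsilon,\delta^{-\epsilon})$ (one degree lower, via the connecting map), not $H^{k+N+1}$; since the critical submanifold has index $N$, the Thom shift is by $N$ and one obtains $H^{k+N}(\delta^\epsilon,\delta^{-\epsilon})\simeq H^k(\Lambda)$ directly, with no need for the ad hoc ``$H^{k+1-1}(\Lambda)$ shifted appropriately''. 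Incidentally, you do not need the negative normal bundle to be trivial, only orientable; the paper does not claim triviality, and orientability of this bundle is exactly where the hypothesis on $\Lambda$ (or the $\zz_2$-coefficient alternative) enters. Once these indices are corrected your argument and the paper's coincide.
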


\begin{proof} Fix $\epsilon$ and $\infin$ satisfying (\ref{ineq:epsilon-infin}).
From the triple $(\delta^\infin, \delta^\epsilon, \delta^{-\epsilon})$, we obtain the long exact sequence:
$$\dots \to H^{k+N}\left( \delta^\epsilon, \delta^{-\epsilon} \right) \to
H^{k+N+1}\left( \delta^\infin, \delta^{\epsilon} \right)  \to
H^{k+N+1}\left( \delta^\infin, \delta^{-\epsilon} \right)  \to \dots.
$$
When $\Lambda$ is orientable or when $\mathbb Z_2$ coefficients are used, standard constructions in Morse-Bott theory and the Thom isomorphism imply that 
$H^{k+N}\left( \delta^{\epsilon}, \delta^{-\epsilon} \right) \simeq H^k(\Lambda)$. The proposition now follows from the definitions of total and relative generating family cohomology.
\end{proof}

The generating family (co)homology descends to equivalence classes of generating families:

\begin{lem}  \label{lem:cohom-equiv}  If $f_0 \sim f_1$, then $\rgh{*}{f_0} \simeq \rgh{*}{f_1}$ 
and $\gh{*}{f_0} \simeq \gh{*}{f_0}$.
\end{lem}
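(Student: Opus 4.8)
The plan is to reduce the statement about generating family cohomology to the Morse-theoretic invariance Lemma~\ref{lem:equiv-he}, applied to the difference functions rather than the generating families themselves. The key observation, already flagged in the text just before Lemma~\ref{lem:equiv-he}, is that equivalence of generating families induces equivalence of the associated difference functions, and that this equivalence respects the linear-at-infinity (tameness) hypothesis needed so that the sublevel sets in Definition~\ref{defn:gh} behave well. So the first step is to spell out: if $f_0 \sim f_1$ via a fiber-preserving diffeomorphism $\Phi$ followed (or preceded) by stabilization $\oplus Q$, then the difference functions $\delta_0$ and $\delta_1$ are related the same way. For the fiber-preserving diffeomorphism piece, $\Phi(x,\eta) = (x,\phi_x(\eta))$ induces the fiber-preserving diffeomorphism $(x,\eta,\tilde\eta) \mapsto (x,\phi_x(\eta),\phi_x(\tilde\eta))$ of $M \times \rr^N \times \rr^N$, under which $\delta_1$ pulls back to $\delta_0$. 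For the stabilization piece, if $f_1 = f_0 \oplus Q$ with $Q$ a nondegenerate quadratic of index $\lambda$ on $\rr^K$, then a direct computation gives $\delta_1 = \delta_0 \oplus (Q \oplus (-Q))$, i.e.\ $\delta_1(x,\eta,\eta',\tilde\eta,\tilde\eta') = \delta_0(x,\eta,\tilde\eta) + Q(\tilde\eta') - Q(\eta')$; the new quadratic form $Q(\tilde\eta') - Q(\eta')$ on $\rr^{2K}$ is nondegenerate with index $K$ (it has $\lambda$ negative directions from $-Q(\eta')$ on the directions where $Q$ is positive, plus $K-\lambda$ negative directions from $Q(\tilde\eta')$ where $Q$ is negative — wait, more carefully: $Q(\tilde\eta')$ contributes index $\lambda$ and $-Q(\eta')$ contributes index $K-\lambda$, for a total index $K$, independent of $\lambda$). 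So $\delta_0 \sim \delta_1$ as functions, with stabilization index $K$.

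Next I would invoke Lemma~\ref{lem:equiv-he} applied to $g_0 = \delta_0$ and $g_1 = \delta_1$: it yields an integer $q$ (here $q = K$, the dimension of the stabilizing quadratic, by the remark following that lemma) so that for all $a < b$ the relative cohomologies $H^*(\delta_0^b, \delta_0^a)$ and $H^{*+q}(\delta_1^b, \delta_1^a)$ are isomorphic — more precisely isomorphic after the appropriate degree shift. The one subtlety to address is that Definition~\ref{defn:gh} uses sublevel sets at the specific thresholds $-\epsilon$, $\epsilon$, $\infin$ chosen relative to $\lmin, \lmax$ for each generating family, and these thresholds may differ for $f_0$ and $f_1$. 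But Proposition~\ref{prop:leg-crit-point} tells us the nonzero critical values of any difference function are exactly $\pm \ell(\gamma)$ over Reeb chords $\gamma$ of the \emph{same} Legendrian $\leg$, so equivalent generating families generate the same $\leg$ with the same Reeb chord lengths; hence $\lmin, \lmax$ are intrinsic to $\leg$ and one may choose a common $\epsilon, \infin$ satisfying (\ref{ineq:epsilon-infin}) for both. (Alternatively, one checks via Lemma~\ref{lem:retract} that the relative cohomology $H^*(\delta^{\infin}, \delta^{\pm\epsilon})$ is independent of the particular choice of $\epsilon \in (0,\lmin)$ and $\infin > \lmax$, since there are no critical values in $(\lmax, \infin)$ or in $(0,\lmin)$ — this is the standard deformation argument, and it requires the linear-at-infinity hypothesis to guarantee the gradient-like vector field is bounded away from zero on the relevant superlevel region.) Either way, we get to apply Lemma~\ref{lem:equiv-he} at matching thresholds.

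Finally, I would assemble the bookkeeping. With common thresholds $-\epsilon < \epsilon < \infin$ and the relation $\delta_0 \sim \delta_1$ with stabilization dimension $K$ (so $N_1 = N_0 + K$), Lemma~\ref{lem:equiv-he} gives $H^{j}(\delta_0^{\infin}, \delta_0^{\epsilon}) \cong H^{j+K}(\delta_1^{\infin}, \delta_1^{\epsilon})$ for all $j$. Plugging in $j = k + N_0 + 1$ and using $N_1 + 1 = N_0 + K + 1$, the left side is $\rgh{k}{f_0}$ and the right side is $H^{(k+N_0+1)+K}(\delta_1^\infin,\delta_1^\epsilon) = H^{k + N_1 + 1}(\delta_1^\infin, \delta_1^\epsilon) = \rgh{k}{f_1}$, so the degree shift exactly cancels the change in fiber dimension — this is the whole point of the $+N+1$ normalization in Definition~\ref{defn:gh}. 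The identical argument with $\delta^{\epsilon}$ replaced by $\delta^{-\epsilon}$ throughout gives $\gh{k}{f_0} \cong \gh{k}{f_1}$ (note the statement of the lemma has a typo, writing $\gh{*}{f_0} \simeq \gh{*}{f_0}$ where it means $\gh{*}{f_1}$). I expect the main obstacle to be purely organizational rather than deep: namely carefully tracking the degree shift through the stabilization so that it cancels the shift in $N$, and handling the mismatch of threshold values cleanly — the honest way is a short lemma (or remark) that $\rgh{*}{f}$ and $\gh{*}{f}$ are independent of the admissible choice of $\epsilon,\infin$, which follows from Lemma~\ref{lem:retract} plus the no-critical-values-in-the-relevant-windows fact from Proposition~\ref{prop:leg-crit-point}, together with the linearity-at-infinity needed to keep the gradient-like field bounded away from $\mathbf 0$.
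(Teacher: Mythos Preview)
Your proposal is correct and follows essentially the same approach as the paper: verify that $f_0 \sim f_1$ implies $\delta_0 \sim \delta_1$, observe that a dimension-$K$ stabilization of $f$ becomes an \emph{index}-$K$ stabilization of $\delta$, and then invoke Lemma~\ref{lem:equiv-he} so that the degree shift exactly cancels the change in $N$. The paper's proof is just a terser version of what you wrote; your explicit computation that $Q(\tilde\eta') - Q(\eta')$ has index $K$ and your remark on the shared choice of $\epsilon,\infin$ simply fill in details the paper leaves to the reader.
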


\begin{proof} It is easy to verify that if $f_0 \sim f_1$, then their associated difference functions $\delta_0$ and $\delta_1$ will also be equivalent.  Moreover, if $f_0$ and $f_1$ differ by a dimension $K$ stabilization, then $\delta_0$ and $\delta_1$ will differ by an 
\emph{index} $K$ stabilization.  The lemma now follows immediately from Lemma~\ref{lem:equiv-he}.
\end{proof}

This lemma partially justifies the shift in index in the definition of generating family (co)homology; choosing to shift by $N+1$ rather than $N$ produces an isomorphism with linearized contact homology \cite{f-r}.

We next show that, under some ``tameness'' conditions on the generating family, the generating family (co)homology does not depend on the choices of  $\epsilon$ and $\infin$.  Since $f$ is defined on the non-compact space $M \times \rr^N$, its behavior outside a compact set must be sufficiently well-behaved in order to apply the Morse-theoretic lemmas enumerated in the previous section.  An important class of such generating families satisfies the following condition:

\begin{defn} \label{defn:li} A function $g: M \times \rr^N \to \rr$ is \dfn{linear-at-infinity} if $g$ can be written as
$$g(x, \e) = g^c(x, \e) + A(\e),$$
where $g^c$ has compact support and $A$ is a non-zero linear function. 
\end{defn}

This convention is particularly convenient for producing compact Legendrians when $M= \rr^n$, as seen in \cite{f-r, lisa-jill}.  There are two problems with the definition of linear-at-infinity:  first, it is not preserved under stabilization.  Second, it is easy to check that if $f$ is linear-at-infinity, then the associated difference function $\delta$ is no longer linear-at-infinity.  All is not lost, however, as the following lemmas show:

\begin{lem} \label{lem:lq-l} If $f$ is the stabilization of a
  linear-at-infinity generating family, then $f$ is equivalent to a
  linear-at-infinity generating family.
\end{lem}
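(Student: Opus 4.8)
The plan is to reduce everything to the definition of the equivalence relation $\sim$. By hypothesis, $f = g \oplus Q$ for some linear-at-infinity generating family $g\colon M\times\rr^N\to\rr$ and some nondegenerate quadratic form $Q\colon\rr^K\to\rr$. Stabilization is one of the two moves that generate $\sim$, so $f\sim g$; since $g$ is linear-at-infinity by hypothesis, $f$ is equivalent to a linear-at-infinity generating family, which is exactly the claim. (The statement is worth recording precisely because linear-at-infinity is not a property of the equivalence class on the nose: the summand $Q(\e')$ in the formula for $f$ is quadratic, so $f$ itself is visibly not linear-at-infinity and cannot serve as its own representative. But the generating family cohomology is an invariant of the equivalence class by Lemma~\ref{lem:cohom-equiv}, and that class does contain the linear-at-infinity representative $g$.)

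A slightly more hands-on alternative — which also covers the more general shape $f = f^c + A + Q$, with $f^c$ compactly supported on all of $M\times\rr^{N+K}$ and $A$ a non-zero linear function of the first $N$ fiber variables — is to argue by an explicit fiber-preserving shear. After a linear change of fiber coordinates one may take $A(\e)=\e_1$; then apply the fiber-preserving diffeomorphism $\Phi(x,\e,\e') = \bigl(x,\,\e_1 - Q(\e'),\,\e_2,\dots,\e_N,\,\e'\bigr)$ of $M\times\rr^{N+K}$, whose inverse sends $\e_1\mapsto \e_1+Q(\e')$. One computes $f\circ\Phi = \bigl(f^c\circ\Phi\bigr) + \e_1$, and $f^c\circ\Phi$ is again compactly supported: compact support of $f^c$ bounds $\e'$ on the support, hence bounds the polynomial $Q(\e')$, hence bounds $\e_1 = (\e_1 - Q(\e')) + Q(\e')$ there as well. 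So $f\circ\Phi$ has the form (compact support) $+$ (non-zero linear), i.e.\ it is linear-at-infinity. In the literal stabilization case this second route is unnecessary: there $f^c = g^c$ does not involve $\e'$, one simply leaves the quadratic variables alone, and the argument collapses to $f\sim g$ from the first paragraph.

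I do not expect a genuine obstacle here; the work is bookkeeping. In the first approach the only points to check are that the hypothesis really does exhibit $f$ as $g\oplus Q$ with $g$ a generating family for the same Legendrian as $f$ (immediate, since $\oplus Q$ changes neither the fiber critical set in the base directions nor the generated Legendrian) and that $\sim$ is the relation under which our invariants are defined. In the shear approach the only step worth spelling out is the compact-support verification for $f^c\circ\Phi$, which is precisely where one uses that $Q$ is a polynomial being evaluated on a bounded set.
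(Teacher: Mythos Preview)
Your proposal is correct. The first paragraph disposes of the lemma as literally stated: since $f = g \oplus Q$ with $g$ linear-at-infinity and stabilization is one of the generating moves of $\sim$, the conclusion is immediate from the definition of equivalence. The paper does not take this tautological route; its proof is really aimed at the stronger assertion that a \emph{linear-quadratic-at-infinity} function (one equal to $A(\ell)+Q(\eta)$ outside a compact subset of the full domain $M\times\rr^N$, with $f^c$ possibly depending on the quadratic variables) is equivalent to a linear-at-infinity function on the same domain. This is the form actually invoked later (see the footnote about \cite{lisa-jill} and the tameness argument in the naturality proof), and it is precisely what your second paragraph addresses.

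For that stronger statement the two arguments differ. The paper straightens $A_1+Q$ to $A_1$ by matching gradient trajectories: each flowline of $\nabla(A_1+Q)$ meets $\{\ell_1=0\}$ once, and one reparametrizes by the value of $A_1+Q$ along the flowline. Your shear $(\e_1,\e')\mapsto(\e_1-Q(\e'),\e')$ achieves the same cancellation directly, leaving the quadratic coordinates untouched; your check that $f^c\circ\Phi$ retains compact support (bound $\e'$ first, then $Q(\e')$, then $\e_1$) is exactly the point that needs verifying, and it is correct. Both diffeomorphisms carry compact sets to compact sets, so both preserve the compact-support condition on $f^c$. Your shear is the more elementary of the two; the paper's construction has the mild conceptual advantage of illustrating the general Morse-theoretic principle that a function without critical points is equivalent to a linear one via its gradient flow, but for the purpose at hand the explicit shear is cleaner and shorter.
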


\begin{proof} Let $f: M \times \rr^K \times \rr^{N-K} \to \rr$ be the
  stabilization of a linear at infinity generating family.  Thus we
  can assume that there is a non-zero linear function $A: \rr^K \to
  \rr$ and a non-degenerate quadratic function $Q: \rr^{N-K} \to \rr$
  so that outside a compact set of $M \times \rr^K \times \rr^{N-K}$,
  $f(x, \ell, \eta) = A(\ell) + Q(\eta)$.  Furthermore, after applying
  a fiber-preserving diffeomorphism of $M \times \rr^K \times
  \rr^{N-K}$ (the product of a linear transformation of $\rr^K$ and
  the identity transformation of $\rr^{N-K}$), we can assume that
  outside a compact set, $f(x, \ell, \eta) = A_1(\ell) + Q(\eta)$ with
  $A_1(\ell) = A_1(\ell_1, \dots, \ell_K) = \ell_1$.  To complete the
  proof, it suffices to show that the linear-quadratic function
  $g(\ell, \eta) = A_1(\ell) + Q(\eta)$ is equivalent to the linear
  function $A_1$.  Namely, we will construct a diffeomorphism $\Phi$
  of $\rr^K \times \rr^{N-K}$ so that $A_1 \circ \Phi ( \ell, \eta) =
  g(\ell, \eta)$.

  Notice that there are no critical points of $g$, and with respect to
  the standard metric on $\rr^K \times \rr^{N-K}$, each gradient
  trajectory of $g$ will intersect the hyperplane $\{ \ell_1 = 0 \}$
  transversally at precisely one point.  Suppose that $(\ell_1,
  \ell_2, \dots, \ell_K, \eta)$ and $(0, \ell_2, \dots, \ell_K,
  \eta')$ are on the same gradient trajectory of $g$, and $g(\ell_1,
  \ell_2, \dots, \ell_K, \eta) =t$.  Consider the diffeomorphism
  $\Phi$ of $\rr^K \times \rr^{N-K}$ given by $\Phi(\ell_1, \ell_2,
  \dots, \ell_K, \eta) = (t, \ell_2, \dots, \ell_K, \eta')$; this
  diffeomorphism is constructed by appropriate time flows along the
  gradient trajectories of $g$ and $A_1$.  By construction, $A_1 \circ
  \Phi ( \ell, \eta) = g(\ell, \eta)$, as desired.
\end{proof}

\begin{lem}[\cite{f-r}] \label{lem:lin-diff} If $f$ is linear-at-infinity, then the associated difference function $\delta$ is equivalent to a linear-at-infinity function.  \end{lem}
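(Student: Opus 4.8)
The goal is to show that if $f: M \times \rr^N \to \rr$ is linear-at-infinity, so $f(x,\e) = f^c(x,\e) + A(\e)$ with $f^c$ compactly supported and $A$ a nonzero linear function, then the difference function $\delta(x,\e,\te) = f(x,\te) - f(x,\e)$ is equivalent to a linear-at-infinity function on $M \times \rr^{2N}$. The key observation is that, outside a compact set in the two fiber variables, $\delta$ becomes the \emph{difference of the linear parts}, namely $A(\te) - A(\e)$, which is itself a linear function on $\rr^{2N}$ — but it may be zero on a large subspace (the diagonal $\{\e = \te\}$), so $\delta$ is not linear-at-infinity on the nose. The strategy is therefore to change fiber coordinates so that the compactly-supported perturbation $f^c(x,\te) - f^c(x,\e)$ is absorbed along the ``good'' direction where $A(\te) - A(\e)$ is genuinely linear, while leaving the degenerate directions untouched.

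First I would make a linear change of fiber coordinates (a fiber-preserving diffeomorphism that is the identity on $M$) so that $A(\e) = \e_1$ and $A(\te) = \te_1$; then the linear part of $\delta$ is simply $\te_1 - \e_1$. Next I would introduce new coordinates $u = \te_1 - \e_1$ and $v = \te_1 + \e_1$ (and keep $\e_2,\dots,\e_N,\te_2,\dots,\te_N$), which is again a linear, hence fiber-preserving, change; in these coordinates $\delta = u + \rho(x, v, \e_2,\dots,\te_N)$ where $\rho(x,\cdot) = f^c(x,\te) - f^c(x,\e)$ has compact support in $(x, v, \e_{\geq 2}, \te_{\geq 2})$ — note $\rho$ does \emph{not} depend on $u$. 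Now I can run exactly the argument of Lemma~\ref{lem:lq-l}: the function $\delta = u + \rho$ has no critical points where we need to act (in the $u$-direction $\partial_u \delta = 1 \neq 0$ everywhere), each gradient trajectory of $\delta$ crosses the hyperplane $\{u = 0\}$ transversally at one point, and flowing along these trajectories against the model linear function $u$ produces a fiber-preserving diffeomorphism $\Phi$ with $u \circ \Phi = \delta$ outside... actually more carefully: I would produce $\Phi$ so that $\delta \circ \Phi^{-1}$ is equal to $u$ outside a compact set. Since $u = \frac12(\te_1 - \e_1)$ is, after undoing the linear change of coordinates, a nonzero linear function of the original fiber variables, this exhibits $\delta$ as equivalent (by fiber-preserving diffeomorphisms alone, no stabilization) to a linear-at-infinity function.

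One technical point I would need to verify is that the diffeomorphism $\Phi$ constructed by the flow-matching trick is itself fiber-preserving, i.e.\ acts as the identity on the base $M$; this holds because $\delta$ and the model function $u$ both have gradients (in the product metric) with vanishing $M$-component outside the compact support of $f^c$, and inside we must be slightly careful — but since $\partial_x \delta = \partial_x f^c(x,\te) - \partial_x f^c(x,\e)$ is bounded and the flow only moves the $u$-coordinate, one arranges the flow to fix $x$ and all coordinates except $u$. The main obstacle, and the place where the argument needs genuine care rather than a quotation of Lemma~\ref{lem:lq-l}, is precisely this: in Lemma~\ref{lem:lq-l} the model function $g$ itself had the product form with \emph{no} base dependence, whereas here the perturbation $\rho$ genuinely depends on $x$, so I must check that flowing along gradient trajectories of $\delta$ does not move the base point and that the resulting $\Phi$ is smooth and globally defined (which follows from completeness of the gradient flow of $\delta$ in the $u$-direction, since $\rho$ is bounded). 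Once that is in hand, the conclusion is immediate. Since this lemma is attributed to \cite{f-r}, it would also be reasonable simply to cite that reference and sketch the coordinate change above as the idea.
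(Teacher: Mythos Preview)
The paper does not prove this lemma at all; it is stated with a citation to \cite{f-r} and nothing more. Your closing suggestion to simply cite Fuchs--Rutherford is therefore exactly what the paper does.

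That said, the sketch you give contains a genuine error. After the linear change to $(u,v)$ with $u=\te_1-\e_1$ and $v=\te_1+\e_1$, the remainder $\rho=f^c(x,\te)-f^c(x,\e)$ \emph{does} depend on $u$, since $\e_1=(v-u)/2$ and $\te_1=(v+u)/2$ both appear in the arguments of $f^c$. It is also \emph{not} compactly supported in the remaining variables: if $(x,\e)$ lies in the support of $f^c$ while $\te$ is far away, then $\rho=-f^c(x,\e)$ is nonzero and $\te_{\geq 2}$ is completely unconstrained (the support of $\rho$ in $M\times\rr^{2N}$ is a ``cross''-shaped, noncompact set). Consequently $\partial_u\delta = 1 + \partial_u\rho$ need not equal $1$, and need not even be nonzero, so the gradient-flow straightening you borrow from Lemma~\ref{lem:lq-l} cannot be invoked as written.

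The actual argument in \cite{f-r} has to contend with exactly this non-compactness of $f^c(x,\te)-f^c(x,\e)$; it does so by a more careful sequence of fiber-preserving shifts (moving one of the $\e_N$ or $\te_N$ coordinates by a function of the \emph{other} block of variables, which is automatically a diffeomorphism) rather than by a single global flow. If you want to include a sketch, that is the idea to convey; otherwise, citing \cite{f-r} as the paper does is the appropriate course.
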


\begin{cor} \label{lem:e-l-indep} If $f: M \times \rr^N \to \rr$ is a linear-at-infinity generating family for the Legendrian $\leg$, then the isomorphism classes of 
$\rgh{*}{f}$ and $\gh{*}{f}$ are independent of the choice of $\epsilon$ and $\infin$.  \end{cor}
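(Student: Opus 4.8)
The plan is to push the problem onto the Morse-theoretic lemmas of Section~\ref{sec:gf} after trading the difference function $\delta$ for an equivalent linear-at-infinity model. By Lemma~\ref{lem:lin-diff} there is a linear-at-infinity function $\delta'$ with $\delta'\sim\delta$. First I would observe that equivalence of families leaves the set of nonzero critical values unchanged: a fiber-preserving diffeomorphism visibly does, and stabilization by a nondegenerate quadratic $Q$ only adds the single critical point of $Q$, with critical value $0$. Hence, by Proposition~\ref{prop:leg-crit-point}, the constants $\lmin$ and $\lmax$ are the same whether computed from $\delta$ or from $\delta'$, so the admissible window $0<\epsilon<\lmin\le\lmax<\infin$ of (\ref{ineq:epsilon-infin}) is identical for the two functions. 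Next, Lemma~\ref{lem:equiv-he} furnishes a fixed integer $q$ with $H^{k}(\delta^{\infin},\delta^{a})\cong H^{k+q}\bigl((\delta')^{\infin},(\delta')^{a}\bigr)$ for every $a<\infin$; since $q$ does not depend on $a$ or $\infin$, it suffices to prove that $H^{*}\bigl((\delta')^{\infin},(\delta')^{a}\bigr)$ is independent of $\infin\in(\lmax,\infty)$ and of $a$ as it ranges over $(0,\lmin)$ (the relative case $a=\epsilon$) or over $(-\lmin,0)$ (the total case $a=-\epsilon$).

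For the independence itself I would invoke the Critical Non-Crossing Lemma~\ref{lem:crit-non-crossing} with the constant family $g_s\equiv\delta'$. Given two admissible choices $(\epsilon,\infin)$ and $(\epsilon',\infin')$, choose continuous paths $\beta$ from $\infin$ to $\infin'$ inside $(\lmax,\infty)$ and $\alpha$ from $\epsilon$ to $\epsilon'$ inside $(0,\lmin)$ (respectively from $-\epsilon$ to $-\epsilon'$ inside $(-\lmin,0)$); these exist because the intervals are connected, and $\alpha(s)\le\beta(s)$ for all $s$. Writing $\delta'=(\delta')^{c}+A$ with $(\delta')^{c}$ compactly supported and $A$ linear, $\delta'$ carries an integrable, gradient-like vector field $X$ that equals $\nabla A$ outside a compact set. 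All critical values of $\delta'$ lie in $\{0\}\cup[\lmin,\lmax]\cup[-\lmax,-\lmin]$, so the compact set $\alpha([0,1])\cup\beta([0,1])$ has positive distance, say $\rho$, from them; taking the constant in the hypothesis of Lemma~\ref{lem:crit-non-crossing} to be any value less than $\rho$, the preimages appearing in (\ref{eqn:cnc}) contain no critical points of $\delta'$, so $X$ is bounded away from $\mathbf 0$ there and the hypothesis is met. Lemma~\ref{lem:crit-non-crossing} then yields a homotopy equivalence of pairs $\bigl((\delta')^{\infin},(\delta')^{\pm\epsilon}\bigr)\simeq\bigl((\delta')^{\infin'},(\delta')^{\pm\epsilon'}\bigr)$ (with a consistent choice of sign), hence an isomorphism in relative cohomology; combined with the degree shift $q$ and Definition~\ref{defn:gh}, this gives the asserted independence for $\rgh{*}{f}$ and $\gh{*}{f}$.

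The main obstacle is not the homotopy-theoretic bookkeeping but the geometric input invoked above: that a linear-at-infinity function genuinely admits an integrable (complete-flow) gradient-like vector field that is controlled — equal to $\nabla A$ — near infinity and bounded away from $\mathbf 0$ off its compactly supported critical locus. This is exactly what makes Lemmas~\ref{lem:retract} and~\ref{lem:crit-non-crossing} usable on the non-compact domain $M\times\rr^{N}\times\rr^{N}$, and it is precisely the property that fails for $\delta$ itself, which is why the detour through $\delta'$ is forced. A smaller point requiring care is the claim, used in the reduction, that equivalence of generating families preserves $\lmin$ and $\lmax$, so that the allowable window for $(\epsilon,\infin)$ transfers unchanged to $\delta'$.
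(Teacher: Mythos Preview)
Your proposal is correct and follows essentially the same approach as the paper: pass from $\delta$ to an equivalent linear-at-infinity model via Lemma~\ref{lem:lin-diff}, note (via Proposition~\ref{prop:leg-crit-point}) that no critical values lie in the relevant intervals, and then deform. The only difference is packaging: the paper invokes Lemma~\ref{lem:retract} directly, while you route the same deformation through the Critical Non-Crossing Lemma~\ref{lem:crit-non-crossing} applied to the constant family $g_s\equiv\delta'$; you are also more explicit about the bookkeeping (preservation of nonzero critical values under equivalence, the degree shift $q$ from Lemma~\ref{lem:equiv-he}) that the paper's two-sentence proof leaves implicit.
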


\begin{proof} 
If $f$ is linear-at-infinity, then by Lemma~\ref{lem:lin-diff}, we may assume that the associated difference function is linear-at-infinity as well.  Independence from the choice of $\epsilon$ and $\infin$ now follows from Proposition~\ref{prop:leg-crit-point} and Lemma~\ref{lem:retract}.  
\end{proof}

\begin{rem} \label{rem:e-l-indep}
  In fact, we can say something stronger:  not only are $H^*(\delta^{\infin_0}, \delta^{\epsilon_0})$ and $H^*(\delta^{\infin_1}, \delta^{\epsilon_1})$ isomorphic for different choices of $\infin_i$ and $\epsilon_i$, but because the isomorphisms come from following the negative gradient flow of $\delta$, we can even identify the underlying chain complexes.
\end{rem}

The results of Proposition~\ref{prop:leg-crit-point} led us to consider the levels $\pm \epsilon$ and $\infin$ in the definition of generating family cohomology.  Other combinations of levels are also natural to examine, but these do not lead to new invariants.

\begin{lem}  \label{lem:total-vanish} For sufficiently large $\infin$, the pair $( \delta^\infin, \delta^{-\infin})$ is acyclic.
\end{lem}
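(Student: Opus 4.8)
The plan is to show that the pair $(\delta^\infin, \delta^{-\infin})$ deformation retracts onto a point for $\infin$ large enough, hence is acyclic. The key observation is that $\delta(x,\e,\te) = f(x,\te) - f(x,\e)$, so $\delta$ is \emph{odd} under the involution $\iota(x,\e,\te) = (x,\te,\e)$; more importantly, for each fixed $x$ the sublevel sets of $\delta$ in the fiber direction have a simple structure because $\delta$ is a ``difference of a function with itself.'' By Lemma~\ref{lem:lin-diff} we may assume $\delta$ is equivalent to a linear-at-infinity function, and by Lemma~\ref{lem:equiv-he} it suffices to prove acyclicity for that representative (a shift in degree does not affect acyclicity). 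So assume $\delta$ itself is linear-at-infinity: $\delta = \delta^c + A$ with $\delta^c$ compactly supported and $A$ a nonzero linear functional on $\rr^{2N}$.

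First I would choose coordinates adapted to $A$, writing the linear functional as a single coordinate, say $A(\e,\te) = u_1$ up to a fiber-preserving linear change (as in the proof of Lemma~\ref{lem:lq-l}). Outside a compact set $C$ in $M \times \rr^{2N}$ we then have $\delta(x, u) = u_1$. Now pick $\infin$ so large that $C \subset \{|\delta| < \infin\}$, which is possible since $\delta^c$ has compact support and is thus bounded, and since the projection of $C$ to the $u_1$-axis is bounded. On the region $\delta^{-1}[-\infin,\infin] \setminus (\text{compact core})$ the function $\delta$ has no critical points and in fact agrees with the coordinate $u_1$, whose gradient (in the standard metric) is the constant unit vector field $\partial_{u_1}$ — manifestly integrable and bounded away from $\mathbf{0}$. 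On the compact core one uses a generic choice of $f$ so that $\delta$ is Morse (Proposition~\ref{prop:leg-crit-point}) and patches the gradient-like vector fields together.

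The heart of the argument is then a two-step retraction. Step one: apply Lemma~\ref{lem:retract} to the region $\delta^{-1}[\lmax, \infin]$, where $\delta$ has no critical points at all — its only critical values are $0$ and $\pm\ell(\gamma)$ for Reeb chords $\gamma$, all of which lie in $(-\lmax, \lmax)$ by the definition of $\lmax$ — to conclude that $\delta^{\lmax}$ is a deformation retract of $\delta^\infin$. Step two: symmetrically, $\delta^{-\lmax}$ is a deformation retract of $\delta^{-\infin}$; indeed on $\delta^{-1}[-\infin, -\lmax]$ there are again no critical points, so $\delta^{-\infin}$ retracts onto $\delta^{-\lmax}$ — but wait, the retraction lemma gives $\delta^a$ a retract of $\delta^b$ for $a<b$, so we instead run the \emph{positive} gradient flow, or equivalently observe that on $\delta^{-1}[-\infin,-\lmax]$ the sublevel set $\delta^{-\infin}$ is a deformation retract of $\delta^{-\lmax}$. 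The point is that there is nothing between level $-\infin$ and level $\infin$ except the critical locus in $(-\lmax,\lmax)$; pushing in from both ends collapses $(\delta^\infin, \delta^{-\infin})$ onto $(\delta^{\lmax}, \delta^{-\lmax})$ without changing the homotopy type of the pair. Finally, since $\delta$ is linear-at-infinity with $\delta = u_1$ outside a compact set, both $\delta^{\lmax}$ and $\delta^{-\lmax}$ are themselves contractible: the flow of $\partial_{u_1}$ (suitably cut off) retracts each half-space-like sublevel set onto a point, and more carefully one retracts $\delta^{\lmax}$ all the way down past all critical values to $\delta^{-\infin'}$ for $\infin'$ huge, which is a genuine linear half-space, hence contractible; the same for $\delta^{-\lmax}$. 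A pair of contractible spaces is acyclic.

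The main obstacle I anticipate is \textbf{not} the topology but the analytic bookkeeping in Step two and the contractibility claim: one must ensure the gradient-like vector field used for the deformations remains integrable and bounded away from $\mathbf{0}$ on the relevant \emph{non-compact} slabs $\delta^{-1}[-\infin,-\lmax]$ and $\delta^{-1}[\lmax,\infin]$, which requires the linear-at-infinity normal form to control the vector field uniformly at infinity — exactly the kind of estimate Lemma~\ref{lem:retract} is designed to handle, but which must be invoked with care since $\delta$ is only \emph{equivalent} to a linear-at-infinity function rather than literally one. An alternative, perhaps cleaner, route avoids the normal form: observe directly that $\delta^{-\infin}$ is contractible (it is a sublevel set of a linear-at-infinity function at a level below all critical values, hence deformation retracts to a point via Lemma~\ref{lem:retract}), and likewise the complement-type considerations show $\delta^\infin$ deformation retracts onto the whole domain $M\times\rr^{2N}$ modulo the same linear tail, which is homotopy equivalent to $M$; then compute the pair's cohomology from the long exact sequence and the fact that the inclusion $\delta^{-\infin} \hookrightarrow \delta^\infin$ induces an isomorphism on cohomology because no critical values are crossed. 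Either way, the only real content is that the interval $(-\infin,\infin)$, for $\infin$ large, contains \emph{all} critical values of $\delta$, so nothing happens to the pair.
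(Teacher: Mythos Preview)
Your proposal contains a genuine error that undermines the argument. Your concluding sentence --- ``the only real content is that the interval $(-\infin,\infin)$ contains all critical values of $\delta$, so nothing happens to the pair'' --- is exactly backwards. If all critical values lie in $(-\infin,\infin)$, then passing from $\delta^{-\infin}$ to $\delta^\infin$ crosses \emph{every} critical value, and the pair $(\delta^\infin,\delta^{-\infin})$ carries \emph{all} of the Morse data of $\delta$. For a generic Morse function this pair is highly nontrivial. The same mistake appears in your ``alternative route,'' where you assert that the inclusion $\delta^{-\infin}\hookrightarrow\delta^\infin$ induces an isomorphism because no critical values are crossed; and it reappears in your main line when you try to retract $\delta^{\lmax}$ ``all the way down past all critical values to $\delta^{-\infin'}$'' --- there are critical values in that interval (namely $0$ and every $-\ell(\gamma)$), so Lemma~\ref{lem:retract} does not apply. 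Finally, the claim that sublevel sets are contractible is not correct for general $M$: they are homotopy equivalent to $M$, not to a point.

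What actually makes the lemma true is not the location of the critical values but the special \emph{global} shape of $\delta$: it is a compactly supported perturbation of a linear function $A$. The paper exploits this directly by introducing the interpolation $\delta_s = s\,\delta^c + A$ and applying the Critical Non-Crossing Lemma~\ref{lem:crit-non-crossing} with the constant paths $\alpha(s)=-\infin$, $\beta(s)=\infin$; choosing $\infin$ larger than $\|\delta|_U\|_\infty$ guarantees these levels are never critical along the family, so $(\delta^\infin,\delta^{-\infin})$ is homotopy equivalent to $(A^\infin,A^{-\infin})$, which is acyclic because $A$ has no critical points. A correct version of your approach is also available, but it is not what you wrote: choose $\infin$ so large that the support $U$ of $\delta^c$ lies in $\{|\delta|<\infin\}$ and in $\{|A|<\infin\}$; then $\delta^{\pm\infin}$ and $A^{\pm\infin}$ literally coincide as sets, and one applies Lemma~\ref{lem:retract} to $A$ (not to $\delta$) to see the inclusion is a homotopy equivalence. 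Either way, the point is that you must compare to the linear function, not merely count critical values of $\delta$.
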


\begin{proof} By Lemma~\ref{lem:lin-diff}, we may assume that $\delta$ is linear-at-infinity, i.e. that we may write $\delta(x,\eta, \tilde{\eta}) = \delta^c(x,\eta, \tilde{\eta}) + A(\eta, \tilde{\eta})$ for a compactly supported function $\delta^c$ and non-zero linear function $A$.  Define a $1$-parameter family of functions $\delta_s$ by $\delta_s(x, \eta, \tilde{\eta}) = s \delta^c(x,\eta, \tilde{\eta}) + A(\eta, \tilde{\eta})$.  Denoting the support of $\delta^c$ by $U$, take $\infin$ greater than $\|\delta|_U\|_\infty$.  Applying Lemma~\ref{lem:crit-non-crossing} to $\delta_s$, the constant paths $\alpha(s) = -\infin$ and $\beta(s) = \infin$, and the gradient field $X = \nabla \delta$ for some choice of metric on $M \times \rr^{2N}$, we see that the pair $(\delta^\infin, \delta^{-\infin})$ is homotopy equivalent to $(A^\infin, A^{-\infin})$, which is obviously acyclic.  \end{proof}

\begin{cor}  \label{cor:alt-gf-hom} If $f: M \times \rr^N \to \rr$ is a linear-at-infinity generating family for the Legendrian submanifold $\leg \subset J^1(M)$, then for $\epsilon, \infin$ satisfying Inequalities~(\ref{ineq:epsilon-infin}), we have:
$$
\begin{aligned} 
H_{k + N } \left( \delta^\epsilon, \delta^{-\infin} \right) &\simeq GH_{k} (f), \text{ and } \\
H_{k + N } \left( \delta^{-\epsilon}, \delta^{-\infin} \right) &\simeq \widetilde{GH}_k(f),
\end{aligned}
$$
for all degrees $k$.
\end{cor}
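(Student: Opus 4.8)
The plan is to read off both isomorphisms from the long exact sequence of a triple of sublevel sets, using the vanishing of $H_*(\delta^\infin, \delta^{-\infin})$ supplied by Lemma~\ref{lem:total-vanish}. Recall that, under the homology version of Definition~\ref{defn:gh} (with the same degree shift by $N+1$), one has $GH_k(f) = H_{k+N+1}(\delta^\infin, \delta^\epsilon)$ and $\widetilde{GH}_k(f) = H_{k+N+1}(\delta^\infin, \delta^{-\epsilon})$, so the content of the corollary is a degree-$1$ shift in the relative group obtained by sliding the base level from $\delta^{\pm\epsilon}$ down to $\delta^{-\infin}$.

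The first step is a reduction to the case of large $\infin$. Lemma~\ref{lem:total-vanish} asserts acyclicity of $(\delta^\infin, \delta^{-\infin})$ only once $\infin$ is sufficiently large, while the corollary permits any $\infin > \lmax$; so I would first observe, arguing exactly as in the proof of Corollary~\ref{lem:e-l-indep} --- that is, using that by Proposition~\ref{prop:leg-crit-point} the difference function $\delta$ has no critical value outside $\{0\} \cup [\lmin, \lmax] \cup [-\lmax, -\lmin]$, together with the deformation-retract Lemma~\ref{lem:retract} --- that each of the four groups $H_{k+N}(\delta^{\epsilon}, \delta^{-\infin})$, $H_{k+N}(\delta^{-\epsilon}, \delta^{-\infin})$, $H_{k+N+1}(\delta^\infin, \delta^{\epsilon})$, $H_{k+N+1}(\delta^\infin, \delta^{-\epsilon})$ is independent of the choice of $\epsilon \in (0,\lmin)$ and $\infin > \lmax$, and in particular that $\infin$ may be taken as large as we wish. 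Hence it suffices to establish the two isomorphisms for a single $\infin$ large enough that Lemma~\ref{lem:total-vanish} applies.

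With such an $\infin$ fixed, I would write the long exact sequence in homology of the triple $(\delta^\infin, \delta^\epsilon, \delta^{-\infin})$, which is legitimate since $-\infin < \epsilon < \infin$:
\[
  \cdots \to H_n(\delta^\infin, \delta^{-\infin}) \to H_n(\delta^\infin, \delta^\epsilon) \xrightarrow{\ \partial\ } H_{n-1}(\delta^\epsilon, \delta^{-\infin}) \to H_{n-1}(\delta^\infin, \delta^{-\infin}) \to \cdots.
\]
Since by Lemma~\ref{lem:total-vanish} every term $H_*(\delta^\infin, \delta^{-\infin})$ vanishes, the connecting map $\partial$ is an isomorphism in all degrees; taking $n = k+N+1$ gives $GH_k(f) = H_{k+N+1}(\delta^\infin, \delta^\epsilon) \simeq H_{k+N}(\delta^\epsilon, \delta^{-\infin})$. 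Replacing $\delta^\epsilon$ by $\delta^{-\epsilon}$ throughout (still with $-\infin < -\epsilon < \infin$) runs the identical argument for the triple $(\delta^\infin, \delta^{-\epsilon}, \delta^{-\infin})$ and yields $\widetilde{GH}_k(f) = H_{k+N+1}(\delta^\infin, \delta^{-\epsilon}) \simeq H_{k+N}(\delta^{-\epsilon}, \delta^{-\infin})$, completing the proof.

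The only genuinely delicate point is the bookkeeping in the first step: reconciling the ``$\infin$ sufficiently large'' hypothesis of Lemma~\ref{lem:total-vanish} with the weaker constraint $\infin > \lmax$ of the corollary, i.e.\ checking that all four groups are genuinely insensitive to the cutoff levels. Everything after that is the formal exact-sequence manipulation above, so I expect no substantive obstacle --- the argument is parallel to the one already used to prove Proposition~\ref{prop:full-rel-dual}, just with a different (and, thanks to Lemma~\ref{lem:total-vanish}, vanishing) third term.
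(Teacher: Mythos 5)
Your proof is correct and follows essentially the same route as the paper: the paper's own proof is precisely the long exact sequences of the triples $(\delta^\infin, \delta^{\epsilon}, \delta^{-\infin})$ and $(\delta^\infin, \delta^{-\epsilon}, \delta^{-\infin})$ combined with the acyclicity of $(\delta^\infin, \delta^{-\infin})$ from Lemma~\ref{lem:total-vanish}. Your preliminary reduction to large $\infin$ via Corollary~\ref{lem:e-l-indep} is a point the paper leaves implicit, and it is handled correctly.
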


\begin{proof} The corollary follows from Lemma~\ref{lem:total-vanish} and the long exact sequences of the triples $(\delta^\infin, \delta^{\epsilon}, \delta^{-\infin})$ and $(\delta^\infin, \delta^{-\epsilon}, \delta^{-\infin})$.
\end{proof}

\subsection{Invariance}

Given a Legendrian submanifold $\leg \subset J^1M$, let:
$$\lingf(\leg) = \{ f:   f \text{ is a linear-at-infinity generating family for } \leg \}.$$
 On the level of equivalence, we will be interested in equivalence classes of generating families that contain linear-at-infinity representatives.  

When $\leg$ is a Legendrian unknot in the standard contact $\rr^3$ with maximal Thurston-Bennequin invariant, all elements of $\lingf(\leg)$ are equivalent; see \cite{lisa-jill}.\footnote{In \cite{lisa-jill}, the focus was on generating families that are linear-quadratic-at-infinity.  Lemma~\ref{lem:lq-l}, however, can be used to show that linear-quadratic-at-infinity functions are equivalent to linear-at-infinity ones.}  In general, the set $\lingf(\leg)$ is not well understood, though see \cite{chv-pushkar, f-r,henry:mcs} for some recent progress.

To form an invariant of a Legendrian submanifold $\leg$ with a generating family, it is important to know that the existence of a linear-at-infinity generating family persists under Legendrian isotopy.  A proof of the following proposition can be given using
Chekanov's ``composition formula'' \cite{chv:quasi-fns}; see, for example, \cite{lisa-jill} and Section~\ref{sec:tqft}.
 
\begin{prop} [Persistence of Legendrian Generating
  Families] \label{prop:leg-persist} Suppose $M$ is compact.  For $t \in [0,1]$, let $\leg_t \subset J^1M$ be an isotopy of Legendrian submanifolds.  If $\leg_0$ has a linear-at-infinity generating family $f$, then there exists a smooth path of generating families $f_t: M \times \rr^N \to \rr$ for $\leg_t $ so that $f_0$ is a stabilization of $f$ and $f_t = f_0$ outside a compact set.
\end{prop}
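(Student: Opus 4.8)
The plan is to realize the Legendrian isotopy $\{\leg_t\}$ by an ambient, compactly supported contact isotopy of $J^1M$, to subdivide this isotopy into steps that are each $C^1$-close to the identity, and then to transport the generating family $f$ across each step using Chekanov's composition formula, all the while tracking the behavior of tameness and of the stabilization structure. First I would invoke the contact isotopy extension theorem: since $M$ is compact the trace $\bigcup_{t\in[0,1]}\leg_t$ is a compact subset of $J^1M$, so there is a contact isotopy $\phi_t\colon J^1M\to J^1M$ with $\phi_0=\id$, equal to the identity outside a fixed relatively compact neighborhood of the trace, and with $\phi_t(\leg_0)=\leg_t$. After reparametrizing the $t$-variable I may assume that $\{\phi_t\}$ is constant for $t$ near each point of a partition $0=t_0<t_1<\dots<t_m=1$ chosen fine enough that each $\psi_i:=\phi_{t_{i+1}}\circ\phi_{t_i}^{-1}$ is uniformly $C^1$-close to the identity. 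Each $\psi_i$ is then a compactly supported contactomorphism close to the identity, hence admits a generating family of functions $G_i$ with a fixed number $K_i$ of fiber variables that is quadratic outside the compact support of $\psi_i$.

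Next I would apply Chekanov's composition formula \cite{chv:quasi-fns} (see also \cite{lisa-jill} and Section~\ref{sec:tqft}): given a generating family $h$ for a Legendrian $\leg'$ and a generating family $G_i$ for $\psi_i$ as above, it produces a generating family $h\# G_i$ for $\psi_i(\leg')$ defined on $M\times\rr^{N'}\times\rr^{K_i}$, depending smoothly on the ambient-isotopy parameter, and normalized so that composing with a generating family for the identity has the effect of a stabilization, $h\# G=h\oplus Q$ for a non-degenerate quadratic $Q$. Applying this inductively over the subintervals, starting from $f$ on $[t_0,t_1]$ and using the reparametrization so that the generated families are literally constant near the partition points (which makes the glued path automatically smooth across them), yields a smooth path of generating families for $\leg_t$, $t\in[0,1]$. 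Padding each stage by trivial non-degenerate quadratics in the fiber variables introduced at the other stages, I can place all of them on the single space $M\times\rr^N$ with $N=N_0+K_1+\dots+K_m$, where $N_0$ is the fiber dimension of $f$.

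It then remains to verify the two asserted features. Since the composition over $[t_0,t_1]$ is, at $t=t_0$, composition with a generating family for $\phi_0=\id$, the normalization above shows that $f_0$ is a stabilization of $f$ (by $Q$ together with the padding quadratics). For the constancy at infinity: $f$ is linear-at-infinity, and the $\psi_i$, their generating families $G_i$, and all the padding quadratics are supported in a fixed compact set $C\subset M\times\rr^N$ (the product of the support neighborhood of the trace with a compact set in the fiber directions), so outside $C$ every $f_t$ agrees with the linear-plus-quadratic function $f_0$. Hence $f_t=f_0$ outside a compact set, and by Lemma~\ref{lem:lq-l} each $f_t$ is moreover equivalent to a linear-at-infinity generating family.

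The main obstacle is that essentially all of the substance sits in Chekanov's composition formula: that $h\# G$ genuinely generates the image Legendrian, that it is smooth in the ambient-isotopy parameter, that it specializes to a stabilization over the identity, and that a compactly supported contactomorphism $C^1$-close to the identity admits a generating family quadratic at infinity. These facts are established in \cite{chv:quasi-fns} and used in \cite{lisa-jill}; once they are in hand, the remaining steps — isotopy extension, subdivision into $C^1$-small pieces, reparametrization to gain smoothness at the partition points, and padding to a common fiber dimension — are routine. I would therefore present the argument as an assembly of these ingredients, carrying out explicitly only the stabilization-at-identity normalization and the compact-support/tameness verification.
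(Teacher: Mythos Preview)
Your proposal is correct and follows essentially the same approach the paper indicates: the paper does not give a detailed proof of this proposition but simply refers to Chekanov's composition formula \cite{chv:quasi-fns}, to \cite{lisa-jill}, and to Section~\ref{sec:tqft}, where the composition construction is carried out in the Lagrangian setting. Your outline---contact isotopy extension, subdivision into $C^1$-small steps, iterated application of the composition formula with padding to a common fiber dimension, and the check that the result is a stabilization of $f$ and constant at infinity---is precisely the intended argument, in fact spelled out in more detail than the paper itself provides.
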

  
\begin{rem} We will often be considering generating families for compact Legendrians in $J^1(\rr^n)$.  The above persistence will still apply since these Legendrians can be thought of as living in $J^1S^n$, and the linear-at-infinity condition allows the generating families to be defined on $S^n \times \rr^N$.  \end{rem}

\begin{cor} \label{cor:end-naturality} If $\leg \subset J^1M$ is a Legendrian submanifold and $\kappa_t$ is a contact isotopy of $J^1M$, then for every $f \in \lingf(K)$ there exists $f_t \in \lingf(\kappa_t(K))$ and an isomorphism $\kappa_t^\#: GH^*([f_t]) \to GH^*([f])$.
\end{cor}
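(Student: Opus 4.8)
The plan is to transport the generating family along the Legendrian isotopy determined by $\kappa_t$, using the persistence result together with the Morse-theoretic lemmas of Section~\ref{sec:gf}. Restrict the contact isotopy to $K$ to get a Legendrian isotopy $\leg_s := \kappa_s(K)$, $s\in[0,t]$, with $\leg_0=K$; when $M=\rr^n$ first regard $K$ as a compact Legendrian in $J^1S^n$ as in the Remark following Proposition~\ref{prop:leg-persist}, and (harmlessly) take $\kappa_s$ compactly supported near the track of $K$. Applying Proposition~\ref{prop:leg-persist} to $\leg_s$ and $f\in\lingf(K)$ yields a smooth path $f_s\colon M\times\rr^N\to\rr$ of generating families for $\leg_s$ with $f_0$ a stabilization of $f$ and $f_s=f_0$ off a fixed compact set. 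Writing $f=f^c+A$ with $f^c$ compactly supported and $A$ linear, we get $f_0=f\oplus Q=f^c+A+Q$ for a nondegenerate quadratic $Q$, so every $f_s$ equals the linear-plus-quadratic function $A+Q$ off a compact set; the argument of Lemma~\ref{lem:lq-l} (which uses only this ``linear-quadratic-at-infinity'' behavior) then shows each class $[f_s]$ contains a linear-at-infinity representative, which we take as the required $f_t\in\lingf(\kappa_t(K))$. Finally $f_0\sim f$, so $GH^*([f_0])=GH^*([f])$ by Lemma~\ref{lem:cohom-equiv}, the degree shift by $N+1$ in Definition~\ref{defn:gh} absorbing the stabilization.

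It remains to produce an isomorphism $GH^*([f_s])\simeq GH^*([f_0])$ for all $s\in[0,t]$. Pass to the difference functions $\delta_s$ of $f_s$: by Lemma~\ref{lem:lin-diff} each is equivalent to a linear-at-infinity function, the path $s\mapsto\delta_s$ is continuous, and $\delta_s=\delta_0$ off a fixed compact subset of $M\times\rr^{2N}$. Since $\{\leg_s\}$ is a compact family, all of their Reeb chords lie in a bounded region of $J^1M$, so the nonzero critical values of every $\delta_s$ --- namely $\pm\ell(\gamma)$ over Reeb chords $\gamma$ of $\leg_s$, by Proposition~\ref{prop:leg-crit-point} --- are bounded by a single constant; fix $\infin$ above this bound. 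If one can choose a continuous lower level $\alpha(s)\in(0,\lmin(\leg_s))$ avoiding the critical values of $\delta_s$, then applying the Critical Non-Crossing Lemma~\ref{lem:crit-non-crossing} to the family $\delta_s$ and the paths $\alpha$ and $\beta\equiv\infin$ --- the required integrable, gradient-like vector fields bounded away from $\mathbf 0$ near levels $\alpha(s)$ and $\infin$ exist because the $\delta_s$ are linear-at-infinity --- shows that $(\delta_s^{\infin},\delta_s^{\alpha(s)})$ has homotopy type independent of $s$; by Corollary~\ref{lem:e-l-indep} this pair computes $GH^*([f_s])$ (all $f_s$ share the fiber dimension $N$), so $GH^*([f_s])\simeq GH^*([f_0])$. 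The map $\kappa_t^\#$ is the composite of these isomorphisms with the identification of the first paragraph; as in Remark~\ref{rem:e-l-indep} it is realized by the negative gradient flows of the $\delta_s$.

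The main obstacle is exactly the step just flagged: a continuous $\alpha(s)\in(0,\lmin(\leg_s))$ need not exist, since along a generic Legendrian isotopy Reeb chords are born and die with length tending to $0$, so $\lmin(\leg_s)$ may drop to $0$. The way around it is that such births and deaths occur in cancelling pairs: when short Reeb chords $\gamma_1,\gamma_2$ are created, the corresponding generators of the relative cochain complex --- the critical points of $\delta_s$ at levels $+\ell(\gamma_1)$ and $+\ell(\gamma_2)$ --- have adjacent degrees and are joined by a short gradient trajectory, hence span an acyclic subcomplex; so letting $\alpha(s)$ drop past these critical values changes $H^*(\delta_s^{\infin},\delta_s^{\alpha(s)})$ only by acyclic summands, and the conclusion of the previous paragraph survives. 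Equivalently --- and this is the route of \cite{lisa-jill} and Section~\ref{sec:tqft}, following Chekanov's composition formula \cite{chv:quasi-fns} --- one realizes $f_s$ explicitly as $f\#\Phi_s$ for generating functions $\Phi_s$ of the contactomorphisms $\kappa_s$ and reads the induced continuation isomorphism on $GH^*$ directly off the resulting relation between the difference functions. Either way, the same argument applies verbatim to the total cohomology $\widetilde{GH}^*$ and to the homology versions.
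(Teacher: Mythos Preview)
Your proof is correct and follows the paper's approach: the paper's entire argument is the single sentence ``The isomorphisms $(\kappa^s_\pm)^\#$ are constructed by applying the Critical Non-Crossing Lemma~\ref{lem:crit-non-crossing} to the difference functions $\delta^s_\pm$,'' with the construction of $f_t$ implicit from Proposition~\ref{prop:leg-persist}. You have supplied the details the paper omits --- in particular the verification that the $f_s$ remain (up to equivalence) linear-at-infinity via Lemma~\ref{lem:lq-l}, and the handling of Reeb-chord birth/death where $\lmin(\leg_s)\to 0$ --- the latter being exactly the point the paper defers to the references \cite{lisa-jill,lisa:links}.
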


The isomorphisms $(\kappa^s_\pm)^\#$ are constructed by applying the Critical Non-Crossing Lemma~\ref{lem:crit-non-crossing} to the difference functions $\delta^s_\pm$.

In general, since it many not be the case that all elements in $\lingf(\leg)$ are equivalent, the generating family homology of a linear-at-infinity generating family $f$, is not itself an invariant of the generated Legendrian $\leg$. By Lemma~\ref{lem:crit-non-crossing}, however, we do have:

\begin{prop} [\cite{lisa-jill, lisa:links}] For a compact Legendrian submanifold $\leg \subset J^1M$, the set of all generating family cohomology groups $$\mathcal{GH}^k(\leg) = \{ GH^k([f])\,:\, f \in \lingf (\leg)\},$$ is invariant under Legendrian isotopy.  \end{prop}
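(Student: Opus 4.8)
The plan is to prove the stronger statement: if $\leg_t$, $t\in[0,1]$, is a Legendrian isotopy and $f\in\lingf(\leg_0)$, then there is some $\bar f\in\lingf(\leg_1)$ with $GH^k([\bar f])\cong GH^k([f])$. Running this for the isotopy and for its reverse then gives $\mathcal{GH}^k(\leg_0)=\mathcal{GH}^k(\leg_1)$, and since Legendrian isotopy is generated by such isotopies, the proposition follows. We may take $M$ compact (using the remark following Proposition~\ref{prop:leg-persist} when $M=\rr^n$).

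\textbf{Setup via persistence.} Fix $f\in\lingf(\leg_0)$. First I would invoke Persistence (Proposition~\ref{prop:leg-persist}) to obtain a smooth path $f_t\colon M\times\rr^N\to\rr$ of generating families for $\leg_t$ with $f_0$ a stabilization of $f$ and $f_t=f_0$ outside a compact set $C\subset M\times\rr^N$. Since $f_0$ is a stabilization of the linear-at-infinity function $f$, it agrees outside a compact set with a nonzero linear function plus a nondegenerate quadratic; because $f_1=f_0$ outside a compact set, the same holds for $f_1$, so the argument of Lemma~\ref{lem:lq-l} shows $f_1$ is equivalent to a linear-at-infinity generating family $\bar f\in\lingf(\leg_1)$. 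By Lemma~\ref{lem:cohom-equiv}, $GH^k([f_0])=GH^k([f])$ and $GH^k([f_1])=GH^k([\bar f])$, so it remains to prove $GH^k([f_0])\cong GH^k([f_1])$; note that Persistence supplies the \emph{same} fiber dimension $N$ for all $t$, so the degree shift $k\mapsto k+N+1$ is common to both sides.

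\textbf{The Critical Non-Crossing argument.} The heart of the proof is an application of Lemma~\ref{lem:crit-non-crossing} to the associated difference functions $\delta_t\colon M\times\rr^{2N}\to\rr$. These form a continuous $1$-parameter family, and since $M$ is compact and $f_t=f_0$ off $C$, they all agree with $\delta_0$ outside a single compact set $\tilde C\subset M\times\rr^{2N}$; moreover $\delta_0$ is, up to equivalence, of the form (nonzero linear)$\,\oplus\,$(nondegenerate quadratic) in the fibers (Lemma~\ref{lem:lin-diff} together with the treatment of stabilizations in the proof of Lemma~\ref{lem:cohom-equiv}), so it carries an integrable gradient-like vector field that is bounded away from $\mathbf 0$ outside $\tilde C$ on the preimage of any interval of regular values. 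Next, using that each $\leg_t$ is embedded and that $[0,1]$ is compact, a standard compactness argument shows the Reeb chord lengths of the $\leg_t$ lie in a fixed compact subinterval of $(0,\infty)$; hence I can choose constants $0<m\le\Infin$ and $\epsilon\in(0,m/4)$ with $m/2\pm\epsilon\in(0,\lmin(\leg_t))$ and $\Infin\pm\epsilon>\lmax(\leg_t)$ for all $t$, so that by Proposition~\ref{prop:leg-crit-point} the intervals $[m/2-\epsilon,m/2+\epsilon]$ and $[\Infin-\epsilon,\Infin+\epsilon]$ contain no critical values of any $\delta_t$. For each $t$ I would build the required vector field $X_t$ by gluing, with a partition of unity, the vector field from $\delta_0$ on the region outside $\tilde C$ to a genuine gradient vector field of $\delta_t$ inside $\tilde C$ (the latter being bounded away from $\mathbf 0$ on the compact, critical-point-free set $\tilde C\cap\delta_t^{-1}([m/2-\epsilon,m/2+\epsilon]\cup[\Infin-\epsilon,\Infin+\epsilon])$). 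Applying Lemma~\ref{lem:crit-non-crossing} with the constant paths $\alpha\equiv m/2$ and $\beta\equiv\Infin$ yields a homotopy equivalence $(\delta_0^{\Infin},\delta_0^{m/2})\simeq(\delta_1^{\Infin},\delta_1^{m/2})$, hence an isomorphism on relative cohomology; combined with Corollary~\ref{lem:e-l-indep} (applied to $\delta_0$ and $\delta_1$, both equivalent to linear-at-infinity functions, to see that these pairs compute $GH^k([f_0])$ and $GH^k([f_1])$ regardless of the choice of $\epsilon$ and $\infin$), this gives $GH^k([f_0])\cong GH^k([f_1])$ and finishes the argument.

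\textbf{Main obstacle.} I expect the main difficulty to be verifying the hypothesis of the Critical Non-Crossing Lemma, namely the \emph{uniform} production of integrable gradient-like vector fields $X_t$ that are genuinely bounded away from $\mathbf 0$ on the relevant noncompact sublevel regions with a single $\epsilon$ valid for all $t$. This is exactly where both clauses of Persistence are essential — a common domain, and coincidence with $f_0$ at infinity, so that one tameness statement for $\delta_0$ (from Lemmas~\ref{lem:lq-l} and \ref{lem:lin-diff}) controls the whole family. The remaining ingredients — the symmetry reduction, the bookkeeping with equivalences, and the long exact sequences of triples hidden inside Corollary~\ref{lem:e-l-indep} — are routine.
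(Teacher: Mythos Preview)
Your proposal is correct and follows essentially the same approach as the paper, which simply cites the Critical Non-Crossing Lemma~\ref{lem:crit-non-crossing} (together with the references \cite{lisa-jill, lisa:links}) and gives no further argument. You have supplied the details the paper omits: invoking Persistence (Proposition~\ref{prop:leg-persist}) to produce the path $f_t$, using Lemma~\ref{lem:lq-l} to land back in $\lingf(\leg_1)$, and then building the family of gradient-like vector fields needed to feed the difference functions $\delta_t$ into Lemma~\ref{lem:crit-non-crossing}; your identification of the tameness-at-infinity verification as the only nonroutine step is accurate.
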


\subsection{Additivity}
\label{ssec:additivity}

Generating family cohomology not only produces an invariant of Legendrian submanifolds, but also behaves well under disjoint union; see \cite{arnold-paper} for a similar phenomenon.  The technical conditions involve the ``support'' of a linear-at-infinity generating function. We say that two linear-at-infinity generating families $f_1, f_2$ have disjoint supports if, possibly after stabilizing to match the domains, the functions $f_1^c, f_2^c : M \times \rr^N \to \rr$ have disjoint supports (see Definition~\ref{defn:li}). If two linear-at-infinity generating families have disjoint supports, we may assume that, up to equivalence, they agree with the same linear function $A$ outside a compact set.

\begin{defn} The \dfn{sum of two linear-at-infinity generating families with disjoint supports} is the linear-at-infinity function:
$$f_1 + f_2 = \begin{cases}
f_1, &\text{ on the support of $f_1$}\\
f_2, &\text{ on the support of $f_2$}\\
A, &\text{ otherwise}.
\end{cases}
$$
\end{defn}

We may extend the ideas of disjoint supports and sums to pairs of equivalence classes if there are (linear-at-infinity) representatives of each equivalence class with disjoint supports.  Using these notions, we may specify the behavior of generating family cohomology for a disjoint union of Legendrians:

\begin{prop} Suppose $\leg_1, \leg_2$ are Legendrian submanifolds of $J^1M$ so that $\pi_M(\leg_1) \cap \pi_M(\leg_2) = \emptyset$, where $\pi_M: J^1M \to M$ is the projection. For every $f_i \in \lingf(\leg_i)$, $i=1,2$, $[f_1]$ and $[f_2]$ have disjoint supports and $f_1 + f_2 \in \lingf (\leg_1 \cup \leg_2)$ satisfies
$$ GH^k([f_1 + f_2]) \simeq GH^k([f_1]) \oplus GH^k([f_2]).$$
\end{prop}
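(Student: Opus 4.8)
The plan is to proceed in three stages: first normalize $f_1$ and $f_2$ so that their compactly supported parts live over disjoint regions of $M$ (which also produces $f_1+f_2$ and shows it generates $\leg_1\cup\leg_2$); then read off the difference function of $f_1+f_2$; and finally compute the relevant relative cohomology by localizing the difference function over those disjoint regions via Mayer--Vietoris. For the normalization, since $M$ is compact and $\pi_M(\leg_1),\pi_M(\leg_2)$ are disjoint compact subsets, I would fix disjoint open sets $W_i\supset\pi_M(\leg_i)$ and a compact $K_i$ with $\pi_M(\leg_i)\subset\operatorname{int}K_i\subset K_i\subset W_i$. The zero set of $\partial_\e f_i$ is $\Sigma_{f_i}$, which projects onto $\pi_M(\leg_i)$, so $\partial_\e f_i$ is nonvanishing over $M\setminus\pi_M(\leg_i)$; since $f_i$ is linear-at-infinity, $\partial_\e f_i$ is a nonzero constant outside a compact set, hence bounded away from $\mathbf 0$ on the closed set $(M\setminus\operatorname{int}K_i)\times\rr^N$. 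A parametrized, relative version of the straightening argument in the proof of Lemma~\ref{lem:lq-l} (flowing along fiberwise gradient trajectories of $f_i$) then produces a fiber-preserving diffeomorphism after which $f_i=A_i$ on $(M\setminus K_i)\times\rr^N$; so, up to equivalence, $\operatorname{supp}(f_i^c)$ projects into $\operatorname{int}K_i$. After stabilizing to a common domain $M\times\rr^N$ and composing $f_2$ with a global linear change of fiber coordinates, I may further assume $A_1=A_2=:A$. Then $[f_1]$ and $[f_2]$ have disjoint supports; and $f:=f_1+f_2$ is linear-at-infinity, equals $f_i$ over $W_i$, and equals $A$ (hence has no fiber critical points) over $M\setminus(K_1\cup K_2)$, so $\Sigma_f=\Sigma_{f_1}\sqcup\Sigma_{f_2}$ and $j_f$ parametrizes $\leg_1\cup\leg_2$; that is, $f\in\lingf(\leg_1\cup\leg_2)$.

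Next I would record the shape of the difference function. Writing $f_i=f_i^c+A$ and $f=f^c+A$ with $f^c=f_1^c+f_2^c$, the associated difference functions satisfy $\delta=\delta_1^c+\delta_2^c+B$ and $\delta_i=\delta_i^c+B$, where $\delta_i^c(x,\e,\te)=f_i^c(x,\te)-f_i^c(x,\e)$ and $B(\e,\te)=A(\te)-A(\e)$ is a \emph{nonzero} linear functional on $\rr^{2N}$. Since $\operatorname{supp}(f_i^c)$ projects into $\operatorname{int}K_i$, so does $\operatorname{supp}(\delta_i^c)$; hence $\delta=\delta_i$ over $W_i\times\rr^{2N}$, $\delta_i=B$ over $(M\setminus K_i)\times\rr^{2N}$, and $\delta=B$ over $(M\setminus(K_1\cup K_2))\times\rr^{2N}$.

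For the localization, fix $\epsilon,\infin$ as in~(\ref{ineq:epsilon-infin}). The engine of the computation is the observation that over any $S\subseteq M$ on which the difference function equals $B$, the relevant sublevel pair is cohomologically trivial: there $\delta^\infin\cap(S\times\rr^{2N})=S\times\{B\le\infin\}$ and $\delta^\epsilon\cap(S\times\rr^{2N})=S\times\{B\le\epsilon\}$, and $\{B\le\epsilon\}$ is a deformation retract of $\{B\le\infin\}$ (push down along the nonzero linear functional $B$), so this relative cohomology vanishes, and likewise for $\delta_i$. I would then apply relative Mayer--Vietoris to the open cover of $M\times\rr^{2N}$ by $(W_1\cup W_2)\times\rr^{2N}$ and $(M\setminus(K_1\cup K_2))\times\rr^{2N}$ --- whose intersection is a disjoint union of collar regions over which $\delta=B$ --- so that the vanishing above kills the second term and the overlap term and leaves
\[
H^*(\delta^\infin,\delta^\epsilon)\ \cong\ \bigoplus_{i=1,2}H^*\big(\delta^\infin\cap(W_i\times\rr^{2N}),\ \delta^\epsilon\cap(W_i\times\rr^{2N})\big),
\]
using $W_1\cap W_2=\emptyset$ for the splitting. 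Running the identical Mayer--Vietoris argument for each $\delta_i$ alone (cover $M\times\rr^{2N}$ by $W_i\times\rr^{2N}$ and $(M\setminus K_i)\times\rr^{2N}$) identifies the $i$-th summand with $H^*(\delta_i^\infin,\delta_i^\epsilon)$, since $\delta=\delta_i$ over $W_i\times\rr^{2N}$. Because all three difference functions live over $M\times\rr^N$ with the same $N$, the degree shift by $N+1$ in Definition~\ref{defn:gh} is uniform; Lemma~\ref{lem:cohom-equiv} and Corollary~\ref{lem:e-l-indep} make the resulting isomorphism independent of the choices and valid on equivalence classes, yielding $GH^k([f_1+f_2])\simeq GH^k([f_1])\oplus GH^k([f_2])$. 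With $\zz_2$ coefficients no orientability hypothesis is needed, and the same scheme gives the analogous splitting of $\widetilde{GH}^*$ and of the homological versions.

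I expect the only genuine obstacle to be the first stage: one must check that the straightening of $f_i$ to its linear part can be performed parametrically in the base point and relative to a compact neighborhood of $\pi_M(\leg_i)$, so that the compactly supported part can be forced to be supported over a prescribed small neighborhood. Once the difference function is in the normal form $\delta=\delta_1^c+\delta_2^c+B$ with disjoint supports, the cohomological bookkeeping in the remaining two stages is formal.
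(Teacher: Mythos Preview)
Your proposal is correct and follows essentially the same strategy as the paper: first arrange that $f_1$ and $f_2$ have disjoint supports using the hypothesis $\pi_M(\leg_1)\cap\pi_M(\leg_2)=\emptyset$, then deduce the direct-sum decomposition via a Mayer--Vietoris argument on the difference function. The paper's proof is a two-sentence sketch that asserts the disjoint-support normalization and invokes Mayer--Vietoris (citing a similar argument elsewhere), whereas you have supplied considerably more detail and correctly flagged the parametrized straightening in the first stage as the only nontrivial step.
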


\begin{proof} By the hypothesis on the projections of $\leg_1$ and $\leg_2$, 
$f_1$ and $f_2$ can be made to have disjoint supports.  The result now follows from choosing linear-at-infinity representatives with disjoint support and 
applying a Mayer-Vietoris argument; see \cite{arnold-paper} for a similar argument.  
\end{proof}

\begin{rem}
  The usual axiom for a TQFT is \emph{multiplicativity} rather than \emph{additivity}; that is, the vector spaces associated to the components of disjoint union should be combined using a tensor product rather than a direct sum.  The fact that we obtain a direct sum is not completely surprising, however, if we think of the generating family homology as the linear term in a differential graded algebra, a structure that is well-known in the pseudo-holomorphic world of linearized contact homology; see \cite{products, egh, henry-rutherford}. What we are detecting is that the linear part of the tensor product of two tensor algebras is a direct sum of the respective linear parts.
\end{rem}

\section{Generating Families for Lagrangian Cobordisms}
\label{sec:cobord-gf}

We now shift our attention from individual Legendrian submanifolds to Lagrangian cobordisms between Legendrian submanifolds.  We will extend the technique of generating families to this new setting, eventually using them to define a TQFT-like structure on the generating family cohomology.

\subsection{Lagrangian Cobordisms and Compatible Generating Families}

A Legendrian submanifold $\leg \subset J^1M$ gives rise to a Lagrangian
cylinder $Z_\leg = \rr \times \leg$ in the symplectization $\rr\times J^1M$. 

\begin{defn} \label{defn:symplect-cobord} A \dfn{Lagrangian cobordism of $\rr \times J^1M$  between two Legendrian submanifolds  $\leg_-, \leg_+ \subset J^1M$},   
 is an embedded Lagrangian submanifold $\sclag  \subset \rr \times J^1M$ so that, for some $s_- < s_+$,
$\sclag$ agrees with the cylinder $Z_{\leg_-}$ for $s \leq s_-$ and $\overline L$ agrees with the cylinder $Z_{\leg_+}$ for $s \geq s_+$; such a cobordism will
be denoted by $\leg_{-} \prec_{\sclag} \leg_{+}$,
\end{defn}
  
To study Lagrangian cobordisms using generating families, we identify $\rr \times J^1M$ with  $T^*(\rr_+ \times M)$ by the symplectomorphism
\begin{equation} \label{eqn:id}
  \begin{split}
    \theta: \rr \times J^1M &\to T^*(\rr_+ \times M) \\
    (s,x,y,z) &\mapsto (e^s, x, z, e^sy).
  \end{split}
\end{equation}
 
Given a Lagrangian cobordism of $\rr \times J^1M$, we refer to its image $\clag = \theta(\sclag)$ as a \dfn{Lagrangian cobordism in $T^*(\rp \times M)$}.  We relabel the values $e^{s_\pm}$ by $t_\pm$.

For a Lagrangian cobordism $\leg_{-} \prec_{\sclag} \leg_{+}$, 
we will be interested in the situation where $\theta(\sclag) = \clag \subset T^*(\rp \times M)$ and $\leg_\pm \subset J^1M$ have
``compatible'' generating families.

\begin{defn} \label{defn:compatible}   Let $f_\pm : M \times \rr^N \to \rr$ and $F:  (\rp \times M) \times \rr^N \to \rr$ be functions.  
The triple of functions $(F, f_-, f_+)$ is \dfn{compatible} 
if for some $t_- < t_+$, we have
$$F(t, x, \eta)  = 
\begin{cases}
  t  f_- (x, \eta), &t \leq t_- \\
  t f_+ (x, \eta), &t \geq t_+.  \end{cases}$$ 
  Moreover,  a \dfn{gf-compatible Lagrangian cobordism} consists of a 
  Lagrangian cobordism $\leg_- \prec_{\sclag} \leg_+$ together with
  compatible triple of generating families $(F, f_{-}, f_{+})$ for, respectively,
  $\theta(\sclag) = \clag \subset  T^*(\rp \times M)$, $\leg_{-}, \leg_{+} \subset J^1M$.  A gf-compatible
  Lagrangian cobordism will be denoted by:
 $$ (\leg_-, f_-) \prec_{(\clag, F)} (\leg_+, f_+).$$
\end{defn}

The notion of compatibility descends to the level of equivalence classes of functions, so long as we require the fiber-preserving diffeomorphisms to be independent of $t$ outside of $[t_-,t_+]$.

As described in Section~\ref{sec:gh-legendrian}, it is useful to impose some ``nice'' behavior on the generating families $f_\pm$ outside a compact set.  The behavior of $F$ outside of $[t_-, t_+]$ prevents $F$ from being linear outside a compact set.  Instead, we will impose the following condition:

\begin{defn} \label{defn:sli}
  A function $F: (\rp \times M) \times \rr^N \to \rr$ is \dfn{slicewise-linear-at-infinity}  if    
   for each $t \in \rp$, there exists a non-zero linear function $A_t : \rr^N \to \rr$ so that $F(t, x, \eta) = A_t(\eta)$ outside a compact set of $M \times \rr^N$.
   A triple of compatible functions $(F, f_-, f_+)$ is \dfn{tame} if $F$ is slicewise-linear-at-infinity and $f_\pm$ are linear-at-infinity.  \end{defn}

\subsection{Wrapped Generating Family Cohomology}
\label{ssec:wgh}

The Lagrangian Floer cohomology groups of a pair of {\it closed} Lagrangian submanifolds $L^0$ and $L^1$ in a symplectic manifold $(W,\omega)$ has a cochain complex generated by elements of $L^0 \cap L^1$, \cite{floer:lagr}.  To generalize to Lagrangians $L^i \subset \rr_+ \times J^1M$, $i=0,1$, that are compact and have {\it boundaries} consisting of Legendrian components $\leg^i_\pm \subset \{ s_\pm\} \times J^1M$, we use the notion of wrapped Floer homology; see \cite{abb-schw:cotangent, as:wrapped-floer, fss:wrapped-floer}.  Wrapped Floer homology uses a chain complex generated by the intersections between compact pieces of two Lagrangian submanifolds {\it and} the Reeb chords between the Legendrian ends $\leg^i_\pm$.  When the compact Lagrangians are extended to Lagrangians $\overline L^i$ with cylindrical ends, then the Reeb chords of interest correspond to intersections between $\overline L^0$ and the image of $\overline L^1$ under an appropriately defined Hamiltonian diffeomorphism. 

In the following, we will define wrapped cohomology using the theory of generating families for Lagrangians $\lag^0 = \lag^1 \subset T^*(\rp \times M)$; the definitions may easily be extended to the case where $\lag^0 \neq \lag^1$.

We begin by specifying the Hamiltonian functions that will be used to convert Reeb chords of Legendrian submanifolds at the boundary to intersections of Lagrangians.  

\begin{defn} \label{defn:shear-H} Given $\leg_{-} \prec_{\sclag}  \leg_{+}$,
for $\clag = \theta(\sclag)$, the set $\mathcal{H}\left(\clag \right)$ of \dfn{Hamiltonian shearing functions} consists of decreasing, smooth functions $H: \rr_+ \to \rr$ that depend on a choice of $t_\pm$ from Definition~\ref{defn:symplect-cobord} and additional parameters $r_\pm$ and $u_\pm$, where $u_-< t_- \leq t_+ < u_+$.  The functions $H$ must satisfy $H^{''}(t) \geq 0$ on $(0, t_-]$ and $H^{''}(t) \leq 0$ on $[t_+, \infty)$ with
  $$H(t) = 
  \begin{cases}
     \frac{r_-}{2}( t - t_-)^2, & t \leq u_-\\
    0, & t \in [t_-, t_+]\\
    - \frac{r_+}{2}( t - t_+)^2, & t \geq u_+;
  \end{cases}$$
see Figure~\ref{fig:H-dH}.  The parameters must satisfy the following technical conditions:
\begin{enumerate}
\item $r_+$ is chosen to be sufficiently large so that $r_+ > \frac{\lmaxp}{t_+}$;
\item $r_-$ is chosen to be sufficiently large so that 
$$\frac{r_-t_-^2}{2} > \text{max} \left\{ 2t_- \lmaxm, t_- \lmaxp - \frac{\lmaxp^2}{2r_-}, 3t_+\lmaxp + \frac{\lmaxp^2}{2r_+} \right\};$$
\item $u_\pm$ are chosen sufficiently close to $t_\pm$ so that $ |u_\pm - t_\pm| < \frac{\lminpm}{2r_\pm}$.
\end{enumerate}
\end{defn}

As a consequence of these choices, we have the following inequalities:

\begin{equation} \label{eqn:ru} \frac{\lmaxpm}{t_\pm} < r_\pm < \frac{\lminpm}{2|u_\pm - t_\pm|}.  
\end{equation}

\begin{figure}
  \centerline{\includegraphics{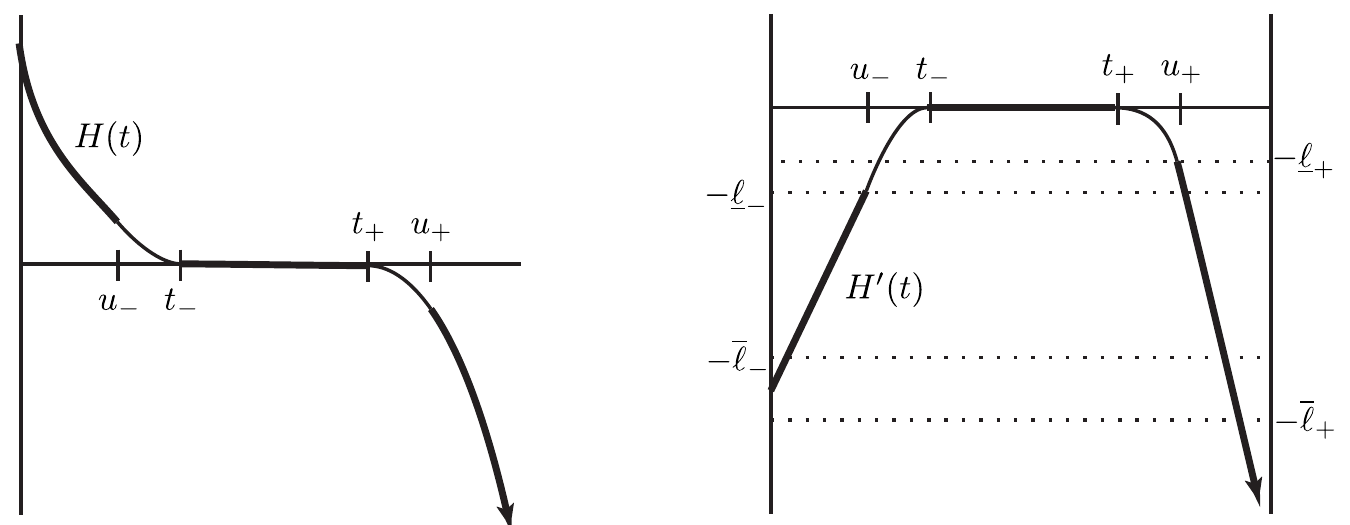}}
  \caption{A schematic picture of $H$ and $H'(t)$ for $H \in \mathcal H(\clag)$.}
  \label{fig:H-dH}
\end{figure}

For $H \in \mathcal{H}\left( \clag\right)$, the quadratic growth condition guarantees that the associated Hamiltonian vector field $X_H$ will be integrable.  If $\phi_H^1$ denotes the time-1 flow of this vector field and $F$ generates $\clag$, then it is easy to verify that $F(t,x,\e) + H(t)$ generates $\phi_H^1(\clag)$.   It is also straightforward to check that $\mathcal H(\clag)$ is path connected.

In parallel to the definition of the difference function $\delta$ in the previous section, a shearing function $H \in \mathcal{H}\left( \clag \right)$ may be used to define the \dfn{sheared difference function} $\Delta: \rp \times M \times \rr^{N} \times \rr^{N'} \to \rr$ is: \begin{equation} \label{eqn:pert-diff}
  \Delta(t,x,\e,\te) = F(t,x,\te) + H(t) - F(t,x,\e).
\end{equation}
Notice that when $t \leq t_-$, the sheared difference function satisfies the identity
$$\Delta(t,x,\e,\te) = t \delta_- (x, \e, \te) + H(t),$$
where $\delta_-$ is the difference function of $f_-$.  A similar statement holds when $t \geq t_+$.

In parallel to Proposition~\ref{prop:leg-crit-point},  the critical points of $\Delta$ detect information about the intersection points of
$\clag$ and $\phi^1_H({\clag})$:

\begin{prop} \label{prop:crit-pt-wgh}  Let $(\leg_-, f_-) \prec_{(\clag, F)} (\leg_+, f_+)$.
There is a one-to-one correspondence between intersection points in ${\clag} \cap \phi^1_H\left({\clag}\right)$ and critical points of $\Delta$.  Moreover, there is a one-to-one correspondence between Reeb chords $\gamma_\pm$ of $\leg_\pm$  and points in 
${\clag} \cap \phi^1_H\left({\clag}\right) \cap \{ t \in(0, u_-) \cup (u_+, \infty)\}$; the critical value of the point corresponding
to the Reeb chord $\gamma_\pm$ is:
$$ t_\pm \ell(\gamma_\pm) \pm \frac1{2r_\pm}  \left( \ell(\gamma_\pm) \right)^2 > 0.$$
All other critical points 
lie in the critical submanifold $$C=\left\{(t,x,\e,\e) \;:\; (t,x,\e) \in \Sigma_F \text{ with }
    t \in [t_-, t_+] \right\}.$$ The critical submanifold is diffeomorphic to $\lag = \clag \cap \{ t \in [t_-, t_+] \}$ and has value $0$; for generic $F$, the submanifold $C$ is non-degenerate of index $N$.  
\end{prop}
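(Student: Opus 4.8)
The plan is to analyze the zero set of $\nabla\Delta$ directly and then read off each assertion region by region in the $t$-variable. Computing the partials of $\Delta(t,x,\e,\te) = F(t,x,\te) + H(t) - F(t,x,\e)$, a point is critical exactly when $\partial_\e F(t,x,\e) = 0$, $\partial_{\te}F(t,x,\te) = 0$, $\partial_x F(t,x,\te) = \partial_x F(t,x,\e)$, and $\partial_t F(t,x,\te) + H'(t) = \partial_t F(t,x,\e)$. The first two equations say that $(t,x,\e)$ and $(t,x,\te)$ both lie on $\Sigma_F$; since $F+H$ generates $\phi^1_H(\clag)$ and the Hamiltonian flow of $H(t)$ fixes the base point and shifts only the cotangent coordinate dual to $t$, the remaining two equations say precisely that $\partial_F(t,x,\e)\in\clag$ agrees with the $\phi^1_H$-image of $\partial_F(t,x,\te)$. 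Since $\clag = \theta(\sclag)$ is embedded, this yields the first claimed bijection between critical points of $\Delta$ and points of $\clag\cap\phi^1_H(\clag)$, recording the base point together with both fiber variables.

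Next I would localize in $t$. On the cylindrical ends $F(t,x,\e)=t\,f_\pm(x,\e)$, so there $\Delta = t\,\delta_\pm(x,\e,\te)+H(t)$ with $\delta_\pm$ the difference function of $f_\pm$. As $t>0$, the fiber and $x$ equations force $(x,\e,\te)$ to be a critical point of $\delta_\pm$; by Proposition~\ref{prop:leg-crit-point} this is either a diagonal point $\e=\te$ with $(x,\e)\in\Sigma_{f_\pm}$, or one of the two points attached to a Reeb chord $\gamma_\pm$, with $\delta_\pm = \pm\ell(\gamma_\pm)$. The last equation becomes $\delta_\pm(x,\e,\te)+H'(t)=0$; since $H$ is decreasing, $H'\le 0$, so this is solvable only on the branch $\delta_\pm = +\ell(\gamma_\pm)>0$, where it reads $H'(t)=-\ell(\gamma_\pm)$. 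On $(0,u_-)$ and on $(u_+,\infty)$ the function $H'$ is a strictly monotone affine function ($r_-(t-t_-)$, resp.\ $-r_+(t-t_+)$), so there is a unique solution, namely $t=t_- - \ell(\gamma_-)/r_-$ on the negative end and $t=t_+ + \ell(\gamma_+)/r_+$ on the positive end. The inequalities $(\ref{eqn:ru})$ are exactly what place these solutions in the asserted intervals: the lower bounds on $r_\pm$ keep $\ell(\gamma_\pm)<r_\pm t_\pm$, so $t$ stays positive, while the upper bounds give $\ell(\gamma_\pm)>r_\pm\lvert u_\pm-t_\pm\rvert$, so $t$ lies past $u_\pm$; in particular the transition windows $[u_-,t_-]$ and $[t_+,u_+]$ carry no critical point with $\e\neq\te$, and (assuming, as we may, that $H'<0$ strictly on $(0,t_-)$ and $H'<0$ on $(t_+,\infty)$) the diagonal points of $\delta_\pm$ occur only at $t=t_\pm$, i.e.\ on the boundary of $C$. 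Substituting $t$ back into $\Delta$ gives the critical value $t_\pm\ell(\gamma_\pm)\pm\frac{1}{2r_\pm}\ell(\gamma_\pm)^2$, which the same inequalities show is positive.

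It then remains to treat the middle slab $t\in[t_-,t_+]$, where $H\equiv 0$, $H'\equiv 0$, and $X_H$ vanishes over this region. Here $\Delta(t,x,\e,\te)=F(t,x,\te)-F(t,x,\e)$, and the critical equations force $\partial_F(t,x,\e)=\partial_F(t,x,\te)$; embeddedness of $\clag$ then forces $\e=\te$, so the critical point lies in $C$. Conversely every $(t,x,\e,\e)$ with $(t,x,\e)\in\Sigma_F$ and $t\in[t_-,t_+]$ is critical, and $\partial_F$ carries this set diffeomorphically onto $\clag\cap\{t\in[t_-,t_+]\}=\lag$; its critical value is $H(t)=0$. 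For the non-degeneracy and index statement I would argue as in Proposition~\ref{prop:leg-crit-point}: for generic $F$ the fiber Hessian $\partial_\e^2 F$ is non-degenerate along $C$, so after a fiber-preserving diffeomorphism $F(t,x,\e)=a(t,x)+b_{(t,x)}(\e)$ near $C$, whence $\Delta = b_{(t,x)}(\te)-b_{(t,x)}(\e)+H(t)$ and the Hessian of $\Delta$ in the directions normal to $C$ is block-diagonal, $\operatorname{diag}(-\hess b,\ \hess b)$; this is non-degenerate and has index $N$ regardless of the index of $\hess b$.

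Everything except the parameter analysis is a direct translation of Proposition~\ref{prop:leg-crit-point} to the sheared setting, and the step I expect to be the main obstacle is exactly that analysis: checking that conditions (1)--(3) of Definition~\ref{defn:shear-H} are simultaneously strong enough to push each Reeb-chord critical point into the correct open interval $(0,u_-)$ or $(u_+,\infty)$ with positive critical value, to exclude spurious critical points in the transition windows, and to control the corners $t=t_\pm$ where $C$ meets the ends.
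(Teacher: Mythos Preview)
Your argument is correct and follows essentially the same route as the paper's proof: both analyze $\nabla\Delta$ region by region in $t$, use the cylindrical form $\Delta = t\delta_\pm + H(t)$ on the ends to reduce to Proposition~\ref{prop:leg-crit-point}, solve $H'(t) = -\ell(\gamma_\pm)$ on the quadratic part, invoke the inequalities~(\ref{eqn:ru}) to locate the solution and verify positivity of the critical value, and use embeddedness on $[t_-,t_+]$ to identify $C$. You go a bit further than the paper in two places---explicitly excluding the transition windows $[u_-,t_-]$ and $[t_+,u_+]$ via the bound $|H'(t)|\le r_\pm|u_\pm-t_\pm|<\lminpm$, and sketching the normal-Hessian computation for the index---both of which the paper leaves implicit.
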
	
 
\begin{proof}
  A straightforward calculation shows that critical points of $\Delta$ correspond to points in ${\clag} \cap \phi^1_H\left({\clag}\right)$.  By construction of $H$, we see that critical points of $\Delta$ with $t \in [t_-, t_+]$ correspond to self-intersection points of $\clag \cap \{ t \in [t_-,t_+]\}$.  Since ${\clag}$ is embedded, every critical point with $t \in [t_-,t_+]$ has critical value $0$.
  
For $t < t_- $, we have:
  \begin{align*}
    {\clag}&= \bigl\{  (t, x_0, z_0, ty_0) \,:\, (x_0, y_0 ,z_0) \in \leg_- \bigr\}, \\
    \phi_H^1\left({\clag}\right) &= \bigl\{ (t, x_1, z_1 + H'(t), ty_1) \,:\, (x_1, y_1, z_1) \in \leg_-
    \bigr\}.
  \end{align*}
  On this region, $H'(t) \leq 0$ and so the intersection points of these two Lagrangians occur when there is a Reeb chord $\gamma_-$ of $\leg_-$ with $ \ell(\gamma_-) = -H'(t)$.  The special form of $F$ and $H$ when $t < u_-$ then implies that critical points occur when $t = t_- - \frac{\ell(\gamma_-)}{r_-}$, and Inequality~(\ref{eqn:ru}) implies that $t < u_-$. The Inequalities~(\ref{eqn:ru}) also imply that such $t$ values are positive, and hence the critical points of $\Delta$ capture all of the Reeb chords of $\leg_-$.  The critical value is then a simple calculation, with its positivity following once again from Inequalities~(\ref{eqn:ru}).
  
  The arguments for $t > t_+$ are similar (and, in fact, slightly easier).
\end{proof}
 
The following lemma is essentially a $1$-parameter version Lemma~\ref{lem:lin-diff}.

\begin{lem} \label{lem:slin-diff} 
If $(\leg_-, f_-) \prec_{(\clag, F)} (\leg_+, f_+)$, 
 then for any $H \in \mathcal H\left( \clag \right)$, the associated compatible triple $(\Delta, \delta_-, \delta_+)$ is equivalent to a tame triple of functions.  \end{lem}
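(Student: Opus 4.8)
The plan is to prove this as a one-parameter version of Lemma~\ref{lem:lin-diff}, carrying $t$ along as an inert parameter and taking care that the fiber-preserving diffeomorphisms produced are independent of $t$ outside $[t_-,t_+]$, so that what we obtain is a genuine equivalence of \emph{triples} in the sense following Definition~\ref{defn:compatible}. The starting observation is that for each fixed $t$ the slice $F(t,\cdot)\colon M\times\rr^N\to\rr$ is linear-at-infinity (with linear part $A_t$), and that for fixed $t$ the sheared difference function
$$\Delta(t,\cdot)=F(t,x,\te)+H(t)-F(t,x,\e)$$
is, up to the additive constant $H(t)$, exactly the difference function of $F(t,\cdot)$. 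Hence Lemma~\ref{lem:lin-diff} applies slicewise: for each $t$ there is a fiber-preserving diffeomorphism $\Psi_t$ of $M\times\rr^{2N}$, possibly after a quadratic stabilization of some fixed index $K$, with $\Delta(t,\cdot)\circ\Psi_t$ linear-at-infinity in the fiber variables. What must be arranged is that the family $t\mapsto\Psi_t$ can be chosen smoothly and made constant on $(0,t_-]$ and on $[t_+,\infty)$, reducing there to fixed linearizing diffeomorphisms $\Psi_\pm$ for $\delta_\pm$.

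First I would treat the cylindrical ends. For $t\le t_-$ we have $F(t,\cdot)=tf_-$, hence $\Delta(t,\cdot)=t\,\delta_-+H(t)$; since multiplication by the positive scalar $t$ carries a linear-at-infinity function to one of the same form and $H(t)$ is merely an additive constant in the fiber variables, the single $t$-independent diffeomorphism $\Psi_-$ furnished by Lemma~\ref{lem:lin-diff} for $\delta_-$ satisfies $\Delta(t,\cdot)\circ\Psi_-=t\,\bar\delta_-+H(t)$ with $t\,\bar\delta_-$ linear-at-infinity, for every $t\le t_-$; likewise $\Psi_+$ works for all $t\ge t_+$. So I set $\Psi_t:=\Psi_-$ for $t\le t_-$ and $\Psi_t:=\Psi_+$ for $t\ge t_+$. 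On the compact interval $[t_-,t_+]$, where $F(t,\cdot)$ varies smoothly among linear-at-infinity functions, I would run the proof of Lemma~\ref{lem:lin-diff} through with $t$ present as a parameter — the construction there is explicit enough, being built from flows determined by the function, that it depends smoothly on its input and hence on $t$ — obtaining a smooth family $t\mapsto\Psi_t$, $t\in[t_-,t_+]$, with $\Delta(t,\cdot)\circ\Psi_t$ linear-at-infinity in the fiber and $\Psi_{t_\pm}=\Psi_\pm$; alternatively, to avoid the internals of that proof, one can interpolate between $\Psi_-$ and $\Psi_+$ through the path-connected space of diffeomorphisms linearizing $\Delta(t,\cdot)$, much as $\mathcal H(\clag)$ was seen to be path-connected. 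Either way, assembling the family into $\tilde\Phi(t,x,\e,\te)=\bigl(t,x,(\Psi_t)_x(\e,\te)\bigr)$, after a common $t$-independent stabilization, yields a fiber-preserving diffeomorphism of $(\rp\times M)\times\rr^{2N+K}$ that is $t$-independent outside $[t_-,t_+]$ and equals $\Psi_\pm$ at $t=t_\pm$. By construction $\tilde\Phi$ carries $(\Delta,\delta_-,\delta_+)$ to a triple $(\bar\Delta,\bar\delta_-,\bar\delta_+)$ with $\bar\delta_\pm$ linear-at-infinity and, slicewise in $t$, $\bar\Delta(t,\cdot)$ equal outside a compact set to a linear function of the fiber variables plus the constant $H(t)$; and $\tilde\Phi$ is an equivalence of triples precisely because it is $t$-independent at the ends.

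One wrinkle deserves comment: for $t$ outside $[t_-,t_+]$ the function $\bar\Delta(t,\cdot)$ equals $tA_\pm+H(t)$ outside a compact set, which is affine rather than homogeneously linear in the fiber because of the term $H(t)$, and this term cannot be removed by a diffeomorphism required to be $t$-independent at the ends. It is harmless, however: $H(t)$ merely translates every critical value of $\bar\Delta(t,\cdot)$ by the same amount and plays no role in any of the Morse-theoretic arguments that invoke tameness, so with the convention that ``slicewise-linear-at-infinity'' tolerates such a slicewise-constant shift, $(\bar\Delta,\bar\delta_-,\bar\delta_+)$ is a tame triple. The main obstacle in the argument is precisely the parametrized step on $[t_-,t_+]$: one must know that the fiber-preserving diffeomorphism, and the index of the quadratic stabilization, produced by Lemma~\ref{lem:lin-diff} can be taken to depend smoothly on the input and to limit onto the prescribed $t$-independent data at $t=t_\pm$. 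This is routine once the proof of Lemma~\ref{lem:lin-diff} is unwound with $t$ carried along, but it is the only point requiring genuine care; everything else is formal.
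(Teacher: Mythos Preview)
Your proposal follows exactly the approach the paper indicates: the paper offers no proof at all beyond the one-line remark that this is ``essentially a $1$-parameter version of Lemma~\ref{lem:lin-diff},'' and you have supplied the details of precisely that argument. Your observation about the additive $H(t)$ term preventing strict slicewise linearity is a genuine subtlety the paper glosses over; the paper silently resolves it by the working convention visible in Equation~(\ref{eqn:nice-Delta}), where $\Delta$ is written as a compactly supported piece plus $A_t(\e,\te)+H(t)$, which is exactly your resolution.
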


We are now ready to define generating family cohomology groups for Lagrangian cobordisms.  

\begin{defn} \label{defn:wgh} Let $(\leg_-, f_-) \prec_{(\clag, F)} (\leg_+, f_+)$.
For $H \in \mathcal H\left( \clag \right)$, choose $\Infin$ and  $\mu$ so that
\begin{equation} \label{ineq:wgh}
  \begin{split}
    \frac{r_-t_-^2}{2} &> \Infin > \max\left\{ t_\pm \lmaxp \pm \frac{\lmaxp^2}{2r_{\pm}}, 2t_- \lmaxm,  3t_+ \lmaxp + \frac{\lmaxp^2}{2r_+}  \right\},\\
    0 &<  \mu < \min\left\{ t_\pm \lminp \pm \frac{\lminp^2}{2r_{\pm}}, \frac{r_\pm}2|u_\pm^2 - t_\pm^2|, t_- \lminm, \ \frac{u_+ \lminp}{2},  \frac{r_+}2(u_+ - t_+)^2\right\}.
 \end{split}
\end{equation}
The \dfn{total (resp. relative) wrapped generating family cohomology of $F$}, $\wgh{k}{F}$ (resp. \rwgh{k}{F}), are defined to be:
 $$\wgh{k}{F} = H^{k+N}\left(\Delta^\Infin, \Delta^{-\mu}\right) \quad \text{and} \quad
  \rwgh{k}{F} = H^{k+N}\left(\Delta^\Infin, \Delta^{\mu}\right).$$ 
\end{defn} 

As shown in Proposition~\ref{prop:crit-pt-wgh}, all critical values of $\Delta$ lie in $[-\mu, \Infin]$, and all critical values of $\Delta$ arising from the Reeb chords of the ends lie in $[\mu, \Infin]$; the other restrictions on $\Infin$ and  $\mu$ will be useful later in this section and when examining the pairs $(\Delta^\Infin, \Delta^{\pm \mu})$ in Section~\ref{sec:map-cones} and beyond.

We have not included $H$, $\Infin$, or $\mu$ in the notation for the total and relative wrapped generating families cohomologies since, as we will show below, they are independent of these choices.  The Critical Non-Crossing Lemma~\ref{lem:crit-non-crossing} will play a key role in these proofs, so it will be necessary to work with the sheared difference function over the compact base $[v_-, v_+] \times M$ rather than $\rp \times M$
  We begin by introducing some convenient notation and two lemmas that will allow us to apply the Critical Non-Crossing Lemma~\ref{lem:crit-non-crossing}.
  
  For $J = [t_0, t_1] \subset \rp$, we use the shorthand
  \begin{gather*} \label{eqn:Delta-restrict}
    \Delta|_J = \Delta|_{\{(t,x,\e, \te): t \in J\}}, \\
 \Delta_J^a = \Delta^a \cap \{t \in J\} = \left\{(t,x,\e,\te)\;:\; t \in J, \Delta(t,x,\e,\te) \leq a\right\}.
\end{gather*}
Notice that if $J \subset [t_+, \infty)$, then we have:
\begin{equation} \label{eqn:end-level}
  \Delta_J^a = \left\{ (t,x,\e,\te) \;:\; t \in J,\  \delta_+(x,\e,\te) \leq \frac{1}{t}(a - H(t)) \right\};
\end{equation}
a similar fact holds for $J \subset (0,t_-]$.  The function $\frac{1}{t}(a-H(t))$ is sufficiently important that we assign to it the name $\lambda_a(t)$.

\begin{lem} \label{lem:cobord-cpt-base}
  For constants $\sigma < \tau < \frac{r_-t_-^2}{2}$,
  there exist $v_\pm \in \rp$ so that:
  \begin{equation} \label{eqn:cobord-cut-off} 
  H^*(\Delta^\tau, \Delta^\sigma) \simeq H^*\left(\Delta^\tau_{[v_-,v_+]}, \Delta^\sigma_{[v_-,v_+]}\right).
  \end{equation}
\end{lem}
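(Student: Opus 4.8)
The plan is to prove the homeomorphism of pairs up to homotopy: I would show that the inclusion
$\iota\colon\left(\Delta^\tau_{[v_-,v_+]},\Delta^\sigma_{[v_-,v_+]}\right)\hookrightarrow\left(\Delta^\tau,\Delta^\sigma\right)$
is a homotopy equivalence for a suitable choice of $v_\pm\in\rp$, which gives (\ref{eqn:cobord-cut-off}). The mechanism is to build a strong deformation retraction of $\Delta^\tau$ onto $\Delta^\tau_{[v_-,v_+]}$ \emph{along which $\Delta$ is non-increasing}; since $\Delta$ is non-increasing along the retraction it automatically preserves the smaller sublevel set $\Delta^\sigma$, hence restricts to a strong deformation retraction of $\Delta^\sigma$ onto $\Delta^\sigma_{[v_-,v_+]}$, and so $\iota$ is a homotopy equivalence of pairs. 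Throughout I would first invoke Lemma~\ref{lem:lin-diff} and its one-parameter version Lemma~\ref{lem:slin-diff} to assume that the difference functions $\delta_\pm$ are genuinely linear-at-infinity, so that the deformation lemmas of Section~\ref{sec:gf} apply.

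The choice of $v_\pm$ comes from the explicit form of $\Delta$ near the ends: $\Delta=t\,\delta_-+H(t)$ on $(0,t_-]$ and $\Delta=t\,\delta_++H(t)$ on $[t_+,\infty)$, so by (\ref{eqn:end-level}) a point in one of these tails lies in $\Delta^a$ exactly when $\delta_\mp\le\lambda_a(t)=\tfrac1t\bigl(a-H(t)\bigr)$. Using $H(t)=-\tfrac{r_+}{2}(t-t_+)^2\to-\infty$ on the right and $H(t)\to\tfrac{r_-t_-^2}{2}$ as $t\to0^+$ on the left, together with the hypothesis $\sigma<\tau<\tfrac{r_-t_-^2}{2}$, a direct computation shows: for $v_+$ sufficiently large, $\lambda_\tau$ (hence also $\lambda_\sigma$) is increasing on $[v_+,\infty)$ and stays strictly above $\lmaxp$, while $\partial_t\Delta<0$ on $\Delta^\tau\cap\{t\ge v_+\}$ and $\Delta<\sigma$ wherever $t\ge v_+$ and $\delta_+\le\lmaxp$; and for $v_-$ sufficiently small and positive, $\lambda_\tau$ stays strictly below $-\lmaxm$ on $(0,v_-]$ and $\partial_t\Delta=\tfrac1t\bigl(\tau+\tfrac{r_-}{2}(t^2-t_-^2)\bigr)+\cdots<0$ on $\Delta^\tau\cap\{t\le v_-\}$. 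One also arranges $v_+>t_++\lmaxp/r_+$ and $0<v_-<t_--\lmaxm/r_-$ so that, by Proposition~\ref{prop:crit-pt-wgh}, $\Delta$ has no critical points with $t\le v_-$ or $t\ge v_+$.

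With these choices, the retraction is assembled from the two tails. On the left tail, since $\partial_t\Delta<0$ on $\Delta^\tau\cap\{t\le v_-\}$, the vector field $\partial_t$ (cut off near $t=v_-$ and normalized as in the proof of Lemma~\ref{lem:retract}) already satisfies $d\Delta<0$ there, and its flow deformation retracts $\Delta^\tau_{(0,v_-]}$ onto the slice $\Delta^\tau_{\{v_-\}}$, moving each trajectory monotonically up in $t$; the half-open end at $t=0$ is harmless. On the right tail, (\ref{eqn:end-level}) identifies $\Delta^a_{[v_+,\infty)}$ with the region below the graph of $\lambda_a$ in the $\delta_+$-direction; because on $[v_+,\infty)$ the nested thresholds $\lambda_\sigma<\lambda_\tau$ are both increasing and remain above $\lmaxp$, no critical value of $\delta_+$ is crossed, so by Lemma~\ref{lem:retract} the sublevel sets $\delta_+^{\lambda_a(t)}$, $t\ge v_+$, are mutually strong deformation retracts. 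I would realize this by a vector field of the form $-\partial_t$ plus a cutoff multiple of $-\nabla\delta_+$ (the cutoff killing the second term where $\delta_+\le\lmaxp$, exactly the region where $\Delta$ is already below $\sigma$ and the second term is unnecessary), arranged so that $d\Delta\le 0$; its flow deformation retracts $\Delta^\tau_{[v_+,\infty)}$ onto $\Delta^\tau_{\{v_+\}}$. Gluing the two tail retractions to the identity on $\{t\in[v_-,v_+]\}$ produces the strong deformation retraction of $\Delta^\tau$ onto $\Delta^\tau_{[v_-,v_+]}$ with $d\Delta\le0$, which as explained above restricts to $\Delta^\sigma$ and yields the isomorphism (\ref{eqn:cobord-cut-off}).

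The main obstacle is the right tail. Unlike the left tail, $\Delta^\tau_{[v_+,\infty)}$ genuinely extends to $t=\infty$ with fattening fibers (since $\lambda_a(t)\to\infty$), so it cannot be retracted by flowing in $t$ alone; one must simultaneously flow $\delta_+$ downhill, and must do so without colliding with the critical points of $\delta_+$ clustered near the critical submanifold. What makes this possible — and what forces the choices of $v_\pm$ — is that on the tails the threshold $\lambda_a(t)$ stays strictly on one side of every critical value of $\delta_\pm$ (above $\lmaxp$ on the right, below $-\lmaxm$ on the left); consequently the whole retraction can be taken to be a single gradient-like flow of the type supplied by Lemma~\ref{lem:retract}, which respects all sublevel sets of $\Delta$ at once, so the compatibility between the $\sigma$- and $\tau$-levels needed for later applications of the Critical Non-Crossing Lemma~\ref{lem:crit-non-crossing} is automatic.
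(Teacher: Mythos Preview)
Your left-tail argument is fine, but the right-tail construction contains an internal contradiction.  You correctly observe that $\partial_t\Delta<0$ on $\Delta^\tau\cap\{t\ge v_+\}$.  In the region $\{\delta_+\le\lmaxp\}$ your cutoff kills the $\nabla\delta_+$ term, so there $X=-\partial_t$ and hence
\[
d\Delta(X)=-\partial_t\Delta>0,
\]
directly contradicting the claimed $d\Delta\le 0$.  This is not merely a matter of choosing the cutoff differently: the interior of $\Delta^\tau_{\{t\}}=\delta_+^{\lambda_\tau(t)}$ contains the entire critical locus of $\delta_+$ (the Morse--Bott submanifold at value $0$ and the Reeb-chord critical points), and at any such point every vector field of the form $-\partial_t-c\,\nabla\delta_+$ has $d\Delta(X)=-\partial_t\Delta>0$.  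So no vector field with negative $\partial_t$-component can satisfy $d\Delta\le 0$ throughout $\Delta^\tau\cap\{t\ge v_+\}$, and the mechanism in your first paragraph (``$\Delta$ non-increasing along the retraction, hence preserves $\Delta^\sigma$ automatically'') is unavailable.  Your final paragraph identifies the right obstacle but draws the wrong conclusion: that $\lambda_\sigma,\lambda_\tau$ stay above $\lmaxp$ keeps the \emph{boundaries} $\{\Delta=\sigma\},\{\Delta=\tau\}$ away from the critical points of $\delta_+$, but the \emph{interior} still contains them.

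The paper sidesteps this entirely by not retracting in the $t$-direction at all.  Instead it uses Mayer--Vietoris: once $v_\pm$ are chosen so that no critical value of $\delta_\pm$ lies in $[\lambda_\sigma(t),\lambda_\tau(t)]$ on the respective tails, the \emph{slicewise} negative gradient flow of $\delta_\pm$ deformation retracts $\Delta^\tau_{\{t\}}$ onto $\Delta^\sigma_{\{t\}}$ for each $t$ in a tail, making the tail pairs $(\Delta^\tau,\Delta^\sigma)$ acyclic; the isomorphism~(\ref{eqn:cobord-cut-off}) then falls out of Mayer--Vietoris.  Your approach could likely be repaired by abandoning the global $d\Delta\le 0$ requirement and instead arguing that the flow preserves each of $\Delta^\tau$ and $\Delta^\sigma$ separately (only needing $d\Delta(X)\le 0$ on the two level sets, where $\nabla\delta_+\neq 0$), but this demands care you have not yet supplied, and the slicewise/Mayer--Vietoris argument is shorter.
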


\begin{proof} The points $v_\pm$ will be constructed in the region where $\Delta = t \delta_\pm + H(t)$.  First notice 
   that if $a < \frac{r_-t_-^2}{2}$, then the following limits hold for the quantity in Equation~(\ref{eqn:end-level}):
  \begin{equation} \label{eqn:level-limits}
    \lim_{t \to 0} \lambda_a(t) = -\infty \quad \text{and} \quad \lim_{t \to \infty} \lambda_a(t) = \infty.
  \end{equation}
  Let $\underline{c}_-$ denote the minimum critical value of $\delta_-$, and let $\overline{c}_+$ denote the maximum critical
  value of $\delta_+$.  
  Choose $v_- < t_-$ so that $\lambda_\tau(t)< \underline{c}_-$ for all $t \leq v_-$, and 
 choose $v_+> t_+$ so that $\lambda_\sigma(t) > \overline{c}_+$ for all $t \geq v_+$.
 Note that there are no critical values of $\delta_-$ in $[ \lambda_\sigma(t),\lambda_\tau(t)]$ for all $t \in (0,v_-]$, and similarly at the positive end.  Equation~\ref{eqn:cobord-cut-off} now follows from a Mayer-Vietoris argument: for a sufficiently small $\epsilon$, split the domain into $U = (v_- - \epsilon, v_++ \epsilon) \times M \times \rr^{2n}$ and $V = \bigl[ (0, v_-+\epsilon) \cup (v_+-\epsilon) \bigr] \times M \times \rr^{2N}$.  The pair $(\Delta^\tau_{(0, v_-+\epsilon)}, \Delta^\sigma_{(0, v_-+\epsilon)})$ is acyclic since we may follow the slicewise negative gradient flow of $\delta_-$ on each $\{t\} \times M \times \rr^{2N}$ to retract $\Delta^\tau_{(0, v_-+\epsilon)}$ down to $\Delta^\sigma_{(0, v_-+\epsilon)}$.  A similar argument applies at the positive end and on the overlaps, so the desired isomorphism follows.  \end{proof}

\begin{lem} \label{lem:cobord-cpt-deform}
  For the values $v_\pm$ in the previous lemma, if  $\tau' < \tau <  \frac{r_-t_-^2}{2}$
  and there are no critical values of $\Delta|_{[v_-, v_+]}$ in
  $[\tau', \tau]$, then
  $\Delta_{[v_-, v_+]}^{\tau'}$ is a deformation retract of
  $\Delta_{[v_-, v_+]}^\tau$.
\end{lem}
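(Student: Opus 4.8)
The plan is to reduce this to the classical deformation-retract lemma (Lemma~\ref{lem:retract}) applied to the restricted function $\Delta|_{[v_-,v_+]}$ on the non-compact domain $[v_-,v_+]\times M\times\rr^{2N}$. The only thing that needs checking is the hypothesis of Lemma~\ref{lem:retract}: that there is an integrable, gradient-like vector field for $\Delta|_{[v_-,v_+]}$ that is bounded away from $\mathbf 0$ on $(\Delta|_{[v_-,v_+]})^{-1}[\tau',\tau]$. Since $[v_-,v_+]$ is compact, non-compactness enters only through the fiber directions $\rr^{2N}$ and (if $M$ is $\rr^n$ rather than compact) the base, and this is exactly the situation that the tameness/slicewise-linearity hypotheses are designed to control. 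I would invoke Lemma~\ref{lem:slin-diff} to assume $(\Delta,\delta_-,\delta_+)$ is equivalent to a tame triple, so that $\Delta|_{[v_-,v_+]}$ agrees, outside a compact set of $[v_-,v_+]\times M\times\rr^{2N}$, with a slicewise-linear expression $tA^\pm_t(\e,\te)$ shifted by $H(t)$; the corresponding slicewise-linear parts have nonvanishing fiber gradient bounded below on compact $t$-intervals.

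The key steps, in order: (1) Using Lemma~\ref{lem:slin-diff}, replace the compatible triple by an equivalent tame one (equivalences of functions induce homeomorphisms of sublevel-set pairs, so the conclusion is unaffected). (2) Build the vector field in two pieces via a partition of unity on $[v_-,v_+]\times M\times\rr^{2N}$: on a large compact set $K$ containing all critical points of $\Delta|_{[v_-,v_+]}$, use a genuine gradient-like field for $\Delta|_{[v_-,v_+]}$ (this exists by Morse/Morse--Bott theory near the nondegenerate critical points and critical submanifold, cf.\ Proposition~\ref{prop:crit-pt-wgh}); outside a slightly larger compact set, use the gradient of the slicewise-linear model (rescaled in $t$ so the $H(t)$ contribution does not kill it), which is nowhere zero. (3) Verify that on $(\Delta|_{[v_-,v_+]})^{-1}[\tau',\tau]$ the assembled field is bounded away from $\mathbf 0$: by hypothesis there are no critical values of $\Delta|_{[v_-,v_+]}$ in $[\tau',\tau]$, so the preimage misses the critical set, hence lies in the region where either the honest gradient-like field (on the compact part, where it is bounded below by a positive constant away from criticality) or the slicewise-linear model (bounded below because its fiber gradient is) controls things; on the transition region a convexity/triangle-inequality estimate shows no cancellation. (4) Check integrability: the quadratic growth of $H$ at the ends of $[v_-,v_+]$ is irrelevant since $[v_-,v_+]$ is compact, and the slicewise-linear model has at-most-linear growth, so the field has at-most-linear growth and is complete. (5) Apply Lemma~\ref{lem:retract} to conclude $\Delta^{\tau'}_{[v_-,v_+]}$ is a deformation retract of $\Delta^{\tau}_{[v_-,v_+]}$.

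The main obstacle I expect is step (3): ensuring that the vector field assembled from the genuine gradient-like field near the critical set and the slicewise-linear model near infinity is uniformly bounded away from zero precisely on the slab $\Delta^{-1}[\tau',\tau]$. The subtlety is that on the region $t\in(v_-,t_-]$ (and symmetrically near $t_+$), $\Delta = t\delta_-(x,\e,\te)+H(t)$ has a $\partial_t$-component mixing $\delta_-$ and $H'(t)$, so one must check that at points with $\Delta\in[\tau',\tau]$ the fiber gradient $t\,\partial_{(\e,\te)}\delta_-$ alone is already bounded below — which is where the choice of $v_\pm$ from Lemma~\ref{lem:cobord-cpt-base} (chosen so that $[\lambda_\sigma(t),\lambda_\tau(t)]$ contains no critical values of $\delta_\pm$) does the work. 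I would phrase this as: on $[v_-,t_-]\times M\times\rr^{2N}$, the level $\Delta=a$ corresponds slicewise to $\delta_-=\lambda_a(t)$, which is a non-critical level of $\delta_-$, so the slicewise gradient of $\delta_-$ is bounded below there, and likewise at the positive end and on the middle slab $[t_-,t_+]$ (where $\Delta=t\delta$ has no critical points in $\Delta^{-1}[\tau',\tau]$ by hypothesis). Everything else is routine compactness and partition-of-unity bookkeeping.
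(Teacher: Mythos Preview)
Your overall strategy---reduce to Lemma~\ref{lem:retract} by building an integrable, gradient-like vector field for $\Delta|_{[v_-,v_+]}$ that is bounded away from $\mathbf 0$ on the slab---is exactly the paper's, but your construction of the vector field has a real gap at the boundary $\{t=v_\pm\}$. The domain $[v_-,v_+]\times M\times\rr^{2N}$ has boundary in the $t$-direction, and for the flow to remain in this domain the vector field must be tangent there. Your partition-of-unity field in step~(2) uses a ``genuine gradient-like field'' on the compact piece, which near $t=v_-$ will have $\partial_t$-component $\delta_-+H'(t)$ generically pointing out of the domain. Your fallback in the last paragraph---use only the slicewise gradient $t\,\grad\delta_-$ on $[v_-,t_-]$---does give tangency, but your claim that $\delta_-=\lambda_a(t)$ is a non-critical level of $\delta_-$ is only arranged for $t\le v_-$ by the choice of $v_-$ in Lemma~\ref{lem:cobord-cpt-base}; for $t\in(v_-,t_-)$ the level $\lambda_a(t)$ can hit critical values of $\delta_-$, so $t\,\grad\delta_-$ alone may vanish at points of $\Delta^{-1}[\tau',\tau]$ that are not critical for $\Delta$.

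The paper's fix is to keep both pieces but damp the $\partial_t$-component: on $t\le t_-$ set
\[
X=\rho(t)\bigl(\delta_-+H'(t)\bigr)\partial_t+t\,\grad\delta_-,
\]
with $\rho:[v_-,t_-]\to[0,1]$ smooth, $\rho(v_-)=0$, $\rho(t_-)=1$ (and symmetrically on $[t_+,v_+]$). Then $\langle X,\grad\Delta\rangle=\rho(t)(\delta_-+H'(t))^2+t^2\|\grad\delta_-\|^2\ge 0$, so $X$ is gradient-like; it is tangent to $\{t=v_-\}$ since $\rho(v_-)=0$; and it vanishes only at critical points of $\Delta$ or at $t=v_-$ with $\grad\delta_-=0$. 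The latter is exactly what the choice of $v_-$ rules out on $\Delta^{-1}[\tau',\tau]$. This damping-at-the-boundary idea is the missing ingredient in your argument.
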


\begin{proof} 
  The claimed deformation retract will follow from Lemma~\ref{lem:retract} if we can construct an integrable, gradient-like vector field $X$ on $[v_-, v_+] \times M \times \rr^N$ for $\Delta|_{[v_-, v_+]}$ which is bounded away from $\mathbf 0$ on $\left(\Delta|_{[v_-, v_+]}\right)^{-1}([\tau', \tau])$.  

  Fix a metric on $[v_-, v_+] \times M \times \rr^N$ and let $X$ be the vector field
$$ X (t, x, \e, \te)  = \begin{cases}
\grad \Delta(t, x, \e, \te), &t \in [t_-, t_+]\\
\rho(t) \left( \delta_- + H'(t) \right) \pd{}{t} + t \grad \delta_-, & t \leq t_- \\
\rho(t) \left( \delta_+ + H'(t) \right) \pd{}{t} + t \grad \delta_+, & t \geq t_+,
\end{cases}$$
 where $\rho(t): [v_-, t_-] \cup [t_+, v_+] \to [0,1]$ is a smooth function with $\rho(t_\pm) = 1$
 and $\rho^{-1}\{ 0 \} = \{v_\pm\}$.  It is clear that $X$ is a gradient-like vector field for $\Delta|_{[v_-, v_+]}$ when
  $t \in [t_-, t_+]$.  
 When $t \leq t_-$,
 $$\langle X, \grad \Delta \rangle = 
 \rho(t) \left( \delta_- + H'(t) \right)^2 + t^2 \| \grad \delta_- \|^2 \geq 0.$$
 The quantity $\langle X, \grad \Delta \rangle$ vanishes only  when either $(t, x, \e, \te)$ is a critical 
 point of $\Delta$
 or when 
 $t = v_-$, and $\grad \delta_-(x, \e, \te) = 0$.  Neither of these two cases can occur
 on $\left(\Delta|_{\{v_-\}}\right)^{-1}[\tau', \tau]$
  by
 construction of $v_-$.
 A similar argument works for $t \geq t_+$.

 By construction, $X$ is parallel to the boundary of $[v_-, v_+] \times M \times \rr^N$, and hence the tameness of $(\Delta, \delta_-, \delta_+)$ (see Lemma~\ref{lem:slin-diff}) implies that $X$ is integrable.
 Furthermore, if $\tau' < \tau <  \frac{r_-t_-^2}{2}$
   and there are no critical values of $\Delta_{[v_-, v_+]}$  in $[\tau', \tau]$,
   then   
    $X$ is bounded away from $\mathbf 0$ on
   $\left(\Delta|_{[v_-, v_+]}\right)^{-1}([\tau', \tau])$.  Thus, by Lemma~\ref{lem:retract}, $\Delta_{[v_-, v_+]}^{\tau'}$ is a deformation
   retract of  $\Delta_{[v_-, v_+]}^{\tau}$.
   \end{proof}

We will be particularly interested in Lemma~\ref{lem:cobord-cpt-base} where
$\tau = \Infin$ and $\sigma = \pm \mu$, where
$\Infin$ and $\mu$ satisfy Inequalities~(\ref{ineq:wgh}).
In this case, we want to choose  $v_- < t_-$ and $v_+ > t_+$ so that 
\begin{equation} \label{ineq:vpm}
\lambda_\Infin(t) < -\lmaxm \quad \forall t \leq v_-, \quad\text{ and } \quad  \lambda_{\pm \mu}(t) > \lmaxp 
\quad \forall t \geq v_+.
\end{equation} 

\begin{cor} \label{cor:wgh-cpct} Given $(\leg_-, f_-) \prec_{(\clag, F)} (\leg_+, f_+)$,
choose $H \in \mathcal H(\clag)$.  For $\Infin$ and $\mu$ satisfying Inequalities~(\ref{ineq:wgh}) and $v_\pm$ satisfying Inequalities~(\ref{ineq:vpm}), we have
$$
\rwgh{k}{F}
\simeq H^{k+N}\left( \Delta_{[v_-, v_+]}^\Infin, \Delta_{[v_-, v_+]}^{\mu} \right),$$
and
$$
\wgh{k}{F}
\simeq H^{k+N}\left( \Delta_{[v_-, v_+]}^\Infin, \Delta_{[v_-, v_+]}^{-\mu} \right).$$
\end{cor}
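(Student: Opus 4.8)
The plan is to obtain both isomorphisms directly from Lemma~\ref{lem:cobord-cpt-base}, applied twice: with $(\tau,\sigma)=(\Infin,\mu)$ to produce the relative groups and with $(\tau,\sigma)=(\Infin,-\mu)$ to produce the total groups. By Definition~\ref{defn:wgh} we have $\rwgh{k}{F}=H^{k+N}(\Delta^\Infin,\Delta^{\mu})$ and $\wgh{k}{F}=H^{k+N}(\Delta^\Infin,\Delta^{-\mu})$, so once the hypotheses of Lemma~\ref{lem:cobord-cpt-base} have been checked for this data, the two asserted isomorphisms are precisely two instances of its conclusion~(\ref{eqn:cobord-cut-off}).

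First I would verify the numerical hypothesis $\sigma<\tau<\frac{r_-t_-^2}{2}$. The inequality $\Infin<\frac{r_-t_-^2}{2}$ is the first line of Inequalities~(\ref{ineq:wgh}), and those same inequalities force $0<\mu<t_-\lminm\le t_-\lmaxm<2t_-\lmaxm<\Infin$; hence $-\mu<\mu<\Infin<\frac{r_-t_-^2}{2}$, and both $(\Infin,\mu)$ and $(\Infin,-\mu)$ are admissible. Next I would reconcile the $v_\pm$ appearing in the statement of the corollary with those built inside the proof of Lemma~\ref{lem:cobord-cpt-base}. There, $v_-$ is any point with $v_-<t_-$ and $\lambda_\tau(t)<\underline c_-$ for all $t\le v_-$, and $v_+$ is any point with $v_+>t_+$ and $\lambda_\sigma(t)>\overline c_+$ for all $t\ge v_+$, where $\underline c_-$ (resp.\ $\overline c_+$) is the minimum (resp.\ maximum) critical value of $\delta_-$ (resp.\ $\delta_+$). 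By Proposition~\ref{prop:leg-crit-point}, the critical values of $\delta_\pm$ are $0$ together with $\pm\ell(\gamma)$ over the Reeb chords $\gamma$ of $\leg_\pm$, so $\underline c_-=-\lmaxm$ and $\overline c_+=\lmaxp$; substituting $\tau=\Infin$ and $\sigma=\pm\mu$, the conditions cutting out $v_\pm$ become exactly Inequalities~(\ref{ineq:vpm}).

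Conversely, any $v_\pm$ satisfying Inequalities~(\ref{ineq:vpm}) makes the Mayer--Vietoris argument in the proof of Lemma~\ref{lem:cobord-cpt-base} run verbatim. Indeed, using the bounds in (\ref{ineq:wgh}) one checks that such $v_\pm$ automatically lie in the cylindrical region, i.e.\ $v_-<t_-$ and $v_+>t_+$, so that $\Delta=t\,\delta_\pm+H(t)$ there; and since $\lambda_a(t)$ is increasing in $a$, we get $[\lambda_\sigma(t),\lambda_\tau(t)]\subset(-\infty,-\lmaxm)$ for $t\in(0,v_-]$ and $[\lambda_\sigma(t),\lambda_\tau(t)]\subset(\lmaxp,\infty)$ for $t\in[v_+,\infty)$, intervals that contain no critical value of $\delta_-$ respectively $\delta_+$. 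Thus the slicewise negative gradient flow of $\delta_\pm$ retracts the pairs over $(0,v_-+\epsilon)$ and $(v_+-\epsilon,\infty)$ onto acyclic pairs, and the isomorphism~(\ref{eqn:cobord-cut-off}) holds with the given $v_\pm$. Taking $\sigma=\mu$ and then $\sigma=-\mu$ yields the two isomorphisms of the corollary.

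The argument involves no new ideas; the only content is matching the numerology of Inequalities~(\ref{ineq:wgh}) and (\ref{ineq:vpm}) to the hypotheses of Lemma~\ref{lem:cobord-cpt-base} and to the critical-value computation of Proposition~\ref{prop:leg-crit-point}. The single step demanding care — the closest thing to an obstacle — is confirming that every $v_\pm$ allowed by (\ref{ineq:vpm}) really sits in the cylindrical ends where $\Delta=t\,\delta_\pm+H(t)$, so that the slicewise gradient flow used in Lemma~\ref{lem:cobord-cpt-base} is available; this follows from the size constraints already imposed in (\ref{ineq:wgh}) together with Definition~\ref{defn:shear-H} and Inequalities~(\ref{eqn:ru}).
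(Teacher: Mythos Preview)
Your proposal is correct and follows essentially the same approach as the paper: the corollary is stated immediately after Inequalities~(\ref{ineq:vpm}) without a separate proof, the intended argument being precisely to apply Lemma~\ref{lem:cobord-cpt-base} with $(\tau,\sigma)=(\Infin,\mu)$ and $(\tau,\sigma)=(\Infin,-\mu)$ and observe that Inequalities~(\ref{ineq:vpm}) specialize the choice of $v_\pm$ made in that lemma's proof. You have simply spelled out the matching of (\ref{ineq:vpm}) with the conditions $\lambda_\tau(v_-)<\underline{c}_-$ and $\lambda_\sigma(v_+)>\overline{c}_+$ via Proposition~\ref{prop:leg-crit-point}, and verified that such $v_\pm$ lie in the cylindrical ends, details the paper leaves implicit.
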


With these preliminary constructions established, we are ready to prove the independence of the wrapped generating family cohomology from the choices of $\Infin$, $\mu$, and $H$.

\begin{prop} \label{prop:wgh-independent} 
Given $(\leg_-, f_-) \prec_{(\clag, F)} (\leg_+, f_+)$,
the isomorphism types of $\wgh{k}{F}$ and $\rwgh{k}{F}$ do not depend on the choice of $H$, $\Infin$, or $\mu$.  \end{prop}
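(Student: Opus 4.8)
The plan is to use Corollary~\ref{cor:wgh-cpct} to replace both $\wgh{k}{F}$ and $\rwgh{k}{F}$ by the relative cohomology of a pair of sublevel sets of $\Delta$ over the \emph{compact} base $[v_-,v_+]\times M$, and then to invoke the Critical Non-Crossing Lemma~\ref{lem:crit-non-crossing}. The tool that makes this work uniformly in the data is the integrable, gradient-like vector field $X$ constructed in the proof of Lemma~\ref{lem:cobord-cpt-deform}: it equals $\grad\Delta$ for $t\in[t_-,t_+]$, interpolates to $t\,\grad\delta_\pm$ near the ends, is tangent to the hyperplanes $t=v_\pm$, and --- by the tameness of $(\Delta,\delta_-,\delta_+)$ from Lemma~\ref{lem:slin-diff} --- is bounded away from $\mathbf 0$ on $\left(\Delta|_{[v_-,v_+]}\right)^{-1}([c,d])$ whenever $[c,d]$ contains no critical value of $\Delta|_{[v_-,v_+]}$. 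By Proposition~\ref{prop:crit-pt-wgh} those critical values are $0$ (from the critical submanifold $C$) together with the numbers $t_\pm\ell(\gamma_\pm)\pm\frac{1}{2r_\pm}\ell(\gamma_\pm)^2>0$, and Inequalities~(\ref{ineq:wgh}) are arranged exactly so that the latter all lie in $(\mu,\Infin)$; hence $(\Infin,\infty)$, $(0,\mu]$, and $[-\mu,0)$ contain no critical values.

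First I would fix $H$ and vary $\Infin$ and $\mu$. Given two admissible pairs $(\Infin_0,\mu_0)$ and $(\Infin_1,\mu_1)$, I would choose $v_\pm$ satisfying Inequalities~(\ref{ineq:vpm}) for both (possible since shrinking $v_-$ and enlarging $v_+$ only helps), so that Corollary~\ref{cor:wgh-cpct} applies to each, and then apply Lemma~\ref{lem:crit-non-crossing} to the constant family $g_s\equiv\Delta|_{[v_-,v_+]}$, the vector field $X$ above, and the straight-line paths $\beta(s)$ from $\Infin_0$ to $\Infin_1$ and $\alpha(s)$ from $\mu_0$ to $\mu_1$ (or from $-\mu_0$ to $-\mu_1$ for the total groups). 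These paths remain in $(\Infin,\infty)$- and $(0,\mu]$- (resp.\ $[-\mu,0)$-) type regions, which are critical-value-free with room to spare, so the hypothesis of Lemma~\ref{lem:crit-non-crossing} holds for a small $\epsilon$; the resulting homotopy equivalence of pairs gives independence of $\Infin$ and $\mu$.

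Next I would vary $H$. Since $\mathcal H(\clag)$ is path connected, I would join two given shearing functions by a path $H_s$, $s\in[0,1]$; substituting $H=H_s$ into (\ref{eqn:pert-diff}) produces a continuous family $\Delta_s$. By compactness of $[0,1]$ the parameters $r_\pm(s),u_\pm(s)$ range over compact sets, so one can select a single pair $v_\pm$ together with continuous paths $\Infin(s),\mu(s)$ satisfying Inequalities~(\ref{ineq:wgh}) and~(\ref{ineq:vpm}) for every $s$. For each $s$ the vector field $X_s$ built from $\Delta_s$ as above is integrable, gradient-like, and bounded away from $\mathbf 0$ off the critical set of $\Delta_s$, and by Proposition~\ref{prop:crit-pt-wgh} the critical values of $\Delta_s|_{[v_-,v_+]}$ are $0$ and $t_\pm\ell(\gamma_\pm)\pm\frac{1}{2r_\pm(s)}\ell(\gamma_\pm)^2$, which vary continuously in $s$ and stay inside $(\mu(s),\Infin(s))$. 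Hence there is a uniform $\epsilon>0$ so that, for all $s$, the $\epsilon$-bands around $\alpha(s)=\pm\mu(s)$ and $\beta(s)=\Infin(s)$ avoid every critical value of $\Delta_s|_{[v_-,v_+]}$, and Lemma~\ref{lem:crit-non-crossing} then yields a homotopy equivalence of the pairs at $s=0$ and $s=1$. Together with Corollary~\ref{cor:wgh-cpct}, this gives independence of $H$.

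The step I expect to be the main obstacle is the uniformity bookkeeping in the last paragraph: one must check that a single $v_\pm$ and a single $\epsilon$ work along the whole path, that the slicewise vector field stays integrable, gradient-like, and tangent to $t=v_\pm$ for every $H_s$, and that no critical value --- in particular the value $0$ of the critical submanifold $C$, which in the total case lies strictly between the two chosen levels --- ever drifts into an $\epsilon$-band. Each of these follows from Proposition~\ref{prop:crit-pt-wgh}, Lemma~\ref{lem:slin-diff}, and the inequalities defining $\mathcal H(\clag)$, $\Infin$, and $\mu$, but assembling them carefully is where the bulk of the work lies.
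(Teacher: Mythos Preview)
Your proposal is correct and follows essentially the same route as the paper: reduce to the compact base via Corollary~\ref{cor:wgh-cpct}, use the path connectedness of $\mathcal{H}(\clag)$ to produce a one-parameter family $\Delta_s$, invoke the tameness of Lemma~\ref{lem:slin-diff}, build the gradient-like vector field $X_s$ exactly as in the proof of Lemma~\ref{lem:cobord-cpt-deform}, and apply the Critical Non-Crossing Lemma~\ref{lem:crit-non-crossing}. The only cosmetic difference is that the paper varies $H$, $\Infin$, and $\mu$ simultaneously along a single path rather than in your two separate stages, which slightly streamlines the uniformity bookkeeping you flag at the end.
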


\begin{proof}  
Since $\mathcal{H}(\clag)$ is path connected, we may choose continuous paths $H_s$ in $\mathcal{H}(\clag)$, $\Infin_s$, and $\mu_s$ with $s \in [0,1]$ joining any two triples of choices of $H$, $\Infin$, and $\mu$ that all satisfy the appropriate inequalities.
Let $\Delta_s$ be the path of associated sheared difference functions.  Choose $v_\pm$ that satisfy the Inequalities~(\ref{ineq:vpm}) for all $s \in [0,1]$.
After applying a fiber-preserving diffeomorphism, we can assume that 
when $t \in [v_-, v_+]$, the sheared difference functions may be written as
\begin{equation} \label{eqn:nice-Delta}
\Delta_s(t, x, \e, \te) = \Delta_s^c(t, x, \e, \te) + A_t(\e, \te) + H_s(t),
\end{equation}
where $\Delta_s^c(t, x, \e, \te)$ is compactly supported and, for each $t$,  $A_t( \e, \te)$ is a non-zero linear function.  

We finish the proof by applying the Critical Non-Crossing Lemma~\ref{lem:crit-non-crossing}.  Since we can assume that $(\Delta_s, \delta_-, \delta_+)$ is tame and that $\Infin(s)$ and  $\pm \mu(s)$ are always regular values of $\Delta_s$, to apply Lemma~\ref{lem:crit-non-crossing} it suffices to construct an appropriate gradient-like vector field $X_s$ on $[v_-, v_+] \times M \times \rr^N$ for $\Delta_s|_{[v_-, v_+]}$; this vector field may be constructed exactly as in Lemma~\ref{lem:cobord-cpt-deform}.
\end{proof}   

Though it was simple enough to simultaneously prove that both the relative and total wrapped generating family cohomologies do not depend on the choices involved in their definitions, it only matters for the relative case, as we have:

\begin{prop} \label{prop:wgh-vanish}
  The total wrapped generating family cohomology vanishes.
\end{prop}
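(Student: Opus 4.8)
The plan is to show that the pair $(\Delta^\Infin, \Delta^{-\mu})$ is acyclic by a homotopy argument analogous to Lemma~\ref{lem:total-vanish}, the key point being that now \emph{all} critical values of $\Delta$ — including those coming from Reeb chords at the ends — lie strictly above $-\mu$, so there are no critical values in $(-\infty, -\mu]$ that obstruct a retraction down to the ``linear part'' of $\Delta$. By Corollary~\ref{cor:wgh-cpct}, I may replace $\Delta$ by its restriction $\Delta|_{[v_-,v_+]}$ over the compact base $[v_-,v_+]\times M$, so that $\wgh{k}{F}\simeq H^{k+N}(\Delta_{[v_-,v_+]}^\Infin, \Delta_{[v_-,v_+]}^{-\mu})$, and work there; this is where the Critical Non-Crossing Lemma~\ref{lem:crit-non-crossing} applies.

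First I would record, using Proposition~\ref{prop:crit-pt-wgh}, that the only critical values of $\Delta$ below $\mu$ come from the critical submanifold $C$, which has value $0 > -\mu$; hence there are no critical values of $\Delta$ in $[-\Infin', -\mu]$ for any $\Infin'$, in particular none in the range we care about. Next, by Lemma~\ref{lem:slin-diff} I may assume the triple $(\Delta, \delta_-, \delta_+)$ is tame, so that over $[v_-, v_+]$ we can write $\Delta(t,x,\e,\te) = \Delta^c(t,x,\e,\te) + A_t(\e,\te) + H(t)$ with $\Delta^c$ compactly supported and $A_t$ slicewise-linear. I would then define the $1$-parameter family $\Delta_s(t,x,\e,\te) = s\,\Delta^c(t,x,\e,\te) + A_t(\e,\te) + H(t)$ for $s\in[0,1]$, together with the constant paths $\alpha(s) = -\mu$ and $\beta(s) = \Infin$. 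One checks that for each $s$ the function $\Delta_s|_{[v_-,v_+]}$ has no critical values in $[-\mu,\Infin]$ lying in a neighborhood of either endpoint — indeed the critical values of $\Delta_s$ vary continuously and stay inside $[-\mu',\Infin']$ for slightly enlarged bounds, and crucially none approach $-\mu$ since the submanifold $C$ persists at value $0$ and the end-contributions remain at value $\geq \mu$ — so the gradient-like vector field $X_s$ built exactly as in Lemma~\ref{lem:cobord-cpt-deform} is bounded away from $\mathbf 0$ on the relevant sublevel band. Applying Lemma~\ref{lem:crit-non-crossing} to $\Delta_s$, $\alpha$, $\beta$, and $X_s$ shows that $(\Delta_{[v_-,v_+]}^\Infin, \Delta_{[v_-,v_+]}^{-\mu})$ is homotopy equivalent to $(\Delta_0{}_{[v_-,v_+]}^\Infin, \Delta_0{}_{[v_-,v_+]}^{-\mu})$ where $\Delta_0(t,x,\e,\te) = A_t(\e,\te) + H(t)$.

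Finally I would argue that this last pair is acyclic: for each fixed $t$, the function $\e,\te \mapsto A_t(\e,\te) + H(t)$ is (affine-)linear and non-zero in the fiber directions, so its sublevel set in the fibers is a half-space, and the pair of sublevel sets $(A_t^{\leq \Infin}, A_t^{\leq -\mu})$ deformation retracts onto a point along the linear flow; performing this fiberwise over the compact base $[v_-, v_+] \times M$ (and handling the $t$-dependence of $A_t$ and the bounded term $H(t)$ by a further application of Lemma~\ref{lem:retract} with an integrable gradient-like field adapted to $\Delta_0$) shows $H^*(\Delta_0{}_{[v_-,v_+]}^\Infin, \Delta_0{}_{[v_-,v_+]}^{-\mu}) = 0$. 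Combined with Corollary~\ref{cor:wgh-cpct}, this yields $\wgh{k}{F} = 0$ for all $k$. The main obstacle is the bookkeeping in the middle step: verifying carefully that along the homotopy $\Delta_s$ no critical value ever crosses the level $-\mu$ (so that the Critical Non-Crossing hypothesis genuinely holds near the lower cutoff), which requires tracking how the critical submanifold and the end-point Reeb-chord critical values move under the deformation — though since $\Delta_s$ differs from $\Delta$ only by scaling the compactly supported part $\Delta^c$, and the submanifold $C$ together with the end behavior is essentially unchanged, this should go through as in the proofs of Lemma~\ref{lem:total-vanish} and Proposition~\ref{prop:wgh-independent}.
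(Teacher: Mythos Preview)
Your overall strategy is exactly the paper's: restrict to the compact base, write $\Delta = \Delta^c + A_t + H$, homotope to the linear part by scaling $\Delta^c$, and apply the Critical Non-Crossing Lemma. The gap is in how you apply that lemma. You take \emph{constant} paths $\alpha(s)=-\mu$, $\beta(s)=\Infin$ and then assert that no critical value of $\Delta_s$ ever crosses these levels, justifying this by saying ``the submanifold $C$ persists at value $0$ and the end-contributions remain at value $\geq\mu$.'' That claim is not correct: once you replace $\Delta^c$ by $s\,\Delta^c$, the critical locus of $\Delta_s$ is no longer $C$ (nor do the Reeb-chord critical points survive in any controlled way). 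For intermediate $s$ the critical set and its values can be essentially arbitrary, so you have no control preventing a crossing of either $-\mu$ or $\Infin$. Your last paragraph flags this as ``bookkeeping,'' but it is the actual content of the argument, and your proposed justification does not work.

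The paper's fix is simple and worth noting: instead of trying to prove critical values avoid fixed levels, it lets the cutoff levels move. One chooses continuous paths $\Infin_s$ and $\mu_s$ with $\Infin_0=\Infin$, $\mu_0=\mu$ and, for every $s$, \emph{all} critical values of $\Delta_s$ contained in $[-\mu_s,\Infin_s]$; then the Critical Non-Crossing hypothesis near the moving levels $-\mu_s$ and $\Infin_s$ is automatic. A second point you gloss over is that $v_\pm$ must be chosen uniformly in $s$ (using the deformed end-difference-functions $(\delta_\pm)_s$), so that the boundary-tangent gradient-like vector field from Lemma~\ref{lem:cobord-cpt-deform} is available for every $\Delta_s$, not just for $\Delta$. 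With those two adjustments your outline becomes the paper's proof.
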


\begin{proof}
Using notation as in Equation~(\ref{eqn:nice-Delta}), consider 
$$\Delta_s(t, x, \e, \te) = (1-s) \Delta^c(t, x, \e, \te) + A_t(\e, \te) + H(t)$$
for $t$ in some compact interval $J$. Choose paths $\Infin_s$ and $\mu(s)$ so that $\Infin_0 = \Infin$, $\mu_0 = \mu$, and all critical values of $\Delta_s$ lie in $[-\mu_s, \Infin_s]$.  Notice that $\Delta_1(t, x, \e, \te) = A_t(\e, \te) + H(t)$, and hence has no critical values.  If we can show that there exists $J = [v_-, v_+]$ with $v_\pm$ satisfying Inequalities (\ref{ineq:vpm}), and an integrable, gradient-like vector field $X_s$ for $\Delta_s$ on $[v_-, v_+] \times M \times \rr^N$, then Corollary ~\ref{cor:wgh-cpct} and the Critical Non-Crossing Lemma~\ref{lem:crit-non-crossing} imply:
$$
\begin{aligned}
\wgh{k}{F} &\simeq 
H^{k+N}\left((\Delta_0)_{[v_-, v_+]}^{\Infin_0}, (\Delta_0)_{[v_-, v_+]}^{-\mu_0} \right) \\
&\simeq  H^{k+N}\left((\Delta_1)_{[v_-, v_+]}^{\Infin_1}, (\Delta_1)_{[v_-, v_+]}^{-\mu_1} \right) \\ &= 0.
\end{aligned}$$

To construct appropriate $v_\pm$, notice that 
when $t \leq t_-$ or $t \geq t_+$, we have:
$$\Delta(t, x, \e, \te) =  t\delta_\pm^c(x, \e, \te) + tD_\pm(\e, \te) + H(t).$$
Consider 
$$ (\delta_\pm)_s = (1-s)\delta_\pm^c(x, \e, \te) + D_\pm(\e, \te).$$
Let $\left( \overline{c}_+ \right)_s $ be greater than all critical values of $(\delta_+)_s$,
and let $\left( \underline{c}_-\right)_s $ be less than all critical values of $(\delta_-)_s$.
Then choose $v_\pm$ so that for all $s \in [0,1]$,
$$\lambda_{\Infin_s}(v_-) < (\underline{c}_-)_s  \quad \text{and} \quad
\lambda_{-\mu_s}(v_+) >   (\overline{c}_+)_s.$$
It follows that if $\Delta(v_\pm, x, \e, \te) \in [-\mu_s, \Infin_s]$, then $(x, \e, \te)$ is not a critical
point of $(\delta_\pm)_s$.  The construction of the integrable, gradient-like vector field $X_s$ for
$\Delta_s$ on $[v_-, v_+] \times M \times \rr^N$ is as in the proof of Lemma~\ref{lem:cobord-cpt-deform}.
\end{proof}

Although the total wrapped generating family cohomology vanishes, the relative version can be non-trivial.  In fact, we will show in Proposition~\ref{prop:vert-les} that $\rwgh{k+1}{F} \simeq H^{k}(\lag, \partial \lag_+)$.

\section{Relative Mapping Cones}
\label{sec:map-cones}

In Sections~\ref{sec:filling-iso} and \ref{sec:cobord-les}, we will show that the pair of spaces used to define the total and relative wrapped generating family cohomology can be viewed as objects akin to mapping cones.  In this section, we develop the theory of \emph{relative} mapping cones.

\subsection{Long Exact Sequences from Mapping Cones} 

The key idea behind the desired constructions is the use of a relative version of the well-known mapping cone construction.  Let $I$ denote the unit interval $[0,1]$. Recall that the cone of a space $X$, $C(X)$, is defined to be $X \times I/ X \times \{1\}$.  Given a map $f: X \to Y$, the mapping cone $C(f)$ is defined to be $C(X) \cup_f Y$, where $\cup_f$ indicates an identification of $(x, 0)$ with $f(x)$.
 
\begin{defn}\label{def:rel-cone}
Given a pair $(X,A)$, define the \dfn{relative cone} $C(X,A)$ to be the pair $(X \times I, A \times I \cup X \times \{1\})$.  For a map $g: (X,A) \to (Y,B)$, let the \dfn{relative mapping cone} $C(g)$ be the pair $C(X,A) \cup_g (Y, B)$. 

We may similarly define the  \dfn{relative suspension} $\Sigma(X,A)$ to be the pair $(X \times I, A \times I \cup X \times \{0,1\})$.
\end{defn}

The following lemma, whose proof is an easy exercise, shows that the (co)homology of relative suspensions behaves in the same way as it does for the non-relative case:

\begin{lem} \label{lem:rel-susp}
  $H^k(\Sigma(X,A)) \simeq H^{k-1}(X,A)$.
\end{lem}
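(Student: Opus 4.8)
The plan is to mimic the standard proof that the reduced cohomology of a suspension shifts degree by one, being careful that all the operations respect the relative structure. First I would unwind the definition: $\Sigma(X,A) = (X\times I,\ A\times I \cup X\times\{0,1\})$. Let me abbreviate $Y = X\times I$ and $B = A\times I \cup X\times\{0,1\}$, and also set $B' = A\times I \cup X\times\{0\}$ and $B'' = A\times I \cup X\times\{1\}$, so that $C(X,A) = (Y, B'')$ is the relative cone and $B = B'\cup B''$. The key point is that $(Y,B'')$ is the relative cone on $(X,A)$, and a relative cone is ``relatively contractible'' in the appropriate sense: the pair $(Y,B'')$ deformation retracts onto $(X\times\{0\}\cup A\times I,\ X\times\{1\}\cup A\times I)$ — wait, more simply, $(Y,B'')$ is acyclic, i.e. $H^k(Y,B'') = 0$ for all $k$, because $Y\times\{1\}$ is a deformation retract of $Y = X\times I$ rel nothing, and after collapsing appropriately the pair becomes contractible; concretely the straight-line homotopy $(x,s)\mapsto (x, s + r(1-s))$, $r\in[0,1]$, pushes $Y$ into $X\times\{1\}\subset B''$ while carrying $B''$ into itself, exhibiting $B''$ as a deformation retract of $Y$, hence $H^*(Y,B'')=0$.

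Next I would run the long exact sequence of the triple $(Y,\ B,\ B'')$ in cohomology:
\begin{equation*}
  \cdots \to H^{k-1}(B, B'') \to H^k(Y, B) \to H^k(Y, B'') \to H^k(B, B'') \to \cdots .
\end{equation*}
Since $H^*(Y,B'')=0$ by the previous paragraph, the connecting maps give isomorphisms $H^k(\Sigma(X,A)) = H^k(Y,B) \simeq H^{k-1}(B,B'')$. So it remains to identify $H^{k-1}(B,B'')$ with $H^{k-1}(X,A)$. Here I would use excision/homotopy: $B = A\times I \cup X\times\{0,1\}$ and $B'' = A\times I\cup X\times\{1\}$, so the pair $(B,B'')$ is, up to the harmless collapse along $A\times I$, the pair $(X\times\{0\}\cup(\text{stuff}),\ \text{stuff})$. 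Precisely, one excises the open set near $X\times\{1\}$: the inclusion $(X\times\{0\}, A\times\{0\}) \hookrightarrow (B,B'')$ induces an isomorphism on relative cohomology, because $B\setminus(X\times\{1\})$ deformation retracts onto $X\times\{0\}$ carrying $A\times I$ onto $A\times\{0\}$, while $B''$ is a neighborhood-like piece that can be excised. This yields $H^{k-1}(B,B'')\simeq H^{k-1}(X\times\{0\},A\times\{0\}) = H^{k-1}(X,A)$, and composing gives the claim.

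The step I expect to be the main (though still mild) obstacle is the bookkeeping in the last paragraph: making the excision/deformation-retraction argument for $(B,B'')$ genuinely rigorous, since $B$ is a union of pieces glued along $A\times I$ and one must check the relevant subspaces are open/closed enough (or pass to a CW/neighborhood-deformation-retract setting) for excision to apply. In the generality the paper needs — where $(X,A)$ is a pair of the sort arising from sublevel sets of tame generating families, which are reasonably nice (e.g. ANRs or deformation retracts of open sets) — this is unproblematic, and indeed the authors flag the whole lemma as ``an easy exercise,'' so I would present the argument at the level of detail above: deformation retract $B''$ inside $Y$ to kill $H^*(Y,B'')$, invoke the triple sequence, and identify the residual relative cohomology of $(B,B'')$ with that of $(X,A)$ by collapsing $A\times I$ and excising the $X\times\{1\}$ end.
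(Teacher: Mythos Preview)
Your argument is correct: the triple $(Y,B,B'')$ together with the acyclicity of the relative cone $(Y,B'')$ reduces the statement to identifying $H^{k-1}(B,B'')$ with $H^{k-1}(X,A)$, and your excision/retraction argument for that last step goes through (concretely, excise $A\times(1/2,1]\cup X\times\{1\}$, whose closure lies in the interior of $B''$ in $B$, then retract the remaining pair $(A\times[0,1/2]\cup X\times\{0\},\,A\times[0,1/2])$ onto $(X\times\{0\},A\times\{0\})$). The paper gives no proof at all---it simply labels the lemma ``an easy exercise''---so there is nothing further to compare; your write-up is exactly the kind of routine verification the authors had in mind.
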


It is well-known that the classical mapping cone on $f:X \to Y$ induces a long exact sequence:
$$\dots \to H^k(Y) \stackrel{f^*}{\to} H^k(X) \to H^{k+1}(C(f)) \to H^{k+1}(Y) \stackrel{f^*}{\to} \dots .$$  
A similar sequence exists for a relative mapping cone:

\begin{lem} \label{lem:cone-exact} Given a map $g: (X, A) \to (Y, B)$, there is a long exact sequence in cohomology:
  $$ \cdots \to H^k(Y, B) \stackrel{g^*}{\to} H^k(X, A) \to H^{k+1}(C(g)) \to H^{k+1}(Y, B) \stackrel{g^*}{\to} \cdots.$$  
\end{lem}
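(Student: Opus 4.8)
The plan is to mimic the classical mapping-cone long exact sequence by producing the correct pair of spaces and then invoking excision and the long exact sequence of a pair. First I would unwind the definitions: by Definition~\ref{def:rel-cone}, the relative mapping cone $C(g)$ is the pair $\bigl( (X \times I) \sqcup Y \,/\! \sim,\ (A \times I \cup X \times \{1\}) \sqcup B \,/\! \sim \bigr)$, where $\sim$ glues $(x,0)$ to $g(x)$. Write $Z$ for the total space and $W$ for the subspace, so $C(g) = (Z, W)$. The natural candidate for the exact sequence is the long exact sequence in cohomology of the triple $(Z, Y', W)$, where $Y' = Y \cup \bigl(A \times I \cup X \times \{1\}\bigr) \subset Z$ is the union of the copy of $Y$ with the "free" part of the relative cone; equivalently $Y'/W$ deformation retracts onto $Y/B$, while $Z/Y'$ is the relative (reduced) suspension $\Sigma(X,A)$ up to the appropriate identifications.

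The key steps, in order: (1) Identify $H^k(Z, Y')$ with $H^k(\Sigma(X,A))$, and then apply Lemma~\ref{lem:rel-susp} to get $H^k(Z,Y') \simeq H^{k-1}(X,A)$. This requires checking that collapsing $Y'$ inside $Z$ yields precisely the relative suspension $\Sigma(X,A) = (X \times I,\ A \times I \cup X \times \{0,1\})$ — the $X \times \{1\}$ end comes from the cone point locus and the $X \times \{0\}$ end is identified with $Y$, hence collapsed into $Y'$; excision removes $Y$ itself. (2) Identify $H^k(Y', W)$ with $H^k(Y, B)$, using that $Y' = Y \cup (A \times I \cup X \times \{1\})$ deformation retracts (rel $W$) onto the copy of $Y$, with $W$ retracting onto $B$; this is a routine deformation-retract argument sliding the $I$-coordinate. (3) Write down the long exact sequence of the triple $(Z, Y', W)$:
\[
\cdots \to H^k(Z, Y') \to H^k(Z, W) \to H^k(Y', W) \xrightarrow{\ \partial\ } H^{k+1}(Z, Y') \to \cdots,
\]
and substitute $H^k(Z, Y') \simeq H^{k-1}(X,A)$, $H^k(Z,W) = H^k(C(g))$, and $H^k(Y',W) \simeq H^k(Y,B)$, reindexing to land on $\cdots \to H^k(Y,B) \to H^k(X,A) \to H^{k+1}(C(g)) \to H^{k+1}(Y,B) \to \cdots$. (4) Check that the connecting map $H^k(Y,B) \to H^k(X,A)$ is exactly $g^*$: this is where the gluing $(x,0) \mapsto g(x)$ enters, since the coboundary of the triple, when pushed through the identifications of steps (1) and (2), restricts a class on $Y$ to $X \times \{0\}$ and then reads it off via $g$.

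The main obstacle I expect is step (4): verifying that the triple's connecting homomorphism, after the two identifications, genuinely coincides with $g^*$ rather than $\pm g^*$ or $g^*$ composed with some suspension isomorphism sign. This is the classical subtlety in mapping-cone arguments, and the cleanest route is probably to compare with the absolute case (where the statement $H^k(Y) \xrightarrow{f^*} H^k(X) \to H^{k+1}(C(f)) \to \cdots$ is standard) by treating the pair $(X,A)$ as a cofibration and reducing to quotients $X/A$, $Y/B$; alternatively one can work at the cochain level with a mapping-cone cochain complex $\mathrm{Cone}(g^\sharp)$ and observe its cohomology is $H^{*+1}(C(g))$ by a direct chain-homotopy argument, which makes the identification of $g^*$ transparent. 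A secondary, more bookkeeping-type obstacle is being careful that all the homotopy equivalences and excisions in steps (1)–(2) are compatible with the subspaces, i.e.\ that everything is done "rel the appropriate pair"; since the spaces here are CW-like (products of manifolds with intervals, glued along maps), excision and the homotopy extension property apply without difficulty, so this should be routine.
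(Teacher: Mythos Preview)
Your proposal is correct and follows essentially the same route as the paper: both arguments use the long exact sequence of the triple $\bigl(X \times I \cup_g Y,\ (A \times I \cup X \times \{1\}) \cup_g Y,\ (A \times I \cup X \times \{1\}) \cup_g B\bigr)$, identifying the first pair with $\Sigma(X,A)$ via excision and Lemma~\ref{lem:rel-susp}, and the last pair with $(Y,B)$ via excision. The paper's proof is terser and does not explicitly verify that the connecting map is $g^*$, so your step~(4) is more careful than the original, but the underlying argument is the same.
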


\begin{proof} The desired result follows by examining the long exact sequence of the triple $$\bigl(X \times I \cup_g Y, (A \times I \cup X \times \{1\}) \cup_g Y,
  (A \times I \cup X \times \{1\}) \cup_g B \bigr).$$ By excision, the cohomology groups of the last pair in the triple agree with those of $(Y,B)$, the coholomogy groups of the pair made from the first and last terms are the cohomology groups of $C(g)$, and the first pair is a suspension of $(X,A)$ and thus Lemma~\ref{lem:rel-susp} applies.  \end{proof}

\subsection{Morse-Theoretic Lemmas to Realize Relative Mapping Cones}
\label{sec:sublevel-sets}

In Sections~\ref{sec:filling-iso} and \ref{sec:cobord-les}, we prove that pairs of sublevel sets of $\Delta$ may be identified with relative mapping cones. The lemmas developed in this section will play a critical role in this identification.
 
We begin by setting notation. Let $J = [t_0,t_1]$ be a closed interval.  Given continuous functions $a,b: J  \to \rr$, we define the following subsets of $J \times X$:
\begin{equation} \label{eqn:A-B}
  \begin{aligned}
    A_t &= \{ t \} \times \delta^{a(t)}, \qquad A_J = \bigcup_{t \in
      J} A_t,  \\
    B_t &= \{ t \} \times \delta^{b(t)}, \qquad B_J = \bigcup_{t \in
      J} B_t.
  \end{aligned}
\end{equation}
We will be interested in finding  homotopy equivalences of the pair $(B_J, A_J)$ under different conditions on the functions $a$ and $b$.  The reason for considering such a setup is that the pairs $(\Delta^\tau, \Delta^\sigma)$ have this form at the ends of $\rr_+$ for the difference functions $\delta_\pm$ and the levels $b(t) = \lambda_\tau(t)$ and $a(t) = \lambda_\sigma(t)$.

First, we analyze the pair $(B_J, A_J)$ in terms of the sublevel sets on the right side of $J$.
\begin{lem} \label{lem:def-retr-right}
  Let $\delta: X \to \rr$ be a continuous function and let $a, b : J \to \rr$ be continuous
  functions satisfying:
\begin{enumerate}
\item $a(t) \leq b(t)$ for all $t \in J$, and
\item $a$ and $b$ are strictly increasing.
\end{enumerate}
Then $\left(B_J, A_J\right)$  deformation retracts to 
$\left(A_J \cup B_{t_1}, A_J\right)$.
\end{lem}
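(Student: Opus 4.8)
The plan is to produce an explicit strong deformation retraction of $B_J$ onto $A_J\cup B_{t_1}$ that moves only the $J$-coordinate: each point $(t,x)\in B_J$ is slid to the right along $J$ (keeping $x$ fixed) until the level $a(\cdot)$ catches up with $\delta(x)$, landing on the graph of $a$ and hence in $A_J$; if $a$ never catches up inside $J$, the point is slid all the way to $t=t_1$ and lands in $B_{t_1}$. To make this precise, note that since $a\colon J\to\rr$ is continuous and strictly increasing it is a homeomorphism onto $[a(t_0),a(t_1)]$ with continuous inverse $a^{-1}$, and I would define the "catch-up time"
$$\tau(t,x)=a^{-1}\!\left(\max\bigl\{\,a(t),\ \min\{a(t_1),\delta(x)\}\,\bigr\}\right).$$
The argument of $a^{-1}$ always lies in $[a(t),a(t_1)]\subseteq[a(t_0),a(t_1)]$, so $\tau$ is well defined; writing it as the composite of $a^{-1}$ with the continuous clamping function shows $\tau$ is continuous in $(t,x)$, and clearly $t\le\tau(t,x)\le t_1$.

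Next I would set $h\colon[0,1]\times B_J\to J\times X$, $h_s(t,x)=\bigl((1-s)t+s\,\tau(t,x),\,x\bigr)$, which is jointly continuous with $h_0=\id$, and verify three things. First, $h_s$ maps $B_J$ into $B_J$: the new coordinate $t_s=(1-s)t+s\,\tau(t,x)$ lies in $[t,\tau(t,x)]\subseteq J$, and since $b$ is increasing and $t_s\ge t$ we get $\delta(x)\le b(t)\le b(t_s)$, so $(t_s,x)\in B_{t_s}$; this is the only step using $b$ increasing, and (together with $a\le b$, which gives $A_J\subseteq B_J$) the only place the hypotheses on $b$ matter. Second, $h_s$ fixes $A_J\cup B_{t_1}$ pointwise: if $\delta(x)\le a(t)$ then the clamp equals $a(t)$ and $\tau(t,x)=t$; if $t=t_1$ then the clamp equals $a(t_1)$ and $\tau(t_1,x)=t_1$. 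Third, $h_1(B_J)\subseteq A_J\cup B_{t_1}$: splitting into the cases $\delta(x)<a(t)$, $a(t)\le\delta(x)\le a(t_1)$ (where the intermediate value theorem and strict monotonicity give the unique crossing time), and $\delta(x)>a(t_1)$, the point $h_1(t,x)=(\tau(t,x),x)$ lands respectively in $A_t$, in $A_{\tau(t,x)}$ since $\delta(x)=a(\tau(t,x))$, or in $B_{t_1}$ since $\delta(x)\le b(t)\le b(t_1)$. Combining these, $h$ is a strong deformation retraction of $B_J$ onto $A_J\cup B_{t_1}$; because $h_s(A_J)\subseteq A_J$ for all $s$, it is in particular a deformation retraction of the pair $(B_J,A_J)$ onto $(A_J\cup B_{t_1},A_J)$, which is the assertion.

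I expect the only genuine subtlety to be the continuity of $\tau$ — across the "corner" loci $\delta(x)=a(t)$ and $\delta(x)=a(t_1)$ and at the endpoints of $J$ — which is exactly why I would package $\tau$ as $a^{-1}$ composed with an explicit $\max$-$\min$ clamp rather than a case definition; with that formula the continuity is immediate and the remaining verifications are the bookkeeping above. Strict monotonicity of $a$ is essential (it makes $a^{-1}$ and the crossing time well defined), while the roles of $a\le b$ and the monotonicity of $b$ are confined to guaranteeing $A_J\subseteq B_J$ and that the sliding path never exits $B_J$.
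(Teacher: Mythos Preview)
Your proof is correct and is essentially the same argument as the paper's: the paper defines the retraction $\rho_+$ by the piecewise formula that your clamp expression $\tau(t,x)$ encodes, and describes the homotopy as ``follow the flow of the horizontal vector field $\partial_t$'' staying in $B_J$ because $b$ is increasing---exactly your linear interpolation $h_s$. Your version is more careful about continuity (packaging the cases as $a^{-1}$ of a $\max$--$\min$), but the idea is identical.
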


\begin{proof}
  We define a retraction $\rho_+: B_J \to A_J \cup B_{t_1}$ as follows:
  $$\rho_+(t,x) = \begin{cases}
    (t,x), & \delta(x) \leq a(t) \\
    (a^{-1}(\delta(x)),x), & a(t) \leq \delta(x) \leq a(t_1)\\
    (t_1,x), & \delta(x) \geq a(t_1).
  \end{cases}$$
  See the left side of Figure~\ref{fig:morse-lemmas}.  To see this map as the end map of 
  a deformation retraction, simply follow the flow of the horizontal vector field $\partial_t$ for ever shorter time intervals; this flow lies inside $B_J $ since $b(t)$ is increasing.
\end{proof}

\begin{cor}  \label{cor:left-map-cone} Under the hypotheses of Lemma~\ref{lem:def-retr-right}, 
$(B_J, A_J)$ 
 deformation retracts to  $(B_{t_1}, A_{t_1})$
\end{cor}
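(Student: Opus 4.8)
The plan is to obtain the corollary by composing two deformation retractions. Lemma~\ref{lem:def-retr-right} already gives a deformation retraction of the pair $(B_J, A_J)$ onto $(A_J \cup B_{t_1}, A_J)$, so it suffices to exhibit a further deformation retraction of $(A_J \cup B_{t_1}, A_J)$ onto $(B_{t_1}, A_{t_1})$ and then concatenate the two homotopies. Note first that the pair $(B_{t_1}, A_{t_1})$ does sit inside the relevant spaces: since $a(t_1) \leq b(t_1)$ we have $A_{t_1} = \{t_1\} \times \delta^{a(t_1)} \subseteq \{t_1\} \times \delta^{b(t_1)} = B_{t_1}$, and $A_{t_1} \subseteq A_J$, $B_{t_1} \subseteq B_J$ by taking $t = t_1$ in the definitions \eqref{eqn:A-B}.

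For the second retraction I would again use the horizontal flow, this time pushing everything to the right endpoint of $J$. Define $H_s(t,x) = \bigl((1-s)t + s t_1,\, x\bigr)$ for $s \in [0,1]$ and $(t,x) \in A_J \cup B_{t_1}$; this is clearly continuous, and $H_0 = \id$. Points of $B_{t_1}$ have first coordinate $t_1$, so $H_s$ fixes $B_{t_1}$ pointwise, and in particular the image of $H_1$ lies in $\{t_1\} \times X$. For $(t,x) \in A_J$ we have $\delta(x) \leq a(t)$, and since $a$ is strictly increasing and $(1-s)t + s t_1 \geq t$, it follows that $\delta(x) \leq a\bigl((1-s)t + s t_1\bigr)$; hence $H_s(t,x) \in A_{(1-s)t + s t_1} \subseteq A_J$ for all $s$, and $H_1(t,x) = (t_1, x) \in A_{t_1}$. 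Thus $H_s$ preserves $A_J$ throughout, carries $A_J$ into $A_{t_1}$ at time $1$, and, because $A_{t_1} \subseteq B_{t_1}$, sends all of $A_J \cup B_{t_1}$ into $B_{t_1}$ at time $1$. This is exactly a deformation retraction of pairs from $(A_J \cup B_{t_1}, A_J)$ onto $(B_{t_1}, A_{t_1})$.

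Concatenating this homotopy after the one provided by Lemma~\ref{lem:def-retr-right} (reparametrizing so that the first runs on $[0,\tfrac12]$ and the second on $[\tfrac12,1]$, which glue continuously since the retraction $\rho_+$ of Lemma~\ref{lem:def-retr-right} has image $A_J \cup B_{t_1}$) yields a deformation retraction of $(B_J, A_J)$ onto $(B_{t_1}, A_{t_1})$, as claimed. There is no real obstacle here; the only point that needs attention is the bookkeeping on the pair structure, i.e.\ verifying that each stage of the combined homotopy keeps the subspace $A_J$ inside itself so that it restricts to a homotopy of the subspaces — and that is precisely where the monotonicity of $a$ is used, the monotonicity of $b$ having entered only through the cited lemma.
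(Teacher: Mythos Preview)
Your proof is correct and follows essentially the same approach as the paper: first apply Lemma~\ref{lem:def-retr-right}, then push horizontally to $t_1$ via the map $(t,x)\mapsto(t_1,x)$, using the monotonicity of $a$ to check that $A_J$ is carried into itself throughout and into $A_{t_1}$ at the end. The paper's proof is the same two-step composition, with the second retraction $\sigma_+(t,x)=(t_1,x)$ stated without writing out the linear homotopy explicitly.
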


\begin{proof}  Consider the retraction $\sigma_+: A_J \cup B_{t_1} \to (B_{t_1}, A_{t_1})$
given by $\sigma_+(t, x) = (t_1, x)$.  Since $a$ is increasing,  $\sigma_+$ can be seen
as the end map of a deformation retraction.  Composing $\rho_+$ and $\sigma_+$ gives the desired deformation retraction.   
\end{proof}

\begin{figure}
  \centerline{\includegraphics[width=5.5in]{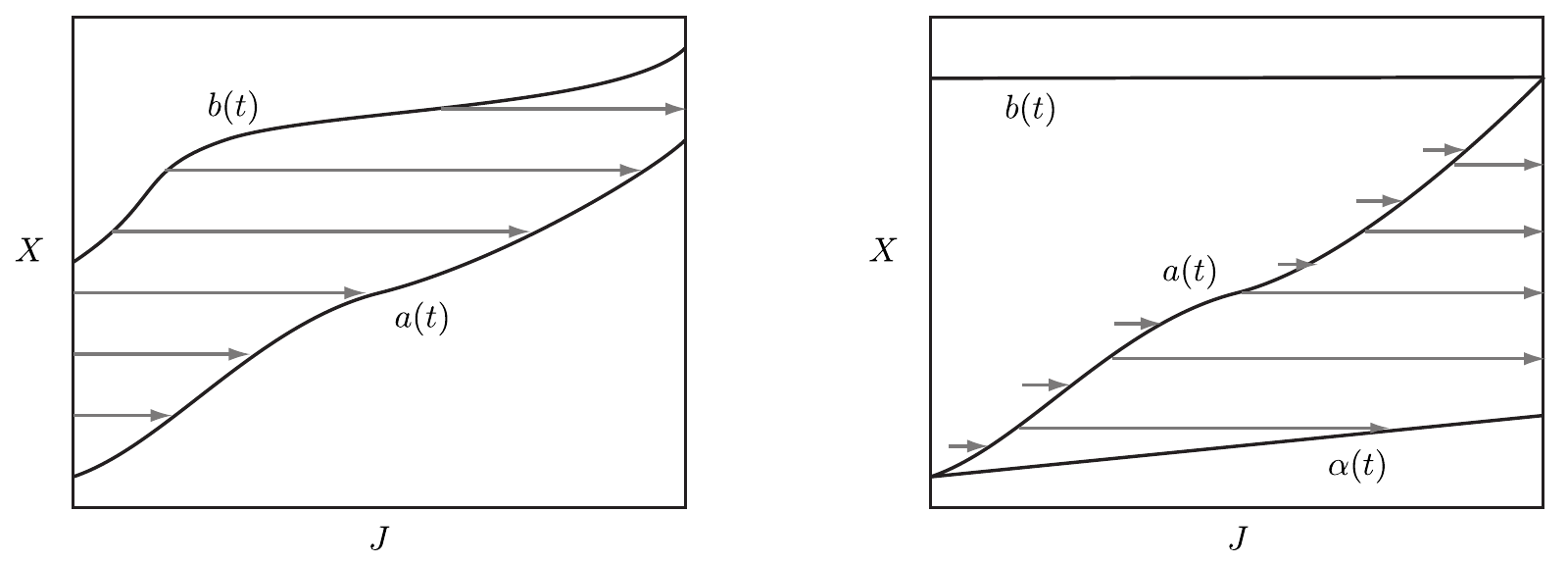}}
 \caption{Schematic diagrams for the maps in the proofs of Lemmas~\ref{lem:def-retr-right} and
  \ref{lem:def-retr-cone}. }
  \label{fig:morse-lemmas}
\end{figure}

Next, we seek to understand $\left(B_J, A_J\right)$ in terms of the sublevel sets of 
$\delta$ at the left side of $J$:

\begin{lem} \label{lem:def-retr-cone} Let $\delta: X \to \rr$ be a smooth
  function whose negative gradient flow exists for all time, and let 
 $a, b:J \to \rr$ be continuous functions satisfying:
\begin{enumerate}
\item $b(t) = b(t_0)$ for all $t$;
\item $a(t)$ is strictly increasing, and $a(t_1) = b(t_0)$;
\item $a(t_0)$ has a neighborhood of regular values of $\delta$.
\end{enumerate}
Then 
$(B_J, A_J)$ deformation retracts  to the relative cone
$C\left(\delta^{b(t_0)}, \delta^{a(t_0)}\right)$.
 \end{lem}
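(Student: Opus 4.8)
The plan is to build the claimed deformation retraction in two stages, mirroring the two-step structure of the proof of Corollary~\ref{cor:left-map-cone} but now producing a cone instead of a single slice. First I would handle the ``cone direction'': since $b(t) \equiv b(t_0)$ is constant, the region $B_J = J \times \delta^{b(t_0)}$ is literally a product, and the role of the interval coordinate $t \in J$ will become the cone parameter. Using condition (2), that $a$ is strictly increasing with $a(t_1) = b(t_0)$, the subset $A_J = \bigcup_{t} \{t\}\times \delta^{a(t)}$ is an ``expanding'' family of sublevel sets that sweeps from $\delta^{a(t_0)}$ up to all of $\delta^{b(t_0)}$ as $t$ runs from $t_0$ to $t_1$. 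Reparametrizing $J$ to $I=[0,1]$ by $t \mapsto s$ with $s=0$ at $t_1$ and $s=1$ at $t_0$ (so the "tip" of the cone is at $t_0$), one sees that the pair $(B_J, A_J)$ is already close to the relative cone $C(\delta^{b(t_0)}, \delta^{a(t_0)}) = (\delta^{b(t_0)}\times I,\ \delta^{a(t_0)}\times I \cup \delta^{b(t_0)}\times\{1\})$: the subspace $A_J$ contains the "side" $\delta^{a(t_0)}\times I$ (the slice over $t_0$, plus more) and, at the $t_1$ end, all of $\delta^{b(t_0)}$, which is the "top" $\delta^{b(t_0)}\times\{1\}$.

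The work, then, is to deformation retract $A_J$ down onto exactly $\delta^{a(t_0)}\times I \cup \delta^{b(t_0)}\times\{t_1\}$ while keeping $B_J$ fixed setwise, so that the resulting pair is homeomorphic to $C(\delta^{b(t_0)},\delta^{a(t_0)})$. For a point $(t,x)$ with $\delta(x) \le a(t_0)$, nothing needs to move. For $(t,x)$ with $a(t_0) < \delta(x) \le b(t_0)$, I would like to push $x$ along the negative gradient flow of $\delta$ (which exists for all time by hypothesis) until its value drops to $a(t_0)$, simultaneously sliding $t$ toward $t_1$; the picture is exactly the right-hand schematic in Figure~\ref{fig:morse-lemmas}. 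Concretely: define for $(t,x)\in A_J$ the flowed point $\phi_s(x)$ where $\phi$ is the negative gradient flow, and stop flowing at the first time the value reaches $a(t_0)$; meanwhile interpolate the $t$-coordinate. Condition (3), that $a(t_0)$ has a neighborhood of regular values, guarantees the flow actually decreases $\delta$ through the level $a(t_0)$ in finite time on the relevant set (no stalling at a critical level), so the stopping time is finite and continuous in $(t,x)$. One checks that this is a deformation retraction of $A_J$ onto $\delta^{a(t_0)}\times I \cup \delta^{b(t_0)}\times\{t_1\}$, and that it extends to a deformation of the pair $(B_J, A_J)$ by doing nothing on $B_J \setminus A_J$ near the top; composing, if convenient, with the retraction $\sigma_+$ from Corollary~\ref{cor:left-map-cone} to tidy up the $t_1$-slice.

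The main obstacle I anticipate is exactly the continuity and well-definedness of the gradient-flow stopping time near the boundary cases: where $\delta(x)$ is close to $a(t_0)$ (so the stopping time is near $0$) and where it is close to $b(t_0)$ and $t$ is close to $t_1$ (so one must reach the "top" of the cone consistently), and at points where the negative gradient flow might only asymptotically approach a critical value. Hypothesis (3) is precisely what rescues the first and last of these, but wiring it into an honest homotopy, rather than just a set-theoretic retraction, requires a bit of care — one really wants to flow for a rescaled time so that the stopping function is continuous and the whole construction fits together into a single map $(B_J,A_J)\times I \to (B_J,A_J)$. Since the analogous arguments in Lemmas~\ref{lem:retract}, \ref{lem:def-retr-right}, and Corollary~\ref{cor:left-map-cone} are handled by "following the flow for ever-shorter time intervals," I expect the authors to invoke the same device here, and the verification that the endpoint pair is genuinely the relative cone $C(\delta^{b(t_0)},\delta^{a(t_0)})$ of Definition~\ref{def:rel-cone} to be a short bookkeeping check once the picture in Figure~\ref{fig:morse-lemmas} is in hand.
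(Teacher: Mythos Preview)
Your overall picture is right --- $B_J$ is a product, $A_J$ sweeps from $\delta^{a(t_0)}$ up to $\delta^{b(t_0)}$, and the task is to retract $A_J$ onto $J\times\delta^{a(t_0)}\cup\{t_1\}\times\delta^{b(t_0)}$ --- but the specific mechanism you propose has a real gap. You want to push a point $(t,x)\in A_J$ with $\delta(x)>a(t_0)$ along the negative gradient flow of $\delta$ until its value drops to $a(t_0)$. Nothing in the hypotheses, however, rules out critical values of $\delta$ in the interior of $(a(t_0),b(t_0))$: condition~(3) guarantees regular values only in a neighborhood of $a(t_0)$. A flowline starting with $\delta(x)$ well above $a(t_0)$ can therefore stall at an intermediate critical level and never reach $a(t_0)$, so your claim that ``Hypothesis~(3) is precisely what rescues\ldots points where the negative gradient flow might only asymptotically approach a critical value'' is incorrect. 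In the actual applications of this lemma (e.g.\ Lemma~\ref{lem:filling-cone}) the interval $(a(t_0),b(t_0))$ \emph{does} contain critical values of $\delta_+$, so this is not an empty objection. A related problem is your extension to $B_J$: ``doing nothing on $B_J\setminus A_J$'' cannot be continuous across the boundary $\{\delta(x)=a(t)\}$ when points of $A_J$ just inside that boundary are being moved substantially.

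The paper's proof sidesteps both issues by reversing the order you suggest: it pushes horizontally in $t$ first and uses gradient flow only in a thin safe strip. Choose $\epsilon>0$ with no critical values in $[a(t_0),a(t_0)+\epsilon]$ and an auxiliary increasing function $\alpha:J\to[a(t_0),a(t_0)+\epsilon]$ with $\alpha(t)<a(t)$ for $t>t_0$. Define $\sigma:B_J\to B_J$ to equal the map $\rho_+$ of Lemma~\ref{lem:def-retr-right} on $A_J$ (with $(\alpha,a)$ in the role of $(a,b)$ there), to be the identity where $\delta(x)\ge a(t)+\epsilon$, and to interpolate in the $t$-direction in between. This $\sigma$ is continuous on all of $B_J$ by construction and is homotopic to the identity; after it, $\sigma(A_J)$ lies in $\{\delta(x)\le\alpha(t)\}\cup\bigl(\{t_1\}\times\delta^{b(t_0)}\bigr)$. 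Now gradient flow is only needed to push each $\delta^{\alpha(t)}$ down to $\delta^{a(t_0)}$, i.e.\ across $[a(t_0),a(t_0)+\epsilon]$, where there are no critical values, so the flow terminates and varies continuously. Your two-stage instinct was sound, but the order matters: push in $t$ first, then use gradient flow only where hypothesis~(3) makes it safe.

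(Minor: your reparametrization sentence places the cone's ``tip'' at $t_0$, but as your subsequent description correctly reflects, the collapse $A_{t_1}=B_{t_1}$ occurs at $t_1$.)
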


\begin{proof} 
   Choose $\epsilon>0$ so that $a(t_0) + \epsilon < b(t_0)$ and so that there are no critical values of $\delta$ in $[a(t_0), a(t_0)+\epsilon]$.  Let $\alpha(t)$ be a function that strictly increases from $a(t_0)$ to $a(t_0) + \epsilon$ over $J$, and satisfies $\alpha(t) < a(t)$ for all $t > t_0$.  Define a map $\sigma: B_J \to B_J$ that is equal to the map $\rho_+(x,t)$ from the proof of Lemma~\ref{lem:def-retr-right} on $A_J$ (with $a$ taking the place of $b$ and $\alpha$ taking the place of $a$), is equal to the identity when $\delta(x) \geq a(t)+\epsilon$, and interpolates between these two extremes in the $t$ direction for $a(t) < \delta(x) \leq a(t)+\epsilon$; see the right side of Figure~\ref{fig:morse-lemmas}.  As in the proof of Lemma~\ref{lem:def-retr-right}, this map is homotopic to the identity.    To finish the proof, the negative gradient flow of $\delta$ defines a map
   that takes $(\sigma(B_J), \sigma(A_J))$ to $\left(J \times \delta^{b(t_0)}, J \times \delta^{a(t_0)}  \cup 
   \{t_1\} \times \delta^{b(t_1)} \right)  = C\left(\delta^{b(t_0)}, \delta^{a(t_0)}\right)$, as desired.
\end{proof}

In practice, we will often need to expand and/or retract $(B_J, A_J)$ by deforming $(B_t, A_t)$ for $t \in J$ before we can apply Lemmas~\ref{lem:def-retr-right} and \ref{lem:def-retr-cone}.  We capture this maneuver in the following definition:
 
\begin{defn} Pairs $(B_J, A_J)$ and $(\widetilde B_J, \widetilde A_J)$ are \dfn{fiberwise homotopy equivalent} if there exists a
  homotopy equivalence $H: (B_J, A_J) \to (\widetilde B_J, \widetilde A_J)$ that, for all $t \in J$, restricts to a homotopy equivalence $H_t: (B_t, A_t) \to (\widetilde B_t, \widetilde A_t)$.  \end{defn}

\begin{lem}\label{lem:vary-a-b} Assume $\delta: X \to \rr$ is a function whose gradient flow exists for all time.  Given continuous functions $a, \tilde a, b, \tilde b : J \to \rr$ with $a(t) \leq b(t)$ and $\tilde a(t) \leq \tilde b(t)$ for all $t$, let $A_J, \widetilde A_J, B_J, \widetilde B_J$ be as defined in Equation~\ref{eqn:A-B}.  If for all $t \in J$, there exist no critical values of $\delta$ between $a(t)$ and $\tilde a(t)$ and between $b(t)$ and $\tilde b(t)$, then $(B_J, A_J)$ and $(\widetilde B_J, \widetilde A_J)$ are fiberwise homotopy equivalent.  \end{lem}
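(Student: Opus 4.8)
The plan is to connect $(B_J,A_J)$ and $(\widetilde B_J,\widetilde A_J)$ by a short chain of \emph{fiberwise} deformation retractions, each of which modifies only one of the two level functions and does so monotonically. Since the maps we will produce are inclusions of pairs together with explicit deformation retractions onto them, and both restrict to homotopy equivalences fiber by fiber, it suffices to treat these one-variable moves and we may freely compose them in either direction. To build the chain, set $\bar a(t)=\max\{a(t),\tilde a(t)\}$ and $\bar b(t)=\max\{b(t),\tilde b(t)\}$; since $a\le b$ and $\tilde a\le\tilde b$ we still have $\bar a(t)\le\bar b(t)$, and for each $t$ the values $\bar a(t),\bar b(t)$ lie on the closed intervals with endpoints $a(t),\tilde a(t)$, respectively $b(t),\tilde b(t)$, which by hypothesis are free of critical values of $\delta$. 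The five level-function pairs $(a,b)$, $(a,\bar b)$, $(\bar a,\bar b)$, $(\tilde a,\bar b)$, $(\tilde a,\tilde b)$, read in sequence, each satisfy ``lower $\le$ upper'', and each consecutive pair differs in only one function, monotonically, within a critical-value-free window; so it is enough to treat one such move.

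The easy move is enlarging the upper set: $b$ is replaced by $b'\ge b$, with $a$ unchanged, $a(t)\le b(t)$, and no critical values of $\delta$ between $b(t)$ and $b'(t)$. Then $(B_J,A_J)\subseteq(B'_J,A_J)$, and I would build a fiberwise deformation retraction of $B'_J$ onto $B_J$ from the negative gradient flow of $\delta$, which exists for all time by hypothesis: a point $(t,x)$ with $\delta(x)\le b(t)$ is fixed, and one with $b(t)<\delta(x)\le b'(t)$ is pushed along the flow until $\delta$ reaches $b(t)$. Because that window contains no critical values this is well defined, the required flow time is continuous in $(t,x)$, and the retraction is the identity on each fiber of $A_J$ since $A_t\subseteq B_t$; hence it is a deformation retraction of the pair $(B'_J,A_J)$ onto $(B_J,A_J)$ that restricts fiberwise. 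This is simply the fiberwise analogue of Lemma~\ref{lem:retract}.

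The delicate move is enlarging the lower set while the upper set stays put, since it can happen (for instance at the step from $(a,\bar b)$ to $(\bar a,\bar b)$) that $a$ grows to $a'$ with $a'(t)\le b(t)$ possibly strict. Here $(B_J,A_J)\subseteq(B_J,A'_J)$ with the \emph{same} $B_J$, and the fiberwise retraction of $\delta^{a'(t)}$ onto $\delta^{a(t)}$ cannot be extended by the identity to a continuous self-map of $\delta^{b(t)}$: points just above level $a'(t)$ would be split off discontinuously from those at level $a'(t)$. I would fix this exactly as in the proof of Lemma~\ref{lem:def-retr-cone}: using compactness of $J$, choose one $\epsilon>0$ with $[a(t),a'(t)+\epsilon]$ free of critical values for all $t\in J$; flow points at levels in $(a(t),a'(t)]$ down to level $a(t)$, flow points at levels in $(a'(t),a'(t)+\epsilon]$ down by a continuously vanishing amount so that the map matches the identity at level $a'(t)+\epsilon$, and leave everything above level $a'(t)+\epsilon$ alone. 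The resulting self-map of $B_J$ is homotopic to the identity and carries $A'_J$ into $A_J$, which exhibits $(B_J,A_J)\hookrightarrow(B_J,A'_J)$ as a fiberwise homotopy equivalence. Composing the moves along the chain then proves the lemma.

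The step I expect to be the real obstacle is not any individual move but the bookkeeping that makes all of these fiberwise flows and interpolations vary continuously with $t\in J$, so that they glue into genuine continuous maps and homotopies of the total spaces $B_J,A_J,\widetilde B_J,\widetilde A_J$ and compose cleanly; closely related is confirming that the global existence of the gradient flow of $\delta$ really does let each trajectory reach its target level within the relevant compact, critical-value-free range. Once these continuity issues are dispatched, the chain above yields the desired fiberwise homotopy equivalence.
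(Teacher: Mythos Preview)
Your proof is correct and follows the same idea as the paper's one-line proof (``the desired homotopy equivalence arises from following the positive or negative gradient flow of $\delta$ in each $t$-slice''), but you make the mechanics far more explicit by organizing the argument as a chain of monotone moves and isolating the subtlety in the ``delicate'' step.

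One minor technical point deserves attention: the uniform $\epsilon>0$ with $[a(t),a'(t)+\epsilon]$ free of critical values for all $t\in J$ is not guaranteed by the hypotheses of the lemma alone, since nothing prevents critical values of $\delta$ from accumulating at $a'(t_0)$ from above. This is a non-issue in every application in the paper, where $\delta$ is a generic difference function with isolated critical values, and it is the same shortcut taken in the proof of Lemma~\ref{lem:def-retr-cone} (where hypothesis~(3) explicitly supplies the margin). If you want an argument that works under the bare hypotheses, you can sidestep the margin entirely: the fiberwise strong deformation retraction of $A'_t$ onto $A_t$ exists by gradient flow, and since the inclusion $A'_t=\delta^{a'(t)}\hookrightarrow B_t=\delta^{b(t)}$ is a closed cofibration (it has a collar, as $a'(t)$ is a regular value), the homotopy extension property lets you prolong that retraction to a self-homotopy of $B_t$ carrying $A'_t$ into $A_t$, varying continuously in $t$.
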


\begin{proof}  The desired homotopy equivalence arises from following the positive or negative gradient flow of $\delta$ in each $t$-slice.
\end{proof}

\subsection{Analysis of functions corresponding to Particular Sublevel Sets}
\label{ssec:sublevel-at-infty}

A crucial part of our analysis of sublevel sets of the sheared difference function in Sections~\ref{sec:filling-iso} and \ref{sec:cobord-les} will occur over intervals that lie outside of $[t_-,t_+]$ and at levels $\Infin$ and $\pm \mu$.  We will want to show that, after a fiberwise homotopy equivalence, the hypotheses of Lemma~\ref{lem:def-retr-right} and \ref{lem:def-retr-cone} can be applied to sublevel sets of $\delta$ with the functions of the form $\lambda_\tau(t)$.  To this end, we analyze the behavior of $\lambda_\Infin$ and $\lambda_{\pm \mu}$ in this section.  We suggest that the reader bypass this section on first reading, looking only at Figures~\ref{fig:Infin}, \ref{fig:mu}, and \ref{fig:-mu}.

For the remainder of this section, suppose that $(\leg_-, f_-) \prec_{(\clag, F)} (\leg_+, f_+)$ is a gf-compatible Lagrangian cobordism of $T^*(\rp \times M)$, and that $H \in \mathcal H(\clag)$.

The first lemma examines the levels $\lambda_\Infin$ and explains some of the lower bounds on $\Infin$ in Inequalities~(\ref{ineq:wgh}).

\begin{lem} \label{lem:Infin} For $\Infin$ meeting the requirements of (\ref{ineq:wgh}), the function $\lambda_\Infin$ satisfies the following:
  \begin{enumerate}
  \item For all $t \geq t_+$, $\lambda_\Infin(t)  > \lmaxp$;
  \item There exists $t_-^c < t_-$ so that $\lambda_\Infin(t)$ has a unique maximum on $(0,t_-]$ at $t_-^c$, and although
    $\lambda_\Infin(t)$ is decreasing on $[t_-^c, t_-]$, $\lambda_\Infin(t) > \lmaxm$ on that interval. 
  \item If $v_-$ satisfies
    Inequality~(\ref{ineq:vpm}), then $\lambda_\Infin'(v_-) > 0$.
  \end{enumerate}
 See Figure~\ref{fig:Infin}.
\end{lem}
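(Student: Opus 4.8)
The plan is to work entirely with the single-variable function $\lambda_\Infin(t) = \tfrac1t\bigl(\Infin - H(t)\bigr)$ and its derivative
$$\lambda_\Infin'(t) = \frac{1}{t^2}\bigl(H(t) - tH'(t) - \Infin\bigr),$$
studying the numerator $n(t) := H(t) - tH'(t) - \Infin$ on the three regions $(0,u_-]$, $[u_-,t_-]$, and $[t_+,\infty)$, where the formula for $H$ from Definition~\ref{defn:shear-H} is either explicit or has controlled sign.

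For part (1), note that $\lambda_\Infin(t) > \lmaxp$ for $t \ge t_+$ is equivalent to $\Infin > g(t) := t\lmaxp + H(t)$. Since $H'' \le 0$ on $[t_+,\infty)$, the function $g$ is concave with $g'(t_+) = \lmaxp > 0$; using $H(t) = -\tfrac{r_+}{2}(t-t_+)^2$ for $t \ge u_+$ together with $u_+ - t_+ < \tfrac{\lminp}{2r_+} \le \tfrac{\lmaxp}{2r_+} < \tfrac{\lmaxp}{r_+}$ from Definition~\ref{defn:shear-H}, one checks that $g' > 0$ on $[t_+,u_+]$ and that the maximum of $g$ on $[t_+,\infty)$ is attained at $t^\ast := t_+ + \lmaxp/r_+ > u_+$, where $g(t^\ast) = t_+\lmaxp + \tfrac{\lmaxp^2}{2r_+}$. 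This last quantity is strictly less than $\Infin$ by Inequalities~(\ref{ineq:wgh}), which proves (1).

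For part (2), on $(0,u_-]$ the explicit form $H(t) = \tfrac{r_-}{2}(t-t_-)^2$ gives $n(t) = \tfrac{r_-}{2}(t_-^2 - t^2) - \Infin$, which is strictly decreasing in $t$, positive as $t \to 0^+$ (since $\Infin < \tfrac{r_-t_-^2}{2}$), and vanishing exactly at $t_-^c := \sqrt{t_-^2 - 2\Infin/r_-}$. I would then verify $t_-^c < u_-$ using $t_-^2 - u_-^2 < \tfrac{\lminm t_-}{r_-}$ (a consequence of $|u_- - t_-| < \tfrac{\lminm}{2r_-}$) together with $\Infin > 2t_-\lmaxm$. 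On $[u_-,t_-]$ one has $n'(t) = -tH''(t) \le 0$ (as $H'' \ge 0$ there) and $n(u_-) < 0$, so $n$ stays negative; hence $\lambda_\Infin$ is strictly decreasing on all of $[t_-^c,t_-]$ and strictly increasing on $(0,t_-^c)$, giving the unique maximum at $t_-^c$. Finally $\lambda_\Infin(t_-) = \Infin/t_-$, which exceeds $\lmaxm$ because $\Infin > t_-\lmaxm$; since $\lambda_\Infin$ is decreasing on $[t_-^c,t_-]$, it stays above $\lmaxm$ there.

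Part (3) then follows quickly: if $v_-$ satisfies Inequality~(\ref{ineq:vpm}) then $\lambda_\Infin(v_-) < -\lmaxm < 0$, whereas on $[t_-^c,t_-]$ part (2) gives $\lambda_\Infin \ge \lambda_\Infin(t_-) = \Infin/t_- > 0$; combined with $v_- < t_-$, this forces $v_- \in (0,t_-^c)$, where $\lambda_\Infin$ is strictly increasing, so $\lambda_\Infin'(v_-) > 0$. The main obstacle I anticipate is purely bookkeeping: carefully extracting from Definition~\ref{defn:shear-H} and Inequalities~(\ref{ineq:wgh}) the comparisons needed to locate $t_-^c$ and $t^\ast$ relative to $u_\pm$, and keeping the signs of $n$ and of $g'$ under control across the gluing intervals $[u_-,t_-]$ and $[t_+,u_+]$ where $H$ is not given by a closed formula.
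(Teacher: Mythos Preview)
Your proposal is correct and essentially matches the paper's approach. Both arguments study the sign of the numerator of $\lambda_\Infin'$ (your $n(t)$ is precisely the paper's $l_\Infin(t)$ from Sublemma~\ref{slem:lambda-sign}) and exploit $n'(t) = -tH''(t)$ to control monotonicity on each piece; your treatment of (1) via the auxiliary function $g(t)=t\lmaxp+H(t)$ is just the dual reformulation of the paper's argument locating the minimum of $\lambda_\Infin$ on $[t_+,\infty)$, and your parts (2) and (3) are the same as the paper's, only more explicit.
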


We will use the following sublemma implicitly throughout the proofs in this subsection; the proof comes from direct calculations and the defining conditions of $H \in \mathcal{H}(\clag)$:

\begin{slem} \label{slem:lambda-sign}
  The sign of $\lambda_\sigma'(t)$ is governed by the sign of $l_\sigma(t) = -tH'(t) -\Infin + H(t)$. Since $l_\sigma'(t) = -tH''(t)$, $l_\sigma$ is increasing when $t \geq t_+$; similarly, $l_\sigma$ is decreasing when $t\leq t_-$.
\end{slem}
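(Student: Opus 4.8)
The plan is to verify Sublemma~\ref{slem:lambda-sign} by a direct computation of $\lambda_\sigma'(t)$ and then interpret the resulting sign. Recall that, for $J$ outside $[t_-,t_+]$, we have $\lambda_\sigma(t) = \tfrac{1}{t}(\sigma - H(t))$, so by the quotient rule
\[
\lambda_\sigma'(t) = \frac{-H'(t)\cdot t - (\sigma - H(t))}{t^2} = \frac{-tH'(t) - \sigma + H(t)}{t^2}.
\]
(Here the statement as written has $\Infin$ in place of $\sigma$ in the formula for $l_\sigma$; I would either take $\sigma = \Infin$ throughout, matching the intended application, or simply replace $\Infin$ by $\sigma$ so that $l_\sigma(t) = -tH'(t) - \sigma + H(t)$.) Since $t > 0$, the sign of $\lambda_\sigma'(t)$ is exactly the sign of the numerator $l_\sigma(t) = -tH'(t) - \sigma + H(t)$, which is the first assertion.

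Next I would differentiate $l_\sigma$ itself: $l_\sigma'(t) = -H'(t) - tH''(t) + H'(t) = -tH''(t)$. This is the clean cancellation that makes the sublemma work, and it is the only computational content. Then I invoke the defining properties of a Hamiltonian shearing function $H \in \mathcal{H}(\clag)$ from Definition~\ref{defn:shear-H}: $H''(t) \leq 0$ on $[t_+,\infty)$, hence $l_\sigma'(t) = -tH''(t) \geq 0$ there, so $l_\sigma$ is (weakly) increasing for $t \geq t_+$; and $H''(t) \geq 0$ on $(0,t_-]$, hence $l_\sigma'(t) = -tH''(t) \leq 0$ there, so $l_\sigma$ is (weakly) decreasing for $t \leq t_-$. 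This gives the monotonicity claims and completes the argument.

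There is essentially no obstacle here — the statement is a two-line calculus exercise whose only subtlety is keeping track of which sign of $H''$ holds on which interval, and noticing the cancellation of the $H'(t)$ terms in $l_\sigma'$. The one thing I would be careful about is the notational mismatch noted above (the stray $\Infin$ in the display for $l_\sigma$), and I would also remark that "increasing"/"decreasing" should be read as weak monotonicity unless one additionally assumes $H'' \neq 0$ on the relevant subintervals, which is consistent with the convexity/concavity hypotheses $H''(t) \geq 0$ and $H''(t) \leq 0$ stated in Definition~\ref{defn:shear-H}. No auxiliary results beyond that definition are needed.
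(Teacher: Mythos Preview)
Your proof is correct and matches the paper's approach exactly: the paper simply states that the sublemma ``comes from direct calculations and the defining conditions of $H \in \mathcal{H}(\clag)$,'' which is precisely the quotient-rule computation and sign analysis you carried out. Your observation about the stray $\Infin$ (which should read $\sigma$) is a genuine typo in the statement, and your remark about weak monotonicity is also apt.
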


\begin{proof}[Proof of Lemma~\ref{lem:Infin}] 
  To prove (1), we will show that at the unique minimum $t^c_+$ of $\lambda_\Infin$ on $[t_+, \infty)$, with $\lambda_\Infin(t^c_+) > \lmaxp$.  Let $\overline{t}_+$ denote the unique point in $(t_+, 2t_+)$ with $-H'(\overline{t}_+) = \lmaxp$.  As shown in Proposition~\ref{prop:crit-pt-wgh}, $H(\overline{t}_+) = t_+\lmaxp + \frac{\lmaxp^2}{2r_+}$.  Since $\overline{t}_+ < 2t_+$ and $\Infin > 3t_+ \lmaxp + \frac{\lmaxp^2}{2r_+}$ from Inequality~(\ref{ineq:wgh}), we may compute that $l_\Infin(\overline{t}_+) < 0$.  On the other hand, $l_\Infin(t)>0$ for sufficiently large $t$.  Thus we see that the unique minimum of $\lambda_\Infin$ occurs at some $t_+^c > \overline{t}_+$.  Since $l_\Infin(t_+^c) = 0$, we obtain the relation $- H(t_+^c) = -t_+^c H'(t_+^c) - \Infin$, and thus
$$  
\lambda_\Infin(t_+^c)=-H'(t_+^c) > -H'(\overline{t}_+) = \lmaxp,
$$
as desired.

A similar argument when $t \leq t_-$ that uses the inequality $2t_- \lmaxm < \Infin$ from (\ref{ineq:wgh}) proves (2). Furthermore, if $v_-$ satisfies Inequality~(\ref{ineq:vpm}), then 
$\lambda_\Infin(t) < \lmaxm$ for all $t \leq v_-$.  It follows that $v_- < t_-^c$, and hence $v_-$ must be contained in the region where $\lambda_\Infin$ is increasing; this proves (3).
\end{proof}

 \begin{figure}
  \centerline{\includegraphics{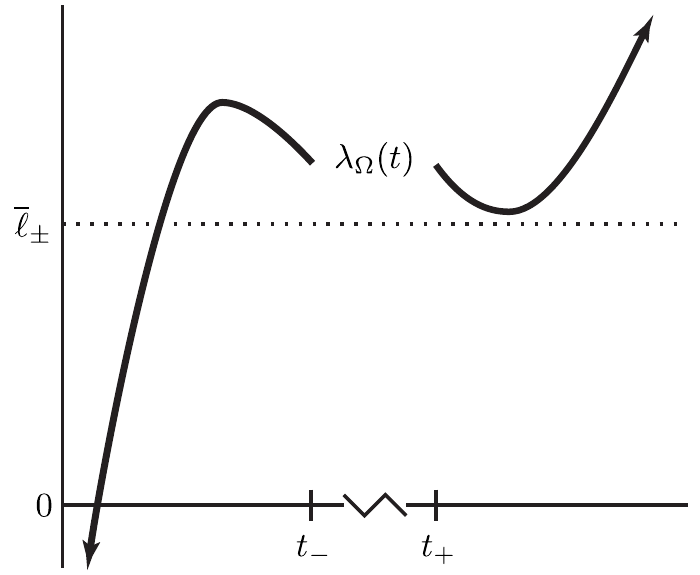}}
  \caption{A schematic picture of $\lambda_\Infin(t) = \frac{1}{t}(\Infin - H(t))$ for $H \in \mathcal H(\clag)$
  and $\Infin$ satisfying Inequalities~(\ref{ineq:wgh}).}
  \label{fig:Infin}
\end{figure}
 
The next lemma explains some of the upper bounds imposed on $\mu$ in Inequality~(\ref{ineq:wgh}).

\begin{lem} \label{lem:mu}
For $\mu$ meeting the requirements of (\ref{ineq:wgh}), the function $\lambda_\mu(t)$ satisfies the following:
\begin{enumerate}
\item There exists a unique minimum $t_+^c \in (t_+, u_+)$ for $\lambda_\mu$ on $[t_+,\infty)$ so that  $0 < \lambda_\mu(t) < \lminp$ for all $t \in [t_+, u_+]$.
\item  There exists a unique maximum $t_-^c \in (u_-, t_-)$ for $\lambda_\mu$ on $(0,t_-]$ so that $0 < \lambda_\mu(t) < \lminm$ for all $t \in [t_-^c, t_-]$.
\end{enumerate}
See Figure~\ref{fig:mu}.
\end{lem}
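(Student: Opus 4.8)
The plan is to analyze the function $\lambda_\mu(t) = \frac{1}{t}(\mu - H(t))$ directly, using the explicit piecewise form of $H \in \mathcal H(\clag)$ and Sublemma~\ref{slem:lambda-sign} to locate its critical points and bound its values. As in the proof of Lemma~\ref{lem:Infin}, I would use that the sign of $\lambda_\mu'(t)$ is controlled by $l_\mu(t) = -tH'(t) - \mu + H(t)$, which is increasing on $[t_+, \infty)$ and decreasing on $(0, t_-]$. Everything splits into the two symmetric regions $t \geq t_+$ and $t \leq t_-$, and I would treat the positive end in detail and remark that the negative end is analogous.

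For part (1): on $[t_+, u_+]$ we have $H(t) = -\frac{r_+}{2}(t-t_+)^2$, so $\lambda_\mu(t) = \frac{1}{t}\bigl(\mu + \frac{r_+}{2}(t-t_+)^2\bigr)$ and one computes $l_\mu(t_+) = -\mu < 0$, while $l_\mu$ is increasing; I then need to check that $l_\mu$ has already become positive by $t = u_+$, i.e. that $l_\mu(u_+) > 0$, so the unique zero $t_+^c$ of $l_\mu$ (equivalently the unique minimum of $\lambda_\mu$ on $[t_+, \infty)$) lies in $(t_+, u_+)$. This is exactly where the hypotheses $\mu < \frac{r_+}{2}(u_+ - t_+)^2$ and $\mu < \frac{r_+}{2}|u_+^2 - t_+^2|$ from Inequality~(\ref{ineq:wgh}) enter: plugging the quadratic form of $H$ into $l_\mu(u_+) = -u_+ H'(u_+) - \mu + H(u_+)$ and simplifying, positivity of $l_\mu(u_+)$ reduces to one of those bounds. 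Once $t_+^c \in (t_+, u_+)$ is established, $\lambda_\mu$ is decreasing on $[t_+, t_+^c]$ and increasing on $[t_+^c, u_+]$, so on $[t_+, u_+]$ its maximum is attained at an endpoint; I bound $\lambda_\mu(t_+) = \mu/t_+$ and $\lambda_\mu(u_+) = \frac{1}{u_+}\bigl(\mu + \frac{r_+}{2}(u_+-t_+)^2\bigr)$, and the hypotheses $\mu < t_+ \lminp$, $\mu < \frac{u_+\lminp}{2}$, and $\mu < \frac{r_+}{2}(u_+ - t_+)^2$ are precisely what is needed to force both endpoint values (hence all of $\lambda_\mu$ on $[t_+,u_+]$) to lie strictly below $\lminp$; positivity is immediate since $\mu > 0$, $H \leq 0$ on $[t_+, u_+]$. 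Part (2) follows by the mirror-image argument on $(0, t_-]$, using instead the terms $\mu < t_- \lminm$ and $\mu < \frac{r_-}{2}|u_-^2 - t_-^2|$ and the fact that $H(t) = \frac{r_-}{2}(t - t_-)^2$ on $[u_-, t_-]$, with $l_\mu$ decreasing so that its unique zero $t_-^c$ lies in $(u_-, t_-)$.

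The main obstacle is purely bookkeeping: matching each clause of the conclusion to the correct term in the rather baroque list of conditions on $\mu$ in Inequality~(\ref{ineq:wgh}), and making sure the algebra in $l_\mu(u_\pm)$ and in the endpoint estimates for $\lambda_\mu$ genuinely uses only those listed bounds (and the defining inequalities (\ref{eqn:ru}) relating $r_\pm$, $u_\pm$, $t_\pm$). There is no conceptual difficulty beyond what already appears in Lemma~\ref{lem:Infin}; the care required is in checking that the unique critical point of $\lambda_\mu$ on each end interval really does land in the open interval $(t_+, u_+)$ (resp. $(u_-, t_-)$) claimed, which is the one place where the smallness of $|u_\pm - t_\pm|$ relative to $\lminpm/r_\pm$ is essential. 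I would present the positive-end computation explicitly and dispatch the negative end with "a symmetric argument, using the inequality $\mu < t_-\lminm$ from (\ref{ineq:wgh})," mirroring the style of the Lemma~\ref{lem:Infin} proof.
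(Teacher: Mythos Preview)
Your proposal is correct and follows essentially the same approach as the paper: locate the critical point via the sign change of $l_\mu$ between $t_+$ and $u_+$ using $\mu < \frac{r_+}{2}(u_+^2 - t_+^2)$, then bound the endpoint values $\lambda_\mu(t_+)$ and $\lambda_\mu(u_+)$ using the remaining constraints on $\mu$ together with Inequality~(\ref{eqn:ru}). One small caution: $H$ is not literally equal to $-\frac{r_+}{2}(t-t_+)^2$ on $[t_+,u_+]$ (it must be smoothed near $t_+$), so your explicit formula for $\lambda_\mu$ there is not exact; but since your argument only evaluates at the endpoints $t_+$ and $u_+$, where the formula is valid, this does not affect the proof.
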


 \begin{figure}
  \centerline{\includegraphics{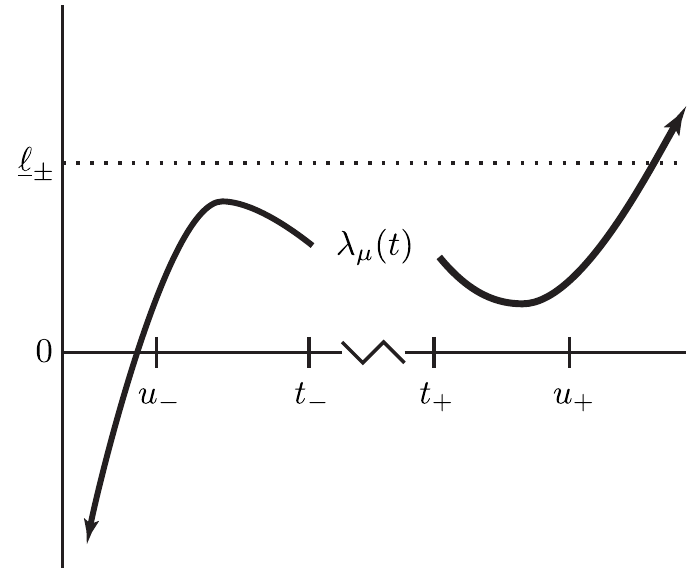}}
  \caption{A schematic picture of $\lambda_\mu$, for $H \in \mathcal H(\clag)$
  and $\mu$ satisfying Inequalities~(\ref{ineq:wgh}).}
  \label{fig:mu}
\end{figure}

\begin{proof} 
  We begin with the case of $t \geq t_+$.  To show that there exists a minimum (Sublemma~\ref{slem:lambda-sign} guarantees that such a minimum would be unique), we compute that $l_\mu(t_+) = -\mu < 0$ on one hand, and that $l_\mu(u_+) = \frac{r_+}{2}(u_+^2-t_+^2) - \mu > 0$, since $\mu < \frac{r_+}{2}(u_+^2- t_+^2)$ by Inequality~(\ref{ineq:wgh}).  Thus, at some point $t_+^c \in (t_+, u_+)$, $\lambda_\mu$ has a minimum. 

  Since $l_\mu(t_+^c) = 0$, we see that $ - H(t_+^c) = -t_+^c H'(t_+^c) - \mu$, and hence $\lambda_\mu(t_+^c)=-H'(t_+^c) > 0$.  Since $\mu < t_+ \lminp$ by Inequality~(\ref{ineq:wgh}), we have $\lambda_\mu(t_+) < \lminp$, and so, for $t \in [t_+, t_+^c]$, $p(t) \in (0,\lminp)$.  In addition, the inequalities $\mu < \frac{u_+}{2}\lminp < t_+ \lminp$ from Inequality~(\ref{ineq:wgh}), $r_+ < \frac{\lminp}{u_+ - t_+}$ from Inequality~(\ref{eqn:ru}), and $u_+ < 2t_+$ imply that $\lambda_\mu(u_+) < \lminp$.  This finishes the proof of (1).

  A similar argument when $t < t_-$ using the inequalities $\mu < \frac{r_-}{2}(t^2_--u^2_-)$ and $\mu < t_-\lminm$ from (\ref{ineq:wgh}) yields (2).
\end{proof}

\begin{rem}  \label{rem:sigmat+u+}
For later purposes, it will be useful to point out that the proof of Lemma~\ref{lem:mu} shows that
when $0< \mu < \frac{u_+}{2} \lminp$, we have $\lambda_\mu(t) \in (0,\lminp)$ for all $t \in [t_+,u_+]$.
 \end{rem}

\begin{lem} \label{lem:-mu} For $\mu$ meeting the requirements of (\ref{ineq:wgh}), the function $\lambda_{-\mu}$ satisfies the following:
  \begin{enumerate}
  \item $\lambda_{-\mu}$ is increasing on $(-\infty, t_-) \cup (t_+, \infty)$,
  \item $-\lminpm < \lambda_{-\mu}(t_\pm) < 0$,  and
  \item  $0< \lambda_{-\mu}(u_+) < \lminp$.
 \end{enumerate}
 See Figure~\ref{fig:-mu}.
\end{lem}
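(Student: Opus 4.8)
\emph{Proof proposal.} The plan is to reduce all three assertions to three ingredients: the explicit form of $H$ near $t_\pm$ and $u_\pm$ from Definition~\ref{defn:shear-H}; the formula $\lambda_{-\mu}(t) = \tfrac{1}{t}\bigl(-\mu - H(t)\bigr)$; and Sublemma~\ref{slem:lambda-sign} applied with $\sigma = -\mu$, which says that the sign of $\lambda_{-\mu}'(t)$ is the sign of $l_{-\mu}(t) = -tH'(t) + \mu + H(t)$, that $l_{-\mu}$ is increasing on $[t_+,\infty)$, and that $l_{-\mu}$ is decreasing on $(0,t_-]$. For (1) I would first note that $H(t_\pm) = H'(t_\pm) = 0$, so $l_{-\mu}(t_\pm) = \mu > 0$; the stated monotonicity of $l_{-\mu}$ then forces $l_{-\mu} > 0$ on all of $(0,t_-]$ and on $[t_+,\infty)$, hence $\lambda_{-\mu}$ is strictly increasing on the two intervals in (1). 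This is the same device used in the proofs of Lemmas~\ref{lem:Infin} and \ref{lem:mu}, made especially painless here because $l_{-\mu}$ is already positive at both reference points $t_\pm$.

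For (2), using $H(t_\pm) = 0$ again gives $\lambda_{-\mu}(t_\pm) = -\mu/t_\pm$, which is negative since $\mu, t_\pm > 0$; the lower bound $-\lminpm < \lambda_{-\mu}(t_\pm)$ is exactly the inequality $\mu < t_\pm\lminpm$. At the negative end this is one of the defining conditions on $\mu$ in (\ref{ineq:wgh}). At the positive end I would instead combine $\mu < \tfrac{u_+}{2}\lminp$ from (\ref{ineq:wgh}) with the estimate $u_+ < 2t_+$ --- which follows from (\ref{eqn:ru}), exactly as in the proof of Lemma~\ref{lem:mu} --- to deduce $\mu < t_+\lminp$.

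For (3), the point $u_+$ lies in the region $t \geq u_+$ on which $H(t) = -\tfrac{r_+}{2}(t-t_+)^2$, so $\lambda_{-\mu}(u_+) = \tfrac{1}{u_+}\bigl(\tfrac{r_+}{2}(u_+-t_+)^2 - \mu\bigr)$; positivity is then immediate from the inequality $\mu < \tfrac{r_+}{2}(u_+-t_+)^2$ in (\ref{ineq:wgh}). For the upper bound $\lambda_{-\mu}(u_+) < \lminp$ I would drop the $-\mu$ term and estimate $\tfrac{r_+(u_+-t_+)^2}{2u_+}$ using $u_+ - t_+ < \tfrac{\lminp}{2r_+}$ (from (\ref{eqn:ru})) once, together with $u_+ - t_+ < u_+$, which lands the quantity below $\tfrac{\lminp}{4}$. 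The whole argument is elementary, and the only point needing care is bookkeeping --- keeping straight that $H$ and $H'$ are both negative on $[t_+,\infty)$, and matching each of the several constraints on $\mu$ in (\ref{ineq:wgh}) (and on $r_\pm, u_\pm$ in (\ref{eqn:ru})) to the precise inequality it is there to guarantee --- so I do not anticipate a genuine obstacle beyond this routine verification that the constants were chosen so each estimate goes through.
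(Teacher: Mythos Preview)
Your proposal is correct and follows exactly the paper's approach: Part~(1) via Sublemma~\ref{slem:lambda-sign} with $l_{-\mu}(t_\pm)=\mu>0$, and Parts~(2) and~(3) by direct evaluation of $\lambda_{-\mu}$ at $t_\pm$ and $u_+$ combined with the inequalities in~(\ref{ineq:wgh}) and~(\ref{eqn:ru}). The paper's proof is terse (it simply says Parts~(2)--(3) are ``direct calculations'' and defers to the proof of Lemma~\ref{lem:mu}); you have filled in precisely those calculations, including the step $u_+<2t_+$ and the estimate $\tfrac{r_+(u_+-t_+)^2}{2u_+}<\lminp$, so nothing is missing.
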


 \begin{figure}
  \centerline{\includegraphics{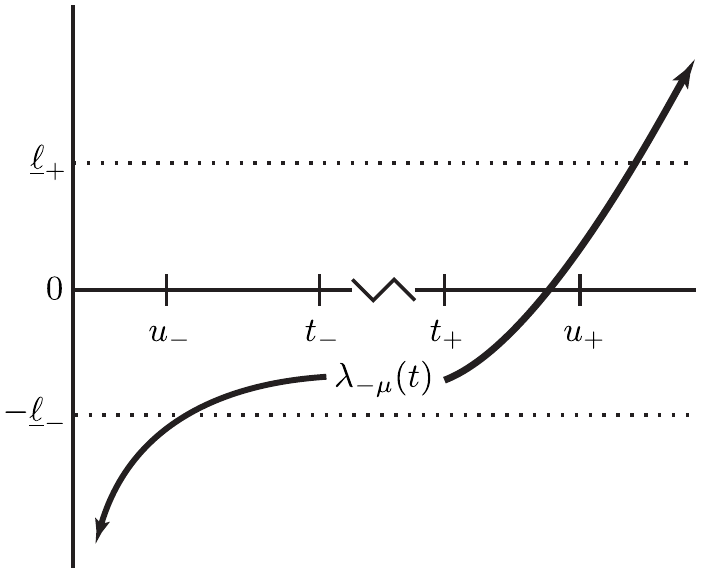}}
  \caption{A schematic picture of $\lambda_{-\mu}$, for $H \in \mathcal H(\clag)$
  and $\mu$ satisfying Inequalities~(\ref{ineq:wgh}).}
  \label{fig:-mu}
\end{figure}

\begin{proof}
  Since $l_{-\mu}(t_\pm) = \mu > 0$, Sublemma~\ref{slem:lambda-sign} implies that $\lambda_{-\mu}(t)$ is increasing when $t \in (-\infty, t_-) \cup (t_+, \infty)$. Parts (2) and (3) follow from direct calculations using Inequalities~(\ref{ineq:wgh}) and a calculation similar to the one in the proof of Lemma~\ref{lem:mu}.
 \end{proof}

\section{Filling Isomorphisms}
\label{sec:filling-iso}

In this section, we will prove Theorem~\ref{thm:filling-iso}.  Namely, we will show that if
$(\emptyset, f_-) \prec_{(\clag, F)} (\leg_+, f_+)$, 
then 
\begin{equation} \label{eqn:gh-iso}
  \rgh{k}{[f_+]} \simeq H^{k+1}(L, \leg_+)
  \quad \text{and} \quad   \gh{k}{[f_+]} \simeq H^{k+1}(L).
\end{equation}
This will follow easily from  Proposition~\ref{prop:wgh-vanish} and the following theorem.

\begin{thm} \label{thm:filling-les}  If $(\leg_-, f_-) \prec_{(\clag, F)} (\leg_+, f_+)$,
and $\clag$ is orientable, 
 then, for $\lag = \clag \cap \{ t \in (0, t_+]\}$, there are long exact sequences
  \begin{equation*}
    \xymatrix@R=5pt{
     \cdots \ar[r] & \wgh{k+1}{F} \ar[r] & H^{k+1}(\lag, \partial \lag)
   \ar[r]^{\phi_F^*} & \rgh{k}{[f_+]} \ar[r] & \cdots\\
      \cdots \ar[r] & \wgh{k+1}{F} \ar[r] & H^{k+1}(\lag)
    \ar[r]^{\widetilde \phi_F^*} & \gh{k}{[f_+]} \ar[r] & \cdots.
 }
  \end{equation*}
\end{thm}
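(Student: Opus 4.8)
The plan is to realize each pair $(\Delta^\Infin, \Delta^{\pm\mu})$ as a relative mapping cone, then feed the cone into the long exact sequence of Lemma~\ref{lem:cone-exact}. Concretely, I would split the base $\rp$ into three regions using the cutoffs $v_-<t_-\le t_+<v_+$ supplied by Corollary~\ref{cor:wgh-cpct}, so that it suffices to analyze $H^{k+N}(\Delta^\Infin_{[v_-,v_+]}, \Delta^{\pm\mu}_{[v_-,v_+]})$. The middle region $[v_-,t_+]$ carries the data of the filling $\lag = \clag\cap\{t\in(0,t_+]\}$: here $\Delta$ has only the critical submanifold $C$ (diffeomorphic to $\lag$) at value $0$, and a Morse--Bott/Thom-isomorphism argument (as in Proposition~\ref{prop:full-rel-dual}) identifies the pair of sublevel sets over this region, relative to its right-hand boundary slice at $t=t_+$, with $H^{k+N}$ of something computing $H^{*}(\lag,\partial\lag)$ or $H^{*}(\lag)$ depending on whether we use level $+\mu$ or $-\mu$ at the left end $\leg_-$; the distinction between the two exact sequences comes precisely from whether the slice over $\leg_-$ is included in the subspace (as it is at level $-\mu$, by Lemma~\ref{lem:-mu}) or excluded (as at level $+\mu$, by Lemma~\ref{lem:mu}). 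The right region $[t_+,v_+]$ is where the Legendrian end $\leg_+$ lives: using Equation~(\ref{eqn:end-level}) and the analysis of $\lambda_\Infin$, $\lambda_{\pm\mu}$ from Lemmas~\ref{lem:Infin}--\ref{lem:-mu}, I would fiberwise-homotope (Lemma~\ref{lem:vary-a-b}) the levels into a form where Lemma~\ref{lem:def-retr-cone} applies, producing the relative cone $C(\delta_+^{b},\delta_+^{a})$ whose cohomology, after the $N+1$ degree shift, is exactly $\rgh{k}{[f_+]}$ (resp.\ $\gh{k}{[f_+]}$).

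In more detail, the structural claim is: the pair $(\Delta^\Infin_{[v_-,v_+]}, \Delta^{\mu}_{[v_-,v_+]})$ is homotopy equivalent to the relative mapping cone $C(\phi_F)$ of a map $\phi_F$ from (a space computing) the relative cohomology of $\lag$ rel $\partial\lag$ to (a space computing) $\rgh{*}{[f_+]}$ — up to the degree shifts bookkept by $N$ versus $N+1$. I would establish this by: (i) using Lemma~\ref{lem:def-retr-right}/Corollary~\ref{cor:left-map-cone} on the region $[v_-, t_-]$ (where $\Delta = t\delta_- + H(t)$ and the relevant levels $\lambda_\sigma$ are monotone by Sublemma~\ref{slem:lambda-sign}) to collapse the far-left behavior, noting that since $\leg_-=\emptyset$ in the filling case there are no Reeb chords to worry about there — though for the general cobordism statement one keeps this region and it contributes the $\wgh{}{}$ term; (ii) handling the middle region $[t_-,t_+]$ via Morse--Bott theory around $C\cong\lag$; and (iii) applying the cone lemma Lemma~\ref{lem:def-retr-cone} on $[t_+, v_+]$ after a fiberwise homotopy from Lemma~\ref{lem:vary-a-b} that replaces the curved levels $\lambda_\Infin, \lambda_{\pm\mu}$ by the constant-and-linear-increasing levels its hypotheses require; the key input is that between the actual levels and the model levels there are no critical values of $\delta_+$, which is exactly what Inequalities~(\ref{ineq:vpm}) and the bounds in (\ref{ineq:wgh}) were arranged to guarantee. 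Feeding the resulting relative mapping cone into Lemma~\ref{lem:cone-exact} and shifting degrees by $N+1$ (using that $H^{k+N}(C(\phi_F))$ sits in a sequence with $H^{k+N}$ of the two pieces) yields the two long exact sequences, with $\phi_F^*$ and $\widetilde\phi_F^*$ the induced maps. The orientability hypothesis enters only through the Thom isomorphism in step (ii), and can be dropped over $\zz_2$.

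The main obstacle I anticipate is step (iii) combined with the Morse--Bott analysis of step (ii): one must carefully track how the region $[t_-,t_+]$, where $\Delta$ vanishes on the $N$-dimensional-normal-bundle critical submanifold $C\cong\lag$, glues to the region $[t_+,v_+]$ where the cone structure over $\leg_+$ lives, and show the gluing produces an honest relative mapping cone rather than merely a space with a related long exact sequence. This requires choosing the deformation retractions of Lemmas~\ref{lem:def-retr-right} and \ref{lem:def-retr-cone} compatibly on the overlap slice $\{t_+\}\times M\times\rr^{2N}$, and verifying that the Thom-isomorphism identification of the middle piece with $H^*(\lag,\partial\lag)$ (resp.\ $H^*(\lag)$) is compatible with the boundary slice over $\leg_+$ that forms the base of the cone. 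A secondary subtlety is the degree shift: the wrapped groups carry a shift by $N$ while the Legendrian groups carry a shift by $N+1$, and the extra $+1$ must be absorbed into the suspension built into the relative cone construction (Lemma~\ref{lem:rel-susp}) — I would need to check the suspension direction lines up so that $H^{k+1}(C(g))$ in Lemma~\ref{lem:cone-exact} matches $\rgh{k}{[f_+]} = H^{k+N+1}(\Delta^\Infin,\Delta^{\mu})$ after accounting for the $N$-shift already present. Once the cone identification is nailed down, the exact sequences and the deduction of Theorem~\ref{thm:filling-iso} (via $\wgh{}{}=0$ from Proposition~\ref{prop:wgh-vanish}) are formal.
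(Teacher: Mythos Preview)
Your high-level strategy is correct and matches the paper: restrict to $[v_-,v_+]$ via Corollary~\ref{cor:wgh-cpct}, realize the pair as a relative mapping cone, and invoke Lemma~\ref{lem:cone-exact}. But the mechanism you propose for distinguishing the two sequences is wrong, and this is not a cosmetic issue.

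You claim the two sequences arise from the choice of $+\mu$ versus $-\mu$. In fact, \emph{both} sequences in the paper come from the \emph{same} pair $(\Delta^\Infin_{[v_-,v_+]}, \Delta^{-\mu}_{[v_-,v_+]})$, whose cohomology is $\wgh{*}{F}$; this is why $\wgh{*}{F}$ appears in both sequences. The two sequences are obtained by splitting this pair at two different base points, $t_+$ versus $u_+$. The key observation (Lemma~\ref{lem:-mu}) is that $\lambda_{-\mu}(t_+)<0$ while $\lambda_{-\mu}(u_+)>0$: thus the slice $(\Delta^\Infin_{\{t_+\}},\Delta^{-\mu}_{\{t_+\}})$ is $(\delta_+^\infin,\delta_+^{-\epsilon})$, computing $\gh{*}{f_+}$, whereas the slice at $u_+$ is $(\delta_+^\infin,\delta_+^{\epsilon})$, computing $\rgh{*}{f_+}$. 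Correspondingly, the filling piece over $[v_-,t_+]$ yields $H^*(\lag)$, while over $[v_-,u_+]$ the extra interval $[t_+,u_+]$ cones off the boundary slice and produces $H^*(\lag,\partial\lag)$.

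Your proposed $+\mu$ route fails concretely: if you take $(\Delta^\Infin_{[v_-,v_+]}, \Delta^{+\mu}_{[v_-,v_+]})$ and split at $t_+$, then over $[v_-,t_+]$ the only critical value of $\Delta$ is $0$, which lies \emph{below} $\mu$; after the gradient retraction to level $\sigma$ the pair $(\Delta^\sigma_{[v_-,t_+]},\Delta^{+\mu}_{[v_-,t_+]})$ is acyclic, not $H^*(\lag,\partial\lag)$. Moreover, the cone's cohomology is then $\rwgh{*}{F}$, not the $\wgh{*}{F}$ that the theorem asserts. (Relatedly, your remark that in the general cobordism case the left region ``contributes the $\wgh{}{}$ term'' misidentifies where that term lives: it is the cohomology of the entire mapping cone, not of a piece.) The paper's $t_+$-versus-$u_+$ device is precisely the missing idea.
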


 The idea of the proof of Theorem~\ref{thm:filling-les} is to realize the pair $( \Delta_{[v_-, v_+]}^\Infin, \Delta_{[v_-, v_+]}^{-\mu})$
as two different mapping cones:
 \begin{enumerate}
\item Over $[t_+, v_+]$ (resp. $[u_+, v_+]$), the pair may be associated with the relative cone on $(\delta_+^\infin, \delta_+^{-\epsilon})$ (resp. $(\delta_+^\infin, \delta_+^{\epsilon}))$;
\item Over $[v_-, t_+]$, the pair may be associated with a disk bundle over the Morse-Bott submanifold $C$, which is diffeomorphic to the
manifold-with-boundary $\lag$, relative to the boundary sphere bundle; over $[v_-, u_+]$, we obtain the same disk bundle, but now taken relative to the boundary sphere bundle {\it and} the disk bundle over $\partial C$. 
\end{enumerate}
 
In Subsection~\ref{ssec:cones}, we state a number of lemmas (which are proved in Subsection~\ref{ssec:lem-proofs}) that make the above outline more precise; we then prove Theorem~\ref{thm:filling-les}.  In Subsection~\ref{ssec:commute}, we show that there are natural vertical maps between the two long exact sequences in Theorem~\ref{thm:filling-les} that produce a commuting diagram that will be useful the discussion of duality in Section~\ref{sec:duality}.
  
\subsection{Realizing Pairs as Relative Mapping Cones} \label{ssec:cones}

Our first goal is to show that $\left( \Delta_{[v_-, v_+]}^\Infin, \Delta_{[v_-, v_+]}^{-\mu} \right)$ can be realized as a mapping cone in two different ways.  

The following Lemmas ~\ref{lem:filling-cone}, \ref{lem:filling-retract}, and \ref{lem:filling-cobord-cohom} will easily lead to the proof of Theorem~\ref{thm:filling-les}.  The proofs of these lemmas appear in Subsection~\ref{ssec:lem-proofs}.  Throughout this section, we will assume that 
$(\emptyset, f_-) \prec_{(\clag, F)} (\leg_+, f_+)$ is an orientable, gf-compatible Lagrangian cobordism of $T^*(\rp \times M)$. Further, we will fix a Hamiltonian shearing function $H \in H(\clag)$ and $\Infin, \mu>0$ satisfying Inequalities~(\ref{ineq:wgh}).

\begin{lem} \label{lem:filling-cone} There are diffeomorphisms of pairs
  \begin{align*}
    (\Delta_{\{u_+\}}^\Infin, \Delta_{\{u_+\}}^{- \mu}) &\simeq (\delta_+^\infin, \delta_+^\epsilon)  \text{ and} \\
    ( \Delta_{\{t_+\}}^\Infin, \Delta_{\{t_+\}}^{- \mu} ) & \simeq (\delta_+^\infin, \delta_+^{-\epsilon}), 
  \end{align*}
  where $\infin, \epsilon$ satisfy Inequalities~(\ref{ineq:epsilon-infin}).  Moreover, for any $v_+> u_+$ satisfying Inequality~(\ref{ineq:vpm}), after applying a fiberwise homotopy equivalence,
  there are deformation retractions:
  \begin{align*}
    {\rho}_+: &\left(\Delta^\Infin_{[u_+, v_+]}, \Delta^{-\mu}_{[u_+, v_+]}\right) \to 
    C\left(\delta_{+}^{\infin}, \delta_{+}^{\epsilon}\right)  \text{ and }\\
   \widetilde \rho_+: &\left(\Delta^\Infin_{[t_+, v_+]}, \Delta^{-\mu}_{[t_+, v_+]}\right) \to 
    C\left(\delta_{+}^{\infin}, \delta_{+}^{-\epsilon}\right),  \\
  \end{align*}
  with ${\rho}_+|_{\Delta^\Infin_{\{u_+\}}}  = \id$ and $\widetilde\rho_+|_{\Delta^\Infin_{\{t_+\}}}  = \id$.
\end{lem}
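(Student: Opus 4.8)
The plan is to exploit the special form of $\Delta$ at the positive end. For $t \ge t_+$ the compatibility condition forces $F(t,x,\e) = t f_+(x,\e)$, so $\Delta(t,x,\e,\te) = t\,\delta_+(x,\e,\te) + H(t)$, where $\delta_+$ is the difference function of $f_+$; hence, exactly as in Equation~(\ref{eqn:end-level}), the slice $\Delta^a_{\{t\}}$ is the honest sublevel set $\delta_+^{\lambda_a(t)} \subset M\times\rr^{2N}$ with $\lambda_a(t) = \tfrac1t(a - H(t))$. For the two diffeomorphisms of pairs I would just evaluate at $t = t_+$ and at $t = u_+$. Since $H(t_+) = 0$, $\Delta^\Infin_{\{t_+\}} = \delta_+^{\Infin/t_+}$ and $\Delta^{-\mu}_{\{t_+\}} = \delta_+^{-\mu/t_+}$, where Inequalities~(\ref{ineq:wgh}) give $\Infin/t_+ > \lmaxp$ and $-\mu/t_+ \in (-\lminp, 0)$; Proposition~\ref{prop:leg-crit-point} identifies the critical values of $\delta_+$ as $0$ together with $\pm\ell(\gamma)$ over the Reeb chords $\gamma$, none of which lie in $(\lmaxp,\infty)$ or in $(-\lminp,0)$, so the standard Morse-theoretic fact that sublevel sets separated by no critical value are diffeomorphic --- valid here since, by Lemma~\ref{lem:lin-diff}, $\delta_+$ is equivalent to a linear-at-infinity function and thus has a well-behaved gradient-like flow --- yields $(\Delta^\Infin_{\{t_+\}}, \Delta^{-\mu}_{\{t_+\}}) \simeq (\delta_+^\infin, \delta_+^{-\epsilon})$ for any $\epsilon,\infin$ meeting Inequalities~(\ref{ineq:epsilon-infin}). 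Running the same argument at $u_+$, now using Lemma~\ref{lem:Infin}(1) (so $\lambda_\Infin(u_+) > \lmaxp$) and Lemma~\ref{lem:-mu}(3) (so $0 < \lambda_{-\mu}(u_+) < \lminp$), gives $(\Delta^\Infin_{\{u_+\}}, \Delta^{-\mu}_{\{u_+\}}) \simeq (\delta_+^\infin, \delta_+^\epsilon)$.

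For the deformation retractions I would work over $J = [u_+,v_+]$ (resp.\ $J = [t_+, v_+]$), view $(\Delta^\Infin_J, \Delta^{-\mu}_J)$ as the pair $(B_J, A_J)$ of Equation~(\ref{eqn:A-B}) with $X = M\times\rr^{2N}$, $\delta = \delta_+$, $b(t) = \lambda_\Infin(t)$ and $a(t) = \lambda_{-\mu}(t)$, and then set up the relative-cone Lemma~\ref{lem:def-retr-cone}. Two adjustments of the level functions, both via the fiberwise homotopy equivalence Lemma~\ref{lem:vary-a-b}, are needed. First, Lemma~\ref{lem:Infin}(1) gives $\lambda_\Infin(t) > \lmaxp$ for all $t \ge t_+$, so $b(t)$ may be replaced by the constant $\lambda_\Infin(u_+)$ (resp.\ $\Infin/t_+$) without crossing a critical value of $\delta_+$. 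Second, by Lemma~\ref{lem:-mu}(1) the function $\lambda_{-\mu}$ is strictly increasing on $J$, with left-hand value in $(0,\lminp)$ (resp.\ $(-\lminp,0)$) by Lemma~\ref{lem:-mu}(3) (resp.\ (2)) and right-hand value $\lambda_{-\mu}(v_+) > \lmaxp$ by Inequality~(\ref{ineq:vpm}); so $a(t)$ may be replaced by a strictly increasing $\tilde a$ that agrees with $\lambda_{-\mu}$ at the left endpoint of $J$, equals the constant top level at $v_+$, and is never separated from $\lambda_{-\mu}(t)$ by a critical value of $\delta_+$ --- easily arranged since everything above $\lmaxp$ is critical-value-free. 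After these replacements the hypotheses of Lemma~\ref{lem:def-retr-cone} all hold (in particular $\tilde a$ at the left endpoint lands in the regular-value interval $(0,\lminp)$, resp.\ $(-\lminp,0)$, of $\delta_+$), so it produces a deformation retraction of $(B_J, A_J)$ onto $C(\delta_+^{\lambda_\Infin(u_+)}, \delta_+^{\lambda_{-\mu}(u_+)}) \simeq C(\delta_+^\infin, \delta_+^\epsilon)$, resp.\ onto $C(\delta_+^\infin, \delta_+^{-\epsilon})$.

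To finish, I would observe that both replacements in Lemma~\ref{lem:vary-a-b} can be taken to fix the slice over the left endpoint of $J$ (the levels are unchanged there) and that the retraction from Lemma~\ref{lem:def-retr-cone} fixes the top-level slice over that endpoint, which gives $\rho_+|_{\Delta^\Infin_{\{u_+\}}} = \id$ and $\widetilde\rho_+|_{\Delta^\Infin_{\{t_+\}}} = \id$. I expect the main obstacle to be bookkeeping rather than anything conceptual: one must track every level value produced by Lemmas~\ref{lem:Infin} and \ref{lem:-mu} against the thresholds $0$, $\lminp$, $\lmaxp$ so that the hypotheses of Lemmas~\ref{lem:vary-a-b} and \ref{lem:def-retr-cone} genuinely apply, build the interpolating function $\tilde a$, and verify the identity-on-the-left-slice property with enough care --- all routine given the estimates of Section~\ref{ssec:sublevel-at-infty}.
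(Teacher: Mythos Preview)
Your proposal is correct and follows essentially the same approach as the paper: reduce to the slicewise description $\Delta^a_{\{t\}} = \delta_+^{\lambda_a(t)}$, use Lemmas~\ref{lem:Infin} and \ref{lem:-mu} together with Lemma~\ref{lem:vary-a-b} to put the level functions in the form required by Lemma~\ref{lem:def-retr-cone}, and then invoke that lemma. The only tactical difference is that the paper sets the constant top level to $a(v_+)=\lambda_{-\mu}(v_+)$ (which already exceeds $\lmaxp$ by the choice of $v_+$), so hypothesis~(2) of Lemma~\ref{lem:def-retr-cone} holds without any further modification of $a$; your choice of constant works too but requires the extra step of building $\tilde a$.
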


For the next lemmas, select $\sigma > 0$ so that 
\begin{equation} \label{ineq:sigma}
r_+u_+(u_+ - t_+) < \sigma < \frac{ u_+}{2}\lminp.
\end{equation}
Note that such a $\sigma$ always exists by condition (3) of Definition~\ref{defn:shear-H}.

\begin{lem} \label{lem:filling-retract} Suppose that $v_- < t_-$ 
 satisfies Inequality~(\ref{ineq:vpm}). After applying a fiberwise homotopy equivalence,
  there exist deformation retractions:
  \begin{align*}
    {\rho}_-: &\left(\Delta^\Infin_{[v_-, u_+]}, \Delta^{-\mu}_{[v_-, u_+]}\right)
    \to 
    \left(\Delta^{ \sigma}_{[v_-, u_+]},  \Delta_{ [v_-, u_+]}^{-\mu} \right),\\
    \widetilde\rho_-: &  \left(\Delta^\Infin_{[v_-, t_+]}, \Delta^{-\mu}_{[v_-, t_+]}\right)
    \to 
    \left(\Delta^{ \sigma}_{[v_-, t_+]},  \Delta_{ [v_-, t_+]}^{-\mu} \right).
  \end{align*}
\end{lem}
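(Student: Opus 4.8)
The plan is to produce both deformation retractions by flowing along (the negative of) a gradient-like vector field for the sheared difference function $\Delta$ over the interval in question, of essentially the same shape as the vector field built in the proof of Lemma~\ref{lem:cobord-cpt-deform} but modified at the right-hand endpoint; the preliminary fiberwise homotopy equivalence (Lemma~\ref{lem:vary-a-b}) only serves to put the roof and floor of the pair into a form adapted to the end analysis of Section~\ref{ssec:sublevel-at-infty}. The maps $\rho_-$ and $\widetilde\rho_-$ arise from the same argument applied to $J=[v_-,u_+]$ and to $J=[v_-,t_+]$.

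First I would record that over the interior of $J$ the function $\Delta$ has no critical values in the window $[\sigma,\Infin]$. Over $(v_-,t_-]$ one has $\Delta=t\delta_-+H(t)$, and $\delta_-$ has no critical points at all since $\leg_-=\emptyset$ forces $f_-$ to have no fiber-critical points; over $[t_-,t_+]$ the only critical locus is the Morse--Bott submanifold $C$, at value $0<\sigma$; and over $[t_+,u_+]$ one again has $\Delta=t\delta_++H(t)$, while Proposition~\ref{prop:crit-pt-wgh} together with Inequalities~(\ref{eqn:ru}) places every Reeb-chord critical point of $\leg_+$ strictly to the right of $u_+$. One also notes $0<\sigma<\Infin$ --- for instance $\sigma<\frac{u_+}{2}\lminp<t_+\lmaxp<\Infin$ --- so $\sigma$ is a regular value.

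Next, after using Lemma~\ref{lem:vary-a-b} to normalize the level functions near the ends (this is permitted because Lemmas~\ref{lem:Infin} and \ref{lem:-mu} and Remark~\ref{rem:sigmat+u+} describe $\lambda_\Infin$, $\lambda_{-\mu}$, and $\lambda_\sigma$ precisely enough to see that no critical value of $\delta_\pm$ need be crossed), I would assemble an integrable gradient-like vector field $X$ for $\Delta|_J$ just as in Lemma~\ref{lem:cobord-cpt-deform} --- namely $\grad\Delta$ over the middle and $\rho(t)\bigl(\delta_\pm+H'(t)\bigr)\df_t+t\,\grad\delta_\pm$ over the ends --- with the single change that the cut-off $\rho$ is taken to be $1$, rather than $0$, at the right endpoint of $J$. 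Since $\delta_-$ has no critical points one gets $\langle X,\grad\Delta\rangle>0$ off $C$, so by the previous paragraph $X$ is bounded away from $\mathbf 0$ on $\Delta^{-1}[\sigma,\Infin]\cap J$; Lemma~\ref{lem:retract} then gives that $\Delta^\sigma_J$ is a deformation retract of $\Delta^\Infin_J$ by a homotopy fixing $\Delta^\sigma_J$, hence fixing $\Delta^{-\mu}_J\subseteq\Delta^\sigma_J$ throughout, which is the desired deformation retraction of pairs. Taking $J=[v_-,u_+]$ gives $\rho_-$ and $J=[v_-,t_+]$ gives $\widetilde\rho_-$, and the normalizations can be made compatible so that the two maps match on the overlap slices used later in the proof of Theorem~\ref{thm:filling-les}.

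The point requiring real care --- and the one place this differs from Lemma~\ref{lem:cobord-cpt-deform} --- is the right-hand boundary slice $\{t=u_+\}$ (resp.\ $\{t=t_+\}$): there the slice function $\Delta|_{\{u_+\}}=u_+\delta_++H(u_+)$ (resp.\ $\Delta|_{\{t_+\}}=t_+\delta_+$) genuinely has critical values --- those of the Reeb chords of $\leg_+$ --- inside $[\sigma,\Infin]$, so $X$ cannot be taken tangent to that slice and still be nonzero along the window. What saves the argument is that $-X$ points \emph{inward} there: on $\{t=u_+\}\cap\Delta^{-1}[\sigma,\Infin]$ one has $\delta_+\ge\lambda_\sigma(u_+)>r_+(u_+-t_+)=-H'(u_+)$, which is precisely the content of the lower bound $r_+u_+(u_+-t_+)<\sigma$ from Inequality~(\ref{ineq:sigma}), so $\df_t\Delta=\delta_++H'(u_+)>0$; similarly $\df_t\Delta=\delta_+>\sigma/t_+>0$ on $\{t=t_+\}\cap\Delta^{-1}[\sigma,\Infin]$; and the defining property of $v_-$ in Inequality~(\ref{ineq:vpm}) forces $\df_t\Delta=\delta_-+H'(v_-)<0$ on $\{t=v_-\}\cap\Delta^{-1}[\sigma,\Infin]$. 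Hence $-X$ points into $J\times M\times\rr^{N}$ along both boundary components over the window, its flow stays in $J$, and --- being bounded away from $\mathbf 0$ there --- it retracts $\Delta^\Infin_J$ onto $\Delta^\sigma_J$ as required. Checking these inward-pointing inequalities and reconciling the flow with the end-normalization is where essentially all of the work goes; the case $J=[v_-,t_+]$ is a bit more delicate because the shearing term $H$ vanishes at its right endpoint, so the relevant window of $\delta_+$-values is controlled only through $0<\sigma<t_+\lminp$.
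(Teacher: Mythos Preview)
Your proof is correct and follows essentially the paper's route: both construct the retractions by flowing along the negative of a gradient(-like) vector field for $\Delta$, stopping at level $\sigma$, and the real content is exactly the boundary check you isolate --- that $\partial_t\Delta>0$ on the right-hand slice (forced by the lower bound $r_+u_+(u_+-t_+)<\sigma$) so the flow cannot escape to the right. One minor point: your appeal to Inequality~(\ref{ineq:vpm}) to get $\partial_t\Delta<0$ at $t=v_-$ is both redundant (you already arranged $\rho(v_-)=0$, so $X$ is tangent there and nonvanishing because $\delta_-$ has no critical points when $\leg_-=\emptyset$) and not well-justified as stated, since that inequality is vacuous in the filling case; the paper instead works with $\grad\Delta$ directly and deduces $\partial_t\Delta|_{t=v_-}<0$ from the behavior of $\lambda_\Infin$ in Lemma~\ref{lem:Infin}, but your tangent construction handles $v_-$ without this.
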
 

From Lemmas~\ref{lem:filling-cone} and \ref{lem:filling-retract}, we get:

\begin{cor} \label{cor:2cones}  The pair $\left( \Delta_{[v_-, v_+]}^\Infin, \Delta_{[v_-, v_+]}^{- \mu} \right)$ can be viewed
  as two different mapping cones: $C(\phi_{F})$, where 
  $$\phi_{F}: \left( \Delta_{\{u_+\}}^\Infin, \Delta_{\{ u_+ \}}^{-\mu} \right) \to \left(\Delta^\sigma_{[v_-, u_+]}, \Delta^{-\mu}_{[v_-, u_+]}\right)$$ is given by the restriction of the map $\rho_-$ in Lemma~\ref{lem:filling-retract} to $\Delta_{\{u_+\}}^\Infin$, 
  and $C(\widetilde \phi_{F})$, where 
  $$\widetilde \phi_{F}: \left( \Delta_{\{t_+\}}^\Infin, \Delta_{\{ t_+ \}}^{-\mu} \right) \to \left(\Delta^\sigma_{[v_-, t_+]}, \Delta^{-\mu}_{[v_-, t_+]}\right)$$ 
  is given by the restriction of the map $\widetilde\rho_-$ in Lemma~\ref{lem:filling-retract} to $\Delta_{\{t_+\}}^\Infin$.  
  \end{cor}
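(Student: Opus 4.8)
The plan is to assemble Corollary~\ref{cor:2cones} purely formally from the two preceding lemmas, with almost all of the substantive work already done. The strategy is to decompose the interval $[v_-,v_+]$ at the point $u_+$ (respectively $t_+$) into a ``cobordism piece'' $[v_-,u_+]$ and a ``cone piece'' $[u_+,v_+]$, recognize that the pair over the cobordism piece deformation retracts onto the target of $\phi_F$ while the pair over the cone piece is the mapping cone of $\phi_F$, and then observe that the gluing along the slice $\{u_+\}$ is exactly the identification in the definition of a relative mapping cone (Definition~\ref{def:rel-cone}).

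First I would make the interval decomposition explicit: write $\left(\Delta^\Infin_{[v_-,v_+]},\Delta^{-\mu}_{[v_-,v_+]}\right)$ as the union of $\left(\Delta^\Infin_{[v_-,u_+]},\Delta^{-\mu}_{[v_-,u_+]}\right)$ and $\left(\Delta^\Infin_{[u_+,v_+]},\Delta^{-\mu}_{[u_+,v_+]}\right)$, glued along the common slice $\left(\Delta^\Infin_{\{u_+\}},\Delta^{-\mu}_{\{u_+\}}\right)$. By Lemma~\ref{lem:filling-retract}, the first piece deformation retracts (after a fiberwise homotopy equivalence) onto $\left(\Delta^\sigma_{[v_-,u_+]},\Delta^{-\mu}_{[v_-,u_+]}\right)$; the map $\phi_F$ is by definition the composite of the inclusion of the slice $\left(\Delta^\Infin_{\{u_+\}},\Delta^{-\mu}_{\{u_+\}}\right)$ into the first piece with this retraction $\rho_-$. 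By Lemma~\ref{lem:filling-cone}, the second piece deformation retracts, relative to the slice $\{u_+\}$ (where $\rho_+$ is the identity), onto the relative cone $C\left(\delta_+^\infin,\delta_+^\epsilon\right)$, and under the diffeomorphism of that slice with $\left(\delta_+^\infin,\delta_+^\epsilon\right)$ the attaching slice $\{u_+\}$ becomes exactly the base $(x,0)$-copy of the relative cone. Gluing the cone-piece retraction to the cobordism-piece retraction along the slice therefore exhibits the whole pair as $C(X,A)\cup_{\phi_F}(Y,B)$ with $(X,A)=\left(\delta_+^\infin,\delta_+^\epsilon\right)$ and $(Y,B)=\left(\Delta^\sigma_{[v_-,u_+]},\Delta^{-\mu}_{[v_-,u_+]}\right)$; this is the relative mapping cone $C(\phi_F)$. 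The second statement, with $u_+$ replaced by $t_+$, $\epsilon$ by $-\epsilon$, and $\rho_-,\rho_+$ by $\widetilde\rho_-,\widetilde\rho_+$, follows verbatim.

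The only point requiring care — and the step I expect to be the main (if minor) obstacle — is checking compatibility of the two fiberwise homotopy equivalences at the gluing slice $\{u_+\}$: Lemma~\ref{lem:filling-cone} applies a fiberwise homotopy equivalence to the pair over $[u_+,v_+]$, and Lemma~\ref{lem:filling-retract} applies a (possibly different) fiberwise homotopy equivalence to the pair over $[v_-,u_+]$, and one needs these to agree on the common slice so that the glued map is well-defined. This is handled by noting that on the single slice $\{u_+\}$ a fiberwise homotopy equivalence is just a homotopy equivalence of the pair $\left(\Delta^\Infin_{\{u_+\}},\Delta^{-\mu}_{\{u_+\}}\right)$, and by absorbing the discrepancy into the definition of $\phi_F$ — that is, one precomposes the retraction $\rho_-$ with the comparison homotopy equivalence of the slice, which changes $\phi_F$ only up to homotopy and hence does not change the homotopy type of the mapping cone $C(\phi_F)$. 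Since $\rho_+|_{\Delta^\Infin_{\{u_+\}}}$ is literally the identity by Lemma~\ref{lem:filling-cone}, no adjustment is needed on the cone side. With this observation the corollary is immediate.
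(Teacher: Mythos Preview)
Your proposal is correct and is exactly the argument the paper has in mind: the corollary is stated immediately after Lemmas~\ref{lem:filling-cone} and \ref{lem:filling-retract} as a direct consequence, and your decomposition at $u_+$ (resp.\ $t_+$), identification of the right piece with the relative cone via $\rho_+$ (which fixes the slice), and retraction of the left piece via $\rho_-$ to obtain the attaching map $\phi_F$ is precisely how the two lemmas are meant to be combined. Your care about matching the fiberwise homotopy equivalences at the gluing slice is warranted and correctly resolved --- the paper suppresses this point entirely.
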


  The following lemma will be useful in identifying terms that arise in the long exact sequences of the mapping cones of Corollary~\ref{cor:2cones}:
 
  \begin{lem} \label{lem:filling-cobord-cohom} There exist isomorphisms
\begin{align*}
H^{k+ N }\left( \Delta_{[v_-, u_+]}^{\sigma}, \Delta_{[v_-, u_+]}^{-\mu} \right) &\simeq H^{k}(\lag, \partial \lag), \text{ and} \\
H^{k+ N }\left( \Delta_{[v_-, t_+]}^{\sigma}, \Delta_{[v_-, t_+]}^{-\mu} \right) &\simeq H^{k}(\lag).
\end{align*}
 \end{lem}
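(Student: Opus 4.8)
The plan is to identify each pair $(\Delta^{\sigma}_{[v_-,t_+]},\Delta^{-\mu}_{[v_-,t_+]})$ and $(\Delta^{\sigma}_{[v_-,u_+]},\Delta^{-\mu}_{[v_-,u_+]})$, up to homotopy, with a disk/sphere bundle pair over the Morse--Bott critical submanifold $C$ of Proposition~\ref{prop:crit-pt-wgh}, and then to apply the Thom isomorphism. The starting observation is that, over the whole interval $[v_-,u_+]$, the critical set of $\Delta$ is \emph{exactly} $C$: by Proposition~\ref{prop:crit-pt-wgh} the only other critical points are Reeb-chord points over $\{t\in(0,u_-)\cup(u_+,\infty)\}$, and since $\leg_-=\emptyset$ in the filling setting, $\Sigma_F$ is empty over $(0,t_-]$. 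Recall that $C$ has critical value $0$ and index $N$, that $C\cong\lag$, and that under this diffeomorphism $\partial C$ is the part of $C$ lying in the slice $\{t=t_+\}$ (where $\clag$ begins its cylindrical end) and corresponds to $\partial\lag=\leg_+$; in that slice $\Delta=t_+\,\delta_+$, so $\partial C$ is the minimum critical submanifold of $\delta_+$. I would first record the bundle data: the negative normal bundle $\nu^-\to C$ has rank $N$, and is orientable whenever $\clag$ (hence $\lag$ and $C$) is orientable --- this is the same orientability input as in Proposition~\ref{prop:full-rel-dual}, and with $\zz_2$ coefficients it is unnecessary --- while its restriction $\nu^-|_{\partial C}$ lies in the slice $\{t=t_+\}$ and is the negative normal bundle of the minimum critical submanifold of $\delta_+$.

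For the interval $[v_-,t_+]$ I would carry out the standard Morse--Bott reduction. Since $0$ is the only critical value of $\Delta|_{[v_-,t_+]}$, using an integrable, gradient-like vector field for $\Delta|_{[v_-,t_+]}$ --- built as in the proofs of Lemmas~\ref{lem:cobord-cpt-deform} and \ref{lem:filling-retract}, cut off near the edge slice $\{t=t_+\}$ so that it is parallel there away from $\partial C$ --- Lemma~\ref{lem:retract} gives, for small $\epsilon'>0$,
$$(\Delta^{\sigma}_{[v_-,t_+]},\Delta^{-\mu}_{[v_-,t_+]})\ \simeq\ (\Delta^{\epsilon'}_{[v_-,t_+]},\Delta^{-\epsilon'}_{[v_-,t_+]}).$$
The right-hand pair is a tubular neighborhood of $C$, which the Morse--Bott normal form identifies with $(D(\nu^-),S(\nu^-))$; here $C$ is a manifold with boundary and $D(\nu^-)|_{\partial C}$ sits inside the edge slice $\{t=t_+\}$, so no spurious boundary is introduced. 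The Thom isomorphism for the oriented rank-$N$ bundle $\nu^-$ then gives
$$H^{k+N}(\Delta^{\sigma}_{[v_-,t_+]},\Delta^{-\mu}_{[v_-,t_+]})\ \cong\ H^{k}(D(\nu^-),S(\nu^-))\ \cong\ H^{k}(C)\ =\ H^{k}(\lag),$$
which is the second isomorphism.

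For the interval $[v_-,u_+]$ I would analyze the extra slab $[t_+,u_+]$, on which $\Delta=t\delta_++H(t)$ and $\Delta$ has no critical points. The estimates of Section~\ref{ssec:sublevel-at-infty} --- Remark~\ref{rem:sigmat+u+} together with the choice of $\sigma$ in Inequality~(\ref{ineq:sigma}), and Lemma~\ref{lem:-mu} --- show that on $[t_+,u_+]$ the level functions $\lambda_\sigma(t)$ and $\lambda_{-\mu}(t)$ both take values in $(-\lminp,\lminp)$, with $\lambda_{-\mu}$ increasing from the negative value $\lambda_{-\mu}(t_+)$ to the positive value $\lambda_{-\mu}(u_+)$. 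Since $0$ is the only critical value of $\delta_+$ in $(-\lminp,\lminp)$, flowing slicewise along $-\grad\delta_+$ and applying Lemmas~\ref{lem:vary-a-b} and \ref{lem:def-retr-right} over $[t_+,u_+]$ shows that the part of $\Delta^{-\mu}_{[v_-,u_+]}$ over this slab grows (where $\lambda_{-\mu}>0$) a disk-bundle neighborhood of the minimum critical submanifold of $\delta_+$, i.e. a copy of $D(\nu^-)|_{\partial C}$, glued onto the copy already sitting in $\{t=t_+\}$. Combining this with the reduction of the previous paragraph over $[v_-,t_+]$ and excising, one obtains
$$(\Delta^{\sigma}_{[v_-,u_+]},\Delta^{-\mu}_{[v_-,u_+]})\ \simeq\ \bigl(D(\nu^-),\,S(\nu^-)\cup D(\nu^-)|_{\partial C}\bigr),$$
and the relative Thom isomorphism yields $H^{k+N}(\Delta^{\sigma}_{[v_-,u_+]},\Delta^{-\mu}_{[v_-,u_+]})\cong H^{k}(C,\partial C)=H^{k}(\lag,\partial\lag)$, the first isomorphism.

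The main obstacle is the geometric bookkeeping in these retractions near the boundary slices: one must produce integrable gradient-like vector fields that are suitably parallel to $\{t=v_-\}$, $\{t=t_+\}$, and $\{t=u_+\}$ so that Lemma~\ref{lem:retract} applies slab-wise without trajectories escaping the interval, and --- for the first isomorphism --- one must match the ``cylinder over $\leg_+$'' sublevel sets of $\Delta$ over $[t_+,u_+]$ precisely with the disk bundle $D(\nu^-)|_{\partial C}$ over $\partial C$, so that exactly the relative term $S(\nu^-)\cup D(\nu^-)|_{\partial C}$ appears. The orientability hypothesis is used only in invoking the Thom isomorphism.
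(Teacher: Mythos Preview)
Your proposal is correct and follows essentially the same approach as the paper: reduce the pair to a disk/sphere bundle pair over the Morse--Bott critical submanifold $C\cong\lag$ (with the $[t_+,u_+]$ slab contributing the extra $D(\nu^-)|_{\partial C}$ in the relative set), then apply the Thom isomorphism. The only packaging differences are that the paper first retracts the $[v_-,t_-]$ slab to $\{t_-\}$ before doing Morse--Bott on $[t_-,t_+]$ (whereas you work directly on $[v_-,t_+]$, which is fine since $\leg_-=\emptyset$), and the paper derives the ``relative Thom isomorphism'' $H^{k+N}(D\lag,S\lag\cup D(\partial\lag))\cong H^k(\lag,\partial\lag)$ via a five-lemma comparison of long exact sequences rather than invoking it by name.
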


\begin{proof} [Proof of Theorem~\ref{thm:filling-les}]
We prove the statement for the relative generating function cohomology $\rgh{*}{f_+}$; the proof for the total generating family cohomology is almost verbatim.
By Corollary~\ref{cor:2cones},  Lemma~\ref{lem:cone-exact} gives a long exact sequence:
\begin{equation}
\begin{aligned} \label{les:u+}
\cdots \to H^{*}\left(\Delta_{[v_-, v_+]}^\Infin, \Delta_{[v_-, v_+]}^{-\mu} \right)  & \to H^*\left( \Delta_{[v_-, u_+]}^{ \sigma}, \Delta_{[v_-, u_+]}^{-\mu}   \right) \\ &\stackrel{\phi_F^*}{\to} 
H^*\left( \Delta_{\{u_+\}}^\Infin, \Delta_{\{u_+\}}^{- \mu}  \right) \to \cdots.
\end{aligned}
\end{equation}
We now identify terms.  Corollary~\ref{cor:wgh-cpct} and Definition~\ref{defn:wgh} allow us to identify the first term:
$$H^{k+N + 1}\left(\Delta_{[v_-, v_+]}^\Infin, \Delta_{[v_-, v_+]}^{-\mu} \right) \simeq
H^{k+N + 1}\left(\Delta^\Infin, \Delta^{-\mu} \right)  =
 \wgh{k+1}{\clag}.$$
Lemma~\ref{lem:filling-cobord-cohom} identifies the second term as $H^{k+1}(\lag, \partial\lag).$
The identification of the last term as $\rgh{k}{[ f_+ ]}$ follows immediately from Lemma~\ref{lem:filling-cone}.
\end{proof} 
 
\begin{proof}[Proof of Theorem~\ref{thm:filling-iso}] The stated isomorphism follows from Theorem~\ref{thm:filling-les}, the fact that $(\lag, \partial \lag)$ is diffeomorphic to $(L, \leg_+)$, and Proposition~\ref{prop:wgh-vanish} which guarantees the vanishing of the total wrapped generating family cohomology of $F$.
\end{proof}

\subsection{Proofs of Lemmas~\ref{lem:filling-cone}, \ref{lem:filling-retract}, and \ref{lem:filling-cobord-cohom}}
\label{ssec:lem-proofs}

\begin{proof}[Proof of Lemma~\ref{lem:filling-cone}]  \label{proof:lem-filling-cone} Fix $H \in H(\clag)$ and  $\Infin, \mu>0$ satisfying Inequalities~(\ref{ineq:wgh}).
The proof relies on two applications of Lemma~\ref{lem:def-retr-cone}.
With Equation~(\ref{eqn:end-level}) in mind, we take $a(t) = \lambda_{-\mu}(t)$ and $b(t) = \lambda_\Infin(t)$.  By Lemma~\ref{lem:-mu},
 $a(t)$ is strictly increasing when $t \geq t_+$.  For $v_+$ satisfying Inequality~(\ref{ineq:vpm}),
$a(v_+)$ is greater than all critical values of $\delta_+$. 
By Lemma~\ref{lem:Infin}, we can assume that $b(t)$ is greater than all critical values of $\delta_+$ for all $t \in [t_+, v_+]$.  After applying Lemma~\ref{lem:vary-a-b}, we can assume $b(t) = a(v_+)$ on $[t_+,v_+]$.     

Lemma~\ref{lem:-mu} then tells us that:
$$-\lminp < a(t_+) < 0 < a(u_+) < \lminp.$$   
Hence, by Lemma~\ref{lem:def-retr-cone},  we have: 
\begin{enumerate} 
\item The pair $\left(\Delta^\Infin_{[u_+, v_+]}, \Delta^{-\mu}_{[u_+, v_+]}\right)$ deformation retracts to the relative cone 
$C\left(\delta_+^{b(u_+)}, \delta_+^{a(u_+)}\right) =    C\left(\delta_{+}^{\infin}, \delta_{+}^{\epsilon}\right)$, and
\item The pair $\left(\Delta^\Infin_{[t_+, v_+]}, \Delta^{-\mu}_{[t_+, v_+]}\right)$ deformation retracts to the relative cone $C\left(\delta_+^{b(t_+)}, \delta_+^{a(t_+)}\right) = C\left(\delta_{+}^{\infin}, \delta_{+}^{-\epsilon}\right)$,
\end{enumerate}
for
$\infin, \epsilon$ satisfying Inequalities~(\ref{ineq:epsilon-infin}).
\end{proof}

\begin{proof}[Proof of Lemma~\ref{lem:filling-retract}]  
  Fix $H \in H(\clag)$, $\Infin, \mu >0$ satisfying Inequalities~(\ref{ineq:wgh}) and $v_-$ satisfying Inequality~(\ref{ineq:vpm}).  We will construct the deformation retractions $\rho_-$ and $\widetilde{\rho}_-$ by flowing along the negative gradient vector field of $\Delta$ (here, we implicitly fix a metric) on the manifolds-with-boundary $[v_-, u_+] \times M \times \rr^{2N}$ and $[v_-, t_+] \times M \times \rr^{2N}$, stopping when the value of $\Delta$ reaches $\sigma$.  The embeddedness of $\clag$ implies that $0$ is the only critical value of $\Delta$ on these manifolds, and hence it suffices to show that the negative gradient vector field is inward-pointing at the boundaries.  In particular, we will show:
  \begin{enumerate}
  \item $\partial_t \Delta < 0$ on $\left( \{v_-\} \times M \times \rr^{2N} \right) \cap \{ \sigma < \Delta \leq \Infin\}$,  for all $\sigma > 0$,  by the choice of $v_-$;
  \item $\partial_t \Delta > 0$ on $\left( [t_+,u_+] \times M \times \rr^{2N} \right) \cap \{\sigma < \Delta \leq \Infin\}$, when $\sigma$ is chosen to satisfy the lower bound in Inequalities (\ref{ineq:sigma}).  This fact is stronger than what we need for the lemma, but will prove useful in the next section.
  \end{enumerate}
  
When $t \leq t_-$, recall that $\Delta(t,x,\e,\te) = t\delta_-(x,\e,\te) + H(t)$ and hence that $\partial_t \Delta = \delta_-(x, \e, \te) + H'(t)$.  Using the notation of Section~\ref{ssec:sublevel-at-infty}, since $\Delta < \Infin$, we have $\delta_- \leq \lambda_\Infin$, and hence that $\partial_t \Delta \leq \lambda_\Infin(t) + H'(t)$.  Rewriting this inequality yields:
$$t \partial_t \Delta(t,x,\e,\te) \leq l_\Infin(t),$$
so Lemma~\ref{lem:Infin} implies that  
$t \partial_t \Delta|_{t = v_-} < 0$, and hence  $\partial_t \Delta|_{t = v_-} < 0$, as desired.

For $t\in [t_+, u_+]$, recall that $\Delta(t,x,\e,\te) = t\,\delta_+(x,\e,\te) + H(t)$.  Since $H''(t) < 0$, we have: 
\begin{equation}
 \partial_t \Delta = \delta_+(x,\e,\te) + H'(t) > \delta_+(x,\e,\te) - r_+(u_+ - t_+). \label{eqn:deriv-Delta-2}
\end{equation}
The inequality $\Delta(t, x, \e,\te) > \sigma$ implies $t \delta_+(x,\e,\te) + H(t) > \sigma$ and hence, since $H(t) \leq 0$,
that 
\begin{equation*}
  \delta_+(x,\e,\te) > \frac{\sigma}{t} > \frac{\sigma}{u_+}.
\end{equation*}
Applying this inequality and the lower bound on $\sigma$ from
Inequality~(\ref{ineq:sigma}) to Equation~(\ref{eqn:deriv-Delta-2}) yields the desired positivity of the derivative $\partial_t \Delta$ when $t \in [t_+,u_+]$.
\end{proof}

\begin{proof}[Proof of Lemma~\ref{lem:filling-cobord-cohom}]

  We will first show that the cohomology groups of the pair $\left(\Delta^\sigma_{[v_-,u_+]}, \Delta^{-\mu}_{[v_-,u_+]}\right)$ agree with those of
  \begin{equation}
    \label{eqn:u+pair} \left(\Delta^{\sigma}_{[t_-, t_+]} , 
      \Delta_{[t_-, t_+]}^{-\mu} \cup \Delta_{\{t_+\}}^{\sigma}\right),
  \end{equation}
  and the cohomology groups of $\left(\Delta^\sigma_{[v_-, t_+]}, \Delta^{-\mu}_{[v_-, t_+]}\right)$ agree with those of
  \begin{equation}  \label{eqn:t+pair}  \left(\Delta^{\sigma}_{[t_-, t_+]} , 
      \Delta_{[t_-, t_+]}^{-\mu} \right).\
  \end{equation}
  We will then apply a Morse-Bott argument to identify the cohomology groups of the pair in (\ref{eqn:u+pair}) with those $(\lag, \partial \lag)$ and the cohomology groups of the pair in (\ref{eqn:t+pair}) with those of $\lag$.
 
  First, we consider the pair $\left(\Delta^\sigma_{[v_-,t_-]}, \Delta^{-\mu}_{[v_-,t_-]}\right)$.  Since $\Lambda_- = \emptyset$,
  after a fiberwise homotopy equivalence, Corollary~\ref{cor:left-map-cone}, Lemma~\ref{lem:mu}, and Lemma~\ref{lem:-mu} show this pair retracts to $\left(\Delta^\sigma_{\{t_-\}},\Delta^{-\mu}_{\{t_-\}}\right)$.  Thus, the cohomology groups of $\left(\Delta^\sigma_{[v_-, t_+]}, \Delta^{-\mu}_{[v_-, t_+]}\right)$ agree with those of the pair in (\ref{eqn:t+pair}), as desired.   

  To get the desired identification between $\left(\Delta^\sigma_{[v_-, u_+]}, \Delta^{-\mu}_{[v_-, u_+]}\right)$ and the pair in (\ref{eqn:u+pair}), we will first employ Lemma~\ref{lem:def-retr-cone} to analyze $\left(\Delta^\sigma_{[t_+, u_+]}, \Delta^{-\mu}_{[t_+, u_+]}\right)$.  Restrict attention to $t$ in the interval $[t_+,u_+]$. Consider $a(t) = \lambda_{-\mu}(t) $ and $b(t) = \lambda_\sigma(t)$.  As noted in Remark~\ref{rem:sigmat+u+}, we can assume $0< b(t) < \lminp$.  By Lemma~\ref{lem:-mu}, $a(u_+)>0$.  After applying a fiberwise homotopy equivalence, we can assume $b(t) = a(u_+)$ for all $t \in [t_+, u_+]$.   By Lemma~\ref{lem:def-retr-cone}, we find that
  $\left( \Delta_{[t_+, u_+]}^\sigma, \Delta_{[t_+, u_+]}^{-\mu} \right) $ deformation retracts
  to  
 $$\left( \Delta_{ \{t_+\}}^\sigma\times [t_+, u_+], \left( \Delta_{\{t_+ \}}^{-\mu} \times [t_+, u_+]  \right) \cup \Delta_{\{ u_+ \}}^\sigma \right).$$
 Thus we see that the cohomology groups of 
$ \left(\Delta^\sigma_{[v_-,u_+]}, \Delta^{-\mu}_{[v_-,u_+]}\right)$, agree with those of
$$\left( \Delta_{[t_-, t_+ ]}^\sigma \cup \left(\Delta_{\{ t_+ \}}^\sigma \times [t_+, u_+]\right), 
\Delta_{[t_-, t_+ ]}^{-\mu} \cup \left( \Delta_{\{ t_+ \}}^{-\mu} \times [t_+, u_+]  \right) \cup  \Delta_{\{ u_+ \}}^\sigma \right),$$
which, after applying a diffeomorphism,  agree with those of the pair in (\ref{eqn:u+pair}), as desired.

To determine the cohomology groups of the pairs in (\ref{eqn:u+pair}) and (\ref{eqn:t+pair}),
 we will  analyze the change in topology as we pass through the critical level $0$ on the way up from $\Delta_{[t_-, t_+]}^{-\mu}$ to $ \Delta_{[t_-, t_+]}^{\sigma}$. To analyze the change, we will employ a simple modification of the standard constructions of Morse-Bott theory to allow for critical submanifolds with boundary.   By Proposition~\ref{prop:crit-pt-wgh}, there is a properly embedded, non-degenerate critical submanifold  $(C, \partial C) \subset \left( \{ t \in [t_-, t_+] \}, \{ t = t_+ \} \right)$ of index $N$ having critical value $0$.  A careful examination of the proof of the Morse-Bott lemma (as in \cite{bh:morse-bott-lemma}, say) shows that since the critical submanifold $C$ is properly embedded in $[t_-, t_+] \times M \times \rr^{2N}$, there is a choice of metric that allows us to assume that the negative gradient flow of $\Delta$ is tangent to the boundary $\{t=t_+ \}$ in a neighborhood of the boundary of $C$.  Thus, the effect of passing through the critical level is to attach an $N$-disk 
   bundle over $C$ to $\Delta^{-\mu}_{[t_-, t_+]}$ along its unit sphere bundle. By Lemma~\ref{prop:crit-pt-wgh}, we know that $C$ is diffeomorphic to $\lag$.  Denote the $N$-disk bundle by $D\lag$ and its sphere bundle by $S\lag$. We obtain a homotopy equivalence between the pairs
  $\left(\Delta^{\sigma}_{[t_-, t_+]}, \Delta^{-\mu}_{[t_-, t_+]} \cup \Delta_{\{t_+\}}^\sigma \right)$  and 
   $(D\lag, S\lag \cup D(\partial \lag))$ and
   between the pairs
   $\left(\Delta^{\sigma}_{[t_-, t_+]}, \Delta^{-\mu}_{[t_-, t_+]}  \right)$  and 
   $(D\lag, S\lag)$.

 The claimed isomorphism between $H^{k+N}\left(\Delta^{\sigma}_{[v_-, t_+]}, \Delta^{-\mu}_{[v_-, t_+]}\right)$ and $H^k(\lag)$ now follows from the Thom isomorphism theorem.  To complete the proof in the other case, consider the long exact sequences of the triple $(D\lag, S\lag \cup D(\partial \lag), S\lag)$ and the pair $(\lag,\partial \lag)$, related by the Thom maps $\tau_\lag$ and $\tau_{\partial \lag}$, along with an induced map $\overline{\tau}$:
 \begin{equation} \label{eqn:iso2}
  \xymatrix@C=15pt{
    \cdots \ar[r] & H^{k} (D\lag, S\lag \cup D(\partial \lag)) \ar[r] \ar[d]^{\overline{\tau}}
    & H^{k}(D\lag, S\lag) \ar[r] \ar[d]^{\tau_\lag} 
    & H^{k}(D(\partial \lag), S(\partial \lag)) \ar[r] \ar[d]^{\tau_{\partial \lag}} & \cdots\\
    \cdots\ar[r] & H^{k-N}(\lag, \partial \lag) \ar[r] & H^{k-N}(\lag) \ar[r] & H^{k-N}(\partial \lag) \ar[r] & \cdots 
 }
\end{equation}
The Thom isomorphism theorem and the $5$-Lemma imply that the map $\overline{\tau}$ is an isomorphism, thus giving
 $$H^{k+N}(\Delta^{\sigma}_{[v_-, u_+]}, \Delta^{-\mu}_{[v_-,u_+]}) \simeq
  H^{k+N}(D\lag, S\lag \cup D\partial \lag) \simeq H^{k}(\lag, \partial \lag),$$
  as desired.
\end{proof}

\subsection{Commutativity of Filling Isomorphisms\label{ssec:commute}}

The following lemma shows that the isomorphisms $\phi_F^*$ and $\widetilde \phi_F^*$ constructed in the proof of Theorem~\ref{thm:filling-les} commute with natural inclusion maps.  In Section~\ref{sec:duality}, this lemma will be employed to prove Theorem~\ref{thm:duality-filling}.

\begin{lem}\label{lem:cone-maps-commute}
  Assume $(\leg_-, f_-) \prec_{(\clag, F)} (\leg_+, f_+)$ where $\clag$ is orientable.
  Let $H \in \mathcal H(\clag)$ and $\Infin, \sigma, \mu > 0$ be chosen to 
satisfy the Inequalities~(\ref{ineq:wgh}) and (\ref{ineq:sigma}).
The maps $\phi_{F}$ and $\widetilde\phi_{F}$ defined in the proof of Theorem~\ref{thm:filling-les} fit into the following commutative diagram:
$$\xymatrix{H^{k+N+1}\left( \Delta_{[v_-, u_+]}^{\sigma}, \Delta_{[v_-, u_+]}^{-\mu} \right) \ar[r]^-{\phi_{F}^*} \ar[d]^{I^*} & H^{k+N+1}\left(\delta_+^\infin, \delta_+^\epsilon\right) \ar[d]^{i^*} \\H^{k+N+1}\left( \Delta_{[v_-, t_+]}^{\sigma}, \Delta_{[v_-, t_+]}^{-\mu} \right)  \ar[r]^-{\widetilde\phi_{F}^*} &H^{k+N+1}\left(\delta_+^\infin, \delta_+^{-\epsilon}\right)
}$$
where $i$ and $I$ are the obvious inclusion maps.
\end{lem}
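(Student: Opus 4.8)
The plan is to trace through the constructions of the two maps $\phi_F$ and $\widetilde\phi_F$ from Corollary~\ref{cor:2cones} and the proof of Theorem~\ref{thm:filling-les}, and check that all the geometric moves used to build them are compatible with the inclusion $[v_-,t_+]\hookrightarrow[v_-,u_+]$ of base intervals. Recall that $\phi_F$ is the restriction of the deformation retraction $\rho_-$ of Lemma~\ref{lem:filling-retract} (which retracts $(\Delta^\Infin_{[v_-,u_+]},\Delta^{-\mu}_{[v_-,u_+]})$ onto $(\Delta^\sigma_{[v_-,u_+]},\Delta^{-\mu}_{[v_-,u_+]})$) to the slice $\Delta^\Infin_{\{u_+\}}$, composed with the identification of that slice with $(\delta_+^\infin,\delta_+^\epsilon)$ from Lemma~\ref{lem:filling-cone}; and $\widetilde\phi_F$ is the analogous restriction of $\widetilde\rho_-$ to $\Delta^\Infin_{\{t_+\}}$, identified with $(\delta_+^\infin,\delta_+^{-\epsilon})$. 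The right-hand vertical map $i^*$ is induced by the inclusion $\delta_+^\epsilon\hookrightarrow\delta_+^{-\epsilon}$ of target pairs (equivalently $\delta_+^\infin$ with the coarser subspace), while $I^*$ is induced by restricting the base interval.

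First I would observe that the vector field used to build both $\rho_-$ and $\widetilde\rho_-$ in the proof of Lemma~\ref{lem:filling-retract} is literally the same negative gradient field of $\Delta$ (for a fixed metric), just considered on the two nested manifolds-with-boundary $[v_-,t_+]\times M\times\rr^{2N}\subset[v_-,u_+]\times M\times\rr^{2N}$. The key points established there — that $\partial_t\Delta<0$ at $t=v_-$ and $\partial_t\Delta>0$ on $[t_+,u_+]$ in the relevant range — show that this field is inward-pointing along every boundary face of \emph{both} manifolds. Consequently the flow defining $\widetilde\rho_-$ is just the restriction of the flow defining $\rho_-$ to the subdomain, and in particular $\rho_-$ maps the slice $\Delta^\Infin_{\{t_+\}}$ into $\Delta^\sigma_{[v_-,t_+]}$ exactly as $\widetilde\rho_-$ does. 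Similarly, in Lemma~\ref{lem:filling-cone} the two identifications of the $u_+$- and $t_+$-slices with $(\delta_+^\infin,\delta_+^\epsilon)$ and $(\delta_+^\infin,\delta_+^{-\epsilon})$ are both obtained from Equation~(\ref{eqn:end-level}): the slice at $t$ is $\{\delta_+\le\lambda_{\Infin}(t)\}$ relative to $\{\delta_+\le\lambda_{-\mu}(t)\}$, and since $\lambda_{-\mu}(t_+)<0<\lambda_{-\mu}(u_+)$ by Lemma~\ref{lem:-mu}, the only difference between the two slices is the level of the subspace, which is precisely what $i$ records. So I would choose the fiberwise homotopy equivalences (which flatten $b(t)$ and normalize $a(t)$) coherently over the larger interval, so that their restrictions to the sub-interval are the ones used for $\widetilde\phi_F$; this is possible because the hypotheses of Lemma~\ref{lem:vary-a-b} are checked slicewise and the fiberwise flows commute with restricting $t$.

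With these compatible choices in place, the square commutes essentially on the nose at the level of the pairs of spaces before passing to cohomology: going right-then-down sends the slice $\Delta^\Infin_{\{u_+\}}$ via $\rho_-$ into $\Delta^\sigma_{[v_-,u_+]}$ and then includes $\Delta^\sigma_{[v_-,t_+]}$ and the $t_+$-slice, while going down-then-right first passes to the $t_+$-slice and applies $\widetilde\rho_-$; the identity "$\rho_-$ restricted to the subdomain equals $\widetilde\rho_-$" makes these two composites the same map of pairs up to a homotopy that is itself fiberwise (the flow for shorter times). Applying $H^{k+N+1}$ and Lemma~\ref{lem:filling-cone}'s identifications then yields the claimed commutative diagram. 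I expect the main obstacle to be purely bookkeeping rather than conceptual: one must be careful that the fiberwise homotopy equivalences and the choices of $\sigma$, $v_\pm$ used in the two applications of the construction (for $\phi_F$ over $[v_-,u_+]$ and for $\widetilde\phi_F$ over $[v_-,t_+]$) can be taken simultaneously — i.e. a single $\sigma$ satisfying Inequality~(\ref{ineq:sigma}) and single $v_\pm$ satisfying Inequality~(\ref{ineq:vpm}) work for both — and that the homotopies exhibited in Lemmas~\ref{lem:def-retr-right} and \ref{lem:def-retr-cone} can be arranged to respect the inclusion of base intervals so that no non-natural choice creeps in when one restricts from $[v_-,u_+]$ to $[v_-,t_+]$. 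Since all the moves are built from the same gradient flow of $\Delta$ and the same slicewise level functions $\lambda_\Infin,\lambda_{\pm\mu}$, this coherence is available, and the proof reduces to recording it.
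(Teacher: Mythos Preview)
Your approach is the same as the paper's in spirit: both rely on the observation that $\rho_-$ and $\widetilde\rho_-$ are defined by the \emph{same} negative gradient flow of $\Delta$, inward-pointing along both $\{t=t_+\}$ and $\{t=u_+\}$.  The paper, however, is more careful about one step that you gloss over, and I think it is worth pointing out.

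The map $\phi_F$ has domain the $u_+$-slice $(\Delta^\Infin_{\{u_+\}},\Delta^{-\mu}_{\{u_+\}})$, while $\widetilde\phi_F$ has domain the $t_+$-slice $(\Delta^\Infin_{\{t_+\}},\Delta^{-\mu}_{\{t_+\}})$.  These are \emph{different} subspaces of $\rp\times M\times\rr^{2N}$; the inclusion $i:(\delta_+^\infin,\delta_+^{-\epsilon})\hookrightarrow(\delta_+^\infin,\delta_+^{\epsilon})$, transported through the identifications of Lemma~\ref{lem:filling-cone}, becomes the map ``project the $t_+$-slice to $M\times\rr^{2N}$ and re-embed at $u_+$'', which is \emph{not} an inclusion of subspaces and is not obviously compatible with the gradient flow of $\Delta$.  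So your claim that ``$\rho_-$ restricted to the subdomain equals $\widetilde\rho_-$'' only gives you the commutativity of $I\circ\widetilde\phi_F$ with the gradient flow starting \emph{at the $t_+$-slice inside the big domain}; it does not by itself compare this to $\phi_F\circ i$, which starts at the $u_+$-slice.  The homotopy you invoke (``the flow for shorter times'') does not bridge this gap: the gradient trajectory of $\Delta$ from $(u_+,p)$ will cross $\{t=t_+\}$ at some $(t_+,p')$ with $p'\neq p$ in general.

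The paper handles this by introducing an intermediate map $\overline\phi_F$ defined on the whole strip $(\Delta^\Infin_{[t_+,u_+]},\Delta^{-\mu}_{[t_+,u_+]})$ by the same gradient flow, together with a \emph{horizontal} deformation retraction $r$ of that strip onto the $u_+$-slice built from Corollary~\ref{cor:left-map-cone} (not from $\nabla\Delta$).  One then checks three easy facts: the bottom square $\overline\phi_F\circ i=I\circ\widetilde\phi_F$ commutes on the nose (this is your ``same flow'' observation), $\phi_F=\overline\phi_F\circ j$ where $j$ is inclusion of the $u_+$-slice, and $j\circ r\sim\id$, so $\phi_F\circ r\sim\overline\phi_F$.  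Finally one observes that $r\circ i$, under the slice identifications, is exactly the inclusion $(\delta_+^\infin,\delta_+^{-\epsilon})\hookrightarrow(\delta_+^\infin,\delta_+^\epsilon)$.  Your argument contains the first of these three ingredients clearly and gestures at the others; the missing piece is the explicit construction of $r$ (or an equivalent horizontal interpolation between the slices) and the verification that it realizes $i$.
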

 
\begin{proof}
  The main idea is to show that the following diagram commutes up to homotopy, where we shall define the maps $r$ and $\overline{\phi}_F$ below.
\begin{equation} \label{eqn:cone-maps-comm}
\xymatrix{
  & (\Delta^\Infin_{\{u_+\}}, \Delta^{-\mu}_{[t_+,u_+]}), \ar[dl]_-{\phi_F} \ar@<1ex>[d]^j \\
   (\Delta^\Infin_{[v_-,u_+]}, \Delta^{-\mu}_{[v_-,u_+]}) & (\Delta^\Infin_{[t_+,u_+]},  \Delta^{-\mu}_{[t_+,u_+]}) \ar@<1ex>[u]^r \ar[l]_-{\overline{\phi}_F} \\
  (\Delta^\Infin_{[v_-,u_+]}, \Delta^{-\mu}_{[v_-,u_+]}) \ar[u]^I & (\Delta^\Infin_{\{t_+\}}, \Delta^{-\mu}_{\{t_+\}}) \ar[u]^i \ar[l]_-{\widetilde{\phi}_F}
}
\end{equation}

We define $\overline{\phi}_F$ in the same way as we defined $\phi_F$ and $\widetilde{\phi}_F$:  simply follow the negative gradient flow of $\Delta$ until the value of $\Delta$ reaches $\sigma$ or less.  The commutativity of the bottom square in the diagram (\ref{eqn:cone-maps-comm}) is then clear, as is the relation
\begin{equation} \label{eqn:cone-map-tri}
  \phi_F = \overline{\phi}_F \circ j.
\end{equation}  

The map $r$ is a deformation retract constructed using Lemma~\ref{lem:-mu} and then applying Corollary~\ref{cor:left-map-cone}.  By pre-applying $r$ to each side of Equation~\ref{eqn:cone-map-tri}, we obtain:
$$ \phi \circ r = \overline{\phi}_F \circ j \sim \overline{\phi}_F,$$
and hence the diagram (\ref{eqn:cone-maps-comm}) commutes up to homotopy.

Finally, we observe that, up to the identifications in Lemma~\ref{lem:filling-cone} and possibly some deformations near $\omega$ and $\epsilon$, $r \circ i$ is the natural inclusion of $(\delta_+^\infin, \delta_+^{-\epsilon})$ into $(\delta_+^\infin, \delta_+^\epsilon)$.
\end{proof}	

\section{Duality} \label{sec:duality} 
 
In this section, we will prove Theorems~\ref{thm:duality} and \ref{thm:duality-filling}.  Namely, in Subsection~\ref{ssec:leg-dual}, we show that if $f$ generates $\Lambda^n$, there is a ``duality map'' $\phi: \rgh{j}{f} \to GH_{k}(f)$ when $j+k = n-1$.  In Subsection~\ref{ssec:filling-dual}, we show that up to the isomorphism between $\rgh{*}{f}$ and $H^*(L, \partial L)$ in Theorem~\ref{thm:filling-iso}, the duality map is essentially the same as the Poincar\'e-Lefschetz duality map.
 
\subsection{Duality for Generating Family Homology} \label{ssec:leg-dual}

We will prove Theorem~\ref{thm:duality} by extending a version of Alexander duality used by Fuchs and Rutherford in \cite{f-r}.  First, we isolate the application of Poincar\'e-Alexander-Lefschetz duality to our situation in the following lemma:
	
\begin{lem} \label{lem:+-iso} Assume $f: M^n \times \rr^N \to \rr$ is linear-at-infinity; let $\delta: M^n \times \rr^{2N} \to \rr$ be its
associated difference function.  For $\infin$ satisfying Inequality~(\ref{ineq:epsilon-infin}) and 
for all $a \in \rr$, there is an isomorphism
$$H^j\left(\delta^\infin, \delta^{-a} \right) \simeq H_{2N+n-j}\left(\delta^{a}, \delta^{-\infin} \right).$$
\end{lem}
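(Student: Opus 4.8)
The plan is to reduce the statement, via the involution that swaps the two fibre factors, to a single application of Poincar\'e--Lefschetz duality on the manifold-with-boundary cut out of $M\times\rr^{2N}$ by the level sets $\{\delta=-\infin\}$ and $\{\delta=a\}$. \emph{First, I would pass to a linear-at-infinity model.} By Lemma~\ref{lem:lin-diff} we may replace $\delta$ by an equivalent linear-at-infinity function, writing $\delta=\delta^c+A$ with $\delta^c$ supported in a compact set $K\subset M\times\rr^{2N}$ and $A\colon\rr^{2N}\to\rr$ a non-zero linear form. This replacement respects the claimed identity once one tracks the degree shifts: a fibre-preserving diffeomorphism induces homeomorphisms of all sublevel sets, and stabilising the generating family by a non-degenerate quadratic of fibre dimension $\kappa$ stabilises $\delta$ by a non-degenerate quadratic of index exactly $\kappa$ (namely $q$ on the second new fibre factor minus $q$ on the first), so by Lemma~\ref{lem:equiv-he} both $H^j(\delta^b,\delta^a)$ and $H_i(\delta^b,\delta^a)$ shift by $\kappa$ while $N$, and hence $2N+n$, also grows by $\kappa$ --- the two shifts cancel in the statement. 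I would also record that the involution $\iota(x,\e,\te)=(x,\te,\e)$, which satisfies $\delta\circ\iota=-\delta$ for a difference function, survives this reduction: one conjugates $\iota$ by the fibre-preserving diffeomorphism, and after a stabilisation one replaces $\iota$ by its product with the swap of the two new $\rr^\kappa$-factors, which reverses the added quadratic. So from now on $\delta=\delta^c+A$ is linear-at-infinity and carries an involution $\iota$ with $\delta\circ\iota=-\delta$. Since by Proposition~\ref{prop:leg-crit-point} all critical values of $\delta$ lie in $[-\lmax,\lmax]$, the levels $\pm\infin$ are regular for $\infin$ as in Inequality~(\ref{ineq:epsilon-infin}), and after a harmless perturbation (a sublevel set at a critical value deformation retracts onto a nearby higher one by Lemma~\ref{lem:retract}) I would take $\pm a$ regular as well. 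Write $\Sigma_c=\delta^{-1}(c)$ and $m=\dim(M\times\rr^{2N})=n+2N$.

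\emph{Next I would rewrite both sides as relative (co)homology of one manifold-with-boundary.} Because $\iota$ is a homeomorphism and $\delta\circ\iota=-\delta$, we have $\iota(\delta^\infin)=\{\delta\ge-\infin\}$ and $\iota(\delta^{-a})=\{\delta\ge a\}$, so $\iota^*$ yields $H^j(\delta^\infin,\delta^{-a})\cong H^j(\{\delta\ge-\infin\},\{\delta\ge a\})$; excision (after a standard thickening across the regular level $a$) identifies this with $H^j(\mathcal C,\Sigma_a)$, where $\mathcal C:=\{-\infin\le\delta\le a\}$ is a (non-compact) $m$-manifold with boundary $\partial\mathcal C=\Sigma_a\sqcup\Sigma_{-\infin}$, orientable when $M$ is and $\zz_2$-orientable in any case. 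On the other side, excision across the regular level $-\infin$ identifies the right-hand side $H_{m-j}(\delta^a,\delta^{-\infin})$ with $H_{m-j}(\mathcal C,\Sigma_{-\infin})$.

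\emph{Finally I would invoke Lefschetz duality and tame the non-compactness.} Poincar\'e--Lefschetz duality for $\mathcal C$ --- cap product with the fundamental class in $H^{\mathrm{lf}}_m(\mathcal C,\partial\mathcal C)$ --- gives $H^j(\mathcal C,\Sigma_a)\cong H^{\mathrm{lf}}_{m-j}(\mathcal C,\Sigma_{-\infin})$, valid in Borel--Moore homology for the non-compact $\mathcal C$. What remains is the identification $H^{\mathrm{lf}}_{m-j}(\mathcal C,\Sigma_{-\infin})=H_{m-j}(\mathcal C,\Sigma_{-\infin})$, and this is exactly where linearity-at-infinity enters: outside $K$ one has $\delta=A$, so $\mathcal C\setminus K$ is a piece of $M$ times the slab $\{-\infin\le A\le a\}$, a trivial interval bundle over $\ker A$, with $\Sigma_{-\infin}$ as its lower face; collapsing the $A$-direction defines a proper deformation retraction of $\mathcal C$ (moving each point a bounded amount), supported away from $K$, sending the non-compact end of $\mathcal C$ into $\Sigma_{-\infin}$. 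Hence $(\mathcal C,\Sigma_{-\infin})$ has the (co)homology of a compact pair, on which singular and Borel--Moore homology coincide. Composing all the isomorphisms and undoing the reduction of the first step gives $H^j(\delta^\infin,\delta^{-a})\cong H_{m-j}(\delta^a,\delta^{-\infin})=H_{2N+n-j}(\delta^a,\delta^{-\infin})$, as claimed.

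The one genuinely delicate step is this last reconciliation: $\mathcal C$, $\Sigma_a$, and $\Sigma_{-\infin}$ are all non-compact, so one must check with some care that it is precisely the relative pairs produced above --- not the individual spaces --- that behave like compact objects, and the linear-at-infinity normalisation of $\delta$ is what makes this true. Everything else (the equivalence bookkeeping in the first step, the excisions in the second) is routine. Throughout, $\zz_2$-coefficients are used when $M$ is non-orientable, in keeping with the paper's convention.
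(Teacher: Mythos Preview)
Your proof is correct and follows the same strategy as the paper's: reduce to a linear-at-infinity $\delta$ via Lemma~\ref{lem:lin-diff}, use the swap involution $(x,\e,\te)\mapsto(x,\te,\e)$ to exchange sublevel and superlevel sets, and then invoke a Poincar\'e--Lefschetz type duality. The only difference is in how the non-compactness of $M\times\rr^{2N}$ is tamed: the paper compactifies the (super)level sets inside $M\times S^{2N}$ following Fuchs--Rutherford and applies Poincar\'e--Alexander--Lefschetz duality directly, whereas you work with the non-compact manifold-with-boundary $\mathcal C=\delta^{-1}[-\infin,a]$, use Borel--Moore homology, and then argue from the linear-at-infinity structure that $H^{\mathrm{lf}}_*(\mathcal C,\Sigma_{-\infin})=H_*(\mathcal C,\Sigma_{-\infin})$; these two devices are standard translations of one another, so the paper's version is more concise while yours makes explicit exactly where the tameness hypothesis does its work.
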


\begin{proof} Since $f$ is linear-at-infinity, Lemma~\ref{lem:lin-diff} allows us to assume that $\delta$ is also linear-at-infinity.  As in the last section of \cite{f-r}, we compactify the super- and sublevel sets of $\delta$ inside $M \times S^{2N}$ so that we can apply standard duality theorems; by abuse of notation, however, we will still refer to the original sublevel sets below.  We examine the compact embedded pair $(\delta^{\geq -\infin}, \delta^{\geq a})$ inside $M \times S^{2N}$; since the super- and sublevel sets are absolute neighborhood retracts, the pair is tautly embedded and hence that we may use ordinary (co)homology theories throughout.  Poincar\'e-Alexander-Lefschetz duality (as formulated in \cite[\S6.2]{spanier}, for example) then yields: \begin{equation} \label{eq:pal-duality}
  H^j(\delta^{\geq -\infin}, \delta^{\geq a}) \simeq H_{2N+n-j}(\delta^a, \delta^{-\infin}).
\end{equation}
The map that exchanges $\e$ and $\te$ induces a homeomorphism $s$ between $\delta^a$ and $\delta^{\geq -a}$, and hence Equation~(\ref{eq:pal-duality}) becomes:
\begin{equation} 
  H^j(\delta^{\infin}, \delta^{-a}) \simeq H_{2N+n-j}(\delta^a, \delta^{-\infin}),
\end{equation}
as desired.
\end{proof}

This lemma is the key to the proof of the duality theorem for generating family cohomology.

\begin{proof}[Proof of Theorem~\ref{thm:duality}] 
The desired long exact sequence follows from the long exact sequence of Proposition~\ref{prop:full-rel-dual}, the isomorphism $\gh{k}{[f]} \simeq  GH_{n-1-k}([f])$ given by  Lemma~\ref{lem:+-iso} with $a=\epsilon$ and $j=k+N+1$, Corollary~\ref{cor:alt-gf-hom}, and the definitions of (relative) generating family (co)homology.
\end{proof}

The proof of Corollary~\ref{cor:arnold} is completely analogous to that in
\cite{high-d-duality}; we repeat it here for the reader's convenience.

\begin{proof}[Proof of Corollary~\ref{cor:arnold}]  We label the maps in the long exact sequence of Theorem~\ref{thm:cobord-les} as follows:
$$ \cdots \to GH_{n-k}([f]) \buildrel{\rho_k}\over \longrightarrow H^{k}(\Lambda) \buildrel{\sigma_k}\over \longrightarrow GH^{k}([f]) \to \cdots.$$
Let $m_k$ denote the number of critical points of index $k+N+1$ of the difference function $\delta$ for $f$.  We work over a field and denote the $k^{th}$ Betti number by $b_k = \dim H^k(\Lambda)$.  Finally, we compute: \begin{align*}
  b_k &= \dim \ker \sigma_k + \dim \image \sigma_k &\\
  &= \dim \image \rho_k + \dim \image \sigma_k &\\
  &\leq \dim  GH_{n-k}(f) + \dim GH^{k}(f)& \\
  &\leq m_{n-k} + m_{k}, &\text{by Proposition~\ref{prop:leg-crit-point},}   \\
  &= r_{n-k} + r_k,& \text{by Proposition~\ref{prop:index}.}
\end{align*} \end{proof}

\subsection{Duality and Lagrangian Spanning Surfaces}\label{ssec:filling-dual}

We will now prove Theorem~\ref{thm:duality-filling}, which shows that the duality map $\phi$ of Theorem~\ref{thm:duality} for the Legendrian $\Lambda_+$ corresponds to a well-known duality for the Lagrangian filling $(L, \Lambda_+)$.  We first work with a long exact sequence that will serve as an intermediary between the top and bottom sequences in the theorem.
 	
\begin{lem} \label{lem:les-pair} Assume $(\leg_-, f_-) \prec_{(\clag, F)} (\leg_+, f_+)$.
Consider $\Delta$ constructed with $H \in \mathcal H(\clag)$, and let $\epsilon, \sigma, \mu$, and $v_-$ satisfy Inequalities~(\ref{ineq:epsilon-infin}), (\ref{ineq:wgh}), (\ref{ineq:vpm}), and (\ref{ineq:sigma}). There exists a long exact sequence:
$$ 
\cdots \to H^{k-1}\left(\delta_+^\epsilon, \delta_+^{-\epsilon}\right) \to 
H^{k}\left(\Delta_{[v_-, u_+]}^\sigma, \Delta_{[v_-, u_+]}^{-\mu}\right) \to 
H^{k}\left(\Delta_{[v_-, t_+]}^\sigma, \Delta_{[v_-, t_+]}^{-\mu}\right)   \to \cdots.
$$
\end{lem}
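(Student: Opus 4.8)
The plan is to realize the long exact sequence as that of a triple, or equivalently of an inclusion-induced pair map. First I would observe that there is a natural inclusion of pairs
$$
\left(\Delta_{[v_-, t_+]}^\sigma, \Delta_{[v_-, t_+]}^{-\mu}\right) \hookrightarrow \left(\Delta_{[v_-, u_+]}^\sigma, \Delta_{[v_-, u_+]}^{-\mu}\right),
$$
and that the long exact sequence we want is precisely the one relating $H^*$ of these two pairs to $H^*$ of a suitable ``third term.'' Concretely, I would use the long exact sequence of the triple
$$
\left(\Delta_{[v_-, u_+]}^\sigma,\ \Delta_{[v_-, t_+]}^\sigma \cup \Delta_{[v_-, u_+]}^{-\mu},\ \Delta_{[v_-, u_+]}^{-\mu}\right),
$$
so that the remaining task is to identify the relative cohomology $H^k\bigl(\Delta_{[v_-, u_+]}^\sigma,\ \Delta_{[v_-, t_+]}^\sigma \cup \Delta_{[v_-, u_+]}^{-\mu}\bigr)$ with $H^{k-1}(\delta_+^\epsilon, \delta_+^{-\epsilon})$ (up to the appropriate degree shift reflected in the statement).

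The main work is this identification of the third term, and here I would reuse the machinery of Section~\ref{ssec:lem-proofs}. By excision, the pair $\bigl(\Delta_{[v_-, u_+]}^\sigma,\ \Delta_{[v_-, t_+]}^\sigma \cup \Delta_{[v_-, u_+]}^{-\mu}\bigr)$ has the same cohomology as a pair supported over $t \in [t_+, u_+]$, namely $\bigl(\Delta_{[t_+, u_+]}^\sigma,\ \Delta_{\{t_+\}}^\sigma \cup \Delta_{[t_+, u_+]}^{-\mu}\bigr)$. Just as in the proof of Lemma~\ref{lem:filling-cobord-cohom}, I would apply Lemma~\ref{lem:def-retr-cone} on the interval $[t_+, u_+]$ with $a(t) = \lambda_{-\mu}(t)$ and $b(t) = \lambda_\sigma(t)$, after the fiberwise homotopy equivalence of Lemma~\ref{lem:vary-a-b} flattening $b$; by Lemmas~\ref{lem:mu} and \ref{lem:-mu}, the relevant levels of $\delta_+$ are $\epsilon$ and $-\epsilon$ respectively (since $\lambda_\sigma(u_+), \lambda_{-\mu}(u_+) \in (0, \lminp)$ and correspond to $\delta_+^\epsilon$, while the left endpoint levels correspond to $\delta_+^{\pm\epsilon}$). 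This shows the deformation retract is a relative cone $C(\delta_+^\epsilon, \delta_+^{-\epsilon})$, so that, modulo the extra piece coming from $\Delta_{\{t_+\}}^\sigma$ in the subspace, Lemma~\ref{lem:rel-susp}-style reasoning yields $H^k \simeq H^{k-1}(\delta_+^\epsilon, \delta_+^{-\epsilon})$.

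The hard part will be bookkeeping the subspaces correctly: one must be careful about which portion of $\Delta_{\{t_+\}}^{-\mu}$ versus $\Delta_{\{t_+\}}^\sigma$ appears in the subspace after the cone retraction, and that the relative cone construction is taken relative to $\delta_+^{-\epsilon}$ rather than relative to the whole $\delta_+^\epsilon$ slice — this is exactly the distinction between a relative cone $C(X,A)$ and a relative suspension $\Sigma(X,A)$ in Definition~\ref{def:rel-cone}. Once the third term is correctly identified, the connecting maps and exactness come for free from the triple's long exact sequence, and the degree shift matches because both $\Delta$-pairs carry the same $+N$ shift appearing in Definition~\ref{defn:wgh} while the $\delta_+$-pair carries the $+N+1$ shift from Definition~\ref{defn:gh}, accounting for the index drop by one in the statement. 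I would also note that all the hypotheses needed to run Lemmas~\ref{lem:def-retr-cone} and \ref{lem:vary-a-b} (gradient flow existing for all time, regular value neighborhoods) follow from tameness of $(\Delta, \delta_-, \delta_+)$ via Lemma~\ref{lem:slin-diff} and from Proposition~\ref{prop:crit-pt-wgh}.
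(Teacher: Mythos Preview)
Your approach is essentially the same as the paper's: the same triple (your middle set $\Delta_{[v_-, t_+]}^\sigma \cup \Delta_{[v_-, u_+]}^{-\mu}$ equals the paper's $\Delta_{[v_-, t_+]}^\sigma \cup \Delta_{[t_+, u_+]}^{-\mu}$ since $\Delta_{[v_-,t_+]}^{-\mu}\subset\Delta_{[v_-,t_+]}^\sigma$), the same excision down to $[t_+,u_+]$, and the same application of Lemma~\ref{lem:def-retr-cone} with Remark~\ref{rem:sigmat+u+} and Lemma~\ref{lem:-mu}. One sharpening: what you describe as ``the relative cone $C(\delta_+^\epsilon,\delta_+^{-\epsilon})$ modulo the extra piece $\Delta_{\{t_+\}}^\sigma$'' is \emph{exactly} the relative suspension $\Sigma(\delta_+^\epsilon,\delta_+^{-\epsilon})$ of Definition~\ref{def:rel-cone}, so Lemma~\ref{lem:rel-susp} applies on the nose rather than merely ``in style''; and don't forget to state the easy excision identifying the remaining term $H^*\bigl(\Delta_{[v_-, t_+]}^\sigma \cup \Delta_{[v_-, u_+]}^{-\mu},\, \Delta_{[v_-, u_+]}^{-\mu}\bigr)\simeq H^*\bigl(\Delta_{[v_-, t_+]}^\sigma,\Delta_{[v_-, t_+]}^{-\mu}\bigr)$.
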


\begin{proof} The long exact sequence in the lemma is simply that of the triple $\left( \Delta_{[v_-, u_+]}^\sigma, \Delta_{[v_-, t_+]}^\sigma \cup \Delta_{[t_+, u_+]}^{-\mu}, \Delta_{[v_-, u_+]}^{-\mu} \right)$ with the first and last terms identified as follows.  The first term is isomorphic to 
    $H^{k} ( \Delta_{[t_+, u_+]}^\sigma, \Delta_{\{t_+\}}^\sigma \cup \Delta_{[t_+, u_+]}^{-\mu})$ by excision, which in turn is isomorphic to 
  $H^{k} ( S(\delta_+^\epsilon, \delta_+^{-\epsilon} ))$ by Lemma~\ref{lem:def-retr-cone}, Remark~\ref{rem:sigmat+u+}, and Lemma~\ref{lem:-mu}.  Finally, Lemma~\ref{lem:rel-susp} gives us 
    $H^{k-1} \left( \delta_+^\epsilon, \delta_+^{-\epsilon}  \right)$, as desired.
The last term may be identified using excision.
\end{proof}

We are now ready to prove the theorem.	

\begin{proof}[Proof of Theorem~\ref{thm:duality-filling}] 
  Fix $H \in \mathcal H(\clag)$, $\epsilon, \infin, \sigma, \mu > 0$ according to Inequalities~(\ref{ineq:epsilon-infin}), (\ref{ineq:sigma}), (\ref{ineq:wgh}), and $v_-< t_-$ so it satisfies Inequality~(\ref{ineq:vpm}).  Consider the following diagram of long exact sequences, where the top row is given by the long exact sequence of the triple $(\delta_+^\infin, \delta_+^\epsilon, \delta_+^{-\epsilon})$, the middle row is given by the long exact sequence of Lemma~\ref{lem:les-pair}, and the third row is the long exact sequence of the pair $(\lag, \partial \lag)$, which is diffeomorphic to $(L,\Lambda_+)$.  To make the diagram more readable, we define the notation $T=[v_-, t_+]$ and $U=[v_-,u_+]$.
$$
\xymatrix@C=10pt{
  \cdots \ar[r] &  H^{k+N}( \delta_+^\epsilon, \delta_+^{-\epsilon} ) \ar[r] 
  & H^{k+N+1}(  \delta_+^\infin, \delta_+^\epsilon ) \ar[r] & H^{k+N+1}( \delta_+^\infin, \delta_+^{-\epsilon} ) \ar[r]& \cdots \\   
  \cdots \ar[r] & H^{k+N}( \delta_+^\epsilon, \delta_+^{-\epsilon} ) \ar[r] \ar@{=}[u]
  & H^{k+N+1}(\Delta_U^\sigma, \Delta_U^{-\mu}) \ar[r] \ar[u]^{\phi_F^*} 
  & H^{k+N+1}(\Delta_T^\sigma, \Delta_T^{-\mu}) \ar[r] \ar[u]^{\widetilde{\phi}_F^*}  & \cdots \\
  \cdots \ar[r] & H^k(\Lambda_+) \ar[r] \ar[u]^\simeq & H^{k+1}(L, \Lambda_+) \ar[u]^\simeq \ar[r] &H^{k+1}(L) \ar[u]^\simeq \ar[r] &\cdots
}
$$
The vertical maps from the third to the second row are given by the Thom isomorphism; see
Proposition~\ref{prop:leg-crit-point} and the proof of Lemma~\ref{lem:filling-cobord-cohom}.  
The only place where commutativity is not obvious is in the upper right square, where we may apply Lemma~\ref{lem:cone-maps-commute}.  

To complete the proof, we identify the rightmost terms.  Along the top, the proof of Theorem~\ref{thm:duality} yields the desired sequence.  Along the bottom, Poincar\'e-Lefschetz duality implies that $H^{k+1}(L) \simeq H_{n-k}(L, \Lambda_+)$.
\end{proof}

\section{The Long Exact Sequence of a Cobordism}
\label{sec:cobord-les}

In the previous two sections, we considered cobordisms with $\leg_- = \emptyset$.  We now consider the more general situation, culminating in the proof of Theorem~\ref{thm:cobord-les} and Corollary~\ref{cor:chantraine}.  As always, after the identification of $\rr \times J^1M$ and $T^*(\rp \times M)$, we will prove Theorem~\ref{thm:cobord-les} with the cobordism $\clag$ of $T^*(\rp \times M)$ in place of $\sclag \subset \rr \times J^1M$.

We use the following assumptions and notation throughout this section: 
$(\leg_-, f_-) \prec_{(\clag, F)} (\leg_+, f_+)$,  $\clag$ is orientable, 
$\lag = \clag \cap \{ t \in [t_-, t_+] \}$, $\partial \lag_\pm = \partial \lag \cap \{ t = t_\pm \}$, 
 $\Infin, \mu>0$ satisfy Inequalities~(\ref{ineq:wgh}), and $v_\pm$ satisfies Inequalities~(\ref{ineq:vpm}).

The long exact sequence in Theorem~\ref{thm:cobord-les} will be constructed in two steps.  We first realize the pair $\left(\Delta_{[v_-, v_+]}^\Infin, \Delta_{[v_-, v_+]}^{\mu}\right)$, as a relative mapping cone by examining the regions of $\rp \times M \times \rr^{2N}$ over $[v_-, t_+]$ and $[t_+, v_+]$.  This results in the following ``horizontal'' long exact sequence:

\begin{prop} \label{prop:horiz-les} There exists a long exact sequence:
  \begin{equation*}
    \xymatrix@1{
      \cdots \ar[r] & \rgh{k}{[f_-]} \ar[r]^{\Psi_F^*} & \rgh{k}{[f_+]} \ar[r] & 
      \rwgh{k+2}{F}  
      \ar[r]  & 
   \cdots.}
 \end{equation*}
\end{prop}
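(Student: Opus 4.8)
The plan is to exhibit the pair $(\Delta^\Infin_{[v_-,v_+]},\Delta^\mu_{[v_-,v_+]})$, which by Corollary~\ref{cor:wgh-cpct} computes $\rwgh{*}{F}$, as a relative mapping cone, and then to feed that into Lemma~\ref{lem:cone-exact}. Following the pattern of Section~\ref{sec:filling-iso}, I would split the base interval as $[v_-,v_+]=[v_-,t_+]\cup[t_+,v_+]$ and analyze the two pieces. Over $[t_+,v_+]$ one has $\Delta=t\delta_++H(t)$, so the relevant sublevel sets are controlled by $\lambda_\mu$ and $\lambda_\Infin$. Since $\lambda_\Infin>\lmaxp$ on $[t_+,\infty)$ while $\lambda_\mu(t_+)\in(0,\lminp)$ and $\lambda_\mu$ rises above $\lmaxp$ by $t=v_+$ (Inequality~(\ref{ineq:vpm})), after a fiberwise homotopy equivalence (Lemma~\ref{lem:vary-a-b}) straightening these levels the hypotheses of Lemma~\ref{lem:def-retr-cone} are met, so $(\Delta^\Infin_{[t_+,v_+]},\Delta^\mu_{[t_+,v_+]})$ deformation retracts onto the relative cone $C(\delta_+^\infin,\delta_+^\epsilon)$, with the base of the cone identified with the slice $(\Delta^\Infin_{\{t_+\}},\Delta^\mu_{\{t_+\}})=(\delta_+^\infin,\delta_+^\epsilon)$ (the levels $\Infin/t_+$ and $\mu/t_+$ lie in the correct ranges by Inequality~(\ref{ineq:wgh})). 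This is the relative analogue of the first half of Lemma~\ref{lem:filling-cone}, with $\mu$ in place of $-\mu$, and it uses the behavior of $\lambda_\mu,\lambda_\Infin$ from Lemmas~\ref{lem:mu} and \ref{lem:Infin}.

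Over $[v_-,t_+]$ I claim the pair is homotopy equivalent to $(\delta_-^\infin,\delta_-^\epsilon)$. By Proposition~\ref{prop:crit-pt-wgh} the only critical points of $\Delta$ over $[t_-,t_+]$ lie in the critical submanifold $C$, whose critical value $0$ is strictly below $\mu$; hence $\Delta$ has no critical values in $(\mu,\Infin]$ over $[t_-,t_+]$, and a gradient-flow argument using a vector field parallel to the slices $\{t=t_\pm\}$ — as in Lemma~\ref{lem:cobord-cpt-deform} and the Morse--Bott analysis in the proof of Lemma~\ref{lem:filling-cobord-cohom} — shows that $(\Delta^\Infin_{[v_-,t_+]},\Delta^\mu_{[v_-,t_+]})$ deformation retracts onto $(\Delta^\Infin_{[v_-,t_-]},\Delta^\mu_{[v_-,t_-]})$. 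Over $[v_-,t_-]$ one has $\Delta=t\delta_-+H(t)$, and the behavior of $\lambda_\mu$ and $\lambda_\Infin$ on $(0,t_-]$ (Lemmas~\ref{lem:mu}(2) and \ref{lem:Infin}(2),(3), together with Inequality~(\ref{ineq:vpm})) shows that, after a fiberwise homotopy equivalence straightening these levels to be strictly increasing, Corollary~\ref{cor:left-map-cone} applies and this pair deformation retracts onto the slice $(\Delta^\Infin_{\{t_-\}},\Delta^\mu_{\{t_-\}})=(\delta_-^\infin,\delta_-^\epsilon)$.

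Gluing the two descriptions along the common slice at $t_+$ exhibits $(\Delta^\Infin_{[v_-,v_+]},\Delta^\mu_{[v_-,v_+]})$ as the relative mapping cone $C(\Psi_F)$ of the composite
$$\Psi_F\colon\ (\delta_+^\infin,\delta_+^\epsilon)=(\Delta^\Infin_{\{t_+\}},\Delta^\mu_{\{t_+\}})\ \hookrightarrow\ (\Delta^\Infin_{[v_-,t_+]},\Delta^\mu_{[v_-,t_+]})\ \longrightarrow\ (\delta_-^\infin,\delta_-^\epsilon),$$
the last arrow being the deformation retraction just described. Lemma~\ref{lem:cone-exact} applied to $\Psi_F$ then gives a long exact sequence
$$\cdots\to H^{j}(\delta_-^\infin,\delta_-^\epsilon)\xrightarrow{\Psi_F^*}H^{j}(\delta_+^\infin,\delta_+^\epsilon)\to H^{j+1}(C(\Psi_F))\to H^{j+1}(\delta_-^\infin,\delta_-^\epsilon)\to\cdots,$$
and setting $j=k+N+1$ and rewriting the three families of terms via Definition~\ref{defn:gh}, Corollary~\ref{cor:wgh-cpct}, and Definition~\ref{defn:wgh} as $\rgh{k}{[f_-]}$, $\rgh{k}{[f_+]}$, and $\rwgh{k+2}{F}$ respectively produces the asserted sequence.

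The hard part will be the identification of the piece over $[v_-,t_+]$ with $(\delta_-^\infin,\delta_-^\epsilon)$ — in particular, handling the interpolation region $[t_-,t_+]$ where $F$ is controlled only through the statement of Proposition~\ref{prop:crit-pt-wgh} about $C$. The essential observation that makes this work — and the reason no $H^*(\lag)$-term appears here, in contrast to the total case and to Theorem~\ref{thm:filling-les}, and the reason orientability of $\clag$ is not needed for this proposition — is that $C$ sits at critical value $0$, which lies strictly below the bottom level $\mu$ of the relative pair, so the relative cohomology is insensitive to it. Making the mapping-cone identification genuinely precise rather than merely ``up to homotopy'' will require bookkeeping analogous to Lemma~\ref{lem:filling-cone} and Corollary~\ref{cor:2cones}; and, as throughout the paper, independence of $\Psi_F$ from the auxiliary data $H,\Infin,\mu,v_\pm$ will be obtained from the Critical Non-Crossing Lemma~\ref{lem:crit-non-crossing}.
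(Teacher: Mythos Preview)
Your overall architecture is exactly the paper's: split at $t_+$, realize the right piece as a relative cone on $(\delta_+^\infin,\delta_+^\epsilon)$, identify the left piece with $(\delta_-^\infin,\delta_-^\epsilon)$, and invoke Lemma~\ref{lem:cone-exact}. The analysis over $[t_+,v_+]$ matches Lemma~\ref{lem:retract-cone}.

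The gap is in your treatment over $[v_-,t_+]$. You claim that because $\Delta$ has no critical values in $(\mu,\Infin]$ over $[t_-,t_+]$, a gradient-flow argument yields a deformation retraction of $(\Delta^\Infin_{[v_-,t_+]},\Delta^\mu_{[v_-,t_+]})$ onto $(\Delta^\Infin_{[v_-,t_-]},\Delta^\mu_{[v_-,t_-]})$. But the negative gradient flow of $\Delta$ lowers $\Delta$-values; it does \emph{not} shrink the $t$-interval. What that flow actually accomplishes (once you check that $\partial_t\Delta>0$ at $t=t_+$ on $\{\Delta>\mu\}$, so the flow is inward there) is to push $\Delta^\Infin_{[t_-,t_+]}$ down to $\Delta^\Infin_{\{t_-\}}\cup\Delta^\mu_{[t_-,t_+]}$: flowlines stop either when they hit level $\mu$ or when they exit through the $t_-$ wall. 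Your asserted retraction would in particular force $\Delta^\mu_{[t_-,t_+]}$ to deformation retract onto $\Delta^\mu_{\{t_-\}}$; but this set contains the entire Morse--Bott critical submanifold $C\cong\lag$ at value $0$, and you have no mechanism to collapse it horizontally into a single slice.

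The paper's Lemma~\ref{lem:retract-t-} records exactly this: the honest target of the retraction over $[v_-,t_+]$ is $(\Delta^\Infin_{\{t_-\}}\cup\Delta^\mu_{[t_-,t_+]},\,\Delta^\mu_{[t_-,t_+]})$. One then passes to $(\Delta^\Infin_{\{t_-\}},\Delta^\mu_{\{t_-\}})\simeq(\delta_-^\infin,\delta_-^\epsilon)$ by \emph{excision}, not by a further retraction. The cobordism map $\psi_F$ is the restriction of this retraction to the $t_+$ slice, landing in the pair $(\Delta^\Infin_{\{t_-\}}\cup\Delta^\mu_{[t_-,t_+]},\Delta^\mu_{[t_-,t_+]})$; your $\Psi_F$ then arises after excision. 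Once you make this correction, your proof coincides with the paper's.
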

 
Second, the term $WGH^{k+2}(F)$ can be identified using the following ``vertical'' long exact sequence, which arises from examining the long exact sequence of the triple $(\Delta_{[v_-, v_+]}^\Infin, \Delta_{[v_-, v_+]}^{\mu}, \Delta_{[v_-, v_+]}^{-\mu})$:

\begin{prop} \label{prop:vert-les} There exists
  a  long exact sequence:
  \begin{equation*}
    \xymatrix@1{
      \cdots \ar[r] &
      \wgh{k+1}{F} \ar[r] & H^{k+1}(\lag, \partial \lag_+) \ar[r]^-{d^*} 
      & \rwgh{k+2}{F}
      \ar[r] &
      \cdots.
    }
  \end{equation*}
\end{prop}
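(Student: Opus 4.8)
The plan is to read the sequence off the long exact sequence of the triple
$$\left(\Delta^{\Infin}_{[v_-,v_+]},\ \Delta^{\mu}_{[v_-,v_+]},\ \Delta^{-\mu}_{[v_-,v_+]}\right),$$
which runs
$$\cdots \to H^{j}\!\left(\Delta^{\Infin}_{[v_-,v_+]},\Delta^{\mu}_{[v_-,v_+]}\right) \to H^{j}\!\left(\Delta^{\Infin}_{[v_-,v_+]},\Delta^{-\mu}_{[v_-,v_+]}\right) \to H^{j}\!\left(\Delta^{\mu}_{[v_-,v_+]},\Delta^{-\mu}_{[v_-,v_+]}\right) \to \cdots.$$
By Corollary~\ref{cor:wgh-cpct} and Definition~\ref{defn:wgh}, after setting $j=k+N$ the first two groups are $\rwgh{k}{F}$ and $\wgh{k}{F}$.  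So the whole proof reduces to the identification
$$H^{j+N}\!\left(\Delta^{\mu}_{[v_-,v_+]},\ \Delta^{-\mu}_{[v_-,v_+]}\right)\ \simeq\ H^{j}(\lag,\partial\lag_+),$$
which is the cobordism analogue of Lemma~\ref{lem:filling-cobord-cohom}; I would prove it by the same Morse--Bott strategy, with the one new feature being the presence of the negative end $\partial\lag_-$.

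To establish this I would split $[v_-,v_+]=[v_-,t_-]\cup[t_-,t_+]\cup[t_+,v_+]$.  Over the negative collar $[v_-,t_-]$, Proposition~\ref{prop:crit-pt-wgh} and the inequalities~(\ref{ineq:wgh}) defining $\mu$ show that the only critical value of $\Delta|_{[v_-,t_-]}$ lying in $[-\mu,\mu]$ is $0$, attained only at $t=t_-$ along $\partial\lag_-$; adjusting the level functions $\lambda_{\pm\mu}$ by a fiberwise homotopy equivalence (Lemma~\ref{lem:vary-a-b}, using Lemmas~\ref{lem:mu} and~\ref{lem:-mu}) and applying Corollary~\ref{cor:left-map-cone}, the pair $\left(\Delta^{\mu}_{[v_-,t_-]},\Delta^{-\mu}_{[v_-,t_-]}\right)$ deformation retracts (relative to the $t_-$-slice) onto $\left(\Delta^{\mu}_{\{t_-\}},\Delta^{-\mu}_{\{t_-\}}\right)$ — crucially adding nothing to the relative part.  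Over the positive collar $[t_+,v_+]$, I would run the cone argument of the proof of Lemma~\ref{lem:filling-cobord-cohom} (Lemma~\ref{lem:vary-a-b}, Lemma~\ref{lem:def-retr-cone}, Remark~\ref{rem:sigmat+u+}, and the estimates of Lemmas~\ref{lem:mu},~\ref{lem:-mu}, noting $0<\mu/t_+<\lminp$); after an excision and a diffeomorphism this replaces the pair by one whose relative part carries an extra copy of $\Delta^{\mu}_{\{t_+\}}=\delta_+^{\mu/t_+}\simeq D\partial\lag_+$.  Over $[t_-,t_+]$ the Morse--Bott-with-boundary argument of Lemma~\ref{lem:filling-cobord-cohom} applies to the index-$N$ critical submanifold $C\cong\lag$ of Proposition~\ref{prop:crit-pt-wgh}, with $\partial C=\partial\lag_-\sqcup\partial\lag_+$: crossing the critical value $0$ attaches the negative $N$-disk bundle $D\lag$ along its sphere bundle $S\lag$.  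Gluing the three pieces, the retracted negative collar glues the Thom pair $\left(D\partial\lag_-,S\partial\lag_-\right)$ onto $\left(D\lag,S\lag\right)$ along $\left(D\lag|_{\partial\lag_-},S\lag|_{\partial\lag_-}\right)$ without changing the homotopy type, while the positive collar relativizes by $D\partial\lag_+$; hence $\left(\Delta^{\mu}_{[v_-,v_+]},\Delta^{-\mu}_{[v_-,v_+]}\right)\simeq\left(D\lag,\ S\lag\cup D\partial\lag_+\right)$.  The Thom isomorphism and the $5$-lemma, applied to the comparison of the long exact sequences of $\left(D\lag,\ S\lag\cup D\partial\lag_+,\ S\lag\right)$ and of $(\lag,\partial\lag_+)$ exactly as in diagram~(\ref{eqn:iso2}), then give $H^{j+N}\!\left(\Delta^{\mu}_{[v_-,v_+]},\Delta^{-\mu}_{[v_-,v_+]}\right)\simeq H^{j}(\lag,\partial\lag_+)$.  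Feeding all three identifications into the triple sequence and relabeling $j=k+N$ yields the asserted long exact sequence, with the connecting homomorphism of the triple (transported through the Thom isomorphism) serving as $d^{*}$.

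The hard part is the gluing step, and within it the verification of the asymmetry between the two ends: that the concavity of $H$ on $[t_+,\infty)$ forces the positive collar to contribute the relative term $D\partial\lag_+$ while the convexity of $H$ on $(0,t_-]$ lets the negative collar retract cleanly and contribute nothing.  This asymmetry — the geometric shadow of the one already visible in Corollaries~\ref{cor:chantraine} and~\ref{cor:cyl-iso} — requires a careful local analysis of $\Delta$ near $\partial C$, parallel to but more delicate than the corresponding point of Lemma~\ref{lem:filling-cobord-cohom}, where $\partial\lag_-=\emptyset$ and the issue does not arise.
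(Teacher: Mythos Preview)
Your proposal is correct and follows essentially the same route as the paper: the long exact sequence of the triple $\left(\Delta^{\Infin}_{[v_-,v_+]},\Delta^{\mu}_{[v_-,v_+]},\Delta^{-\mu}_{[v_-,v_+]}\right)$, with the middle pair identified with $(\lag,\partial\lag_+)$ via the collar retractions and the Morse--Bott/Thom argument of Lemma~\ref{lem:filling-cobord-cohom} (this is the paper's Lemma~\ref{lem:sigma-mu}). The only place where the paper is more explicit is on the positive collar $[t_+,v_+]$: since $\lambda_\mu$ is not monotone there, the paper first splits at $u_+$ by a Mayer--Vietoris argument, showing the $[u_+-\epsilon,v_+]$ piece is acyclic before running the cone lemma on $[t_+,u_++\epsilon]$ --- your ``after an excision'' needs to unpack to exactly this step.
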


From these two propositions, which we will prove below, we can easily obtain:

\begin{proof}[Proof of Theorem~\ref{thm:cobord-les}]
  By Proposition~\ref{prop:wgh-vanish}, $\wgh{*}{F}$ vanishes, and thus Proposition~\ref{prop:vert-les} yields:
 $$H^{k+1}(\lag, \partial\lag_+) \simeq WGH^{k+2}(F) .$$
Substituting this into the long exact sequence of Proposition~\ref{prop:horiz-les} yields Theorem~\ref{thm:cobord-les}.
\end{proof}

\begin{proof}[Proof of Corollary~\ref{cor:chantraine}]
  The only mysterious element in the statement is the sign.  This comes from two sources: first, \cite[\S3]{ees:high-d-geometry} and Proposition~\ref{prop:index} show that up to the sign $(-1)^{\frac{1}{2}(n-1)(n-2)}$, one may calculate the Thurston-Bennequin invariant to be the Euler characteristic of the generating family homology. Second, Poincar\'e duality immediately implies that $\chi(\slag, \Lambda_+) = (-1)^n\chi(\slag)$.
\end{proof}

\subsection{The ``Horizontal'' Long Exact Sequence}
\label{ssec:horiz-les}

In parallel to Subsection~\ref{ssec:cones}, the following lemmas analyze the pair $\left(\Delta_{[v_-, v_+]}^\Infin, \Delta_{[v_-, v_+]}^{\mu}\right)$. In parallel to Lemma~\ref{lem:filling-cone}, on $\{t \geq t_+\}$, we find:

\begin{lem} \label{lem:retract-cone}   
  The pair $\left( \Delta^\Infin_{\{t_+ \}} , \Delta^\mu_{\{t_+\}} \right)$ is diffeomorphic to $\left( \delta_+^\infin, \delta_+^\epsilon \right),$ where $\epsilon, \infin$ satisfy Inequalities~(\ref{ineq:epsilon-infin}).  Moreover, for any $v_+ > t_+$ satisfying Inequality~(\ref{ineq:vpm}), after applying a fiberwise homotopy equivalence, there is a deformation retraction
$$\rho_+: \left(\Delta^\Infin_{[t_+, v_+]}, \Delta^{\mu}_{[t_+, v_+]}\right) \to
C\left(\delta_{+}^{\infin}, \delta_{+}^{\epsilon}\right), \quad\text{ with }\rho_+|_{\Delta^\Infin_{\{t_+\}}}  = \id.$$
\end{lem}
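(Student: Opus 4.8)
The proof runs in close parallel to that of Lemma~\ref{lem:filling-cone}, so I will indicate only the points at which it must be modified.

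First, since $t_+ \in [t_-,t_+]$ we have $H(t_+)=0$, so $\Delta$ restricted to the slice $\{t=t_+\}$ is exactly $t_+\,\delta_+$, whence $\Delta^a_{\{t_+\}} = \delta_+^{\,a/t_+}$. The lower bound $\Infin > t_+\lmaxp + \frac{\lmaxp^2}{2r_+}$ and the upper bound $\mu < t_+\lminp - \frac{\lminp^2}{2r_+}$ from Inequalities~(\ref{ineq:wgh}) give $\Infin/t_+ > \lmaxp$ and $0 < \mu/t_+ < \lminp$. Since $\delta_+$ has no critical values in $(\lmaxp,\infty)$ and none in $(0,\lminp)$, following the negative gradient flow of $\delta_+$ identifies the pair $(\Delta^\Infin_{\{t_+\}}, \Delta^\mu_{\{t_+\}}) = (\delta_+^{\,\Infin/t_+}, \delta_+^{\,\mu/t_+})$ with $(\delta_+^\infin, \delta_+^\epsilon)$ for any $\epsilon,\infin$ satisfying Inequalities~(\ref{ineq:epsilon-infin}).

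For the deformation retraction I would work over $J=[t_+,v_+]$, where by Equation~(\ref{eqn:end-level}) the relevant sublevel sets of $\Delta$ are governed by the levels $b(t)=\lambda_\Infin(t)$ (top) and $a(t)=\lambda_\mu(t)$ (bottom) of $\delta_+$. The only genuine difference from Lemma~\ref{lem:filling-cone} is that the bottom level $\lambda_\mu$ is not monotone: by Lemma~\ref{lem:mu}(1) and Sublemma~\ref{slem:lambda-sign}, $\lambda_\mu$ decreases from $t_+$ to its minimum $t_+^c\in(t_+,u_+)$ and is strictly increasing on $[t_+^c,\infty)$, while staying inside $(0,\lminp)$ on $[t_+,u_+]$. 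Because $(0,\lminp)$ consists entirely of regular values of $\delta_+$, Lemma~\ref{lem:vary-a-b} lets me replace $\lambda_\mu$ over $[t_+,u_+]$ by a strictly increasing function and keep it equal to $\lambda_\mu$ over $[u_+,v_+]$ (where it is already strictly increasing), obtaining a strictly increasing $\tilde a:J\to\rr$ with $\tilde a(t_+)\in(0,\lminp)$. Likewise, by Lemma~\ref{lem:Infin}(1) the top level $\lambda_\Infin(t)$ exceeds $\lmaxp$ for all $t\geq t_+$, and by Inequality~(\ref{ineq:vpm}) so does $\lambda_\mu(v_+)=\tilde a(v_+)$; hence Lemma~\ref{lem:vary-a-b} also lets me replace $\lambda_\Infin$ over $J$ by the constant $\tilde a(v_+)$. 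After these fiberwise homotopy equivalences the hypotheses of Lemma~\ref{lem:def-retr-cone} are satisfied on $J=[t_+,v_+]$ (with $t_0=t_+$, $t_1=v_+$, the bottom level $\tilde a$ strictly increasing with $\tilde a(v_+)$ equal to the constant top level, and $\tilde a(t_+)\in(0,\lminp)$ a regular value), and it produces a deformation retraction onto the relative cone $C(\delta_+^{\,\infin}, \delta_+^{\,\epsilon})$. The normalization $\rho_+|_{\Delta^\Infin_{\{t_+\}}}=\id$ follows exactly as in Lemma~\ref{lem:filling-cone}: the retraction is the composite of the map $\sigma$ of Lemma~\ref{lem:def-retr-cone}, which is the identity near the slice $t=t_0=t_+$, with the negative gradient flow of $\delta_+$, which may be arranged to fix that slice.

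The main (and essentially only) obstacle relative to Lemma~\ref{lem:filling-cone} is handling the non-monotonicity of $\lambda_\mu$ near $t_+$; the point is that the ``dip'' of $\lambda_\mu$ stays within the regular-value window $(0,\lminp)$ of $\delta_+$, so it can be straightened by a fiberwise homotopy equivalence before Lemma~\ref{lem:def-retr-cone} is invoked.
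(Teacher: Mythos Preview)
Your proof is correct and takes essentially the same approach as the paper, which simply remarks that the argument is ``entirely similar to that of Lemma~\ref{lem:filling-cone} with Lemma~\ref{lem:mu} in place of Lemma~\ref{lem:-mu}.''  You have correctly identified and handled the one genuine new issue: unlike $\lambda_{-\mu}$, the function $\lambda_\mu$ is not monotone on $[t_+,v_+]$, but its dip on $[t_+,u_+]$ stays within the regular-value interval $(0,\lminp)$ of $\delta_+$, so Lemma~\ref{lem:vary-a-b} straightens it before Lemma~\ref{lem:def-retr-cone} is applied.
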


The proof of this lemma is entirely similar to that of Lemma~\ref{lem:filling-cone} with Lemma~\ref{lem:mu} in place of Lemma~\ref{lem:-mu}.

In parallel to Lemma~\ref{lem:filling-retract}, on $\{t \leq t_+\}$, we make essential use of the embeddedness of $\clag$ to find:

\begin{lem} \label{lem:retract-t-} The pair
  $\left( \Delta^\Infin_{\{t_- \}} , \Delta^\mu_{\{t_-\}} \right)$ is diffeomorphic to $\left( \delta_-^\infin, \delta_-^\epsilon \right),$ where $\epsilon, \infin$ satisfy Inequalities~(\ref{ineq:epsilon-infin}). Moreover, for all $v_-$ satisfying Inequality~\ref{ineq:vpm}), after applying a fiberwise homotopy equivalence, there exists a deformation retraction, 
$$\rho_-: \left(\Delta^\Infin_{[v_-, t_+]}, \Delta^{\mu}_{[v_-, t_+]}\right)
\to \left(\Delta^\Infin_{\{t_-\}} \cup \Delta_{[t_-, t_+]}^{\mu},  \Delta_{[t_-, t_+]}^{\mu}\right).$$
\end{lem}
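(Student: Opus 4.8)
The plan is to dispatch the diffeomorphism assertion by evaluating $\Delta$ on the slice $\{t=t_-\}$, and, in parallel to Lemma~\ref{lem:filling-retract}, to produce $\rho_-$ by composing two deformation retractions: one over $[v_-,t_-]$ that slides the pair rightward onto its $\{t_-\}$-slice, and one over $[t_-,t_+]$ that pushes everything of value above $\mu$ downward, allowing the negative gradient flow to ``spill'' through the face $\{t=t_-\}$ into the $\Delta^\Infin_{\{t_-\}}$ part of the target. For the first assertion, since $H(t_-)=0$ for $H\in\mathcal H(\clag)$ we have $\Delta(t_-,x,\e,\te)=t_-\,\delta_-(x,\e,\te)$, so $\Delta^a_{\{t_-\}}=\delta_-^{a/t_-}$ for every $a$; Inequalities~(\ref{ineq:wgh}) give $\Infin/t_->2\lmaxm$ and $0<\mu/t_-<\lminm$, and since by Proposition~\ref{prop:leg-crit-point} the intervals $(\lmaxm,\infty)$ and $(0,\lminm)$ contain no critical values of $\delta_-$, Lemma~\ref{lem:retract} (cf.\ Remark~\ref{rem:e-l-indep}) identifies $(\delta_-^{\Infin/t_-},\delta_-^{\mu/t_-})$ with $(\delta_-^\infin,\delta_-^\epsilon)$ for any $\epsilon,\infin$ satisfying~(\ref{ineq:epsilon-infin}).

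Over $[v_-,t_-]$ one has $\Delta(t,x,\e,\te)=t\,\delta_-(x,\e,\te)+H(t)$, so $\bigl(\Delta^\Infin_{[v_-,t_-]},\Delta^\mu_{[v_-,t_-]}\bigr)$ is of the form $(B_J,A_J)$ from Section~\ref{sec:map-cones} with $\delta=\delta_-$, $b=\lambda_\Infin$, $a=\lambda_\mu$. By Lemmas~\ref{lem:Infin} and~\ref{lem:mu} (together with Sublemma~\ref{slem:lambda-sign}) the functions $\lambda_\Infin$ and $\lambda_\mu$ are strictly increasing on $[v_-,t_-]$ up to their respective maxima and, on the remaining (decreasing) sub-interval, take values in the critical-value-free ranges $(\lmaxm,\infty)$ and $(0,\lminm)$; I would therefore invoke Lemma~\ref{lem:vary-a-b} to replace them, via a fiberwise homotopy equivalence, by strictly increasing functions $\tilde b\ge\tilde a$ with $\tilde b(t_-)>\lmaxm$ and $\tilde a(t_-)\in(0,\lminm)$. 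Corollary~\ref{cor:left-map-cone} then deformation retracts the resulting pair onto its $\{t_-\}$-slice, which (again because $(\lmaxm,\infty)$ and $(0,\lminm)$ are critical-value-free) is, up to deformation, $(\Delta^\Infin_{\{t_-\}},\Delta^\mu_{\{t_-\}})$; as this retraction is the identity on the $\{t_-\}$-slice it extends by the identity over $[t_-,t_+]$, so that $\bigl(\Delta^\Infin_{[v_-,t_+]},\Delta^\mu_{[v_-,t_+]}\bigr)$ deformation retracts onto $\bigl(\Delta^\Infin_{[t_-,t_+]},\Delta^\mu_{[t_-,t_+]}\bigr)$.

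For $[t_-,t_+]$, I would use that, by Proposition~\ref{prop:crit-pt-wgh} and the embeddedness of $\clag$, the only critical value of $\Delta$ on $[t_-,t_+]\times M\times\rr^{2N}$ is $0<\mu$; hence $-\grad\Delta$ (for a fixed ambient metric) is nonvanishing on $\{\mu\le\Delta\le\Infin\}$ and $\Delta$ strictly decreases along its normalized flow. On $\{\Delta>\mu\}$ we have $\partial_t\Delta=\delta_+\ge\mu/t_+>0$ along $\{t=t_+\}$ and $\partial_t\Delta=\delta_-\ge\mu/t_->0$ along $\{t=t_-\}$, so the flow is inward-pointing along $\{t=t_+\}$ and outward-pointing and transverse along $\{t=t_-\}$. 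Thus every point of value $>\mu$ reaches, in finite time, either $\{\Delta=\mu\}$ or the face $\{t=t_-\}$ — both contained in $\Delta^\Infin_{\{t_-\}}\cup\Delta^\mu_{[t_-,t_+]}$ — and, the flow being transverse to each of these hypersurfaces away from their common corner, the resulting endpoint map is continuous (integrability of the flow follows from the slicewise-linearity of $\Delta$, as in Lemma~\ref{lem:cobord-cpt-deform}). This gives a deformation retraction of $\bigl(\Delta^\Infin_{[t_-,t_+]},\Delta^\mu_{[t_-,t_+]}\bigr)$ onto $\bigl(\Delta^\Infin_{\{t_-\}}\cup\Delta^\mu_{[t_-,t_+]},\Delta^\mu_{[t_-,t_+]}\bigr)$; composing with the retraction of the previous paragraph yields $\rho_-$.

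The step I expect to be the main obstacle is this last one: one must verify that a negative gradient flow which is genuinely \emph{outward}-pointing along the left face $\{t=t_-\}$ nevertheless assembles into an honest continuous deformation retraction onto a target that retains the whole slice $\{t=t_-\}$ up to level $\Infin$. The two facts that make this go through are the embeddedness of $\clag$ — which via Proposition~\ref{prop:crit-pt-wgh} forces $0$ to be the unique critical value over $[t_-,t_+]$, so the flow terminates in finite time — and the identities $\partial_t\Delta|_{t=t_\pm}=\delta_\pm$ together with $\delta_\pm>\mu/t_\pm$ on $\{\Delta>\mu\}$, which supply the transversality needed for continuity of the endpoint map. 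Everything else (the form $(B_J,A_J)$ over $[v_-,t_-]$, the monotonicity of $\lambda_\Infin$ and $\lambda_\mu$, the fiberwise homotopy equivalences, and the splicing of the two retractions at $\{t=t_-\}$) is routine bookkeeping given the lemmas of Sections~\ref{sec:gf} and~\ref{sec:map-cones} and the analysis in Section~\ref{ssec:sublevel-at-infty}.
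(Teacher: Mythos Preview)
Your proof is correct and follows essentially the same two-step strategy as the paper: a negative-gradient-flow retraction over $[t_-,t_+]$ (using embeddedness of $\clag$ to ensure $0$ is the only critical value, and checking the sign of $\partial_t\Delta$ at the faces) together with an application of Corollary~\ref{cor:left-map-cone} over $[v_-,t_-]$ after straightening $\lambda_\Infin$ and $\lambda_\mu$ via Lemma~\ref{lem:vary-a-b}. You present the two pieces in the opposite order and are more explicit than the paper about the flow being outward-pointing at $\{t=t_-\}$ and why that is harmless (the paper simply says this step is ``essentially the same'' as Lemma~\ref{lem:filling-retract}), but the argument is the same.
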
 

\begin{proof}
  We will prove this in two steps.  First, we show that there exists a retraction from $\Delta^\Infin_{[t_-, t_+]}$ to $\Delta_{\{t_-\}}^\Infin \cup \Delta^{\mu}_{[t_-, t_+]}$ constructed by flowing along the negative gradient vector field of $\Delta$.  This part of the proof is essentially the same as in the proof of Lemma~\ref{lem:filling-retract}. 

  Second, we apply Corollary~\ref{cor:left-map-cone} to show that there exists a retraction from $\left( \Delta^\Infin_{[v_-, t_-]}, \Delta^{ \mu}_{[v_-, t_-]}\right)$ to $\left( \Delta^\Infin_{ \{t_-\}}, \Delta^{\mu}_{\{t_-\}}\right)$.  Consider 
  $a(t) = \lambda_\mu(t)$ and $b(t) = \lambda_\Infin(t)$ for $t \in [v_-, t_-]$.
  By Lemmas~\ref{lem:mu} and \ref{lem:vary-a-b}, we may assume that $a(t)$ is strictly increasing and $a(t_-) \in (0,\lminm)$.  By Lemma~\ref{lem:Infin} and \ref{lem:vary-a-b}, we may assume that $b(t)$ is strictly increasing
  and $b(t_-) > \lmaxm$.  Thus, our desired retraction follows from Corollary~\ref{cor:left-map-cone}.
\end{proof}

In parallel to Corollary~\ref{cor:2cones}, we obtain:

\begin{cor} \label{cor:cobord-cone} 
The pair $\left( \Delta_{[v_-, v_+]}^\Infin, \Delta_{[v_-, v_+]}^{\mu} \right)$ is
the mapping cone $C(\psi_{F})$, where 
$$\psi_{F}: \left(\Delta^\Infin_{\{t_+\}}, \Delta^{\mu}_{\{t_+\}}\right)
\to \left(\Delta^\Infin_{\{t_-\}} \cup \Delta_{[t_-, t_+]}^{\mu},  \Delta_{[t_-, t_+]}^{\mu}\right)
$$
 is the restriction of the map $\rho_-$ in Lemma~\ref{lem:retract-t-} to 
  $\Delta_{\{t_+\}}^\Infin$.
\end{cor}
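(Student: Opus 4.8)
The plan is to mimic the proof of Corollary~\ref{cor:2cones}: write the base interval as $[v_-, v_+] = [v_-, t_+] \cup [t_+, v_+]$, glued along the slice $\{t_+\}$, and recognize the restriction of the pair $\left( \Delta_{[v_-, v_+]}^\Infin, \Delta_{[v_-, v_+]}^{\mu} \right)$ over $[t_+, v_+]$ as a relative cone attached to its restriction over $[v_-, t_+]$ along that slice.

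First I would invoke Lemma~\ref{lem:retract-cone}: after a fiberwise homotopy equivalence, the pair $\left( \Delta^\Infin_{[t_+, v_+]}, \Delta^{\mu}_{[t_+, v_+]} \right)$ deformation retracts via $\rho_+$ onto the relative cone $C\left( \delta_+^{\infin}, \delta_+^{\epsilon} \right)$, which the diffeomorphism in that same lemma identifies with $C\left( \Delta^\Infin_{\{t_+\}}, \Delta^{\mu}_{\{t_+\}} \right)$, and this retraction restricts to the identity on the slice $\Delta^\Infin_{\{t_+\}}$. Thus, up to fiberwise homotopy equivalence, the portion of $\left( \Delta_{[v_-, v_+]}^\Infin, \Delta_{[v_-, v_+]}^{\mu} \right)$ lying over $[t_+, v_+]$ is a copy of the relative cone on the $\{t_+\}$-slice pair, attached to the portion over $[v_-, t_+]$ via the inclusion $\left( \Delta^\Infin_{\{t_+\}}, \Delta^{\mu}_{\{t_+\}} \right) \hookrightarrow \left( \Delta^\Infin_{[v_-, t_+]}, \Delta^{\mu}_{[v_-, t_+]} \right)$ --- the gluing being undisturbed precisely because $\rho_+$ fixes that slice. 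Next I would apply Lemma~\ref{lem:retract-t-}: after a further fiberwise homotopy equivalence, the portion over $[v_-, t_+]$ deformation retracts via $\rho_-$ onto $\left( \Delta^\Infin_{\{t_-\}} \cup \Delta_{[t_-, t_+]}^{\mu}, \Delta_{[t_-, t_+]}^{\mu} \right)$, and the restriction of $\rho_-$ to the $\{t_+\}$-slice is, by definition, the map $\psi_F$. By Definition~\ref{def:rel-cone}, attaching the relative cone $C\left( \Delta^\Infin_{\{t_+\}}, \Delta^{\mu}_{\{t_+\}} \right)$ to $\left( \Delta^\Infin_{\{t_-\}} \cup \Delta_{[t_-, t_+]}^{\mu}, \Delta_{[t_-, t_+]}^{\mu} \right)$ along $\psi_F$ yields exactly $C(\psi_F)$; hence $\left( \Delta_{[v_-, v_+]}^\Infin, \Delta_{[v_-, v_+]}^{\mu} \right)$ is identified, up to homotopy equivalence, with the relative mapping cone $C(\psi_F)$, as claimed.

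The step requiring the most care --- and the main obstacle --- is the bookkeeping at the seam $t = t_+$. The fiberwise homotopy equivalences supplied by Lemmas~\ref{lem:retract-cone} and \ref{lem:retract-t-} are a priori defined only over the two subintervals $[t_+, v_+]$ and $[v_-, t_+]$, so one must arrange that they agree on the overlapping slice $\{t_+\}$, and hence glue to a single fiberwise homotopy equivalence of the full pair; one must also use the fact that $\rho_+$ restricts to the identity on $\Delta^\Infin_{\{t_+\}}$, so that coning off over $[t_+, v_+]$ does not interfere with the retraction $\rho_-$ over $[v_-, t_+]$. Once this compatibility is in place --- exactly as in the proof of Corollary~\ref{cor:2cones} --- the identification with $C(\psi_F)$ is formal.
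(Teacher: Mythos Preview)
Your proposal is correct and follows exactly the route the paper intends: the corollary is stated without proof, as an immediate parallel to Corollary~\ref{cor:2cones}, and your argument---split $[v_-,v_+]$ at $t_+$, use Lemma~\ref{lem:retract-cone} to cone off the right-hand piece while fixing the $\{t_+\}$-slice, then retract the left-hand piece via Lemma~\ref{lem:retract-t-} so that the attaching map becomes $\psi_F$---is precisely that parallel. Your care about compatibility at the seam $t=t_+$ is well placed and correctly resolved by the condition $\rho_+|_{\Delta^\Infin_{\{t_+\}}}=\id$ in Lemma~\ref{lem:retract-cone}.
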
	

We now arrive at the horizontal long exact sequence:

\begin{proof}[Proof of Proposition~\ref{prop:horiz-les}]  By Corollary~\ref{cor:cobord-cone} and Lemma~\ref{lem:cone-exact}, there is a long exact sequence:
\begin{multline} \label{eqn:pre-horiz-les}
\cdots \to H^{k+N+1}\left( \Delta_{\{t_-\}}^\Infin \cup \Delta_{[t_-, t_+]}^{\mu}, \Delta_{[t_-, t_+]}^{\mu}  \right) 
\stackrel{\psi_F^*}{\to} H^{k+N+1}\left( \Delta_{\{t_+\}}^\Infin, \Delta_{\{t_+\}}^{\mu} \right) \\ 
\to  H^{k+N+2}\left(\Delta_{[v_-, v_+]}^\Infin, \Delta_{[v_-, v_+]}^{\mu} \right)  \to \cdots.
\end{multline}

We finish the proof of the proposition with the following identification of terms:
\begin{enumerate}
\item For the first term, we have
   \begin{align*}
  H^{k+N+1}\left( \Delta_{\{t_-\}}^\Infin \cup \Delta_{[t_-, t_+]}^{\mu},  \Delta_{[t_-, t_+]}^{ \mu}  \right) &\simeq
H^{k+N+1}\left( \Delta_{\{t_-\}}^\Infin, \Delta_{\{t_-\}}^{\mu}    \right) &\text{by excision}\\ &\simeq
H^{k+N+1}\left( \delta_{-}^\infin, \delta_{-}^{\epsilon}   \right) &\text{by Lemma ~\ref{lem:retract-t-} }\\ 
& = \rgh{k}{[f_-]}.
\end{align*}

\item For the second term, we have:
\begin{align*}
H^{k+N+1}\left( \Delta_{\{t_+\}}^\Infin, \Delta_{\{t_+\}}^{\mu} \right) &\simeq 
H^{k+ N + 1} \left( \delta_{+}^\infin, \delta_{+}^{\epsilon} \right) & \text{by Lemma ~\ref{lem:retract-cone} }\\
&= \rgh{k}{[f_+]}.
\end{align*}

\item By Lemma~\ref{lem:cobord-cpt-base}, we have:
$$H^{k+N+2}\left(\Delta_{[v_-, v_+]}^\Infin, \Delta_{[v_-, v_+]}^{\mu}\right) = WGH^{k+2}(F).$$
\end{enumerate}
\end{proof}

At this point, we can officially define the cobordism map $\Psi_F: \rgh{k}{f_-} \to \rgh{k}{f_+}$ via the following diagram, where $\psi_F$ is defined by following the negative gradient flow of $\Delta$ until the first point at which the flowline intersects $\Delta^\mu$ or $\{t_-\} \times M \times \rr^{2N}$.   The map $ex$ is excision.
\begin{equation} \label{eqn:cobord-map}
  \xymatrix{
    \rgh{*}{f_-} \ar@{<->}[d]^\simeq \ar@{-->}[r]^-{\Psi_{F}} & \rgh{*}{f_+} \ar@{<->}[d]^\simeq \\
    H^*(\Delta^\Infin_{\{t_-\}}, \Delta^\mu_{\{t_-\}}) \ar@{<->}[d]^{ex} &
    H^*(\Delta^\Infin_{\{t_+\}}, \Delta^\mu_{\{t_+\}}) \\
    H^*(\Delta^\Infin_{\{t_-\}} \cup \Delta^\mu_{[t_-, t_+]}, \Delta^\mu_{[t_-, t_+]}) \ar[ur]^{\psi_{F}^*} &
}
\end{equation}

The proof of the following lemma is a straightforward exercise in relating long exact sequences of mapping cones.

\begin{lem} \label{lem:cobord-indep-equiv}
  Up to the isomorphisms in Lemma~\ref{lem:cohom-equiv}, the cobordism map descends to equivalence classes of the generating family $F$.
\end{lem}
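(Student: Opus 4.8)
The plan is to reduce the statement to the two elementary moves that generate the equivalence relation on compatible triples --- a $t$-preserving fiber-preserving diffeomorphism that is independent of $t$ outside $[t_-,t_+]$, and a stabilization --- and, in each case, to check that the square relating $\Psi_{F_0}$ and $\Psi_{F_1}$ commutes with the vertical isomorphisms supplied by Lemma~\ref{lem:cohom-equiv}. Note at the outset that equivalent compatible triples $(F_0,f^0_-,f^0_+)$ and $(F_1,f^1_-,f^1_+)$ generate the same Lagrangian cobordism $\clag$ and the same Legendrian ends $\leg_\pm$, so the set $\mathcal H(\clag)$, the Reeb-chord lengths $\lminpm,\lmaxpm$, and hence the admissible ranges for $\Infin,\mu,\sigma,v_\pm$ are all unchanged; we fix one such choice of auxiliary data --- shearing function, constants, cutoffs, metric --- and use it for both triples.

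\textbf{Fiber-preserving diffeomorphism.} Suppose $F_1=F_0\circ\Phi$ with $\Phi(t,x,\e)=(t,x,\phi_{t,x}(\e))$ independent of $t$ outside $[t_-,t_+]$. A direct computation from (\ref{eqn:pert-diff}) gives $\Delta_1=\Delta_0\circ\widetilde\Phi$, where $\widetilde\Phi(t,x,\e,\te)=(t,x,\phi_{t,x}(\e),\phi_{t,x}(\te))$ is a $t$-preserving fiber-preserving diffeomorphism of $\rp\times M\times\rr^{2N}$, again independent of $t$ outside $[t_-,t_+]$. Since $\widetilde\Phi$ fixes the $t$-coordinate, it carries each $\Delta_0^a$ and each $\Delta_0^a\cap\{t\in J\}$ homeomorphically onto its $\Delta_1$ counterpart, commutes with the excision isomorphisms, and --- after transporting the auxiliary metric by $\widetilde\Phi$ --- intertwines the negative gradient flows used to build the deformation retractions of Lemmas~\ref{lem:retract-cone} and \ref{lem:retract-t-}, hence the map $\psi_F$ of Corollary~\ref{cor:cobord-cone}. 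Therefore the two instances of the diagram (\ref{eqn:cobord-map}) for $F_0$ and $F_1$ are intertwined by the isomorphisms $\widetilde\Phi$ induces, and every resulting square commutes. The vertical sides, restricted to $\{t=t_\pm\}$, are precisely the fiber-preserving diffeomorphisms relating $\delta^0_\pm$ and $\delta^1_\pm$, which induce exactly the degree-preserving isomorphisms of Lemma~\ref{lem:cohom-equiv}; this settles this case.

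\textbf{Stabilization.} After a preliminary fiber-preserving diffeomorphism as above we may assume $F_1(t,x,\e,\e')=F_0(t,x,\e)+t\,Q(\e')$ for a fixed nondegenerate quadratic $Q\colon\rr^K\to\rr$ of index $\kappa$; the factor $t>0$ is forced by the compatibility condition at the ends but does not affect indices. Then $\Delta_1(t,x,\e,\te,\e',\te')=\Delta_0(t,x,\e,\te)+t(Q(\te')-Q(\e'))$, and $t(Q(\te')-Q(\e'))$ is, for every $t>0$, a nondegenerate quadratic in the $2K$ new fiber variables of constant index $K$. Exactly as in the proof of Lemma~\ref{lem:cohom-equiv} (via Lemma~\ref{lem:equiv-he} and the Thom isomorphism), this shifts the relative cohomology of every pair of sublevel sets appearing in (\ref{eqn:cobord-map}) --- and in Lemmas~\ref{lem:retract-cone}, \ref{lem:retract-t-}, and \ref{lem:cobord-cpt-base} --- by $K$, compatibly with all the retractions, excisions, and the map $\psi_F$, which split off the $2K$ quadratic directions. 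Hence the diagram (\ref{eqn:cobord-map}) for $F_1$ is the one for $F_0$ with a uniform degree shift of $+K$; since the same $Q$ stabilizes both ends, this shift is identical on the $f_-$ and $f_+$ sides, and it is absorbed by the normalization $\rgh{k}{f}=H^{k+N+1}(\delta^\infin,\delta^\epsilon)$, in which $N$ increases by $K$. Thus after passing to generating family cohomology the shift disappears and the square commutes with the isomorphisms of Lemma~\ref{lem:cohom-equiv}.

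The one genuine subtlety --- and the step I expect to be the main obstacle --- is that the constructions in (\ref{eqn:cobord-map}) depend on choices (of $H$, of $\Infin,\mu,\sigma,v_\pm$, and of metrics), so one must produce these choices compatibly for $F_0$ and $F_1$. In the diffeomorphism case this is automatic by transport along $\widetilde\Phi$; in the stabilization case one transports along the product structure, or, more robustly, invokes Proposition~\ref{prop:wgh-independent} together with the Critical Non-Crossing Lemma~\ref{lem:crit-non-crossing} applied to a path of data interpolating any two admissible choices, exactly as in the proof of independence of $\wgh{*}{F}$. Granting that, the remainder is bookkeeping: track the degree shift $K$ through each identification in (\ref{eqn:cobord-map}) and confirm it cancels consistently on both ends.
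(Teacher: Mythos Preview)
Your argument is correct and considerably more explicit than what the paper offers; the paper contents itself with the single sentence ``the proof is a straightforward exercise in relating long exact sequences of mapping cones.''  Your approach and the paper's hint are the same in substance but differ in packaging: you work directly at the level of spaces and retractions, showing that the diffeomorphism $\widetilde\Phi$ (resp.\ the product decomposition in the stabilized case) intertwines every piece of the diagram~(\ref{eqn:cobord-map}), whereas the paper's phrasing suggests applying naturality of the long exact sequence of Lemma~\ref{lem:cone-exact} to the mapping cones $C(\psi_{F_0})$ and $C(\psi_{F_1})$ and reading off the result.  Both routes ultimately rest on the same facts --- that the elementary moves act compatibly on all sublevel sets and on the gradient retractions --- so the difference is organizational rather than mathematical.

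Two small remarks.  First, your observation that compatibility forces the stabilizing quadratic on $F$ to carry a factor of $t$ is exactly right and is the one place a careless reader might stumble; it is good that you flag it.  Second, the independence of $\psi_F^*$ from the choice of metric (which you invoke when transporting metrics) is not stated as a separate lemma in the paper, but it follows from the standard fact that any two deformation retractions onto the same subspace are homotopic; this is implicit in the paper's use of ``the'' map $\psi_F$ and need not be belabored.
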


More interestingly, we have:

\begin{lem} \label{lem:cobord-indep-H}
  The cobordism map is independent of $H \in \mathcal H(\clag)$.
\end{lem}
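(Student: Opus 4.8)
The plan is to imitate the proof of Proposition~\ref{prop:wgh-independent}, where the analogous independence of the \emph{wrapped} generating family cohomology from the choice of $H$ was established: the space $\mathcal H(\clag)$ of Hamiltonian shearing functions is path connected, so any two choices $H_0, H_1 \in \mathcal H(\clag)$ may be joined by a continuous path $H_s$, $s \in [0,1]$, which we complement by continuous paths $\Infin_s$ and $\mu_s$ of levels satisfying Inequalities~(\ref{ineq:wgh}) for all $s$. Let $\Delta_s$ be the associated path of sheared difference functions. The key point is that the cobordism map $\Psi_F$ is, by its definition in Diagram~(\ref{eqn:cobord-map}), built entirely out of (i) the canonical identifications $\rgh{*}{f_\pm} \simeq H^*(\Delta^{\Infin_s}_{\{t_\pm\}}, \Delta^{\mu_s}_{\{t_\pm\}})$ coming from Lemmas~\ref{lem:retract-t-} and \ref{lem:retract-cone}, (ii) the excision isomorphism $ex$, and (iii) the map $\psi_F^*$ induced by following the negative gradient flow of $\Delta_s$ down to the level $\mu_s$ or to the wall $\{t_-\}\times M \times \rr^{2N}$. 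All three ingredients vary continuously (indeed smoothly) with $s$, so the resulting homomorphism $\rgh{*}{f_-}\to\rgh{*}{f_+}$ is locally constant in $s$, hence constant.

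To make this rigorous I would, first, invoke Corollary~\ref{cor:wgh-cpct} and Lemma~\ref{lem:cobord-cpt-base} to pass to a fixed compact base $[v_-, v_+]\times M$, choosing $v_\pm$ so that Inequalities~(\ref{ineq:vpm}) hold for \emph{all} $s\in[0,1]$; after a fiber-preserving diffeomorphism I may assume, as in Equation~(\ref{eqn:nice-Delta}), that $\Delta_s = \Delta_s^c + A_t(\e,\te) + H_s(t)$ on this region. Second, I would reprove the relevant deformation retractions of Lemmas~\ref{lem:retract-cone} and \ref{lem:retract-t-} in families: the gradient-like vector fields there (constructed exactly as in Lemma~\ref{lem:cobord-cpt-deform}) depend continuously on $s$, so the Critical Non-Crossing Lemma~\ref{lem:crit-non-crossing}, applied to $\Delta_s|_{[v_-,v_+]}$ with the paths $\alpha(s) = \mu_s$ and $\beta(s) = \Infin_s$ and also to the intermediate levels appearing in those retractions, produces homotopy equivalences of pairs that are themselves continuous in $s$ and intertwine the flow maps. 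Third, I would record that excision is natural and that the identifications of the endpoint terms $\rgh{*}{f_\pm}$ with sublevel pairs of the fixed Legendrian difference functions $\delta_\pm$ are independent of $s$ altogether (they only involve the slice $t = t_\pm$, where $\Delta_s$ equals $t_\pm\delta_\pm + H_s(t_\pm)$ and $H_s(t_\pm) = 0$). Assembling these, Diagram~(\ref{eqn:cobord-map}) for $\Delta_0$ and for $\Delta_1$ are connected by a ladder of isomorphisms commuting up to homotopy, whence $\Psi_{F,H_0} = \Psi_{F,H_1}$ under the canonical identifications of the generating family cohomologies.

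The main obstacle I anticipate is not any single step but the bookkeeping of commutativity: one must check that the $s$-family of homotopy equivalences produced by the Critical Non-Crossing Lemma genuinely commutes (up to homotopy) with the gradient-flow map $\psi_F$ and with excision, rather than merely identifying the two endpoint mapping cones abstractly. This is exactly the kind of ``straightforward exercise in relating long exact sequences of mapping cones'' referenced in Lemma~\ref{lem:cobord-indep-equiv}, and I would handle it the same way: observe that all maps in sight are realized by gradient flows of the single function $\Delta_s$ on a fixed compact manifold-with-boundary, so the homotopies witnessing commutativity can be taken to be flow homotopies, which compose and vary continuously in $s$ without obstruction. A minor technical point to dispatch along the way is that the level paths $\Infin_s, \mu_s$ can be chosen to remain regular values of $\Delta_s$ for all $s$ simultaneously, which follows by a standard transversality/compactness argument since the critical values of $\Delta_s$ vary continuously and are bounded away from the chosen level paths by Proposition~\ref{prop:crit-pt-wgh}.
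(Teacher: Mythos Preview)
Your deformation approach misses the paper's key simplifying observation: on the interval $[t_-,t_+]$ every $H \in \mathcal H(\clag)$ vanishes identically, so $\Delta$ restricted to $[t_-,t_+] \times M \times \rr^{2N}$ is completely independent of $H$. Since $\psi_F$ is defined entirely by the negative gradient flow of $\Delta$ on that region (Diagram~(\ref{eqn:cobord-map})), the cobordism map can depend on $H$ only through the choice of the endpoints $t_\pm$ themselves. You implicitly assume $t_\pm$ are fixed when you write that the slice identifications ``only involve the slice $t = t_\pm$'' and are ``independent of $s$ altogether''; but with $t_\pm$ fixed there is nothing to prove beyond reconciling different choices of $\Infin,\mu$ via Remark~\ref{rem:e-l-indep}, and your Critical Non-Crossing machinery is overkill. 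The substantive case---different $t_\pm$ for $H_0$ and $H_1$---is exactly the one your argument does not address, since then the endpoint slices genuinely move with $s$ and your third ingredient fails as stated.

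The paper handles varying $t_\pm$ not by deformation but by a \emph{factorization} argument. Taking, say, $t_-^0 = t_-^1$ and $t_+^0 < t_+^1$, one observes that the flow map for the longer interval decomposes as $\psi_{[t_-,t_+^1]} = \tilde\psi_{[t_-,t_+^0]} \circ \psi_{[t_+^0,t_+^1]}$, where the second factor lives entirely in the region where $\Delta = t\delta_+$. A direct computation shows this second factor is just the slicewise gradient flow of $\delta_+$, hence induces the canonical identification of Remark~\ref{rem:e-l-indep} on cohomology. A commutative ladder (essentially the diagram you would build, but with concrete maps rather than abstract homotopy equivalences) then yields $\Psi_{[t_-,t_+^0]} = \Psi_{[t_-,t_+^1]}$. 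This is the same mechanism that later proves functoriality (Proposition~\ref{prop:funct}), and is both shorter and more transparent than tracking a one-parameter family of gradient flows through the Critical Non-Crossing Lemma.
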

 
\begin{proof} 
  Since the cobordism map depends only on $\Delta$ in the region where $t \in [t_-, t_+]$ and any $H \in \mathcal{H}(\clag)$ is zero on this region, the only dependence on $H$ that the cobordism map could have is on the choice of $t_\pm$.  We restrict attention to the case where $t^0_+ < t^1_+$ but $t^0_-=t^1_-$; the proof at the negative end is entirely similar.

To notational clarity, let us rename the map $\psi_F$ with
  $$\psi_{[t_-,t_+]}: (\Delta^\Infin_{\{t_+\}}, \Delta^\mu_{\{t_+\}}) \to
  (\Delta^\Infin_{\{t_-\}} \cup \Delta^\mu_{[t_-, t_+]}, \Delta^\mu_{[t_-, t_+]}).$$
If we let $\tilde{\psi}_{[t_-,t^0_+]}$ be the map $\psi_{[t_-,t^0_+]}$ whose domain has been expanded to be the pair $(\Delta^\Infin_{\{t^0_+\}} \cup \Delta^\mu_{[t^0_+, t^1_+]}, \Delta^\mu_{[t^0_+, t^1_+]})$, then the description of the $\psi_{[t_-,t_+]}$ maps above easily implies that
\begin{equation} \label{eqn:indep-comp}
  \psi_{[t_-,t^1_+]} = \tilde{\psi}_{[t_-,t^0_+]} \circ \psi_{[t^0_+,t^1_+]}.
\end{equation}

Combining the diagram (\ref{eqn:cobord-map}) with Equation~(\ref{eqn:indep-comp}) yields the following commutative diagram, where for clarity, we suppress degrees and use the notation $[i]$ to represent the interval $[t_-, t^i_+]$ and $[01]$ to represent $[t^0_+, t^1_+]$. 

$$
\xymatrix{
\rgh{*}{f_-} \ar@{<->}[d]^\simeq \ar@{-->}[r]_-{\Psi_{[0]}} \ar@(ur, ul)@{-->}[rr]^{\Psi_{[1]}} & \rgh{*}{f_+} \ar@{<->}[d]^\simeq \ar@{-->}[r]_-{\Psi_{[01]}} & \rgh{*}{f_+} \ar@{<->}[d]^\simeq \\
H^*(\Delta^\Infin_{\{t_-\}}, \Delta^\mu_{\{t_-\}}) \ar@{<->}[d]^{ex} &
H^*(\Delta^\Infin_{\{t^0_+\}}, \Delta^\mu_{\{t^0_+\}}) \ar@{<->}[d]^{ex} &
H^*(\Delta^\Infin_{\{t^1_+\}}, \Delta^\mu_{\{t^1_+\}}) \\
H^*(\Delta^\Infin_{\{t_-\}} \cup \Delta^\mu_{[0]}, \Delta^\mu_{[0]}) \ar@{<->}[d]^{ex} \ar[ur]^{\psi_{[0]}^*} &
H^*(\Delta^\Infin_{\{t^1_+\}} \cup \Delta^\mu_{[01]}, \Delta^\mu_{[01]}) \ar[ur]^{\psi_{[01]}^*} & \\
H^*(\Delta^\Infin_{\{t_-\}} \cup \Delta^\mu_{[1]}, \Delta^\mu_{[1]}) \ar[ur]^{\tilde{\psi}_{[0]}^*} \ar@/_40pt/[uurr]_{\psi^*_{[1]}}& &
}
$$

A direct computation of the gradient of $\Delta$ over the interval $[t^0_+,t^1_+]$ (where $\Delta = tf_+$) shows that the $M \times \rr^{2N}$ component of $\psi_{[01]}$ simply follows the negative gradient flow of $\delta$, and hence we may identify the two relative generating family cohomologies of $f_+$ as in Remark~\ref{rem:e-l-indep}.  This completes the proof.
\end{proof}

\subsection{The ``Vertical'' Long Exact Sequence}
\label{ssec:vert-les}

We now prove Proposition~\ref{prop:vert-les}, which boils down to the following lemma, whose proof is similar to that of Lemma~\ref{lem:filling-cobord-cohom}.
 
\begin{lem}  \label{lem:sigma-mu}   
  There exists an isomorphism between 
$H^{k+N}\left(\Delta_{[v_-, v_+]}^\mu, \Delta_{[v_-, v_+]}^{-\mu}\right)$ and $H^k(\lag, \partial \lag_+)$.
\end{lem}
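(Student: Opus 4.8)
The plan is to mimic the argument in Lemma~\ref{lem:filling-cobord-cohom}, with the sublevel $\Delta^\sigma$ replaced by $\Delta^\mu$ throughout and with attention paid to the fact that $\leg_-$ need no longer be empty. First I would establish that the cohomology of $\left(\Delta^\mu_{[v_-,v_+]}, \Delta^{-\mu}_{[v_-,v_+]}\right)$ agrees with that of a pair supported over $[t_-,t_+]$, namely
$$\left( \Delta^\mu_{[t_-,t_+]},\ \Delta^{-\mu}_{[t_-,t_+]} \cup \Delta^\mu_{\{t_+\}} \right).$$
To get there I would deal with the two cylindrical regions separately. Over $[v_-,t_-]$ the sheared difference function is $\Delta = t\delta_- + H(t)$; taking $a(t) = \lambda_{-\mu}(t)$ and $b(t) = \lambda_{\mu}(t)$, Lemma~\ref{lem:-mu} gives $a$ strictly increasing and Lemma~\ref{lem:mu} (together with Lemma~\ref{lem:vary-a-b}) lets us arrange $b$ strictly increasing as well, so Corollary~\ref{cor:left-map-cone} shows the pair over $[v_-,t_-]$ retracts onto its slice over $\{t_-\}$. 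This contributes nothing new to the relative homology since both levels are pushed to $t=t_-$; a Mayer--Vietoris/excision argument then shows the cohomology over $[v_-,t_+]$ equals that over $[t_-,t_+]$. Over $[t_+,v_+]$, where $\Delta = t\delta_+ + H(t)$, I would apply Lemma~\ref{lem:def-retr-cone} with $a(t)=\lambda_{-\mu}(t)$, $b(t)=\lambda_\mu(t)$: using Remark~\ref{rem:sigmat+u+} and Lemma~\ref{lem:-mu} to check $0 < b(t) < \lminp$ after a fiberwise homotopy equivalence (Lemma~\ref{lem:vary-a-b}) and $a(t_+) < 0 < a(u_+) < \lminp$, the pair over $[t_+,v_+]$ deformation retracts to the relative cone $C(\delta_+^{b(t_+)}, \delta_+^{a(t_+)})$, which after taking cohomology of the cone is built from $\Delta^\mu_{\{t_+\}}$ attached to the $t_+$-slice. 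Patching the two pieces and applying excision/a diffeomorphism yields the pair displayed above.

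Next I would carry out the Morse--Bott analysis over $[t_-,t_+]$, exactly as in the proof of Lemma~\ref{lem:filling-cobord-cohom}. By Proposition~\ref{prop:crit-pt-wgh}, the only critical set of $\Delta$ with $t\in[t_-,t_+]$ is the non-degenerate critical submanifold $C$ of index $N$ and critical value $0$, properly embedded in $([t_-,t_+]\times M\times\rr^{2N}, \{t=t_\pm\})$ and diffeomorphic to $\lag = \clag\cap\{t\in[t_-,t_+]\}$, with $\partial C$ sitting over $\{t=t_\pm\}$ and corresponding to $\partial\lag_\pm$. Choosing a metric so that the negative gradient flow is tangent to the boundary near $\partial C$, passing through level $0$ attaches an $N$-disk bundle $D\lag$ over $C$ to $\Delta^{-\mu}_{[t_-,t_+]}$ along its sphere bundle $S\lag$. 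The key bookkeeping difference from the filling case is that now $\leg_-\neq\emptyset$, so the boundary of $\lag$ has two pieces $\partial\lag_-$ and $\partial\lag_+$; the extra summand $\Delta^\mu_{\{t_+\}}$ in the subspace picks out $D(\partial\lag_+)$ only (the $t=t_-$ boundary contributes no such summand because there is nothing analogous happening over $[v_-,t_-]$ — the levels there are at $\pm\mu$, not at $\Infin$). Thus I obtain a homotopy equivalence of pairs
$$\left(\Delta^\mu_{[t_-,t_+]},\ \Delta^{-\mu}_{[t_-,t_+]}\cup\Delta^\mu_{\{t_+\}}\right)\ \simeq\ \left(D\lag,\ S\lag\cup D(\partial\lag_+)\right).$$

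To finish, I would run the five-lemma argument with the Thom isomorphism, as in (\ref{eqn:iso2}) but using the triple $(D\lag, S\lag\cup D(\partial\lag_+), S\lag)$ and the long exact sequence of the pair $(\lag,\partial\lag_+)$, with vertical Thom maps $\tau_\lag, \tau_{\partial\lag_+}$, and an induced map $\overline\tau$ on the relative terms. Since $\tau_\lag$ and $\tau_{\partial\lag_+}$ are isomorphisms by the Thom isomorphism theorem, the five-lemma forces $\overline\tau$ to be an isomorphism, giving
$$H^{k+N}\left(\Delta^\mu_{[v_-,v_+]}, \Delta^{-\mu}_{[v_-,v_+]}\right)\ \simeq\ H^{k+N}\left(D\lag, S\lag\cup D(\partial\lag_+)\right)\ \simeq\ H^k(\lag,\partial\lag_+),$$
which is the claimed isomorphism. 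The step I expect to be the main obstacle is the careful topological patching in the first paragraph — getting the subspace over $[t_-,t_+]$ to come out as $\Delta^{-\mu}_{[t_-,t_+]}\cup\Delta^\mu_{\{t_+\}}$ rather than something involving the $t=t_-$ slice as well, and verifying the inequalities on $\lambda_{\pm\mu}$ needed to invoke Lemmas~\ref{lem:def-retr-cone}, \ref{lem:-mu}, and~\ref{lem:mu} in exactly the right regions; the Morse--Bott/Thom portion is essentially identical to Lemma~\ref{lem:filling-cobord-cohom} and should go through routinely.
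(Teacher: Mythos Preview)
Your overall strategy matches the paper's exactly: reduce to the pair $\bigl(\Delta^\mu_{[t_-,t_+]},\ \Delta^{-\mu}_{[t_-,t_+]}\cup\Delta^\mu_{\{t_+\}}\bigr)$ and then run the Morse--Bott/Thom argument from Lemma~\ref{lem:filling-cobord-cohom}. The treatment of the negative end $[v_-,t_-]$ and the final disk-bundle identification are fine and essentially verbatim what the paper does.

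There is, however, a genuine gap in your handling of the positive end $[t_+,v_+]$. You propose to apply Lemma~\ref{lem:def-retr-cone} directly over the whole interval $[t_+,v_+]$, after using Lemma~\ref{lem:vary-a-b} to make $b(t)=\lambda_\mu(t)$ constant in $(0,\lminp)$. But Remark~\ref{rem:sigmat+u+} only guarantees $\lambda_\mu(t)\in(0,\lminp)$ for $t\in[t_+,u_+]$; for $t>u_+$, $\lambda_\mu(t)$ increases without bound and in particular crosses the critical values of $\delta_+$. Hence Lemma~\ref{lem:vary-a-b} cannot be invoked to flatten $b$ over all of $[t_+,v_+]$, and the hypotheses of Lemma~\ref{lem:def-retr-cone} (which require $b$ constant) fail. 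The paper deals with this by a Mayer--Vietoris split near $t=u_+$: over $[u_+-\epsilon,u_++\epsilon]$ both $\lambda_\mu$ and $\lambda_{-\mu}$ lie in $(0,\lminp)$, so that overlap pair is acyclic; over $[u_+-\epsilon,v_+]$ both functions are strictly increasing and, by the choice of $v_+$ in (\ref{ineq:vpm}), end above $\lmaxp$, so Corollary~\ref{cor:left-map-cone} retracts to the acyclic slice at $v_+$; finally over $[t_+,u_++\epsilon]$ one now \emph{can} straighten $b$ and apply Lemma~\ref{lem:def-retr-cone} to get the cone on $(\delta_+^{b(t_+)},\delta_+^{a(t_+)})$. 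Inserting this split fixes your argument; the rest goes through as you wrote.
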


\begin{proof} 
  We will show that  the cohomology groups of  $\left(\Delta^\mu_{[v_-,v_+]}, \Delta^{-\mu}_{[v_-,v_+]}\right)$ agree with those of   $$\left(\Delta^{\mu}_{[t_-, t_+]} , 
    \Delta_{[t_-, t_+]}^{-\mu} \cup \Delta_{\{t_+\}}^{\mu}\right).$$ The Morse-Bott argument in the proof of Lemma~\ref{lem:filling-cobord-cohom} then implies that the cohomology groups of this pair can be identified with those of $(\lag, \partial \lag_+)$.
   
  First, we consider the pair $(\Delta^\mu_{J}, \Delta^{-\mu}_{J})$, where $J = [v_-, t_-]$ and $J = [t_+, v_+]$.  Consider $a(t) = \lambda_{-\mu}(t)$ and $b(t) = \lambda_\mu(t)$. When $t \in [v_-, t_-]$, Lemmas~\ref{lem:mu}, \ref{lem:-mu}, \ref{lem:vary-a-b}, and Corollary~\ref{cor:left-map-cone} imply that $(\Delta^\mu_{[t_-, v_-]}, \Delta^{-\mu}_{[t_-, v_-]})$, retracts to $\left(\Delta^\mu_{\{t_-\}},\Delta^{-\mu}_{\{t_-\}}\right)$.

  When $J = [t_+, v_+]$, we will apply a Mayer-Vietoris argument.  First observe that Lemma~\ref{lem:mu} implies $0< b(t) < \lminp$ for all $t \in [t_+, u_+]$.  By Lemma~\ref{lem:-mu}, we can assume $a(u_+) >0$. Thus, after applying a fiberwise homotopy equivalence, we can assume, for a sufficiently small $\epsilon > 0$, that the pair $(\Delta^\mu_{u_+-\epsilon,u_++\epsilon}, (\Delta^{-\mu}_{u_+-\epsilon,u_++\epsilon})$ is acyclic.
A Mayer-Vietoris argument then shows that:
 $$\begin{aligned}
 H^*&\left( \Delta_{[t_+, v_+]}^\mu, \Delta_{[t_+, v_+]}^{-\mu} \right) 
 \simeq \\
 &H^* \left( \Delta_{[t_+, u_+ + \epsilon]}^\mu, \Delta_{[t_+, u_+ + \epsilon]}^{-\mu} \right) \oplus
 H^* \left( \Delta_{[u_+ - \epsilon, v_+]}^\mu, \Delta_{[u_+ - \epsilon, v_+]}^{-\mu} \right).
 \end{aligned}$$
 By Lemma~\ref{lem:mu}, Lemma~\ref{lem:-mu}, and Corollary \ref{cor:left-map-cone}, and the choice of
 $v_+$, we find
  $H^* \left( \Delta_{[u_+ - \epsilon, v_+]}^\mu, \Delta_{[u_+ - \epsilon, v_+]}^{-\mu} \right) = 0$.  
  Lemma~\ref{lem:def-retr-cone} implies that
 $\left( \Delta_{[t_+, u_+ + \epsilon]}^\mu, \Delta_{[t_+, u_+ + \epsilon]}^{-\mu} \right) $ deformation retracts
  to 
 $$\left( \Delta_{ \{t_+\}}^\mu \times [t_+, u_+], \left( \Delta_{\{t_+ \}}^{-\mu} \times [t_+, u_+]  \right) \cup \Delta_{\{ u_+ \}}^\mu \right).$$
The rest of the proof proceeds as in that of Lemma~\ref{lem:filling-cobord-cohom}.
\end{proof}

\begin{proof}[Proof of Proposition~\ref{prop:vert-les}] The triple
 $\left(\Delta_{[v_-, v_+]}^\Infin, \Delta_{[v_-, v_+]}^\mu, \Delta_{[v_-, v_+]}^{-\mu}\right)$ leads to the long exact sequence:
$$\begin{aligned}
\cdots &\to H^{k+N}\left(\Delta_{[v_-, v_+]}^\Infin, \Delta_{[v_-, v_+]}^\mu\right) \to 
H^{k+N}\left(\Delta_{[v_-, v_+]}^\Infin, \Delta_{[v_-, v_+]}^{-\mu}\right)  \\
&\text{\hskip 1in} \to  
H^{k+N}\left(\Delta_{[v_-, v_+]}^\mu, \Delta_{[v_-, v_+]}^{-\mu}\right) \to \cdots.
\end{aligned}
$$
   Applying Definition~\ref{defn:wgh}, Corollary~\ref{cor:wgh-cpct}, and Lemma~\ref{lem:sigma-mu}, we obtain the following identifications in the terms of the long exact sequence above.
 $$ \begin{aligned} H^{k+N}\left(\Delta_{[v_-, v_+]}^\Infin, \Delta_{[v_-, v_+]}^{\mu}\right)   &\simeq  \rwgh{k}{F}, \\
  H^{k+N}\left(\Delta_{[v_-, v_+]}^\Infin, \Delta_{[v_-, v_+]}^{-\mu}\right)  
  &\simeq \wgh{k}{F}, \\
   H^{k+N}\left(\Delta_{[v_-, v_+]}^\mu, \Delta_{[v_-, v_+]}^{-\mu}\right) &\simeq H^{k}\left( \lag, \partial \lag_+ \right).
   \end{aligned}
   $$
 \end{proof}

\section{Generating Family Cohomology as a TQFT}
\label{sec:tqft}

In this section, we establish several fundamental properties of the cobordism map defined in Section~\ref{sec:cobord-les}. First, it is important to check that this cobordism map is not always trivial; in the axioms of TQFT, this is often referred to as a ``normalization'' condition. In addition, if a symplectic isotopy (of a special form at the ends) is applied to the Lagrangian, we would naturally hope that we get a cobordism map that only differs by pre- and post-compositions of isomorphisms.  Lastly, we will show that if two Lagrangians (with matching end behavior) are glued, we get a cobordism map which is a composition of the cobordism maps of the pieces; this is often referred to as a ``functoriality'' axiom in TQFT.

We begin with the non-triviality of the cobordism map.

\begin{prop}[Non-Triviality] \label{prop:non-trivial} Given 
a Lagrangian cobordism of $\rr \times J^1M$ of the form 
$(\leg, f) \prec_{(\theta(Z_{\leg}), F)} (\leg, f)$, 
the cobordism map, $\psi_{[F]}$ is the identity.  \end{prop}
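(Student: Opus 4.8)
The plan is to compute the cobordism map directly in the special case where the cobordism is the trivial cylinder $\clag = \theta(Z_\leg)$ over a single Legendrian $\leg$, using the explicit description of $\Psi_F$ via the negative gradient flow of $\Delta$ given in diagram~(\ref{eqn:cobord-map}). The key observation is that for the trivial cylinder, we may take $F(t,x,\e) = tf(x,\e)$ for \emph{all} $t$, so the sheared difference function is $\Delta(t,x,\e,\te) = t\,\delta(x,\e,\te) + H(t)$, where $\delta$ is the difference function of $f$ and $H \in \mathcal H(\clag)$ vanishes on $[t_-,t_+]$. In particular, on the entire slab $\{t \in [t_-,t_+]\}$ we have $\Delta = t\,\delta$, which is just a positive rescaling of $\delta$ in each $t$-slice; its negative gradient flow (for a product metric) moves only in the $M \times \rr^{2N}$ directions and agrees slicewise with the flow of $\delta$ up to reparametrization.

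First I would fix the identifications $\rgh{*}{f} \simeq H^*(\Delta^\Infin_{\{t_-\}}, \Delta^\mu_{\{t_-\}})$ and $\rgh{*}{f} \simeq H^*(\Delta^\Infin_{\{t_+\}}, \Delta^\mu_{\{t_+\}})$ coming from Lemmas~\ref{lem:retract-t-} and \ref{lem:retract-cone}: since $\Delta_{\{t_\pm\}} = t_\pm \delta$, these are literally $(\delta^{\infin}, \delta^{\epsilon})$ after the rescaling $\epsilon = \mu/t_\pm$, $\infin = \Infin/t_\pm$, and by Remark~\ref{rem:e-l-indep} the two identifications are the \emph{same} chain-level identification with $\rgh{*}{f}$ (independence of $\epsilon,\infin$ via the gradient flow). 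Then I would trace the map $\psi_F^*$ through diagram~(\ref{eqn:cobord-map}): the excision isomorphism $ex$ replaces the pair $(\Delta^\Infin_{\{t_-\}} \cup \Delta^\mu_{[t_-,t_+]}, \Delta^\mu_{[t_-,t_+]})$ by $(\Delta^\Infin_{\{t_-\}}, \Delta^\mu_{\{t_-\}})$, and $\psi_F$ itself, by definition, follows the negative gradient flow of $\Delta$ over $[t_-,t_+]$ starting from $\Delta^\Infin_{\{t_+\}}$ until hitting $\Delta^\mu$ or the wall $\{t=t_-\}$. Because $\Delta = t\delta$ on this region, a flowline starting at $(t_+, p)$ with $\delta(p) = c$ (where $\epsilon < c \le \infin$, so no critical value is crossed) has its $t$-coordinate strictly increasing toward... — here I need to check the sign: $\partial_t \Delta = \delta(p) > 0$ when $\delta(p) > 0$, so $t$ increases, meaning the flow from $t_+$ never reaches $t_-$; rather the relevant computation runs the \emph{reverse} direction, and the honest statement is that the horizontal $\partial_t$-flow combined with the slicewise $\delta$-flow gives, at the level of pairs, a homotopy from $\psi_F$ to the slicewise identification. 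The cleanest way to phrase this: the composite $ex^{-1}\circ\psi_F^*$, post- and pre-composed with the identifications of both copies of $\rgh{*}{f}$, is realized by a fiberwise deformation of $(\Delta^\Infin_{[t_-,t_+]}, \Delta^\mu_{[t_-,t_+]})$ that is slicewise the gradient flow of $\delta$, hence induces the identity on $H^*(\delta^\infin,\delta^\epsilon)$.

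The main obstacle will be bookkeeping the direction of the gradient flow and the excision step carefully enough to see that everything collapses to the identity rather than to some automorphism of $\rgh{*}{f}$ — in other words, verifying that the two a priori different identifications $\rgh{*}{f} \cong H^*(\Delta^\Infin_{\{t_\pm\}}, \Delta^\mu_{\{t_\pm\}})$ really are compatible with the gradient-flow map $\psi_F^*$ between them. I would handle this by invoking Remark~\ref{rem:e-l-indep} (the gradient flow of a linear-at-infinity difference function identifies the underlying chain complexes for all admissible $\epsilon,\infin$) applied to the one-parameter rescaling $s \mapsto s\,\delta$, $s \in [t_-,t_+]$, together with the Critical Non-Crossing Lemma~\ref{lem:crit-non-crossing} to confirm that no critical values of $\Delta|_{\{t\}}= t\delta$ cross the levels $\mu$ and $\Infin$ as $t$ ranges over $[t_-,t_+]$ (this uses exactly the inequalities~(\ref{ineq:wgh}), which were set up so that $t_\pm \lmin > \mu$ and $t_\pm \lmax < \Infin$, and monotonicity in $t$ for the intermediate slices). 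Once that is in hand, the map on the chain level is visibly the identity, and so is the induced map on generating family cohomology. A brief final remark would note that the argument simultaneously reproves Corollary~\ref{cor:cyl-iso} in the model case, and that the general-$H$ independence is already covered by Lemma~\ref{lem:cobord-indep-H}.
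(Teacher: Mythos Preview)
Your proposal has a genuine gap: the proposition is stated for an \emph{arbitrary} generating family $F$ for the cylinder $\theta(Z_\leg)$ compatible with $f$ at both ends, not just for $F=tf$. When you write ``we may take $F(t,x,\e)=tf(x,\e)$,'' you are implicitly asserting that any such $F$ is equivalent to $tf$. That is exactly the content of the paper's Lemma~\ref{lem:equiv-to-straight}, and it is not obvious: it requires first normalizing the fiber critical sets via isotopy extension, then building an explicit path $F_s$ of generating families for $\clag$ from $F$ to $tf$, and finally invoking Th\'eret's argument \cite[Theorem~5.1]{theret:viterbo} to upgrade that path to a fiber-preserving diffeomorphism. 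Without this, you have only proved the special case $F=tf$, and the proposition as stated remains open.

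Even in the special case $F=tf$, your approach is more laborious than necessary and contains a sign error. You write that $\partial_t\Delta=\delta(p)>0$ implies $t$ increases along the negative gradient flow; in fact $\dot t=-\partial_t\Delta=-\delta<0$, so the flow from $\{t=t_+\}$ \emph{does} move toward $\{t=t_-\}$, but with a nontrivial $M\times\rr^{2N}$ component, so the bookkeeping you anticipate is real. The paper bypasses all of this with a one-line observation (Lemma~\ref{lem:straight-gf}): since $F=tf$ for \emph{all} $t$, the constants $t_-,t_+$ in Definition~\ref{defn:shear-H} may be chosen equal, whereupon the interval $[t_-,t_+]$ degenerates to a point and $\psi_F$ is literally the identity map by construction. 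Your gradient-flow argument can presumably be made to work, but it is the hard way around.
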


The proof of this proposition rests on two lemmas.  First, we show that if the generating family $F$ has the particularly simple form $F(t, x, \e) =tf(x, \e)$,
 then $\Psi_{[F]}$ is the identity. Second, we show that any generating family $F$ of $Z_\leg$ is equivalent to $tf$.

\begin{lem} \label{lem:straight-gf} Given a Lagrangian cobordism of the form $(\leg, f) \prec_{(\clag, tf)} (\leg, f)$, 
the cobordism map, $\Psi_{[tf]}$ is the identity.  
  \end{lem}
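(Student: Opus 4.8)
The plan is to use the very simple form that the sheared difference function takes for a straight cylinder. Since $\clag = \theta(Z_\leg)$ and $F(t,x,\e) = tf(x,\e)$, for \emph{every} $t \in \rr_+$ we have $\Delta(t,x,\e,\te) = t\,\delta(x,\e,\te) + H(t)$, where $\delta$ is the difference function of $f$; in particular $\Delta = t\,\delta$ on the whole slab $t \in [t_-, t_+]$ because $H \equiv 0$ there. As in the proof of Lemma~\ref{lem:cobord-indep-H}, the cobordism map depends only on $\Delta$ over this slab, so I am free to compute it with respect to the product of the standard metric on $[t_-, t_+] \subset \rr_+$ and any metric on $M \times \rr^{2N}$.

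With this metric, $\grad\Delta = \delta\,\partial_t + t\,\grad\delta$ on the slab, so the flow lines of $-\grad\Delta$ project, in the $M\times\rr^{2N}$ factor, to flow lines of $-\grad\delta$ (traversed at $t$-dependent speed), and both $\delta$ and $\Delta$ are non-increasing along them. Each slice $(\Delta^\Infin_{\{t\}}, \Delta^\mu_{\{t\}})$ equals $(\{t\}\times\delta^{\Infin/t}, \{t\}\times\delta^{\mu/t})$, and since $\leg_- = \leg_+ = \leg$ (so $\lmaxpm = \lmax$ and $\lminpm = \lmin$), Inequalities~(\ref{ineq:wgh}) force $\Infin/t > \lmax$ and $0 < \mu/t < \lmin$ for every $t \in [t_-, t_+]$; hence $\delta$ has no critical values in $[\mu/t, \Infin/t]$, for any such $t$. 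Using Lemma~\ref{lem:retract} and the (obvious decreasing versions of the) arguments of Section~\ref{sec:sublevel-sets}, it follows that the slab pair $(\Delta^\Infin_{[t_-, t_+]}, \Delta^\mu_{[t_-, t_+]})$, the expanded pair $(\Delta^\Infin_{\{t_-\}}\cup\Delta^\mu_{[t_-, t_+]}, \Delta^\mu_{[t_-, t_+]})$, and both slices $(\Delta^\Infin_{\{t_\pm\}}, \Delta^\mu_{\{t_\pm\}})$ are all mutually related by deformation retractions coming from the flow of $-\grad\Delta$ (equivalently, slicewise, of $-\grad\delta$) --- and these are precisely the canonical identifications of Remark~\ref{rem:e-l-indep} and Corollary~\ref{lem:e-l-indep} with $\rgh{*}{f}$.

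Finally I would trace through diagram~(\ref{eqn:cobord-map}). There the outer isomorphisms $\rgh{*}{f}\simeq H^*(\Delta^\Infin_{\{t_\pm\}},\Delta^\mu_{\{t_\pm\}})$ are the canonical $\delta$-flow identifications just described; the excision map $ex$ is dual to the inclusion $(\Delta^\Infin_{\{t_-\}},\Delta^\mu_{\{t_-\}}) \hookrightarrow (\Delta^\Infin_{\{t_-\}}\cup\Delta^\mu_{[t_-, t_+]}, \Delta^\mu_{[t_-, t_+]})$, which is a homotopy equivalence of pairs by the previous paragraph; and $\psi_F$ is the restriction to $\Delta^\Infin_{\{t_+\}}$ of the retraction $\rho_-$ of Lemma~\ref{lem:retract-t-}, which over $[t_-, t_+]$ is exactly the partial $-\grad\Delta$ flow above. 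Since every arrow in the diagram is thus (the cohomology dual of) a deformation retraction induced by the single negative gradient flow of $\delta$, the composite is the canonical identification $H^*(\Delta^\Infin_{\{t_-\}},\Delta^\mu_{\{t_-\}}) \xrightarrow{\ \sim\ } H^*(\Delta^\Infin_{\{t_+\}},\Delta^\mu_{\{t_+\}})$, and conjugating by the outer isomorphisms gives $\Psi_{[tf]} = \id$. The main obstacle is the bookkeeping in this last step: one must check that the restriction of $\rho_-$, the excision map, and the slice-versus-slab retractions are genuinely governed by the \emph{same} flow of $-\grad\delta$, so that they cancel rather than producing a nontrivial automorphism of $\rgh{*}{f}$; making this compatibility precise (rather than merely plausible) is the real content. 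A secondary, routine point is recording the ``decreasing'' analogues of the retraction lemmas of Section~\ref{sec:sublevel-sets}, needed here because the cutoff levels $\Infin/t$ and $\mu/t$ decrease in $t$.
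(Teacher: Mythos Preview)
Your approach is correct in outline, and you have honestly flagged its real cost: the bookkeeping needed to check that $\psi_F$, the excision map, and the two slice identifications are all governed by the \emph{same} $-\grad\delta$ flow and therefore compose to the canonical identification rather than to some nontrivial automorphism of $\rgh{*}{f}$. This can be done (indeed, the end of the proof of Lemma~\ref{lem:cobord-indep-H} carries out exactly this kind of argument on the sub-slab $[t_+^0,t_+^1]$), but it is more work than necessary.

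The paper's proof avoids the bookkeeping entirely with a one-line trick. You already invoke Lemma~\ref{lem:cobord-indep-H} to say that only the slab matters; the paper uses the \emph{full} strength of that lemma, namely that $\Psi_F$ is independent of the choice of $t_\pm$. Since $F(t,x,\e)=tf(x,\e)$ for \emph{every} $t$, the cylindricality condition of Definition~\ref{defn:compatible} is satisfied for any $t_-\le t_+$, and Definition~\ref{defn:shear-H} allows $t_-=t_+$. With that degenerate choice the slab $[t_-,t_+]$ collapses to a point, the domain and target of $\psi_F$ in Corollary~\ref{cor:cobord-cone} literally coincide, $\psi_F$ is the identity map of pairs, and the remaining arrows in diagram~(\ref{eqn:cobord-map}) are tautologically identities. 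So $\Psi_{[tf]}=\id$ with no flow analysis at all. Your route has the virtue of being more hands-on and of reusing the machinery of Section~\ref{sec:sublevel-sets}; the paper's buys a two-sentence proof by exploiting a degeneration you did not consider.
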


\begin{proof}  By Lemma~\ref{lem:cobord-indep-H}, $\Psi_F$ does not depend on the
choice of $H \in \mathcal H(\clag)$.  Since $F(t, x, \e) = tf(x, \e)$, for all $t$, we can take $t_- = t_+$ in the choice of
$H \in \mathcal H(\clag)$.  Then, by construction, $\psi_F$ in Corollary~\ref{cor:cobord-cone} will be the identity, and hence
$\Psi_F$ will be the identity.   
\end{proof}

\begin{lem} \label{lem:equiv-to-straight} Given a Lagrangian cobordism of the form $(\leg, f) \prec_{(\clag, F)} (\leg, f)$, if $\theta^{-1}(\clag) = Z_\leg \subset \rr \times J^1M$, then $F$ is equivalent to $t f$.  \end{lem}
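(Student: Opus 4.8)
The plan is to reduce Lemma~\ref{lem:equiv-to-straight} to a parametrized form of the uniqueness of generating families for the fixed Legendrian $\leg$. Recall from Equation~(\ref{eqn:id}) that
$$\clag \;=\; \theta(Z_\leg) \;=\; \bigl\{\,(t,\,x_0;\; z_0,\; t y_0)\;:\; t \in \rp,\ (x_0,y_0,z_0)\in\leg\,\bigr\},$$
where $(x,p_x)$ and $(t,p_t)$ are the base and cotangent-fiber coordinates on $T^*(\rp\times M)$. For $t\in\rp$ define $g_t\colon M\times\rr^N\to\rr$ by $g_t(x,\e):=t^{-1}F(t,x,\e)$. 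Since $F$ is slicewise-linear-at-infinity (Definition~\ref{defn:sli}), each $g_t$ is linear-at-infinity, and by compatibility of the triple $(F,f,f)$ (Definition~\ref{defn:compatible}) we have $g_t=f$ for $t\notin[t_-,t_+]$. The first step is to check that \emph{each $g_t$ is a generating family for $\leg$}. As $\partial_\e g_t = t^{-1}\partial_\e F$, the origin is a regular value of $\partial_\e g_t$ and $\Sigma_{g_t}$ is the slice $\Sigma_F\cap\{t=\mathrm{const}\}$. Because $\partial_f$ parametrizes $\clag$, on $\Sigma_F$ we have $\partial_x F = t y_0$ and $\partial_t F = z_0$ at points lying over the arc of $\clag$ through a given $(x_0,y_0,z_0)\in\leg$; in particular $\partial_t F$ is constant in $t$ along such an arc. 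Lifting this arc to a curve $t\mapsto(t,x_0,\e(t))$ in $\Sigma_F$ and differentiating gives $\frac{d}{dt}F(t,x_0,\e(t)) = \partial_t F = z_0$, so $F(t,x_0,\e(t)) = t z_0 + C$; evaluating near the negative end, where $F=tf$ and $f(x_0,\e(t))=z_0$, forces $C=0$. Hence on $\Sigma_{g_t}$ we have $\partial_x g_t = y_0$ and $g_t = z_0$, so $j_{g_t}$ has image $\leg$.

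With this in hand, $\{g_t\}_{t\in\rp}$ is a smooth path of linear-at-infinity generating families for the \emph{fixed} Legendrian $\leg$ that equals the constant family $f$ outside $[t_-,t_+]$. I would then invoke a parametrized uniqueness theorem for generating families: after a common stabilization, there is a smooth family of fiber-preserving diffeomorphisms $\Phi_t(x,\e)=(x,\phi_{t,x}(\e))$ of $M\times\rr^N$, independent of $t$ and equal to $\id$ outside $[t_-,t_+]$, with $g_t\circ\Phi_t=f$ for all $t$. Setting $\Psi(t,x,\e):=(t,x,\phi_{t,x}^{-1}(\e))$ produces a fiber-preserving diffeomorphism of $(\rp\times M)\times\rr^N$, independent of $t$ outside $[t_-,t_+]$, and a direct computation using $F(t,x,\e)=t\,g_t(x,\e)$ and $g_t = f\circ\Phi_t^{-1}$ gives $(tf)\circ\Psi = F$. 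Absorbing the common stabilization into both $F$ and $tf$, this exhibits $F\sim tf$ with all $t$-dependence confined to $[t_-,t_+]$, which is precisely the notion of equivalence for compatible families recorded after Definition~\ref{defn:compatible}.

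The main obstacle is the parametrized uniqueness invoked in the second step. Unparametrized, this is the classical statement that any two generating families of a given Legendrian become fiber-preserving diffeomorphic after stabilization; I expect to obtain the parametrized version over the contractible interval $[t_-,t_+]$ by running the argument behind Proposition~\ref{prop:leg-persist} --- which rests on Chekanov's composition formula --- with the \emph{constant} isotopy $\leg_t\equiv\leg$, so that the resulting path of generating families is joined to $f$ through the allowed operations. A secondary technical point is to arrange $\Phi_{t_\pm}=\id$ exactly (not merely up to an automorphism preserving $f$), so that $\Psi$ has genuinely no $t$-dependence near the ends; this is handled by absorbing the boundary automorphisms, together with a further common stabilization if necessary, neither of which affects the end behavior.
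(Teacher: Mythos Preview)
Your strategy---slicing $F$ into $g_t(x,\e)=t^{-1}F(t,x,\e)$, verifying each $g_t$ generates $\leg$, and then invoking a path-uniqueness result to straighten the slices---is sound and closely parallels the paper's argument. The paper instead builds an explicit one-parameter family $F_s(t,x,\e)=\frac{t}{(1-s)t+st_-}\,F((1-s)t+st_-,x,\e)$ of generating families for the \emph{full} cobordism $\clag$ (note $F_s(t,\cdot)=t\,g_{(1-s)t+st_-}(\cdot)$, so this is a reparametrization of your slice family) and applies Th\'eret's theorem once on $(\rp\times M)\times\rr^N$. Your version applies the same Th\'eret argument on $M\times\rr^N$, parametrized by $t$, and then reassembles; both routes rest on exactly the same uniqueness input and both require the preliminary Isotopy-Extension step aligning the fiber-critical sets (your $\Sigma_{g_t}$ vary with $t$ a priori).

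The one genuine gap is the reference you give for that input. Proposition~\ref{prop:leg-persist} is an \emph{existence} statement---it manufactures a path of generating families along a Legendrian isotopy via Chekanov's composition formula---and running it with the constant isotopy only hands you back another path starting at $f$; it says nothing about relating your given path $g_t$ to $f$ by fiber-preserving diffeomorphisms. What you actually need is Th\'eret's path-uniqueness theorem (the paper invokes \cite[Theorem~5.1]{theret:viterbo} and sketches the proof: integrate the vector field solving $\frac{d}{ds}(F_s\circ\Phi_s)=0$, using Hadamard's lemma near the aligned fiber-critical set and tameness for integrability). Once you cite that correctly, your assembly $\Psi(t,x,\e)=(t,x,\phi_{t,x}^{-1}(\e))$ works as written; your ``secondary technical point'' is handled simply by extending $\Phi_t$ as the constant $\Phi_{t_+}$ for $t\geq t_+$, since $\Phi_{t_+}$ is an automorphism of $f$ and the compatibility convention after Definition~\ref{defn:compatible} only demands $t$-independence at the ends, not that $\Psi$ be the identity there.
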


\begin{proof}  Suppose that $F$ generates 
$$\clag = \theta(Z_\leg) = \{ (t, x, z, ty) : (x, y, z) \in \leg \},$$
and that outside of a compact interval of $\rp$, $F  = tf$.  The key points of the proof will be to find a path of generating families $F_s$ for $\clag$ that interpolate between $F$ and $tf$, and then to apply an argument of Th\'eret to $F_s$ to produce the desired equivalence.

We first show that after applying a fiber-preserving diffeomorphism, we may assume that the fiber-critical sets of $F$ and $tf$ agree, i.e.
 $\Sigma_F = \rp \times \Sigma_f$,
and that $F|_{\Sigma} = tf|_{\Sigma}.$
We use the notation $F(t, x, \e) = t f_t(x, \e)$, and hence we may write
$$\Sigma_F = \{ (t, x, \e) : \pd{f_t}{\e}(x, \e) = 0 \} = \bigcup_t  \left( \{t \} \times \Sigma_{f_t} \right).$$
Since for each $t$, $\Sigma_{f_t}$ is isotopic to $\Sigma_{f_{t_-}} = \Sigma_f$, the Isotopy Extension Theorem provides a compactly-supported fiber-preserving diffeomorphism that yields $\Sigma_{f_t} = \Sigma_f$ for all $t$.  A continuity argument shows that the embedding $j_{f_t}$ is also independent of $t$.

Since $tf_t(x, \e)$ generates
$$\{ \left(t, x, f_t(x, \e) + {t}\pd{f_t}{t}(x, \e), t \pd{f_t}{x}(x, \e) \right) :  (x, \e) \in \Sigma_f  \},$$
we have $f_t(x, \e) + {t}\pd{f_t}{t}(x, \e) = z$, where for fixed $x$ and $\e$, we think of $z$ as a fixed constant.  This equation is an ODE for $f_t(x,\e)$ as a function of $t$ with initial values $f_{t_-}(x,\e) = f(x,\e)$.  Since the constant solution $f_t(x,\e) = f(x,\e)$ solves the ODE, the uniqueness of the solution yields the equation $\pd{f_t}{t} (x, \e) = 0$.

Next, we show that there exists a $1$-parameter family of generating families $F_s(t, x, \e)$ between $F$ and $tf$ so that 
$F_s$ generates $\clag$ for all $s$.  Consider the path
$$F_s(t,x,\e) = \frac{t}{(1-s)t + st_-} F( (1-s)t + st_-, x, \e).$$
It is easy to verify that $F_0 = F$, $F_1 = tf$, $\Sigma_{F_s} = \rp \times \Sigma_f$, and that $F_s$ is a generating family for each $s$.  Then, since $F(t, x, \e) = tf(x, \e)$ on $\Sigma_{F_s}$, we see that $F_s$ generates $\clag$ for each $s$.

It now follows from an argument of Th\'eret \cite[Theorem 5.1]{theret:viterbo} that $F_1$ and $F_0$ are equivalent by a fiber-preserving diffeomorphism $\Phi_1$. For the reader's convenience, we sketch Th\'eret's argument.  The goal is to show that there exists a fiber-preserving isotopy $\Phi_s$ so that $F_s \circ \Phi_s= F_0$, for all $s \in [0, 1]$.  By differentiating this equation with respect to $s$, we get an equation for a vector field $X_s$ that generates this isotopy.  It is easy to find the solution for this $X_s$ outside the fiber critical set $\Sigma_{F_s} = \Sigma$ of $F_s$.  We then apply Hadamard's lemma to find a solution $X_s$ near $\Sigma$.  These two solutions are then glued together to produce a globally defined $X_s$ by the choice of an appropriate bump function.  The taming conditions on $F$ guarantee that the vector field $X_s$ will be integrable.
\end{proof}
 
Next, we consider the naturality of the cobordism map. As usual, we begin with a gf-compatible Lagrangian  cobordism $(\leg_-, f_-) \prec_{(\sclag, F)} (\leg_+, f_+)$ in the symplectization $\rr \times J^1M$.  First suppose that $\kappa^s_\pm$ are compactly supported contact isotopies of $J^1M$.  These isotopies may be extended to symplectic isotopies on the symplectization $\rr \times J^1M$ by the formula
$$S\kappa^s_\pm(t,x) = (a^s_\pm(x)t, \kappa^s_\pm(x)),$$
where $a^s_\pm$ are the scaling functions given by $(\kappa^s_\pm)^*\alpha = a^s_\pm \alpha$. 

Next, let $\phi^s$, $s \in [0, 1]$, be a symplectic isotopy of $\rr \times J^1M$ so that,
  for compact sets $I \subset \rr$ and $X \subset J^1M$,  we have:
\begin{itemize}
\item $\phi^s = \id$ on the complement of $\rr \times X$ and
\item  $\phi^s = S\kappa^s_\pm$ for (compactly supported) 
   contact isotopies $\kappa^s_\pm$ of $J^1M$ on the complement of
   $I \times J^1M$.
\end{itemize}

\begin{prop}[Naturality] \label{prop:cobord-naturality} 
  Given the conditions above, there exists a smooth, $1$-parameter family 
  of cobordisms $(\kappa^{s}(\leg_-), f_-^{s}) \prec_{(\phi^{s}(\theta(\clag), F^{s})} (\kappa^{s}(\leg_+), f_+^{s})$ so that:
 \begin{enumerate}
  \item $(F^0, f_-^0, f_+^0) \sim (F, f_-, f_+)$, and
  \item The  following diagram commutes for all $s$:
    \begin{equation} \label{eqn:nat-diag}
      \xymatrix{
        GH^k\left([f_-^0]\right) \ar[r]^{\Psi_{[F^0]}} & GH^*\left([f_+^0]\right)\\
        GH^k\left([f^s_-]\right) \ar[r]^{\Psi_{[F^s]}} \ar[u]^{(\kappa^s_-)^\#}_{\simeq} & GH^*\left([f^s_+]\right) \ar[u]^{(\kappa^s_+)^\#}_{\simeq}.
      }
    \end{equation}
  \end{enumerate}
\end{prop}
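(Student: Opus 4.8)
The plan is to build the $1$-parameter family of cobordisms and generating families directly from the given symplectic isotopy $\phi^s$ and then to verify commutativity of~(\ref{eqn:nat-diag}) by a Critical Non-Crossing argument (Lemma~\ref{lem:crit-non-crossing}) applied to the sheared difference functions. First I would construct the family of generating families $F^s$ for $\phi^s(\clag)$. Near the ends, where $\phi^s$ restricts to the lifted contact isotopy $S\kappa^s_\pm$, the persistence result for Legendrian generating families (Proposition~\ref{prop:leg-persist}) produces families $f^s_\pm$ generating $\kappa^s_\pm(\leg_\pm)$ that agree with $f^0_\pm$ outside a compact set and are stabilizations of $f_\pm$; the scaling factor built into $S\kappa^s_\pm$ matches exactly the ``$t f_\pm$'' factor in the definition of compatibility (Definition~\ref{defn:compatible}), so setting $F^s(t,x,\e)= t f^s_\pm(x,\e)$ for $t$ outside $[t_-,t_+]$ is forced. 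In the interior region, one uses Chekanov's composition formula (as in the discussion preceding Proposition~\ref{prop:leg-persist} and in~\cite{chv:quasi-fns}) to produce a generating family for $\phi^s(\clag)$ interpolating between these two ends; the taming condition (slicewise-linear-at-infinity) is preserved because $\phi^s$ is the identity outside a compact set and equals $S\kappa^s_\pm$ outside $I\times J^1M$. This yields the family of gf-compatible cobordisms and statement (1), since $F^0$ differs from $F$ only by a stabilization coming from the composition formula, hence $(F^0,f^0_-,f^0_+)\sim(F,f_-,f_+)$.

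Next I would set up the commutative square. The vertical maps $(\kappa^s_\pm)^\#$ are, by Corollary~\ref{cor:end-naturality} and the remark following it, obtained by applying the Critical Non-Crossing Lemma to the paths of difference functions $\delta^s_\pm$ associated to $f^s_\pm$; concretely they are induced by following the negative gradient flow of $\delta^s_\pm$ as $s$ varies, and under the identifications $\rgh{*}{f^s_\pm}\simeq H^*(\Delta^\Infin_{\{t_\pm\}},\Delta^\mu_{\{t_\pm\}})$ from Lemmas~\ref{lem:retract-cone} and~\ref{lem:retract-t-} they become continuation maps for the sheared difference functions $\Delta^s$ restricted to the slices $\{t=t_\pm\}$. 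The cobordism map $\Psi_{[F^s]}$ is, by the diagram~(\ref{eqn:cobord-map}), the composition of an excision isomorphism with $(\psi^s_F)^*$, where $\psi^s_F$ follows the negative gradient flow of $\Delta^s$ in the region $t\in[t_-,t_+]$ down to the level $\mu$ or to the slice $\{t=t_-\}$.

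The key step, and the one I expect to be the main obstacle, is to fit all of this into a single Critical Non-Crossing argument on the compact base $[v_-,v_+]\times M\times\rr^{2N}$. The idea is to regard the pair $(\Delta^s_{[v_-,v_+]})^\Infin$ relative to $(\Delta^s_{[v_-,v_+]})^\mu$, together with the mapping-cone decomposition of Corollary~\ref{cor:cobord-cone}, as a family over $s$; the levels $\Infin(s)$ and $\mu(s)$ can be chosen to remain regular throughout by the estimates in Lemma~\ref{lem:mu} and Lemma~\ref{lem:-mu}, since the Reeb chord lengths of $\kappa^s_\pm(\leg_\pm)$ vary continuously and the parameters $r_\pm,u_\pm$ in $\mathcal H(\clag)$ can be taken uniform in $s$. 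One then constructs, exactly as in the proofs of Lemma~\ref{lem:cobord-cpt-deform} and Proposition~\ref{prop:wgh-independent}, an integrable gradient-like vector field $X_s$ for $\Delta^s|_{[v_-,v_+]}$, bounded away from $\mathbf 0$ on the relevant level bands; Lemma~\ref{lem:crit-non-crossing} then gives a homotopy equivalence of the pairs of sublevel sets over $s$ that is compatible with the mapping-cone structure. The resulting isomorphism of long exact sequences of mapping cones has the horizontal map $\Psi_{[F^s]}$ on the ``base'' factor and the slicewise continuation maps $(\kappa^s_\pm)^\#$ on the ``cone point'' and ``target'' factors; naturality of the long exact sequence of a mapping cone (Lemma~\ref{lem:cone-exact}) then forces the square~(\ref{eqn:nat-diag}) to commute. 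The delicate point is checking that the vector field $X_s$ can be chosen to simultaneously be the gradient of $\Delta^s$ in the interior (so that it induces $\psi^s_F$ correctly) and to restrict on the end slices $\{t=t_\pm\}$ to a field inducing $(\kappa^s_\pm)^\#$; this is exactly the compatibility between the interior gradient flow and the slicewise flow used in Lemma~\ref{lem:cobord-indep-H}, and the same argument — a direct computation of $\grad\Delta^s$ where $\Delta^s=t\delta^s_\pm+H$ together with the observation that its $M\times\rr^{2N}$ component is parallel to $\grad\delta^s_\pm$ — carries over here.
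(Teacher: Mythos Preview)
Your overall strategy matches the paper's: Chekanov's composition formula for the existence of $(F^s,f^s_-,f^s_+)$, and a gradient-flow compatibility argument for the commutativity of~(\ref{eqn:nat-diag}). The key technical observation you isolate at the end --- that on the slices $\{t=t_\pm\}$ the $M\times\rr^{2N}$ component of $\grad\Delta^s$ is parallel to $\grad\delta^s_\pm$ --- is exactly what the paper uses.

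Two points of comparison. First, your treatment of tameness in Part~1 is too breezy. The composition formula introduces new fiber variables $(r,x,t,q)$ beyond the original $\e$, and the raw composed function $K$ is neither a compatible triple nor slicewise-linear-at-infinity. The paper makes two successive adjustments (a cutoff $\tau$ bounding the rescaled $t$-variable, then a fiber-preserving diffeomorphism $\beta$) to arrange compatibility, and then gives a separate argument bounding $\|K''-\text{(linear part)}\|_\infty$ slicewise to recover tameness via Lemma~\ref{lem:lq-l}. This is the bulk of the work, and your sentence ``the taming condition is preserved because $\phi^s$ is the identity outside a compact set'' does not cover it.

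Second, for Part~2 the paper is more direct than your packaging suggests. Rather than running a global Critical Non-Crossing argument and then invoking naturality of the mapping-cone long exact sequence, the paper simply extends the slicewise continuation map $k^s_-$ to a map $\tilde{k}^s_-$ on $\Delta^\Infin_{\{t_-\}}\cup\Delta^\mu_{[t_-,t_+]}$ using the slicewise projections of $\grad\Delta^s$, and then observes that $(\tilde{k}^s_-)^{-1}\circ\psi_{F^s}\circ k^s_+$ is homotopic to $\psi_{F^0}$. Your route would work, but the ``compatibility with the mapping-cone structure'' step you flag as delicate amounts to exactly this extension, so you may as well do it directly.
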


\begin{proof}[Proof of Existence of $(F^s, f_-^s, f_+^s)$] The construction of the generating families $(F^s, f_-^s, f_+^s)$ essentially follows from a construction of Chekanov \cite[\S5]{chv:quasi-fns}; we will discuss the necessary modifications.  As Chekanov proves, we may assume that $M = \rr^k$.  We denote the symplectic isotopy $\theta \circ \phi^s \circ \theta^{-1}$ of $T^*(\rr_+ \times \rr^k)$ by $\Phi$.  Since $\phi^s$ is well-behaved outside of a compact set, we may write it as a composition of $C^2$-small symplectomorphisms, and hence may simply assume that $\phi^s$ --- and hence $\Phi$ --- is $C^2$-close to the identity.  The key idea underlying Chekanov's proof of persistence is to repeatedly apply the fact that a generating family $F$ for $\clag$ and a generating family for a sufficiently small $\Phi$ (meaning a generating family for the image of the graph of $\Phi$) can be ``composed'' to obtain a generating family for $\Phi(\clag)$.

We now outline Chekanov's composition formula, partly for the reader's convenience, and partly to note the changes necessary to adjust his proof --- in which the generating family $F(t,x,\e)$ was assumed to be of the form $t f(x,\e)$ --- to our more general situation.  Let $\Gamma_\Phi \subset \overline{T^*(\rp \times \rr^k)} \times T^*(\rp \times \rr^k)$ denote the graph of $\Phi$, and 
 let $\widetilde{\Gamma}_\Phi = \sigma(\Gamma_\Phi)$ where $\sigma$ is the symplectic embedding given by:
$$\begin{aligned}
\sigma: \overline{T^*(\rp \times \rr^k)} \times T^*(\rp \times \rr^k) & \to T^*(\rr^{k+1} \times \rp \times \rr^{k}) \\
(t,q,u, p, T, Q, U, P) &\mapsto (u,p,T,Q,t,q,U,P)
\end{aligned}
$$
If $\Phi$ is sufficiently close to the identity, then
$\widetilde{\Gamma}_\Phi$ is the graph of the exact $1$-form $dG$ for a function
$G: \rr^{k+1} \times \rp \times \rr^{k} \to \rr$.

We can now combine $F$ and $G$ to obtain a generating family 
\begin{align*}
K: \rr_+ \times \rr^k \times (\rr_+ \times \rr^k \times \rr \times \rr^k \times \rr^N) &\to \rr \\
(s,y; r,x,t,q,\e) &\mapsto F(t,q,\e) + G(r,x,s,y)\\
& \quad \quad -rt-xq
\end{align*}
for $\Phi(\clag)$. We emphasize that the fiber variables are now $r, x, t, q$, and $\eta$.  It is straightforward to check that $K$ does, indeed, generate $\Phi(\clag)$.  In order to prove that $K$ is part of a compatible triple, however, we need to make some adjustments in the spirit of \cite[Lemma 5.6]{chv:quasi-fns}.

The first adjustment begins by defining constants $a$ and $b$ such that 
\begin{equation} \label{eqn:ab}
  a < \frac{\partial_r G}{s} < b;
\end{equation}
the proof of existence of such $a$ and $b$ is entirely similar to that in \cite[Lemma 5.7]{chv:quasi-fns}.  We now define a function $\tau: \rr \to \rr$ to be a smooth, non-decreasing function with the property that:
$$ \tau(x) = \begin{cases} a/2 & x <a/2, \\ x & a \leq x \leq b, \\ 2b & x > 2b, \end{cases}$$
which then can be used to define
$$K'(s,y; r,x,t,q,\e) = F( s \tau(t/s), q, \e) + G(r,x,s,y) -rt-xq.$$
To see that $K'$ generates the same Lagrangian as $K$, notice that on the fiber critical set of $K'$, we have $\partial_r G = t$.  Thus, in a neighborhood of the fiber critical set, we have $a < t/s < b$, so $s \tau (t/s) = t$, and hence $K=K'$.

The second adjustment uses the fiber-preserving diffeomorphism
$$\beta(s,y;r,x,t,q,\e) = (s,y; r, s x, st, q, \e/\tau(t)),$$
with $K'' = K' \circ \beta$.  The function $K''$ clearly still generates $\clag$.  Outside of a compact interval in the $\rr_+$ coordinate $s$, where $F(t,q,\e) = t f_\pm(q,\e)$ and, as noted in \cite[Proposition 5.5]{chv:quasi-fns}, $G(r,sx,s,y)$ is of the form  $s g_\pm(r,x,y)$, we see that 
$$K''(s,y;r,x,t,q,\e) = s \bigl( \tau(t) f_\pm(q, \e/\tau(t)) + g_\pm(r,x,y) -rt - xq\bigr).$$
Letting $k_\pm$ be the functions in the parentheses, we see that $(K'', k_-, k_+)$ is a compatible triple of generating families of $\Phi(\clag)$.  Note that outside of a compact set, the linear term in $\e$ of $k_\pm$ agrees with that of $f_\pm$.

It remains to show that the triple $(K'', k_-, k_+)$ is tame, i.e.\ that $K''$ is slicewise linear at infinity.  Suppose that for each $s \in \rp$, $F$ is equal to the linear function $A_s(\e)$.  If we can show that the quantity
\begin{equation} \label{eqn:K''}
\begin{split}
  B(s) &= \|K'' - \frac{1}{\tau(t)}A_{s\tau(t)}(\e) + rt+xq\|_\infty \\
  &\leq \|F(s \tau(t), q, \e/\tau(t)) - \frac{1}{\tau(t)}A_{s \tau(t)}(\e)\|_\infty + \|G(r,sx,s,y)\|_\infty
\end{split}
\end{equation}
is bounded above for each fixed $s$, then a slicewise application of Fuchs and Rutherford's proof of Lemma~\ref{lem:lin-diff}, together with Lemma~\ref{lem:lq-l}, shows that $K''$ is equivalent to a slicewise linear-at-infinity generating family.

To show that the quantity $B(s)$ in (\ref{eqn:K''}) is bounded for each fixed $s$, we begin by noting that since $G$ generates a slicewise compactly supported symplectomorphism, it must be constant outside of a compact set in each slice and hence the second term in (\ref{eqn:K''}) is bounded. For each fixed $t$, the fact that $F$ is slicewise linear-at-infinity shows that if $q$ or $\e$ grow large, then $F(s \tau(t), q, \e/\tau(t))$ agrees with $\frac{1}{\tau(t)}A_{s\tau(t)}(\e)$.  Finally, if $t$ lies outside a compact set in $\rr_+$, then $\tau(t)$ becomes constant, and hence $\|F(s \tau(t), q, \e/\tau(t)) - \frac{1}{\tau(t)}A_{s \tau(t)}(\e)\|_\infty$ is uniformly bounded for all $t$.  \end{proof}

\begin{proof}[Proof of the Commutativity of (\ref{eqn:nat-diag})] Let $\clag^s$ denote the image of $\phi^s(\overline L)$ in $T^*(\rp \times M)$.  We may assume that there is a $1$-parameter family of tame, compatible generating families $(F^s, f_-^s, f_+^s)$ for $\left(\clag^s, \kappa_-^s(\leg_-), \kappa_+^s(\leg_+)\right)$. Construct $\Delta^s$ from $F^s$ and a $1$-parameter family of shearing functions $H^s$ in $\mathcal H\left(\clag^s\right)$ so that the values $t_\pm$ are fixed for all $s$. Finally, choose $1$-parameter families $\Infin(s)$ and $\mu(s)$ satisfying Inequalities~(\ref{ineq:wgh}).  As in Inequalities~(\ref{ineq:vpm}), choose $v_\pm$ so that 
\begin{align*}
    \lambda_{\Infin(s)}(t) &< -\lmaxm^s & \text{for all }t \leq v_-,\\
    \lambda_{\mu(s)}(t) &> \lmaxp^s & \text{for all }t \geq v_+.  
\end{align*}

As noted after Corollary~\ref{cor:end-naturality}, the isomorphisms $(\kappa^s_\pm)^\#$ are constructed by applying the Critical Non-Crossing Lemma~\ref{lem:crit-non-crossing} to the difference functions $\delta^s_\pm$.  In particular, the maps $k^s_\pm$ underlying $(\kappa^s_\pm)^\#$ are simply compositions of positive and negative gradient flows of finitely many $\delta^s_\pm$ functions.  In this proof, we think of the gradients $\nabla \delta^s_\pm$ as projections of the gradients $\nabla \Delta^s$ restricted to $t=t_\pm$.  This allows us to extend $k^s_-$ to a map $\tilde{k}^s_-$ on $(\Delta^s)_{t_-}^\Infin \cup (\Delta^s)^\mu_{[t_-,t_+]}$ by looking at flows of the projections of $\nabla \Delta^s$ to each constant $t$ slice.  We can therefore form the map 
$$(\tilde{k}^s_-)^{-1} \circ \psi_{F^s} \circ k^s_+,$$
which is clearly homotopic to $\psi_F$.  The commutativity of the diagram (\ref{eqn:nat-diag}) follows.
\end{proof}

The final TQFT-like property that we will explore is functoriality, i.e.\ the behavior of the cobordism map under the gluing of cobordisms. In order to state the functoriality property, we begin by specifying what it means to glue together two generating families with matching end behavior.

\begin{defn} \label{defn:rho} Two  compatible triples of generating families
$(F^1, f_-^1, f_+^1)$ and $(F^2, f_-^2, f_-^2)$ are  \dfn{composable} if $f^1_+ \sim f^2_-$. 
\end{defn}

Given two composable triples, after applying stabilizations and fiber-preserving diffeomorphisms to $F^1$ and $F^2$, we can assume that $f^1_+ = f^2_-$. Let $t^1_\pm$ and $t^2_\pm$ be as in Definition~\ref{defn:shear-H}, and choose any $\rho > 0$ so that $t^2_- + \rho \geq t^1_+$.  Then define:
$$F^1 \#_\rho F^2 (t, x, \e) = \begin{cases}
  F^1(t,x,\e), & t \leq  t^2_- + \rho \\
  F^2(t-\rho, x, \e), & t \geq t^2_- + \rho.
\end{cases}$$ 
It is not hard to see that if, for $i = 1,2$, we have Lagrangian cobordisms $(\leg_-^i, f_-^i) \prec_{(\clag^i, F^i)} (\leg_+^i, f_+^i)$, and if $(F^1, f_-^1, f_+^1)$ and $(F^2, f_-^2, f_+^2)$ are composable, then $(F^1 \#_\rho F^2, f_-^1, f_+^2)$ is a tame, compatible triple of generating families
for the cobordism $ \leg_-^1 \prec_{\clag^1 \#_\rho \clag^2} \leg_+^2$, where $\clag^1 \#_\rho \clag^2$ denotes the glued Lagrangian which agrees with the image of $\leg_-^1$ when $t \leq t_-^1$ and with the image of $\leg_+^2$ when $t \geq t_2^+ + \rho$.  For $H \in \mathcal H(F^1 \#_\rho F^2)$, let $\bar{\Delta}$ be the associated sheared difference function.

\begin{prop}[Functoriality] \label{prop:funct} If $(F^1, f_-^1, v_+^1)$ and $(F^2, f_-^2, f_+^2)$ are
  composable, tame generating families, then:
    $$\Psi_{[F^1 \#_\rho F^2]} = \Psi_{[F^2]} \circ \Psi_{[F^1]}:
     \rgh{k}{[f^1_-]} \to \rgh{k}{[f^2_+]}$$
   In particular, the cobordism map for the glued Lagrangian does not
   depend on $\rho$.
\end{prop}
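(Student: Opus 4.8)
The plan is to trace through the definition of the cobordism map in diagram~(\ref{eqn:cobord-map}) and observe that the construction for $F^1 \#_\rho F^2$ decomposes as an honest composition of the constructions for $F^1$ and $F^2$. First I would fix a shearing function $H \in \mathcal H(F^1 \#_\rho F^2)$; by Lemma~\ref{lem:cobord-indep-H} the three cobordism maps involved are independent of the choices of shearing functions, so I am free to pick $H$ of a convenient form --- in particular, one whose ``neck'' $[t^1_-, t^2_+ + \rho]$ contains the intermediate level $t^1_+ = t^2_-$, and which I can take to agree with admissible shearing functions $H^1 \in \mathcal H(\clag^1)$ and (a shifted copy of) $H^2 \in \mathcal H(\clag^2)$ on the two halves. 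Since $F^1 \#_\rho F^2 = t f^1_+$ on a neighborhood of $t^1_+$ (using $f^1_+ = f^2_-$), the associated sheared difference function $\bar\Delta$ restricted to $\{t \le t^1_+\}$ is, up to the shift, exactly the $\Delta^1$ built from $F^1$, and restricted to $\{t \ge t^1_+\}$ is exactly $\Delta^2$.

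The key step is then to factor the map $\psi_{F^1 \#_\rho F^2}$ from~(\ref{eqn:cobord-map}) through the intermediate level $\{t^1_+\} \times M \times \rr^{2N}$. Recall that $\psi_{F^1\#_\rho F^2}$ is defined by following the negative gradient flow of $\bar\Delta$ from $\{t^2_+ + \rho\} \times M \times \rr^{2N}$ downward until the flowline first hits $\bar\Delta^\mu$ or the wall $\{t^1_-\} \times M \times \rr^{2N}$. Because $\partial_t \bar\Delta$ has a definite sign on the ``end'' regions and the level $\{t^1_+\}$ sits inside the glued neck, every such flowline that does not terminate earlier must cross the slice $\{t^1_+\}$; so I would stop the flow there, obtaining a factorization $\psi_{F^1\#_\rho F^2} \simeq \psi_{F^1} \circ \psi_{F^2}$ as maps of pairs (after the appropriate excisions identifying the relevant relative cohomology of the slices $\{t^1_+\}$, $\{t^1_-\}$, and $\{t^2_+ + \rho\}$ with $\rgh{*}{f^1_+}$, $\rgh{*}{f^1_-}$, $\rgh{*}{f^2_+}$ respectively via Lemmas~\ref{lem:retract-t-} and~\ref{lem:retract-cone}). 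Applying the identifications of diagram~(\ref{eqn:cobord-map}) to each of $F^1$, $F^2$, and $F^1 \#_\rho F^2$ and pasting the three copies of that diagram together then yields $\Psi_{[F^1\#_\rho F^2]} = \Psi_{[F^2]} \circ \Psi_{[F^1]}$.

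Two technical points need care. First, one must check that the parameter choices $\Infin$ and $\mu$ (Inequalities~(\ref{ineq:wgh})) can be made uniformly valid for $F^1$, $F^2$, and the glue simultaneously; this is routine since $\lmaxpm$ and $\lminpm$ for the glued cobordism are controlled by those of the pieces, and one may stabilize so that all generating families have a common fiber dimension. Second --- and this is where I expect the main obstacle --- one must verify that stopping the gradient flow at the intermediate slice $\{t^1_+\}$ really does split the map cleanly, i.e. that the ``$\psi$'' construction is compatible with the excision/deformation-retraction identifications used to name the endpoint groups, so that the composite of the two halves is chain-level (not merely up-to-homotopy-in-a-way-that-obstructs-composition) equal to the glued map. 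This amounts to the same kind of bookkeeping as in the proof of Lemma~\ref{lem:cobord-indep-H} (compare Equation~(\ref{eqn:indep-comp}) and the commutative diagram following it), where the flow over an interval on which $\bar\Delta = tf$ is shown to be the honest negative gradient flow of $\delta$; indeed, the independence-of-$\rho$ clause of the Proposition follows immediately from exactly that observation, applied to the interval $[t^1_+, t^2_- + \rho]$ where $\bar\Delta = tf^1_+$, just as in Remark~\ref{rem:e-l-indep}. So the strategy is: reduce to a convenient $H$, establish the flowline factorization through the intermediate slice, and then invoke the Lemma~\ref{lem:cobord-indep-H}-style argument to upgrade it to the claimed equality of cobordism maps.
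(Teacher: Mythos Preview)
Your proposal is correct and follows essentially the same route as the paper: factor the gradient-flow map $\psi_{F^1\#_\rho F^2}$ through the intermediate slice $\{t^1_+\}$ to obtain $\psi_{F^1\#_\rho F^2} = \tilde\psi_{F^1}\circ\psi_{F^2}$ (with $\tilde\psi_{F^1}$ having domain expanded to include $\Delta^\mu_{[t^1_+,\,t^2_++\rho]}$), and then invoke the Lemma~\ref{lem:cobord-indep-H} diagram chase verbatim. One small sharpening: the factorization through $\{t^1_+\}$ does not require a sign argument on $\partial_t\bar\Delta$---it is definitional, since $\psi_{F^2}$ is \emph{defined} to stop the flow upon first hitting either $\bar\Delta^\mu$ or the wall $\{t^2_-+\rho\}=\{t^1_+\}$, which is precisely the expanded domain of $\tilde\psi_{F^1}$.
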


\begin{proof}
  As usual, choose $\Infin$ and $\mu$ that satisfy (\ref{ineq:wgh}), $v_\pm$ that satisfy (\ref{ineq:vpm}), and, for simplicity, $\rho$ so that $t_-^2 + \rho = t_+^1$.  As in the proof of Lemma~\ref{lem:cobord-indep-H}, we concentrate on the interval $[t^1_-, t^2_++\rho]$.  Recall that the map
  $$\psi_{F}: (\Delta^\Infin_{\{t_+\}}, \Delta^\mu_{\{t_+\}}) \to
  (\Delta^\Infin_{\{t_-\}} \cup \Delta^\mu_{[t_-, t_+]}, \Delta^\mu_{[t_-, t_+]})$$
  is defined by following the negative gradient flow of $\Delta$ until the first point at which the flowline intersects $\Delta^\mu$ or $\{t_-\} \times M \times \rr^{2N}$.  If we let $\tilde{\psi}_{F^1}$ be the map $\psi_{F^1}$ whose domain has been expanded to be the pair $(\Delta^\Infin_{\{t^1_+\}} \cup \Delta^\mu_{[t^1_+, t^2_+ + \rho]}, \Delta^\mu_{[t^1_+, t^2_+ + \rho]})$, then the description of the $\psi_F$ maps above easily implies that
\begin{equation} \label{eqn:comp-comp}
  \psi_{F^1 \# F^2} = \tilde{\psi}_{F^1} \circ \psi_{F^2}.
\end{equation}

The rest of the proof is completely analogous to that of Lemma~\ref{lem:cobord-indep-H}.
\end{proof}

\section{Examples and Open Questions}
\label{sec:ex}

 In this section, we give a number of applications of some of the main theorems of this paper.  Some of these results are
 stated in Theorem~\ref{thm:ex}.  The following applications fall in the general categories of negative twist knots, higher dimensional
 Legendrians, and  Legendrians with non-equivalent generating families.

Throughout this section, we use $\zz_2$ coefficients in this section both for ease of computation and so as to be able to use Fuchs and Rutherford's aforementioned connection between generating family homology and linearized contact homology in $\rr^3$.

\subsection{Negative Twist Knots}

Etnyre, Ng, and Vertesi classified Legendrian negative twist knots in $\rr^3$ up to Legendrian isotopy in \cite{env:twist}. They showed that any negative twist knot with maximal Thurston-Bennequin
  number  is isotopic to one of the forms pictured in Figure~\ref{fig:twist-knot}.  For a fixed twist knot, let $z^\pm$ denote the number of crossings of the form $Z^\pm$. 

\begin{figure}
  \centerline{\includegraphics[width=4in]{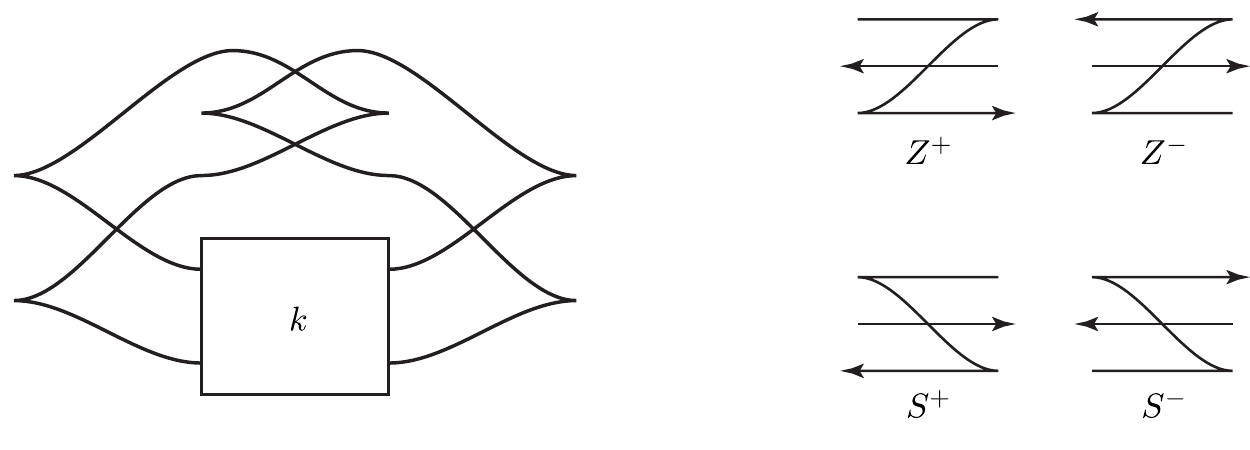}}
   \caption{Any Legendrian knot that is topologically a negative twist knot, $K_m$ with $m \leq 2$ ,
    is isotopic to one of the pictured knots, where the rectangle contains $k = |m +2|$ negative 
    half twists each of which is of type $Z^\pm$ or $S^\pm$.}
  \label{fig:twist-knot}
\end{figure} 

  Etnyre, Ng, and Vertesi show that
a Legendrian knot that is topologically an odd, negative twist knot $K_{-2n-1}$ is isotopic to one in the form of Figure~\ref{fig:twist-knot}
with $z^+ = n$ and that the Legendrian isotopy class of the knot is determined by $z^-$, where $0 \leq z^- < n$. 

  All of these knots possess graded rulings, and hence generating families. A straightforward computation of the linearized Legendrian contact homology, followed by an application of Fuchs and Rutherford's theorem \cite{f-r}, shows that there is a unique generating family cohomology whose Poincar\'e polynomial is $t^{-2z^--1} + t + t^{2z^--1}$.  
  Thus Corollary~\ref{cor:cyl-iso} shows that no two topologically equivalent odd, negative twist knots are gf-compatibly Lagrangian cobordant; further, Theorem~\ref{thm:filling-iso} shows that none of them possess a gf-compatible Lagrangian filling.  Note that only the latter result can be derived from classical invariants via Corollary~\ref{cor:chantraine}.

For a Legendrian knot of maximal Thurston-Bennequin invariant that is topologically $K_{-2n}$, 
the classification of \cite{env:twist} is somewhat different.  These knots are determined by the pair $(z_+,z_-)$, subject to the unique relation that $K_{(z^+,z^-)} \simeq K_{(n-1-z^+,n-1-z^-)}$.  In this case, arguments as above yield a unique generating family cohomology whose Poincar\'e polynomial is: $$t^{-2(z^++z^-+1-n)} + t + t^{2(z^++z^-+1-n)}.$$  Thus, Corollary~\ref{cor:cyl-iso} shows that if $z_0^++z_0^- \neq z_1^+ + z_1^-$, then there is no gf-compatible Lagrangian cobordism between $K_{(z_0^+,z_0^-)}$ and $K_{(z_1^+, z_1^-)}$.  Further, Theorem~\ref{thm:filling-iso} (but not the classical information in Corollary~\ref{cor:chantraine}) implies that a negative even twist knot can only have a gf-compatible Lagrangian filling if $z^++z^-=n-1$.  

This argument does leave an open question, however:

\begin{oques}
  If $z_0^++z_0^- = z_1^+ + z_1^-$, is there a gf-compatible Lagrangian concordance from one of these knots to the other? In particular,
  there are two non-equivalent Legendrian representatives of $K_{-6} = m(7_2)$ with maximal Thurston-Bennequin
  invariant that both have generating family polynomial $2 + t$; see, for example, \cite{ng:atlas}.    Are they Lagrangian cobordant?
\end{oques}

We suspect that the answer is ``no'' since Etnyre, Ng, and Vertesi distinguished these knots using the contact element in Heegaard-Floer knot homology, which itself should be an invariant of Lagrangian cobordism. See also \cite{sivek:monopole}.  The open question above may be generalized to:

\begin{oques}
  Do there exist Legendrian twist knots that are Lagrangian concordant but not Legendrian isotopic?  That is, is the relation of Lagrangian concordance among twist knots completely determined by Legendrian isotopy?  Even smooth concordances between twist knots are a
  topic of current research; see, for example, \cite{grs:concordance}.  
\end{oques}
 
\subsection{Higher Dimensional Legendrians}
Next, we move to higher dimensions.  We revisit Examples 3.1 and 4.9 of \cite{ees:high-d-geometry} from a generating family perspective in $J^1\rr^2$ (though the techniques here apply to $n>2$ as well).  Let $\leg_0$ be Legendrian sphere whose front diagram is the ``flying saucer'' of Figure~\ref{fig:high-d-ex}(a).  One may construct a linear-at-infinity generating family for $\leg_0$ by carefully spinning a generating family for the standard unknot in $J^1\rr$.  This Legendrian knot has a single Reeb chord of index $2$, and hence we have, for any generating family $f$, that  its Poincar\'e polynomial is $P_f(t) = t^2$.

Create another surface $\Lambda_1$ as follows; see Figure~\ref{fig:high-d-ex}. 
Squeeze the front of $\leg_0$ along a plane through the origin, producing a dumbbell shape as
shown in Figure~\ref{fig:high-d-ex}.  The region between the tubes can be stretched into a tube.   Finally, make the tube into a helical shape
so that in the resulting front, the dumbbell ends are overlapping.  It is not hard to explicitly construct a generating family $f:\rr^2 \times \rr \to \rr$ for $\leg_1$.  
  Further, \cite[Prop 4.10]{ees:high-d-geometry} shows that $\leg_0$ and $\leg_1$ have the same classical invariants.

In \cite{ees:high-d-geometry}, the authors compute that $\leg_1$ is  Legendrian isotopic to a  surface with seven Reeb chords
  with the following gradings:
\begin{align*}
  |a| = |b| = |c| &= 2, \\
  |d| &= 1, \\
  |e| &= 0, \\
  |f| = |g| &= -1.  
\end{align*} 
Proposition~\ref{prop:index} shows that these gradings are the same as those for the generating family cohomology.
Working over a field, we immediately see that $\dim \rgh{-1}{f} > 0$.  We conclude from Theorem~\ref{thm:cobord-les} 
that 
there is no tame, generating family compatible Lagrangian cobordism between $\Lambda_0$ and $\Lambda_1$, in either order.
Further, by 
Theorem~\ref{thm:filling-iso}, $\leg_1$ cannot have a generating family compatible Lagrangian filling.  
 
\subsection{Legendrians with Non-Equivalent Generating Families}
Finally, we return to $\rr^3$ to show how subtle the question of the existence of a compatible cobordism can be.

  Let $\leg$ be the Legendrian knot pictured in Figure~\ref{fig:ms}.  Melvin and Shrestha found that the Legendrian contact homology of $\leg$ (over $\zz_2$) has augmentations $\epsilon_1$ and $\epsilon_2$ with different linearized homologies \cite{melvin-shrestha}.  Assume, for the moment, that there exist generating families $f_1$ and $f_2$ for $\leg$ so that the Fuchs-Rutherford isomorphism yields $LCH^*_{\epsilon_i}(\leg) \simeq \rgh{*}{f_i}$, and hence that:
  $$P_{f_1}(t) = t^{-1} + 4 + 2t \quad \text{and} \quad P_{f_2}(t) = 2+t.$$
Corollary~\ref{cor:cyl-iso} implies that there is no gf-compatible concordance between $\leg$ and itself that interpolates between the generating families $f_1$ and $f_2$, while Theorem~\ref{thm:filling-iso} shows that only the second generating family can support a gf-compatible Lagrangian filling (in fact, a punctured torus).  This demonstrates the subtlety of the question of existence of gf-compatible cobordisms.

\begin{figure}
  \centerline{\includegraphics{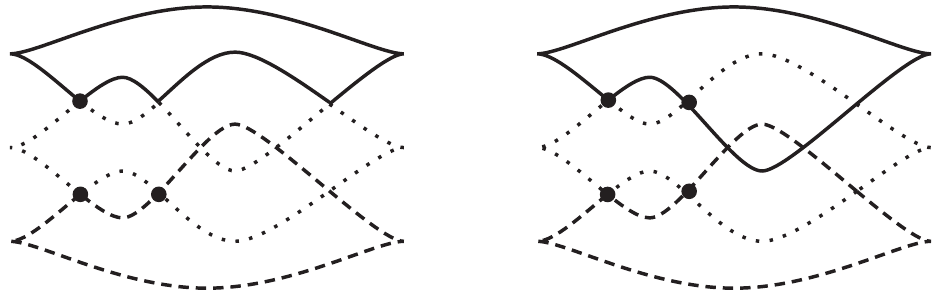}}
  \caption{Two graded normal rulings of a Legendrian $m(8_{21})$ knot and the corresponding augmentations of the Chekanov-Eliashberg DGA.  These rulings may be used to construct generating families that cannot be related by a gf-compatible concordance. }
  \label{fig:ms}
\end{figure} 

We now sketch a proof that the generating families $f_1$ and $f_2$ do, indeed, exist.  Beginning with the augmentations $\epsilon_1$ and $\epsilon_2$, the algorithm in \cite{rulings} produces both the graded normal rulings in Figure~\ref{fig:ms} and augmentations $\epsilon_1'$ and $\epsilon_2'$ on a diagram of $\leg$ that has a set of ``dips'' between every pair of adjacent crossings or cusps. On one hand, Fuchs and Rutherford show how to construct generating families $f_1$ and $f_2$ out of these rulings \cite[Section 3]{f-r}.  On the other, the new diagram does not quite yield the DGA used in \cite{f-r} to interpolate between the generating family and linearized contact homologies; it is necessary to add in a new dip to the left of the dips surrounding each crossing of the original diagram of $\leg$ that is augmented by $\epsilon_i'$.  The augmentations on the newly dipped diagrams are obtained by, in the language of \cite[Section 5.3.2]{henry:mcs}, extending the augmentation by $\mathcal{H}_{j+1,j}$, where the associated crossing occurs between the strands $j$ and $j+1$ of the front diagram.  

The resulting augmentations yield the linearized chain complexes that Fuchs and Rutherford define whose homology computes both the linearized contact homology associated to the $\epsilon_i$ and the generating family homology associated to the $f_i$.
 
It is reasonable to conjecture using the constructions of Melvin and Shrestha \cite{melvin-shrestha}, or alternatively using Sivek's Whitehead double construction \cite{sivek:bordered-dga}, that this example is but one of an infinite family of Legendrian knots with two --- and probably arbitrarily many --- non-compatibly-concordant generating families.


\bibliographystyle{amsplain} 
\bibliography{main}

\end{document}